\theoremstyle{definition}
\newtheorem{definition}{Definition}[section]
\theoremstyle{plain}
\newtheorem{theorem}[definition]{Theorem}
\newtheorem{proposition}[definition]{Proposition}
\newtheorem{lemma}[definition]{Lemma}
\newtheorem{corollary}[definition]{Corollary}
\theoremstyle{remark}
\newtheorem{remark}[definition]{Remark}
\numberwithin{equation}{section}
\title{Quantum Berezinian for the Twisted Super Yangian}
\author{Hongda Lin ${}^{1}$ Yongjie Wang ${}^{2,}$\thanks{Corresponding Author. Email:\ wyjie@mail.ustc.edu.cn}~~ and Honglian Zhang ${}^{3,4}$}
\date{}
\begin{document}

\maketitle
\begin{center}
\footnotesize
\begin{itemize}
\item[1] Shenzhen International Center for Mathematics, Southern University of Science and Technology, China.
\item[2] School of Mathematics, Hefei University of Technology, Hefei, Anhui, 230009, China.
\item[3] Department of Mathematics, Shanghai University, Shanghai 200444, China.
\item[4] Newtouch Center for Mathematics at Shanghai University, Shanghai 200444, China.
\end{itemize}
\end{center}

\begin{abstract}
Motivated by an open problem proposed in Molev's book \cite[Section 2.16, Example 16]{Mo07}, we investigate the quantum Berezinian $\mathfrak{B}^{tw}(u)$ associated with the twisted super Yangian, which is a coideal sub-superalgebra of the super Yangian of the general linear Lie superalgebra. We provide an explicit formulation of $\mathfrak{B}^{tw}(u)$, and we also construct the center of the twisted super Yangian. This construction enables us to define the special twisted super Yangian, which is isomorphic to the quotient of the twisted super Yangian by its center. Moreover, we demonstrate the quantum Sylvester theorem for both the generator matrix and the quantum Berezinian.
\end{abstract}

\noindent\textit{MSC(2010):} 17B37, 17D10, 20G42.
\bigskip

\noindent\textit{Keywords:} Quantum Berezinian; Twisted Super Yangian; Quantum Sylvester Theorem; Center.

\tableofcontents

\section{Introduction}
The term ``Yangian", first introduced by Drinfeld \cite{Dr85,Dr87}, is used to describe a significant class of quantum groups that are closed connected to the rational solutions of the quantum Yang-Baxter equation. It exhibits a profound integration into mathematical physics and quantum integrable systems, emerging as a prevalent topic in contemporary research.
In the context of a finite-dimensional classical simple Lie algebra $\mathfrak{a}$, the Yangian $\mathrm{Y}(\mathfrak{a})$ associated with $\mathfrak{a}$ can be regarded as a deformation of the universal enveloping algebra $\mathrm{U}(\mathfrak{a}[x])$ of the polynomial current Lie algebra $\mathfrak{a}[x]$.

Yangian admits a presentation whose defining relations are expressed in a specific matrix form, originally introduced by Faddeev-Reshetikhin-Takhtajan (FRT) \cite{FRT88}. In the FRT formalism, the Yangian is realized through a generator matrix that satisfies a set of ternary relations, known as the R-matrix presentation. These relations endow the Yangian with a natural bialgebra structure, which can be further equipped with an antipode to form a Hopf algebra. In their work \cite{BK05}, Brundan and Kleshchev developed the parabolic presentation for the Yangian of the general linear Lie algebra $\mathfrak{gl}_N$, utilizing Gauss decomposition and quasi-determinants \cite{GGRW05} of the FRT generators. This presentation has a close connection with the Drinfeld presentation, as described in \cite{Dr87}.

Twisted Yangian was introduced by Sklyanin \cite{Sk88} to investigate the boundary conditions of quantum integrable systems, and was named by Olshanskii. In \cite{Ol92}, the author defined the twisted Yangians associated to the symmetric pairs:
\begin{gather*}
    \textrm{AI}:\ (\mathfrak{gl}_N,\mathfrak{o}_N),\qquad \textrm{AII}:\ (\mathfrak{gl}_{2n},\mathfrak{sp}_{2n}),
\end{gather*}
and used them to construct the representations of infinite-dimensional classical Lie algebras. Let $\theta$ be an involutive automorphism over $\mathfrak{gl}_N$. Each of these algebras specializes to the universal enveloping algebras of a twisted polynomial current Lie algebra $\mathfrak{gl}_N[x]^{\theta}$.

There are two methods for realizing a class of twisted Yangians \cite{MNO96}: one approach defines them as coideal subalgebras of the Yangian in terms of its comultiplication, while the other is based on a presentation generated by a series of current elements, along with the quaternary relations and the symmetry relations. This framework also applies to the twisted Yangians of \textrm{AIII} type \cite{MR02} as well as those of \textrm{BCD} types \cite{GR16}.

The representation theory of Yangians and twisted Yangians has been extensively studied for over thirty years. In \cite{Mo98}, Molev classified the irreducible highest weight representations of twisted Yangians associated with classical Lie algebras $\mathfrak{o}_N$ and $\mathfrak{sp}_N$. He further applied the quantum Sylvester theorem to study the skew representation of twisted Yangian $\mathrm{Y}^{tw}\left(\mathfrak{sp}_{2n}\right)$ in \cite{Mo06}, which encompasses the classical results of evaluation representations of $\mathrm{Y}^{tw}\left(\mathfrak{sp}_{2n}\right)$. More recently, a series of works \cite{LWZ23,LWZ24,LZ24} have constructed Drinfeld presentations of twisted Yangians for arbitrary quasi-split types, thereby enabling the study of character theories of twisted Yangians, analogous to that of Yangians \cite{Kn95}.

In the early 1990s, Nazarov \cite{Na91} introduced a super version of the Yangian associated with the general linear Lie superalgebra $\mathfrak{gl}_{M|N}$, which is commonly known as the super Yangian, denoted by $\mathrm{Y}\left(\mathfrak{gl}_{M|N}\right)$. Since then, the super Yangian $\mathrm{Y}\left(\mathfrak{gl}_{M|N}\right)$ has been extensively studied by numerous researchers. Zhang \cite{Zrb96} developed  finite-dimensional irreducible representation of $\mathrm{Y}\left(\mathfrak{gl}_{M|N}\right)$ connected to the standard Dynkin diagram. Additionally, the super Yangian $\mathrm{Y}\left(\mathfrak{gl}_{M|N}^{\mathfrak{s}}\right)$ at arbitrary parity sequences $\mathfrak{s}$ has been linked by reflections \cite{Ts20}, with analogous results for non-conjugacy Dynkin diagrams established by \cite{Lk22,Mo22}. In a separate work, Gow \cite{Go05,Go07} presented a Gauss decomposition for $\mathrm{Y}\left(\mathfrak{gl}_{M|N}\right)$, provided the structure of the center $\mathcal{Z}\mathrm{Y}\left(\mathfrak{gl}_{M|N}\right)$ and explored its connection with the Drinfeld presentation of the special super Yangian $\mathrm{Y}\left(\textrm{A}(M-1,N-1)\right)$ as presented by Stukopin \cite{St94}. Peng \cite{Pe11,Pe16} derived the parabolic presentations of the super Yangian and extended them to $\mathfrak{gl}_{M|N}$ associated with arbitrary Borel subalgebras.

As a coideal of the super Yangian $\mathrm{Y}\left(\mathfrak{gl}_{M|N}\right)$, the twisted super Yangians $\mathrm{Y}^{\pm}(M|N)$ were defined in terms of a specific transposition by \cite{AACFR03,BR03}. These superalgebras are deformations of the universal enveloping superalgebras $\mathrm{U}\left(\mathfrak{osp}_{M|N}[x]\right)$. Briot-Ragoucy \cite{BR03} demonstrated that the product $MN$ cannot be odd for the superalgebras $\mathrm{Y}^{\pm}(M|N)$ and the two classes of twisted super Yangians are isomorphic upon exchanging $M$ and $N$.

It is well known that there is a close connection between the structures and representations of Yangians and quantum loop algebras. The latter is a significant class of Hopf algebras as quantum deformations of loop algebras. Drinfeld asserted \cite{Dr86} that, in a certain sense, Yangians are a limited form of quantum loop algebras. Until around 2012, comprehensive proofs of this statement were established by Gautam-Toledano Laredo \cite{GT13} and Guay-Ma \cite{GM12}, respectively. Subsequent to these proofs, analogous claims were also formulated for twisted cases \cite{CG15} and for super cases \cite{LhdWZ24}. Furthermore, in another significant paper by Gautam-Toledano Laredo \cite{GT16}, the authors also established the equivalence between the finite-dimensional representations of Yangians and quantum loop algebras for symmetrizable Kac-Moody algebras.

Inspired by numerous pioneering researches \cite{Go05,Go07,JLZ24,JZ24,Mo95,Mo07,MNO96,MRS03,Na91,Na20,Ol92}, the main objective of this paper is to investigate the central structure of the twisted super Yangian, as well as its super analogue of the Sklyanin determinant. Our comprehensive study not only delves into the fundamental definitions and properties of the twisted super Yangian, but also provides a precise explanation of the center and the quantum Berezinian.
Our endeavor potentially offers several future directions for the twisted super Yangian, involving Gauss decomposition, Drinfeld presentation,  classification of irreducible representations, Gelfand-Tsetlin bases, and Harish-Chandra isomorphism, among others.

The paper is organized as follows. In Section \ref{se:Superyangian}, we recall some fundamental concepts related to the super Yangian $\mathrm{Y}\left(\mathfrak{gl}_{M|N}\right)$, its center $\mathcal{Z}\mathrm{Y}\left(\mathfrak{gl}_{M|N}\right)$, and the special super Yangian $\mathrm{Y}\left(\mathfrak{sl}_{M|N}\right)$. In Section \ref{se:twistedsuperyangian}, we define a class of twisted super Yangians, which are associated with a nonsingular block matrix featuring a symmetric $(M\times M)$-submatrix in the upper-left block and a skew-symmetric $(N\times N)$-submatrix in the lower-right block. Additionally, we introduce a transposition of order 2, which yields an equivalent realization of the twisted super Yangian and generalizes the definition presented in \cite{BR03}. In Section \ref{se:Invo-graded}, we investigate the graded algebra for the twisted super Yangian $\mathrm{Y}^{tw}\left(\mathfrak{osp}_{M|N}\right)$ associated with the second filtration of the super Yangian. Based on this, we find a formal power series
$$\mathfrak{z}(u)\in 1+u^{-1}\mathrm{Y}^{tw}\left(\mathfrak{osp}_{M|N}\right)\left[\left[u^{-1}\right]\right]$$
such that the coefficients of $\mathfrak{z}(u)$ generate the center of $\mathrm{Y}^{tw}\left(\mathfrak{osp}_{M|N}\right)$. Furthermore, we introduce the super version of the special twisted Yangian, which is defined as a quotient of $\mathrm{Y}^{tw}\left(\mathfrak{osp}_{M|N}\right)$ ($M\neq N$) by its center. In Section \ref{se:quantumbereianian}, we examine the quantum Berezinian for the twisted super Yangian $\mathrm{Y}^{tw}\left(\mathfrak{osp}_{M|N}\right)$ and establish the corresponding quantum Liouville formula. In the final section \ref{se:quantumsylvester}, we demonstrate that a twisted super Yangian can be regarded as a quotient of an extended (associative) superalgebra by the ideal generated by central elements. This extended superalgebra is referred to as the extended twisted super Yangian. Finally, we formulate the quantum Sylvester theorem for the quantum Berezinian of the extended twisted super Yangian.

\vspace{1em}
{\bf Notations and terminologies}
\vspace{0.2em}

Throughout this paper, we will use the standard notations $\mathbb{C}$, $\mathbb{Z}$, $\mathbb{Z}_+$ and $\mathbb{N}$ to represent the sets of complex numbers, integers, non-negative integers, and positive integers, respectively. Unless otherwise stated, all matrices, (super)spaces, associative (super)algebras, and Lie (super)algebras are assumed to be defined over $\mathbb{C}$. We denote $\mathcal{Z}(\mathcal{A})$ the center of the associative (super)algebra $\mathcal{A}$. The Kronecker delta $\delta_{ij}$ is equal to $1$ if $i=j$ and $0$ otherwise.

We write $\mathbb{Z}_2=\left\{\bar{0},\bar{1}\right\}$. For a homogeneous element $x$ of an associative or Lie superalgebra, we use $|x|$ to denote the degree of $x$ with respect to the $\mathbb{Z}_2$-grading. Throughout the paper, when we write $|x|$ for an element $x$,  we always assume that $x$ is a homogeneous element and automatically extend the relevant formulas by linearity (whenever applicable). All modules of Lie superalgebras and quantum superalgebras are assumed to be $\mathbb{Z}_2$-graded. The tensor product of two superalgebras $A$ and $B$ carries a superalgebra structure defined by
$$(a_1 \otimes b_1)\cdot(a_2\otimes b_2) =(-1)^{|a_2||b_1|}a_1a_2\otimes b_1b_2,$$
for homogeneous elements $a_i\in A,$ $b_i\in B$ with $i=1,2$.

Given $M,N\in\mathbb{Z}_+$ such that $M+N\geqslant 2$, we set the index set $I=\{1,\ldots,M+N\}$, the positive index set $I_+=\{1,\ldots,M\}$ and the negative index set $I_-=I\setminus I_+$.
Let $V=\mathbb{C}^{M|N}$ be a $(M+N)$-dimensional superspace with a homogeneous basis $e_1,e_2,\cdots,e_{M+N}$. We briefly denote the degree of $e_k$ with $|k|$ for all $k\in I$. Let $\operatorname{End}V$ be the endomorphism algebra of the superspace $V$ and denote by $E_{ij}$ the endomorphism defined by $E_{ij}e_k=\delta_{jk}e_i$ for all $k\in I$.

We define $X=(X_{ij})_{i,j\in I}$ as a $(M+N)\times (M+N)$ matrix with entries in an associative superalgebra $\mathcal{A}$, if $|X_{ij}|=|i|+|j|$ for all $i,j$.
This matrix $X$ can be regarded as the element
\begin{align*}\label{tensornotation}
X=\sum\limits_{i,j\in I}E_{ij}\otimes X_{ij}\in\mathrm{End}V\otimes\mathcal{A}.
\end{align*}
Next, we define the graded transposition $t$ subject to an arbitrary matrix $X=(X_{ij})_{i,j\in I}$:
\begin{gather*}
X^{t}=\sum_{i,j\in I}E_{ij}\otimes X_{ij}^{t}\quad\text{with}\quad X_{ij}^{t}=(-1)^{|i||j|+|j|}X_{ji}.
\end{gather*}

We consider tensor product superalgebras of the form
$\mathrm{End}V^{\otimes m}\otimes \mathcal{A}$. For any $1\leqslant a\leqslant m$, let $X_a$ denote the element associated with the $a$-th copy of $\mathrm{End}V$ so that
$$X_a=\sum\limits_{i,j\in I} 1^{\otimes (a-1)}\otimes E_{ij}\otimes 1^{\otimes (m-a)}\otimes X_{ij}\in\mathrm{End}V^{\otimes m}\otimes \mathcal{A}.$$
Let $t_a$ be the partial transposition obtained from the action of $t$ on the $a$-th position of tensor products.
The supertrace map is $\mathrm{Str}:\ \mathrm{End}V\longrightarrow\mathbb{C}$ defined by $E_{ij}\mapsto (-1)^{|i|}\delta_{ij}.$
Furthermore, for any $a\in\{1,\ldots,m\}$ the partial supertrace $\mathrm{Str}_{a}$ is defined as follows:
$$\mathrm{Str}_{a}:\ \mathrm{End}V^{\otimes m}\longrightarrow\mathrm{End}V^{\otimes (m-1)},$$
which applies the supertrace to the $a$-th copy of $\mathrm{End}V$ and acts the identity map on all the remaining copies. The full supertrace $\mathrm{Str}_{1,\cdots,m}$ is the composition $\mathrm{Str}_{1}\circ\cdots\circ\mathrm{Str}_{m}$.

We also use the following super-permutation operator:
\begin{gather*}
P=\sum_{i,j\in I}(-1)^{|j|}E_{ij}\otimes E_{ji}\in \operatorname{End}V\otimes \operatorname{End}V,
\end{gather*}
which acts on $V\otimes V$ via $P(e_i\otimes e_j)=(-1)^{|i||j|}e_j\otimes e_i$ for $i,j\in I$. For a $R=\sum r_{[1]i}\otimes r_{[2]i}\in \operatorname{End}V^{\otimes 2}$ and an integer $m\geqslant 2$, denote for $1\leqslant a<b\leqslant m$,
\begin{gather*}
    R_{ab}=\sum 1^{\otimes (a-1)}\otimes r_{[1]i}\otimes 1^{\otimes (b-1)}\otimes r_{[2]i}\otimes 1^{\otimes (m-b)}\in \operatorname{End}V^{\otimes m}.
\end{gather*}
For example, $P_{12}=P\otimes 1$ when $m=3$.

\vskip 0.3cm

{\bf Acknowledgments}
\vspace{0.2em}

The second author is partially supported by the National Natural Science Foundation of China (Nos. 12071026, 12471025), supported by the Anhui Provincial Natural Science Foundation 2308085MA01. H. Zhang is partially supported by the National Natural Science Foundation of China (No. 12271332), and the Natural Science Foundation of Shanghai 22ZR1424600.

\section{Super Yangian}\label{se:Superyangian}

In the initial section of this paper, we recall several fundamental concepts, properties, and results pertinent to the super Yangian $\mathrm{Y}\left(\mathfrak{gl}_{M|N}\right)$.

\subsection{Definition of the super Yangian and its graded algebra}\label{se:Superyangian:1}

\begin{definition}
    The super Yangian $\mathrm{Y}\left(\mathfrak{gl}_{M|N}\right)$ is the complex associative superalgebra generated by elements $t_{ij}^{(r)}$ for $i,j=1,\ldots, M+N, r\in\mathbb{N}$. The parity is determined by setting
$|t_{ij}^{(r)}|=|i|+|j|$ for all $r\geqslant 1$. The defining relation in terms of the generated matrix
\begin{gather*}
T(u)=\sum_{i,j\in I}E_{ij}\otimes t_{ij}(u),\quad \text{with}\quad t_{ij}(u)=\delta_{ij}+\sum_{r\geqslant 1}t_{ij}^{(r)}u^{-r}
\end{gather*}
is given by
\begin{gather}\label{Y1}
R_{12}(u-v)T_1(u)T_2(v)=T_2(v)T_1(u)R_{12}(u-v),
\end{gather}
where $R(u)=1-u^{-1}P$ is the rational $R$-matrix, as a $\mathbb{Z}_2$-graded solution of the following Yang-Baxter equation
\begin{gather}\label{YB-1}
R_{12}(u)R_{13}(u+v)R_{23}(v)=R_{23}(v)R_{13}(u+v)R_{12}(u).
\end{gather}
\end{definition}

The super Yangian $\mathrm{Y}\left(\mathfrak{gl}_{M|N}\right)$ is a Hopf superalgebra with the comultiplication $\Delta$, the counit $\varepsilon$  and the antipode $S$ given by:
\begin{equation*}
\Delta(T(u))=T(u)\otimes T(u),\qquad \varepsilon(T(u))=1,\qquad S(T(u))=T(u)^{-1}.
\end{equation*}
The comultiplication can be explicitly rewritten in terms of generating series as follows:
\begin{gather*}
   \Delta( t_{ij}(u))= \sum_{k\in I}(-1)^{(|i|+|k|)(|k|+|j|)}t_{ik}(u)\otimes t_{kj}(u).
\end{gather*}

The generator matrix $T(u)$ possesses a Gauss decomposition with the unique series
\begin{gather*}
d_i(u),\quad e_{ij}(u),\quad f_{ji}(u)\in\mathrm{Y}\left(\mathfrak{gl}_{M|N}\right)\left[\left[u^{-1}\right]\right]
\end{gather*}
such that
\begin{align*}
T(u)=\left(1\otimes 1+\sum_{i<j}E_{ji}\otimes f_{ji}(u)\right)\left(\sum_{i\in I}E_{ii}\otimes d_i(u)\right)\left(1\otimes 1+\sum_{i<j}E_{ij}\otimes e_{ij}(u) \right)
\end{align*}
More specifically,
\begin{align*}
    t_{ii}(u)&=d_i(u)+\sum_{k<i}f_{ik}(u)d_k(u)e_{ki}(u), \\
    t_{ij}(u)&=d_i(u)e_{ij}(u)+\sum_{k<i}f_{ik}(u)d_k(u)e_{kj}(u), \\
    t_{ji}(u)&=f_{ji}(u)d_i(u)+\sum_{k<i}f_{jk}(u)d_k(u)e_{ki}(u),
\end{align*}
where $1\leqslant i<j\leqslant M+N$. Note that all series $d_i(u)$ are invertible.

It is obvious that $\left(T(u)^{-1}\right)^t=\left(T(u)^t\right)^{-1}$, where $t$ is always represented as the graded transposition in this article.
For simplicity, we utilize the following notations:
\begin{gather*}
    \widetilde{T}(u) =\sum_{i,j\in I}E_{ij}\otimes \tilde{t}_{ij}(u):=T(u)^{-1},\quad
    T^{\ast}(u)=\sum_{i,j\in I}E_{ij}\otimes t_{ij}^{\ast}(u):=\widetilde{T}^{t}(u).
\end{gather*}

There are several distinguished automorphisms and anti-automorphisms of the super Yangian $\mathrm{Y}\left(\mathfrak{gl}_{M|N}\right)$, and the specifics of the proof can be found in \cite[Section 2]{Na20}. Let $f(u)$ be any formal power series in $u^{-1}$ with the leading term $1$,
\begin{gather*}
    f(u)=1+f^{(1)}u^{-1}+f^{(2)}u^{-2}+\cdots \in\mathbb{C}\left[\left[u^{-1}\right]\right],
\end{gather*}
let $\lambda$ be any complex number, and let $B$ be any non-singular complex $(M+N)\times(M+N)$-matrix.
\begin{lemma}
Each of the mappings
\begin{align}
\mu_f:\ &T(u)\mapsto f(u)T(u), \label{auto:Ygl1}\\
&T(u)\mapsto T(u-\lambda), \label{auto:Ygl2}\\
&T(u)\mapsto BT(u)B^{-1}\label{auto:Ygl3}
\end{align}
defines an automorphism of $\mathrm{Y}\left(\mathfrak{gl}_{M|N}\right)$.
\end{lemma}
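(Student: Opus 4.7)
The plan is to verify, for each of the three assignments on the generator series $T(u)$, that the image of $T(u)$ still satisfies the defining RTT relation \eqref{Y1} and respects the grading $|t_{ij}^{(r)}|=|i|+|j|$; this ensures the map extends to a superalgebra endomorphism of $\mathrm{Y}\!\left(\mathfrak{gl}_{M|N}\right)$. I then exhibit a two-sided inverse in each case to upgrade each endomorphism to an automorphism.

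For \eqref{auto:Ygl1}, the scalar series $f(u)\in\mathbb{C}[[u^{-1}]]$ is central and even, so it commutes through every tensor factor; consequently both sides of \eqref{Y1} pick up the same factor $f(u)f(v)$ and the relation is preserved. The inverse is $\mu_{f^{-1}}$, where $f^{-1}(u)\in\mathbb{C}[[u^{-1}]]$ is the multiplicative inverse of $f(u)$, which exists because $f(u)$ has leading term $1$. For \eqref{auto:Ygl2}, the $R$-matrix $R(u-v)=1-(u-v)^{-1}P$ depends only on the difference $u-v$, so replacing $u,v$ by $u-\lambda,v-\lambda$ leaves $R_{12}$ unchanged and the RTT relation is preserved verbatim; the inverse is the shift by $-\lambda$.

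The only substantive case is \eqref{auto:Ygl3}. Writing out the image of the left-hand side of \eqref{Y1} and moving the constant matrices $B_1,B_2$ (which commute with each other and with the coefficients of $T_j$) past the series $T_1(u),T_2(v)$, it suffices to show that
\[
[R_{12}(u-v),\,B_1B_2]=0,\qquad\text{equivalently}\qquad [P,\,B\otimes B]=0.
\]
This is the one place one must use that $B$ is an even (parity-preserving) matrix: then for homogeneous $v,w\in V$ one checks
\[
P(Bv\otimes Bw)=(-1)^{|v||w|}Bw\otimes Bv=(B\otimes B)P(v\otimes w),
\]
since $|Bv|=|v|$ and $|Bw|=|w|$. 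The two-sided inverse is $T(u)\mapsto B^{-1}T(u)B$.

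I expect this compatibility with the super-permutation $P$, together with the implicit requirement that $B$ be homogeneous of degree $\bar{0}$ so that the assignment preserves the $\mathbb{Z}_2$-grading on the generators, to be the only non-routine ingredient; once in hand, all three automorphisms follow along uniform lines. The analogous arguments for the comultiplication and antipode are not needed here, since the statement only asserts that the maps are automorphisms of superalgebras.
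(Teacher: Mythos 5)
Your proof is correct in structure and content. Since the paper itself gives no proof of this lemma (it defers entirely to \cite[Section 2]{Na20}), there is no ``paper's proof'' against which to compare, but the route you take is the standard one: check that each image of $T(u)$ still satisfies \eqref{Y1}, confirm grading compatibility, and exhibit a two-sided inverse. All of your verifications are sound, including the super-sign check $[P,\,B\otimes B]=0$ in the third case.

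One genuinely useful observation you make --- and one the paper glosses over --- is that the conjugation $T(u)\mapsto BT(u)B^{-1}$ only makes sense as a superalgebra automorphism when $B$ is an \emph{even} (parity-preserving, i.e. block-diagonal) matrix. The paper says ``any non-singular complex $(M+N)\times(M+N)$-matrix,'' but this is too broad: if $B$ has odd off-diagonal blocks, the $(i,j)$-entry $\sum_{k,l}B_{ik}t_{kl}(u)\,(B^{-1})_{lj}$ is no longer homogeneous of parity $|i|+|j|$, so the map does not respect the $\mathbb{Z}_2$-grading, and moreover $P$ and $B\otimes B$ cease to commute (a quick check with the swap matrix on $\mathbb{C}^{1|1}$ already shows the failure). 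Your explicit computation $P(Bv\otimes Bw)=(-1)^{|v||w|}Bw\otimes Bv=(B\otimes B)P(v\otimes w)$, which hinges on $|Bv|=|v|$, is exactly the right justification and correctly isolates where evenness enters. You should therefore regard ``$B$ nonsingular and even'' as the precise hypothesis. With that refinement, all three maps are verified to be automorphisms, and your argument is complete.
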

\begin{lemma}
Each of the mappings
\begin{align}
&\vartheta :\ T(u)\mapsto T(-u), \label{antiauto:Ygl1}\\
&\tau:\ T(u)\mapsto T^{t}(u), \label{antiauto:Ygl2}\\
&S:\ T(u)\mapsto \widetilde{T}(u)\label{antiauto:Ygl3}
\end{align}
defines an anti-automorphism of $\mathrm{Y}\left(\mathfrak{gl}_{M|N}\right)$.
\end{lemma}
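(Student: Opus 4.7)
My plan is to treat the three mappings separately. In each case I would start from the RTT relation (\ref{Y1}), apply the prospective anti-automorphism entry-wise, and manipulate the result into an equivalent identity that matches (\ref{Y1}) but with the Yangian products read in the opposite order; the universal property of the R-matrix presentation then guarantees that the assignment extends to a unique anti-homomorphism of $\mathrm{Y}\left(\mathfrak{gl}_{M|N}\right)$. Bijectivity is settled in each case by exhibiting an explicit inverse, typically a power of the same map.

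For $\vartheta: T(u)\mapsto T(-u)$, I would substitute $u\mapsto-u$ and $v\mapsto-v$ in (\ref{Y1}) and rearrange using the symmetries $PR(w)P=R(w)$ and $R(w)R(-w)=1-w^{-2}$ to arrive at $R_{12}(u-v)T_2(-v)T_1(-u)=T_1(-u)T_2(-v)R_{12}(u-v)$, which is the RTT relation for $T(-u)$ with the Yangian products in reverse order. For $\tau: T(u)\mapsto T^t(u)$, I would apply the combined partial graded transposition $t_1 t_2$ to both sides of (\ref{Y1}); since $P^{t_1 t_2}=P$, the matrix $R_{12}(u-v)$ is invariant under $t_1 t_2$, and a direct super-index computation yields $(T_1(u)T_2(v))^{t_1 t_2}=T_2^t(v)T_1^t(u)$, converting (\ref{Y1}) into the opposite RTT for $T^t(u)$. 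For $S: T(u)\mapsto\widetilde T(u)$, I would multiply (\ref{Y1}) by $T_1(u)^{-1}$ on the left and by $T_2(v)^{-1}$ on the right to cancel the $T_i$ factors and reach $\widetilde T_1(u)\widetilde T_2(v)R_{12}(u-v)=R_{12}(u-v)\widetilde T_2(v)\widetilde T_1(u)$, again the opposite RTT; the existence of $T(u)^{-1}$ as a power series in $u^{-1}$ is furnished by the Gauss decomposition recalled above together with the invertibility of each $d_i(u)$.

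Bijectivity follows in each case from a short self-composition computation: $\vartheta$ is an involution; one checks directly that $\tau^2$ equals the parity automorphism $x\mapsto(-1)^{|x|}x$, which is invertible, so $\tau$ is also invertible; and $S^2$ is a genuine automorphism of the super Yangian, so $S$ admits an inverse as well.

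\textbf{Main obstacle.} The principal difficulty is the sign bookkeeping, most acutely in the $\tau$ case: one must reconcile the super-signs $(-1)^{|i||j|+|j|}$ in the definition $X^t_{ij}=(-1)^{|i||j|+|j|}X_{ji}$ with the signs coming from the super-tensor multiplication implicit in $T_1(u)T_2(v)$, in order to verify on the nose that $(R_{12}(u-v)T_1(u)T_2(v))^{t_1 t_2}$ equals $R_{12}(u-v)T_2^t(v)T_1^t(u)$ with no stray sign. A patient index-by-index calculation takes care of this; thereafter each of the three verifications is routine.
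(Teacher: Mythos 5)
The paper does not prove this lemma itself; it cites \cite[Section 2]{Na20}, so your blind attempt should be measured against the standard argument that Nazarov gives. Your strategy — apply the prospective anti-automorphism to the RTT relation \eqref{Y1}, use properties of $R$ and the graded transposition to recover an ``opposite'' RTT, then argue bijectivity separately — is exactly the right one, and the computations for $\vartheta$ and $\tau$ ($R(w)R(-w)=1-w^{-2}$, $PR(w)P=R(w)$, $R^{t_1t_2}=R$, and $\tau^2$ equal to the parity automorphism $x\mapsto(-1)^{|x|}x$) are all correct; you are also right to flag the super-sign bookkeeping as the delicate part, since $(T_1T_2)^{t_1t_2}$ and $T_2^tT_1^t$ differ precisely by the reversal of the Yangian product, which is what the anti-homomorphism condition supplies.

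Two points deserve sharpening. First, for $S$ the described manipulation does not quite give what you claim: multiplying \eqref{Y1} by $T_1(u)^{-1}$ on the left and $T_2(v)^{-1}$ on the right yields $T_1^{-1}RT_1=T_1^{-1}T_2T_1RT_2^{-1}$, not the opposite RTT. What you want is to invert the entire identity $R\,T_1T_2=T_2T_1\,R$, giving $T_2^{-1}T_1^{-1}R^{-1}=R^{-1}T_1^{-1}T_2^{-1}$, and then conjugate by $R$; the final relation you write down is right, but the route to it is not. Second, and more substantively, the bijectivity of $S$ is a genuine gap as stated. Asserting that ``$S^2$ is a genuine automorphism of the super Yangian'' does not by itself supply an inverse: the composite of two anti-homomorphisms is a homomorphism, but its bijectivity is exactly what has to be established, and in fact $S^2\neq\mathrm{id}$ (the paper itself remarks that $S$ is \emph{not} involutive). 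A clean way to close this is to pass to the associated graded: $S$ respects the second filtration, and by Proposition~\ref{grU} it induces on $\operatorname{gr}\mathrm{Y}(\mathfrak{gl}_{M|N})\cong\mathrm{U}(\mathfrak{gl}_{M|N}[x])$ the anti-automorphism determined by $a\mapsto -a$ on the current Lie superalgebra, which is visibly bijective; this forces $S$ to be bijective. Alternatively one can invoke the quantum Liouville formula to identify $S^2$ with an explicit bijective shift/conjugation. Either supplement makes your bijectivity argument complete.
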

The involutive anti-automorphisms \eqref{antiauto:Ygl1} and \eqref{antiauto:Ygl2} commute with each other. Thus, their composition
\begin{gather}\label{auto:Ygl4}
    \rho:\ T(u)\mapsto T^{\mathrm{t}}(-u)
\end{gather}
is an involutive automorphism.

Let $$T^{\ast}(u):=(1\otimes \tau)\widetilde{T}(u)=(\tau\otimes 1)\widetilde{T}(u),$$
where $\tau$\footnote{Here, we utilize the same symbols $\tau$ as in equation \eqref{antiauto:Ygl2}. This will not result in any confusion since $T^{t}(u)$ is equal to $(\tau\otimes1)T(u)$.} is an anti-superinvolution of $\operatorname{End}V$ defined by
 $\tau\left(E_{ij}\right)=(-1)^{|i||j|+|i|}E_{ji}$.
Then the mapping
\begin{gather}\label{auto:Ygl5}
    \psi:\ T(u)\mapsto T^{\ast}(u)
\end{gather}
 is an automorphism of $\mathrm{Y}\left(\mathfrak{gl}_{M|N}\right)$ but not involutive, since the antipode $S$ does not.

We recall the two different ascending $\mathbb{Z}$-filtrations of $\mathrm{Y}\left(\mathfrak{gl}_{M|N}\right)$ defined by Nazarov in \cite{Na20}. The first assigns the generator $t_{ij}^{(r)}$  of degree $r$ for each $r\geqslant 1$. The associated graded algebra, denoted by $\operatorname{gr}_0\mathrm{Y}\left(\mathfrak{gl}_{M|N}\right)$, is supercommutative and is generated freely by the elements $t_{ij}^{(r)}$ (with no confusion arising from the same notations), where $i,j= 1,\ldots,n$ and $r\in\mathbb{N}$. The second ascending $\mathbb{Z}$-filtration is of particular importance, and we now provide further details below.

Define $\deg t_{ij}^{(r)}=r-1$ for all $i,j$, and set $\deg (xy)=\deg x+\deg y$ for homogeneous elements $x,y\in\mathrm{Y}\left(\mathfrak{gl}_{M|N}\right)$. Then,
\begin{gather*}
   \mathcal{F}_0\subset \mathcal{F}_1\subset \cdots \subset \mathcal{F}_p\subset \cdots,
\end{gather*}
is a $\mathbb{Z}$-filtration of $\mathrm{Y}(\mathfrak{gl}_{M|N}) $,
where
\begin{gather*}
    \mathcal{F}_p:=\left\{\, x\in\mathrm{Y}\left(\mathfrak{gl}_{M|N}\right) \,|\, \deg x\leqslant p\,\right\},\quad p>0, \text{ and } \mathcal{F}_0=\mathbb{C}.
\end{gather*}
The associated graded algebra is denoted by
\begin{gather*}
    \operatorname{gr}\mathrm{Y}\left(\mathfrak{gl}_{M|N}\right):=\mathcal{F}_0\oplus \left(\mathop{\bigoplus}\limits_{p=1}^{\infty}\mathcal{F}_{p}/\mathcal{F}_{p-1}\right).
\end{gather*}

Set $\mathfrak{gl}_{M|N}[x]=\mathfrak{gl}_{M|N}\otimes \mathbb{C}[x]$.
\begin{proposition}\label{grU}\cite[Theorem 2.8]{Na20}
There exists an isomorphism of superalgebras between the filtered superalgebra $\operatorname{gr}\mathrm{Y}\left(\mathfrak{gl}_{M|N}\right)$ and the universal enveloping superalgebra $\mathrm{U}\left(\mathfrak{gl}_{M|N}[x]\right)$ such that
\begin{gather*}
\bar{t}_{ij}^{(r+1)}\mapsto -(-1)^{|j|}E_{ji}x^{r},\quad i,j\in I,\ r\in\mathbb{Z}_+,
\end{gather*}
where $\bar{t}_{ij}^{(r+1)}$ denotes the image of $t_{ij}^{(r+1)}$ in the component $\mathcal{F}_{r}/\mathcal{F}_{r-1}$, for $r\geqslant 1$.
\end{proposition}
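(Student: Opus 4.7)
The plan is to extract the explicit commutation identities from the RTT relation \eqref{Y1}, read them through the second filtration $\mathcal F_\bullet$ so as to identify the associated bracket on generators with the super Lie bracket of $\mathfrak{gl}_{M|N}[x]$, and conclude by a PBW-type comparison of graded components against the first (total-degree) filtration already described above.

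First I would expand $R_{12}(u-v)T_1(u)T_2(v) = T_2(v)T_1(u) R_{12}(u-v)$ with $R(u) = 1 - u^{-1} P$ to obtain the standard super RTT identity
\begin{gather*}
(u-v)\bigl[t_{ij}(u), t_{kl}(v)\bigr] = (-1)^{|i||k|+|i||l|+|k||l|}\bigl(t_{kj}(u) t_{il}(v) - t_{kj}(v) t_{il}(u)\bigr),
\end{gather*}
where $[\cdot,\cdot]$ is the supercommutator, and compare coefficients of $u^{-r} v^{-s}$ to produce a quadratic relation
\begin{gather*}
\bigl[t_{ij}^{(r)}, t_{kl}^{(s)}\bigr] = (-1)^{|i||k|+|i||l|+|k||l|} \sum_{p=0}^{\min(r,s)-1} \Bigl(t_{kj}^{(p)} t_{il}^{(r+s-1-p)} - t_{kj}^{(r+s-1-p)} t_{il}^{(p)}\Bigr),
\end{gather*}
with the convention $t_{ij}^{(0)} = \delta_{ij}$. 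The super-sign bookkeeping, in particular the graded flip inside $T_1(u)T_2(v)$ and the action of $P$, is routine but must be carried out with care.

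Next, I would read this identity through the second filtration. For each $p \geq 1$ the corresponding summand is a product of two factors of positive filtration degree summing to $(p-1)+(r+s-2-p) = r+s-3$; only the boundary term $p=0$, equal to $\delta_{kj} t_{il}^{(r+s-1)} - \delta_{il} t_{kj}^{(r+s-1)}$, survives in filtration degree $r+s-2$. Hence $[t_{ij}^{(r)}, t_{kl}^{(s)}] \in \mathcal F_{r+s-2}$, and its image in $\mathcal F_{r+s-2}/\mathcal F_{r+s-3}$ reproduces, after incorporating the twist by $-(-1)^{|j|}$ built into the proposed identification, the super Lie bracket $[E_{ji} x^{r-1}, E_{lk} x^{s-1}]$ in $\mathfrak{gl}_{M|N}[x]$. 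A short sign check confirms compatibility on generators.

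With the bracket structure matched, I would invoke the universal property of $\mathrm U(\mathfrak{gl}_{M|N}[x])$ to define a homomorphism
\begin{gather*}
\Phi: \mathrm U(\mathfrak{gl}_{M|N}[x]) \longrightarrow \operatorname{gr} \mathrm Y(\mathfrak{gl}_{M|N}), \qquad E_{ji} x^r \longmapsto -(-1)^{|j|} \bar t_{ij}^{(r+1)},
\end{gather*}
surjective by the choice of generators. For injectivity I would appeal to the first-filtration statement recalled just before the proposition, namely that $\operatorname{gr}_0 \mathrm Y(\mathfrak{gl}_{M|N})$ is freely supercommutatively generated by the $t_{ij}^{(r)}$; this yields a PBW basis of $\mathrm Y(\mathfrak{gl}_{M|N})$ consisting of ordered monomials in the $t_{ij}^{(r)}$, whose counts in each multidegree pass unchanged to $\operatorname{gr} \mathrm Y(\mathfrak{gl}_{M|N})$ and match exactly the PBW basis of $\mathrm U(\mathfrak{gl}_{M|N}[x])$ in ordered monomials in $E_{ji} x^r$. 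A graded-component dimension comparison then forces $\Phi$ to be a bijection. The main obstacle is this last step, where one must line up the two distinct filtrations so that the PBW bases correspond grade by grade; everything else is a direct consequence of the RTT expansion and patient sign tracking.
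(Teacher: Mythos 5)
The paper does not prove this proposition; it is cited directly from Nazarov \cite[Theorem~2.8]{Na20}. Your strategy --- extracting the explicit commutator from the $RTT$ relation, reading it through the second filtration to recover the super Lie bracket on generators, invoking the universal property of $\mathrm{U}$, and establishing injectivity by transporting the PBW basis obtained from the first filtration --- is the standard argument and matches the one in \cite{Na20}.

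Two details deserve a closer look. First, the prefactor you wrote, $(-1)^{|i||k|+|i||l|+|k||l|}$, does not agree with what the conventions $T(u)=\sum E_{ij}\otimes t_{ij}(u)$ and $P=\sum(-1)^{|j|}E_{ij}\otimes E_{ji}$ actually produce, namely $(-1)^{|i||j|+|i||k|+|j||k|}$; the two differ by $(-1)^{(|i|+|k|)(|j|+|l|)}$, which is not identically $1$ even after imposing $\delta_{kj}$ or $\delta_{il}$, so the ``short sign check'' you defer is precisely what determines whether $-(-1)^{|j|}$ is the correct twist. Second, the transfer of the PBW basis between the two filtrations needs one explicit observation: since the right-hand side of the quadratic relation lies in $\mathcal{F}_{r+s-2}$, rewriting any word as a linear combination of ordered monomials never increases its second-filtration degree, so the ordered monomials of degree $\le p$ span $\mathcal{F}_p$ and their images with degree exactly $p$ form a basis of $\mathcal{F}_p/\mathcal{F}_{p-1}$. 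With that spelled out, $\Phi$ carries PBW basis to PBW basis and the isomorphism follows.
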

Thus, the superalgebra $\mathrm{Y}\big(\mathfrak{gl}_{M|N}\big)$ is a deformation of the universal enveloping superalgebra $\mathrm{U}\big(\mathfrak{gl}_{M|N}[x]\big)$ in the class of Hopf superalgebras.

\subsection{The center of super Yangian}
The center of an (associative) superalgebra $\mathcal{A}$ is an ideal consisting of elements which super-commute with $\mathcal{A}$, denoted by $\mathcal{Z}\mathcal{A}$. In this subsection, we briefly review some known results related to $\mathcal{Z}\mathrm{Y}\left(\mathfrak{gl}_{M|N}\right)$, for more details, please refer to \cite{Na20}.

\begin{proposition}\label{centre:gl}\cite[Proposition 3.1]{Na20}
There is a series
\begin{gather*}
z(u)=1+z^{(1)}u^{-1}+z^{(2)}u^{-2}+z^{(3)}u^{-3}+\cdots \in\mathcal{Z}\mathrm{Y}\left(\mathfrak{gl}_{M|N}\right)
\end{gather*}
such that
\begin{align}\label{centre:1}
&\sum_k \tilde{t}_{kj}(u)t_{ik}(u+M-N)=\delta_{ij}z(u), \\ \label{centre:2}
&\sum_k t_{kj}(u+M-N)\tilde{t}_{ik}(u)=\delta_{ij}z(u).
\end{align}
Moreover, $z^{(1)}=0$ and $z^{(2)},z^{(3)},\ldots$ are free generators of $\mathcal{Z}\mathrm{Y}\left(\mathfrak{gl}_{M|N}\right)$.
\end{proposition}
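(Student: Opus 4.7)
Set $\kappa = M - N$. My proof of Proposition~\ref{centre:gl} proceeds in three stages: first establish the scalar matrix identity underlying \eqref{centre:1}--\eqref{centre:2}, from which centrality of $z(u)$ is essentially automatic; then obtain $z^{(1)} = 0$ by a direct coefficient expansion; finally deduce algebraic independence of $z^{(r)}$ for $r \geqslant 2$ via the second filtration of Section~\ref{se:Superyangian:1} and the graded algebra identification of Proposition~\ref{grU}.

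The heart of the argument is to show that the matrix $\widetilde T(u) T(u + \kappa)$ is a scalar matrix, after which $z(u)$ can be read off as any diagonal entry. The strategy is to derive from the RTT relation \eqref{Y1} a ternary commutation relation between $\widetilde T(u)$ and $T(v)$, obtained by multiplying by appropriate inverses $\widetilde T_1$ and rearranging. Specialising $v = u + \kappa$, the rational R-matrix becomes $R_{12}(-\kappa) = 1 + \kappa^{-1} P_{12}$, and the application of the partial supertrace $\mathrm{Str}_1$ combined with the crossing identity $\mathrm{Str}_1 P_{12} = \mathrm{id}_2$ (a direct consequence of the super-permutation formula and the definition of $\mathrm{Str}$ in the Notations) forces $\widetilde T(u) T(u + \kappa)$ to collapse to $z(u) \cdot I$, which is precisely \eqref{centre:1}. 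The companion identity \eqref{centre:2} is obtained by the symmetric computation (operating on the second tensor factor), or by applying the anti-automorphism $\tau$ of \eqref{antiauto:Ygl2} together with $\vartheta$ of \eqref{antiauto:Ygl1}. Centrality of $z(u)$ is then essentially automatic: sandwiching the scalar matrix identity between further copies of $T(v)$ and using \eqref{Y1} at parameters $(u,v)$ and $(u+\kappa,v)$, the R-matrix factors cancel between the two sides to leave $[z(u), t_{pq}(v)] = 0$.

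The vanishing $z^{(1)} = 0$ is a short coefficient comparison in \eqref{centre:1} with $i = j$: using $\tilde t_{kj}(u) = \delta_{kj} - t_{kj}^{(1)} u^{-1} + O(u^{-2})$ and $t_{ik}(u+\kappa) = \delta_{ik} + t_{ik}^{(1)} u^{-1} + O(u^{-2})$, the $u^{-1}$-coefficient of $\sum_k \tilde t_{ki}(u) t_{ik}(u+\kappa)$ reduces to $-t_{ii}^{(1)} + t_{ii}^{(1)} = 0$, independent of $\kappa$. For freeness, I verify that $z^{(r)} \in \mathcal F_{r-1}$ and compute its symbol in $\mathcal F_{r-1}/\mathcal F_{r-2}$ via Proposition~\ref{grU}; the resulting element in $\mathrm{U}(\mathfrak{gl}_{M|N}[x])$ should be, up to a nonzero scalar, the classical supertrace $\sum_{i \in I} (-1)^{|i|} E_{ii} x^{r-1}$. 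Since these supertraces are algebraically independent generators of the centre of $\mathrm{U}(\mathfrak{gl}_{M|N}[x])$, a standard filtration-lifting argument transfers the independence and generation back to $\mathcal{Z}\mathrm{Y}(\mathfrak{gl}_{M|N})$.

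The principal obstacle is the scalar collapse in the first stage. Inverting $T_1(u)$ in \eqref{Y1} must be carried out with careful tracking of the super-signs introduced by the grading of $V = \mathbb{C}^{M|N}$, and the exact role of the specific shift $\kappa = M - N$ only emerges after combining the explicit form of $R_{12}(-\kappa)$ with the crossing identity for $P_{12}$; choosing any other shift would leave a residual R-matrix factor that spoils the scalar collapse. Once this manipulation is secured, the remaining stages flow directly from the tools already assembled in Section~\ref{se:Superyangian:1}.
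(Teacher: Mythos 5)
The paper offers no proof of this proposition; it is cited directly from \cite[Proposition 3.1]{Na20}. So the benchmark is Nazarov's argument, which runs through the \emph{partially transposed} $R$-matrix: applying $t_1$ to the RTT relation \eqref{Y1} and inverting $T_1^{t_1}(u)$ gives
\[
R^{t_1}_{12}(u-v)\,T_2(v)\,T_1^{\ast}(u)\;=\;T_1^{\ast}(u)\,T_2(v)\,R^{t_1}_{12}(u-v),
\]
and at $u-v=M-N$ the operator $R^{t_1}(M-N)=1-(M-N)^{-1}Q$ with $Q=P^{t_1}$ becomes the complementary projector to a rank-one operator, because $Q^{2}=(M-N)Q$. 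The equality $\ker R^{t_1}(M-N)=\operatorname{Im}Q=\mathbb{C}\,\omega$ (with $\omega=\sum_i(-1)^{|i|}e_i\otimes e_i$, up to normalisation) is what forces $T_2(v)T_1^{\ast}(u)$ to act as a scalar on $\omega$, which unwinds to \eqref{centre:1}. This is the same mechanism behind the paper's own remark that $\big(R^{t_i}(u)\big)^{-1}=R^{t_i}(-u+M-N)$.

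Your scalar-collapse step replaces $Q=P^{t_1}$ by $P$ itself and invokes $\mathrm{Str}_1 P_{12}=\mathrm{id}_2$. This does not yield a scalar matrix identity. The correct ternary relation obtained by inverting $T_1$ in \eqref{Y1} is
\[
\widetilde T_1(u)\,R_{12}(u-v)^{-1}\,T_2(v)\;=\;T_2(v)\,R_{12}(u-v)^{-1}\,\widetilde T_1(u),
\]
and at $v=u+\kappa$, after stripping the normalising scalar, this reads $\widetilde T_1(u)\big(1-\kappa^{-1}P\big)T_2(u+\kappa)=T_2(u+\kappa)\big(1-\kappa^{-1}P\big)\widetilde T_1(u)$. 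Taking $\mathrm{Str}_1$ and using $PT_2=T_1P$ together with $\mathrm{Str}_1(X_1P_{12})=X_2$ produces only
\[
\big[\mathrm{Str}(\widetilde T(u)),\,T(u+\kappa)\big]\;=\;\kappa^{-1}\big(\widetilde T(u)T(u+\kappa)-T(u+\kappa)\widetilde T(u)\big),
\]
a commutator identity rather than a statement that $\widetilde T(u)T(u+\kappa)$ is scalar. The shift $\kappa=M-N$ has no privileged role here because $P^{2}=1$; the rank-one collapse is a property of $Q=P^{t_1}$ (with $Q^{2}=(M-N)Q$), not of $P$, and $\mathrm{Str}_1P_{12}=\mathrm{id}_2$ is not a crossing relation. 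Until this step is replaced by the $Q$-projector argument, the existence of $z(u)$, its centrality, and the companion identity \eqref{centre:2} all remain unproved.

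On the final stage: your degree estimate $z^{(r)}\in\mathcal{F}_{r-1}$ can be improved to $z^{(r)}\in\mathcal{F}_{r-2}$, and the symbol of $z^{(r)}$ in $\operatorname{gr}\mathrm{Y}\big(\mathfrak{gl}_{M|N}\big)\cong\mathrm{U}\big(\mathfrak{gl}_{M|N}[x]\big)$ is, by Corollary~\ref{Na:Z:U} and Proposition~\ref{grU}, proportional to $\sum_{i\in I}E_{ii}\,x^{r-2}$, not to $\sum_{i}(-1)^{|i|}E_{ii}\,x^{r-1}$. The distinction matters: $J=\sum_i(-1)^{|i|}E_{ii}$ is not central in $\mathfrak{gl}_{M|N}$ for $M,N>0$, so the elements you name are not central generators of $\mathrm{U}\big(\mathfrak{gl}_{M|N}[x]\big)$ and the lifting argument would not go through as written; the identity matrix $\sum_iE_{ii}$ is the one that is central. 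Your computation of $z^{(1)}=0$ is correct.
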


\begin{corollary}\cite[Proposition 3.4]{Na20}
    The series $z(u)$ is invariant under the action of $\tau$ in \eqref{antiauto:Ygl2}.
\end{corollary}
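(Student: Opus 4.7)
The plan is to apply the anti-automorphism $\tau$ directly to the characterizing relation \eqref{centre:1} for $z(u)$ and to recognize the resulting identity as the companion relation \eqref{centre:2} with $\tau(z(u))$ in place of $z(u)$. Because either relation uniquely determines the central series, this will force $\tau(z(u)) = z(u)$.

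First I would pin down the action of $\tau$ on the entries $\tilde t_{ij}(u)$ of $\widetilde T(u) = T(u)^{-1}$. One route is to apply $\tau$ directly to the matrix identity $T(u)\widetilde T(u) = I$ in component form and invert the resulting linear system. A more structural route is to exploit the automorphism $\psi:T(u)\mapsto T^{\ast}(u) = \widetilde T(u)^t$ of \eqref{auto:Ygl5} together with the antipode $S$ and the identity $(T(u)^{-1})^t = (T(u)^t)^{-1}$ noted right after the Gauss decomposition. Either approach should yield a formula of the shape
\[
\tau(\tilde t_{ij}(u)) \;=\; (-1)^{\varepsilon(i,j)}\,\tilde t_{ji}(u)
\]
for an explicit $\mathbb{Z}_2$-valued function $\varepsilon$, expressing that $\tau$ acts on the entries of $\widetilde T(u)$ by the same graded-transpose pattern it acts on $T(u)$.

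With this formula in hand, I would apply $\tau$ to \eqref{centre:1}. Because $\tau$ reverses products,
\[
\sum_k \tau(t_{ik}(u+M-N))\,\tau(\tilde t_{kj}(u)) \;=\; \delta_{ij}\,\tau(z(u)).
\]
Substituting $\tau(t_{ij}(u))=(-1)^{|i||j|+|j|}t_{ji}(u)$ from \eqref{antiauto:Ygl2} together with the formula for $\tau(\tilde t_{ij}(u))$, the accumulated super-signs collapse and, after relabeling $i\leftrightarrow j$, the left-hand side reorganizes into $\sum_k t_{ki}(u+M-N)\,\tilde t_{jk}(u)$, i.e.\ exactly the left-hand side of \eqref{centre:2}. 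Comparing with \eqref{centre:2} itself, which determines $z(u)$ uniquely (e.g.\ by fixing any one diagonal index $i=j$), one concludes $\tau(z(u)) = z(u)$.

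The principal delicacy is the sign bookkeeping. In the super setting one must keep track simultaneously of the graded-transpose signs in both $\tau(t_{ij})$ and $\tau(\tilde t_{ij})$, the super-commutation signs arising from the tensor-product superalgebra structure on $\mathrm{End}V\otimes\mathrm{Y}\left(\mathfrak{gl}_{M|N}\right)$, and the parities of various products of indices; these have to conspire to collapse into precisely the right-hand form of \eqref{centre:2} without any residual sign factor. Once the signs are correctly accounted for, the corollary reduces to the conceptual observation that $\tau$ simply swaps the two dual characterizations \eqref{centre:1} and \eqref{centre:2} of the central series $z(u)$.
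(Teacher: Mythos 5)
The paper does not prove this corollary itself; it is cited directly from Nazarov \cite[Proposition 3.4]{Na20}. Your proposed argument is sound and almost certainly mirrors the original: one verifies from $T(u)\widetilde T(u)=I$ (applying the super anti-automorphism rule $\tau(ab)=(-1)^{|a||b|}\tau(b)\tau(a)$) that $\tau(\tilde t_{ij}(u))=(-1)^{|i||j|+|j|}\tilde t_{ji}(u)$, i.e.\ $\tau$ acts on the entries of $\widetilde T(u)$ by the same graded-transpose pattern as on $T(u)$; then applying $\tau$ to \eqref{centre:1} produces
\[
(-1)^{|i||j|+|j|}\sum_k t_{ki}(u+M-N)\,\tilde t_{jk}(u)=\delta_{ij}\,\tau(z(u)),
\]
and the residual prefactor $(-1)^{|i||j|+|j|}$ vanishes precisely on the diagonal $i=j$, where it reduces to $(-1)^{2|i|}=1$. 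Comparing with the $i=j$ case of \eqref{centre:2} gives $\tau(z(u))=z(u)$. Your high-level description of the sign collapse is therefore accurate, and the uniqueness remark (setting $i=j$ in either defining relation pins down $z(u)$) closes the argument as you intended.
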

\begin{corollary}\cite[Corollary 3.5]{Na20}
    The anti-automorphism $S$ in \eqref{antiauto:Ygl3} maps $z(u)$ onto $z(u)^{-1}$.
\end{corollary}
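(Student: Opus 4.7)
The plan is to exploit the Hopf superalgebra structure on $\mathrm{Y}(\mathfrak{gl}_{M|N})$ and reduce the statement to showing that $z(u)$ is a group-like element. Once one has $\Delta(z(u))=z(u)\otimes z(u)$ and $\varepsilon(z(u))=1$, the antipode axiom $m\circ(S\otimes\mathrm{id})\circ\Delta=\iota\circ\varepsilon$ immediately yields $S(z(u))\cdot z(u)=1$, which is the desired identity $S(z(u))=z(u)^{-1}$.

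The preliminary step is to derive the coproduct on the entries of $\widetilde{T}(u)$. Using the compatibility $\Delta\circ S=\tau_{s}\circ(S\otimes S)\circ\Delta$ between the antipode and coproduct in any Hopf superalgebra (with $\tau_{s}$ the super-flip), together with the given formula for $\Delta(t_{ij}(u))$, the two copies of the super-signs $(-1)^{(|k|+|l|)(|l|+|j|)}$ cancel against each other, producing the clean formula
\[
\Delta(\tilde{t}_{kj}(u))=\sum_{l}\tilde{t}_{lj}(u)\otimes\tilde{t}_{kl}(u).
\]

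The main step is to apply $\Delta$ to the defining relation \eqref{centre:1}. Expanding $\Delta(\tilde{t}_{kj}(u))\,\Delta(t_{ik}(u+M-N))$ in the tensor product superalgebra produces two sources of signs: one $(-1)^{(|i|+|m|)(|m|+|k|)}$ from the coproduct on $t_{ik}$, and one $(-1)^{(|k|+|l|)(|i|+|m|)}$ from the graded product $(a\otimes b)(c\otimes d)=(-1)^{|b||c|}ac\otimes bd$. These combine to $(-1)^{(|i|+|m|)(|m|+|l|)}$. Summing first over the inner index $k$ collapses the right tensor factor via \eqref{centre:1} itself into $\delta_{ml}z(u)$, forcing $m=l$ and thereby killing the remaining sign; summing over $l$ then collapses the left factor into $\delta_{ij}z(u)$. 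Comparing with $\Delta(\delta_{ij}z(u))=\delta_{ij}\Delta(z(u))$ and setting $i=j$ gives $\Delta(z(u))=z(u)\otimes z(u)$. The identity $\varepsilon(z(u))=1$ is immediate: apply $\varepsilon$ to \eqref{centre:1} at $i=j$, using $\varepsilon(t_{ij}(u))=\varepsilon(\tilde{t}_{ij}(u))=\delta_{ij}$, to get $\sum_{k}\delta_{kj}\delta_{jk}=1$.

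The main obstacle is the bookkeeping of super-signs when combining the two coproduct formulas and carrying out the two successive index contractions: one must verify carefully that the sign from the tensor product superalgebra exactly cancels against the signs coming from the individual coproducts, leaving a sign-free contraction that recombines into $z(u)\otimes z(u)$. An alternative route would be to apply $S$ directly to the defining relation \eqref{centre:2}, but this requires an auxiliary formula for $S(\tilde{t}_{ij}(u))$ (equivalently, an understanding of $S^{2}$), which makes the Hopf-theoretic route noticeably cleaner.
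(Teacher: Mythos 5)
Your proof is correct. The paper itself gives no proof for this corollary (it is cited to~\cite[Corollary 3.5]{Na20}), so the comparison is necessarily against the argument one expects in that reference: showing that $z(u)$ is group-like and then invoking the antipode axiom, which is exactly the route you take. Your sign bookkeeping is accurate — the coproduct formula $\Delta(\tilde t_{kj}(u))=\sum_l\tilde t_{lj}(u)\otimes\tilde t_{kl}(u)$ can be obtained just as cleanly by applying $\Delta$ (an algebra homomorphism) to $\widetilde T(u)=T(u)^{-1}$ and noting the Koszul signs cancel, and the contraction of the exponent $(|i|+|m|)(|m|+|k|)+(|k|+|l|)(|i|+|m|)$ to $(|i|+|m|)(|m|+|l|)$, which then vanishes when $\delta_{ml}$ sets $l=m$, is the correct observation. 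Two small remarks: the antipode axiom yields $S(z(u))\,z(u)=1$, and you should note explicitly that this gives $S(z(u))=z(u)^{-1}$ because $z(u)\in 1+u^{-1}\mathrm Y(\mathfrak{gl}_{M|N})[[u^{-1}]]$ is an invertible even power series lying in the center; also, the group-like property $\Delta(z(u))=z(u)\otimes z(u)$ is itself a useful fact worth isolating, since it is what makes the coideal property of $\mathrm Y^{tw}$ compatible with the central series in later sections.
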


\begin{corollary}\cite[Proposition 3.7]{Na20}\label{Na:Z:U}
    The image of elements $z^{(r)}$ for $r\geqslant 2$ in the graded algebra $\operatorname{gr}\mathrm{Y}\left(\mathfrak{gl}_{M|N}\right)$ is
    \begin{gather*}
    (1-r)\sum_{i\in I}(-1)^{|i|}\bar{t}_{ii}^{(r-1)}
    \end{gather*}
    with degree $\deg z^{(r)}=r-2$. Here $\bar{t}_{ij}^{(r)}$ denotes the image of $t_{ij}^{(r)}$ in the $r$-th component of $\operatorname{gr}\mathrm{Y}\big(\mathfrak{gl}_{M|N}\big)$.
\end{corollary}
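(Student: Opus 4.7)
The plan is to expand the defining identity for $z(u)$ in Proposition~\ref{centre:gl} and identify the image of $z^{(r)}$ at filtration degree $r-2$ under the isomorphism $\operatorname{gr}\mathrm{Y}(\mathfrak{gl}_{M|N}) \cong \mathrm{U}(\mathfrak{gl}_{M|N}[x])$ of Proposition~\ref{grU}. Setting $i=j$ in equation (\ref{centre:1}) yields $z(u) = \sum_k \tilde t_{kj}(u)\, t_{jk}(u+c)$ with $c=M-N$, valid for each index $j$. Since the target expression $(1-r)\sum_i (-1)^{|i|}\bar t_{ii}^{(r-1)}$ is manifestly a supertrace, I would symmetrize by multiplying both sides by $(-1)^{|j|}$ and summing over $j$, obtaining (for $M\ne N$) the form $c\,z(u) = \sum_{j,k}(-1)^{|j|}\tilde t_{kj}(u)\,t_{jk}(u+c)$; the degenerate case $M=N$ can be handled either by regarding $c$ as a formal parameter and passing to the limit, or by extracting the leading part from a single index $j$ and using $j$-independence of the graded image.

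Next, substituting $\tilde t_{kj}(u) = \delta_{kj}+\sum_{a\ge 1}\tilde t_{kj}^{(a)}u^{-a}$ together with the binomial $(u+c)^{-b} = \sum_{m\ge 0}\binom{-b}{m}c^m u^{-b-m}$ and reading off the coefficient of $u^{-r}$ sorts $c\,z^{(r)}$ by second-filtration degree (recall $\deg t_{ij}^{(s)}=s-1$, and the same for $\tilde t_{ij}^{(s)}$). Four groups of terms are relevant up to degree $r-2$: the diagonals $(-1)^{|j|}t_{jj}^{(r)}$ and $(-1)^{|j|}\tilde t_{jj}^{(r)}$ at degree $r-1$; the binomial-shifted diagonal $-(r-1)c\sum_j(-1)^{|j|}t_{jj}^{(r-1)}$ at degree $r-2$; the bilinear block $\sum_{j,k,\,a+b=r,\,a,b\ge 1}(-1)^{|j|}\tilde t_{kj}^{(a)}t_{jk}^{(b)}$ at degree $r-2$; and strictly lower-order corrections. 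Expanding $\widetilde T(u) = \sum_{n\ge 0}(I-T(u))^n$ gives the key identity $\tilde t_{ii}^{(r)} \equiv -t_{ii}^{(r)} + \sum_{a+b=r,\,a,b\ge 1}\sum_\ell t_{i\ell}^{(a)}t_{\ell i}^{(b)} \pmod{\mathcal{F}_{r-3}}$, which forces the two degree-$(r-1)$ diagonals to cancel and produces an additional quadratic contribution at degree $r-2$.

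Finally, I would transport the surviving degree-$(r-2)$ content to $\mathrm{U}(\mathfrak{gl}_{M|N}[x])$ using $\bar t_{ij}^{(s+1)}\mapsto -(-1)^{|j|}E_{ji}x^{s}$ and simplify the remaining bilinear expressions via the Lie super-bracket $[E_{\ell j},E_{j\ell}]_s = E_{\ell\ell} - (-1)^{|j|+|\ell|}E_{jj}$ in $\mathfrak{gl}_{M|N}$. The hardest step will be verifying that the noncentral bilinear remainders (arising from the discrepancy between the ordinary commutator $\bar t_{j\ell}^{(a)}\bar t_{\ell j}^{(b)} - \bar t_{\ell j}^{(a)}\bar t_{j\ell}^{(b)}$ and the super-commutator, and producing terms proportional to $((-1)^{|j|+|\ell|}-1)E_{j\ell}E_{\ell j}$) cancel in pairs under the $(j,\ell)\leftrightarrow(\ell,j)$ symmetry of the double summation combined with the weight $(-1)^{|\ell|}$, leaving only a scalar multiple of the central element $I\cdot x^{r-2}$. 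Combining this with the binomial-shifted diagonal contribution $(r-1)c\,I\cdot x^{r-2}$ and dividing by $c$ then produces $\bar z^{(r)} = (1-r)\sum_i (-1)^{|i|}\bar t_{ii}^{(r-1)}$, with $\deg z^{(r)} = r-2$. As a cross-check, the centrality of $z^{(r)}$ in $\mathrm{Y}(\mathfrak{gl}_{M|N})$ forces $\bar z^{(r)}$ to be central in $\mathrm{U}(\mathfrak{gl}_{M|N}[x])$, which leaves essentially no freedom beyond the claimed scalar multiple of $I\cdot x^{r-2}$ at this filtration degree, and the explicit binomial prefactor $-(r-1)c$ pins down the overall scalar.
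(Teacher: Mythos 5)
The paper does not prove this corollary; it is quoted verbatim from Nazarov \cite[Proposition 3.7]{Na20}, so there is no in-text proof to compare against. Evaluated on its own merits, your proposal contains a genuine gap in the ``key identity.'' Because $\widetilde T(u)=T(u)^{-1}$ is formed inside the superalgebra $\operatorname{End}V\otimes\mathrm{Y}(\mathfrak{gl}_{M|N})$, matrix multiplication carries the super-tensor sign
$(T^{(a)}T^{(b)})_{ij}=\sum_\ell(-1)^{(|i|+|\ell|)(|\ell|+|j|)}t_{i\ell}^{(a)}t_{\ell j}^{(b)}$, and the correct expansion reads
\begin{gather*}
\tilde t_{jj}^{(r)}\equiv -t_{jj}^{(r)}+\sum_{a+b=r,\;a,b\ge 1}\sum_{\ell}(-1)^{|j|+|\ell|}\,t_{j\ell}^{(a)}t_{\ell j}^{(b)}\pmod{\mathcal F_{r-3}},
\end{gather*}
whereas you omit the sign $(-1)^{|j|+|\ell|}$. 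This is not cosmetic: with the correct sign the degree-$(r-2)$ quadratic remainder for a \emph{single} fixed $j$ maps under the isomorphism of Proposition \ref{grU} to $\sum_\ell\bigl[E_{\ell j},E_{j\ell}\bigr]_s\,x^{r-2}=\sum_\ell\bigl(E_{\ell\ell}-(-1)^{|j|+|\ell|}E_{jj}\bigr)x^{r-2}$, whose $j$-dependent piece is exactly cancelled by the binomial contribution $-(r-1)(M-N)\,t_{jj}^{(r-1)}$; no symmetrization over $j$ is needed at all. With your signless identity, one instead produces the \emph{ordinary} commutator $E_{\ell j}E_{j\ell}-E_{j\ell}E_{\ell j}$, which for $|j|\ne|\ell|$ differs from the supercommutator by the noncentral term $\bigl((-1)^{|j|+|\ell|}-1\bigr)E_{j\ell}E_{\ell j}$.

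Your proposed rescue --- summing over $j$ with weight $(-1)^{|j|}$ and arguing that the noncentral pieces ``cancel in pairs under the $(j,\ell)\leftrightarrow(\ell,j)$ symmetry'' --- does not actually work. Writing $C_{kj}=E_{kj}E_{jk}-E_{jk}E_{kj}$ (which is antisymmetric in $(k,j)$), the symmetrized sum becomes $\sum_{j,k}(-1)^{|k|}C_{kj}=\tfrac12\sum_{j,k}\bigl((-1)^{|k|}-(-1)^{|j|}\bigr)C_{kj}$, and the surviving contributions are precisely those with $|j|\ne|k|$, for which $C_{kj}$ is not central in $\mathrm{U}(\mathfrak{gl}_{M|N}[x])$; already for $\mathfrak{gl}_{1|1}$ one gets a nonzero multiple of $E_{11}+E_{22}-2E_{21}E_{12}$. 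Moreover the symmetrization forces you to divide by $c=M-N$, which vanishes when $M=N$, and your suggested fix (treating $c$ formally) does not carry through a sign error. The correct route --- work with a single index $j$, use the signed identity, recognize the supercommutator $[E_{\ell j},E_{j\ell}]_s$, and observe the $j$-dependence cancels against the binomial term --- avoids both problems and immediately yields the image $(r-1)\sum_k E_{kk}x^{r-2}$, which under Proposition \ref{grU} is $(1-r)\sum_i(-1)^{|i|}\bar t_{ii}^{(r-1)}$.
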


The \textit{quantum Berezinian} was introduced by Nazarov in \cite{Na91} as the following power series with coefficients in the Yangian $\mathrm{Y}\left(\mathfrak{gl}_{M|N}\right)$
\begin{align*}
\mathfrak{B}_{M|N}(u)=&\sum_{\sigma\in \mathfrak{S}_M}\operatorname{sgn}\sigma\cdot t_{\sigma(1),1}(u+M-N-1)t_{\sigma(2),2}(u+M-N-2)
           \cdots t_{\sigma(M),M}(u-N) \\
&\times \sum_{\sigma\in \mathfrak{S}_N} \operatorname{sgn}\sigma \cdot \tilde{t}_{M+1,M+\sigma(1)}(u-N)\tilde{t}_{M+2,M+\sigma(2)}(u-N+1)
          \cdots \tilde{t}_{M+N,M+\sigma(N)}(u-1),
\end{align*}
where $\operatorname{sgn}\sigma$ denotes the signature of the permutation $\sigma$ of the symmetric group $\mathfrak{S}_m$, see \cite[Section 4]{Na20}. The coefficients of the quantum Berezinian generate the center of the Yangian, conjectured in \cite{Na91} and proved by Gow in \cite{Go05}.
The following proposition is a super-version of the quantum Liouville formula.
\begin{proposition}\cite[Theorem 4.1]{Na20}
We have the equality in $\mathrm{Y}(\mathfrak{gl}_{M|N})\left[\left[ u^{-1}\right]\right]$,
\begin{gather}\label{qLiou-gl}
z(u)=\frac{\mathfrak{B}_{M|N}(u+1)}{\mathfrak{B}_{M|N}(u)}.
\end{gather}
\end{proposition}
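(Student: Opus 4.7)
The plan is to prove the equivalent identity $z(u)\,\mathfrak{B}_{M|N}(u) = \mathfrak{B}_{M|N}(u+1)$ by recasting the quantum Berezinian in a supertrace/fusion form and then invoking the defining relation \eqref{centre:1} of $z(u)$ to realize the shift.

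First I would rewrite $\mathfrak{B}_{M|N}(u)$ as a full supertrace over $V^{\otimes(M+N)}$ by introducing the antisymmetrizer
$$A^{(M)} = \tfrac{1}{M!}\sum_{\sigma\in\mathfrak{S}_M}\mathrm{sgn}(\sigma)\,P_{\sigma}$$
acting on the first $M$ tensor positions and the super-symmetrizer $H^{(N)} = \tfrac{1}{N!}\sum_{\sigma\in\mathfrak{S}_N} P_{\sigma}$ acting on the last $N$ positions, so that
$$\mathfrak{B}_{M|N}(u)\;=\;\mathrm{Str}_{1,\ldots,M+N}\bigl(A^{(M)} H^{(N)}\, T_1(u+M-N-1)\cdots T_M(u-N)\,\tilde{T}_{M+1}(u-N)\cdots \tilde{T}_{M+N}(u-1)\bigr).$$
The point of this reformulation is that the antisymmetrizer/symmetrizer recovers the $\mathrm{sgn}\,\sigma$ (resp.\ the plain sum over $\mathfrak{S}_N$) after taking the partial supertraces and using $\mathrm{Str}(E_{ij})=(-1)^{|i|}\delta_{ij}$, and that both $A^{(M)}$ and $H^{(N)}$ enjoy strong absorption properties under the RTT relation (the super fusion procedure): at the special shifted arguments appearing in the product, one can commute $A^{(M)}$ and $H^{(N)}$ through the product of $T$'s and $\tilde{T}$'s up to an equality $A^{(M)}H^{(N)}(\cdots) = A^{(M)}H^{(N)}(\cdots)A^{(M)}H^{(N)}$, allowing free insertions of the (anti)symmetrizer.

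Next I would compare $\mathfrak{B}_{M|N}(u+1)$ with $z(u)\,\mathfrak{B}_{M|N}(u)$. Shifting $u\to u+1$ in the supertrace expression amounts to replacing the product by $T_1(u+M-N)\cdots T_M(u-N+1)\,\tilde{T}_{M+1}(u-N+1)\cdots \tilde{T}_{M+N}(u)$; in other words, the arguments of the $T$-block and the $\tilde{T}$-block each shift up by $1$. To bridge the two expressions I would introduce an auxiliary position $0$ and insert an extra pair $\tilde{T}_0(u)\,T_0(u+M-N)$; by \eqref{centre:1}, the partial supertrace $\mathrm{Str}_0(\tilde{T}_0(u) T_0(u+M-N))$ evaluates to a scalar multiple of $z(u)\cdot 1$, which is how $z(u)$ enters. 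Using the RTT relation repeatedly together with the (anti)symmetrizer absorption of Step 1, one commutes the $T_0$ factor past all of $T_1,\ldots,T_M$ and the $\tilde{T}_0$ factor past all of $\tilde{T}_{M+1},\ldots,\tilde{T}_{M+N}$, at which point the shifted arguments telescope and produce exactly $\mathfrak{B}_{M|N}(u+1)$ on one side and $z(u)\,\mathfrak{B}_{M|N}(u)$ on the other.

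The main obstacle is the super-sign bookkeeping in Step 2: moving $\tilde{T}$'s past $T$'s, verifying that the symmetrizer $H^{(N)}$ (on the odd positions) plays the role of an antisymmetrizer after applying $\mathrm{Str}$ (because of the $(-1)^{|i|}$), and making sure the fusion identities $A^{(M)} R_{ab}(k) = 0$ (for $|k|=1$ between even positions) and the analogous super-identity for $H^{(N)}$ hold at the precise shifts $u+M-N-1,\ldots,u-N$ and $u-N,\ldots,u-1$. Once these fusion/absorption identities are in place, the algebraic rearrangement leading from $z(u)\,\mathfrak{B}_{M|N}(u)$ to $\mathfrak{B}_{M|N}(u+1)$ is essentially a telescoping argument driven by \eqref{centre:1}--\eqref{centre:2}, and the identity \eqref{qLiou-gl} follows upon recalling that $\mathfrak{B}_{M|N}(u) \in 1 + u^{-1}\mathrm{Y}(\mathfrak{gl}_{M|N})[[u^{-1}]]$ is invertible.
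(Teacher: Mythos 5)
The paper does not prove this proposition: it is cited verbatim from \cite[Theorem~4.1]{Na20}, so there is no internal argument to compare your sketch against. On its own merits your strategy (recast the quantum Berezinian as a supertrace, then insert an auxiliary factor whose partial supertrace produces $z(u)$) has the right flavor, and the supertrace reformulation you need is in fact established later in this paper (Theorem~\ref{GH-QB} and the proposition immediately following it). However, your Step~1 is not the correct formula. The supertrace contracts \emph{row} indices, so the second block must be $T^{\ast}_{M+1}(w_{M+1})\cdots T^{\ast}_{M+N}(w_{M+N})$ with $T^{\ast}=\widetilde{T}^{\,t}$, not $\widetilde{T}$: over a noncommutative ring the row-permuted and column-permuted determinant sums of $\widetilde{T}$ are distinct, and replacing $T^{\ast}$ by $\widetilde{T}$ changes the element. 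You also must insert the projector $\Im=\mathcal{I}_1\cdots\mathcal{I}_M\mathcal{J}_{M+1}\cdots\mathcal{J}_{M+N}$ onto $\bigl(\mathbb{C}^{M|0}\bigr)^{\otimes M}\otimes\bigl(\mathbb{C}^{0|N}\bigr)^{\otimes N}$, because $G^{(M)}\otimes H^{(N)}$ is \emph{not} a rank-one projector on all of $V^{\otimes(M+N)}$ (only on that subspace), and without $\Im$ the supertrace collects spurious contributions. Compare with the paper's own statement
\begin{equation*}
\mathfrak{B}_{M|N}(u)=\operatorname{Str}_{1,\ldots,M+N}\,\Im\,A^{(M|N)}\,T_1(w_1)\cdots T_M(w_M)\,T^{\ast}_{M+1}(w_{M+1})\cdots T^{\ast}_{M+N}(w_{M+N}).
\end{equation*}

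For Step~2, the normalization is a genuine issue, not a mere bookkeeping nuisance. Expanding and using \eqref{centre:1} one finds
\begin{equation*}
\mathrm{Str}_0\bigl(\widetilde{T}_0(u)\,T_0(u+M-N)\bigr)=(M-N)\,z(u),
\end{equation*}
which degenerates to $0=0$ when $M=N$, so the naive insert-and-trace manoeuvre is vacuous in that case (and the Liouville formula is supposed to hold there as well). A complete argument therefore cannot simply take $\mathrm{Str}_0$ of the bare pair; it must push the auxiliary $T_0$ through the $T$-block via absorption into $G^{(M+1)}$ and push $\widetilde{T}_0$ through the $T^{\ast}$-block via absorption into $H^{(N+1)}$, tracking the $\Im$ projection throughout, before any trace is taken. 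Until the $M=N$ degeneracy and the precise fusion mechanics are resolved, the sketch has a real gap.
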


\vspace{1em}
Recall that the special linear Lie superalgebra $\mathfrak{sl}_{M|N}$ is a Lie sub-superalgebra of $\mathfrak{gl}_{M|N}$ consisting of all matrices with zero supertrace.

\begin{definition}\cite{Go07,Ts20}
    The special super Yangian $\mathrm{Y}\left(\mathfrak{sl}_{M|N}\right)$ is the sub-superalgebra of $\mathrm{Y}\left(\mathfrak{gl}_{M|N}\right)$ consisting of elements that are invariant under all automorphisms \eqref{auto:Ygl1}. Specifically, we have
    $$\mathrm{Y}(\mathfrak{sl}_{M|N}):=\left\{y\in\mathrm{Y}(\mathfrak{gl}_{M|N})|\ \mu_f(y)=y \text{ for all }f\right\}.$$
\end{definition}

The Lie superalgebra $\mathfrak{sl}_{N|N}$ for $M=N$ is not simple, as it has a one-dimensional ideal consisting of scalar matrices. Therefore, we only show the results of $\mathrm{Y}\left(\mathfrak{sl}_{M|N}\right)$ for $M\neq N$.

\begin{proposition}\cite[Proposition 3]{Go07}
    For $M\neq N$, the superalgebra $\mathrm{Y}\left(\mathfrak{gl}_{M|N}\right)$ has the following decomposition
    \begin{gather*}
        \mathrm{Y}\left(\mathfrak{gl}_{M|N}\right)=\mathcal{Z}\mathrm{Y}\left(\mathfrak{gl}_{M|N}\right)\otimes \mathrm{Y}\left(\mathfrak{sl}_{M|N}\right).
    \end{gather*}
\end{proposition}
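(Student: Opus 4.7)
The plan is to show that the multiplication map
$$m\colon \mathcal{Z}\mathrm{Y}(\mathfrak{gl}_{M|N})\otimes \mathrm{Y}(\mathfrak{sl}_{M|N})\longrightarrow \mathrm{Y}(\mathfrak{gl}_{M|N})$$
is a superalgebra isomorphism. Surjectivity will follow from a rescaling construction: I will exhibit a central series $g(u)\in 1+u^{-1}\mathcal{Z}\mathrm{Y}(\mathfrak{gl}_{M|N})[[u^{-1}]]$ such that the entries of $T'(u):=g(u)^{-1}T(u)$ lie in $\mathrm{Y}(\mathfrak{sl}_{M|N})$, so that $T(u)=g(u)T'(u)$ gives the decomposition at the level of generators. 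Injectivity will follow by passing to the Nazarov filtration and using the Lie-superalgebra splitting $\mathfrak{gl}_{M|N}=\mathfrak{z}\oplus\mathfrak{sl}_{M|N}$, which holds precisely when $M\neq N$.

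The construction of $g(u)$ uses the quantum Berezinian. A direct computation with \eqref{auto:Ygl1} shows
$$\mu_f\bigl(\mathfrak{B}_{M|N}(u)\bigr)=\prod_{k=0}^{M-N-1}f(u+k)\cdot \mathfrak{B}_{M|N}(u)$$
(with the obvious inversion when $M<N$). I would then solve the functional equation $\prod_{k=0}^{M-N-1}g(u+k)=\mathfrak{B}_{M|N}(u)$ for a unique series $g(u)\in 1+u^{-1}\mathcal{Z}\mathrm{Y}(\mathfrak{gl}_{M|N})[[u^{-1}]]$; comparing coefficients of $u^{-r}$ produces $(M-N)g^{(r)}+P_r\bigl(g^{(<r)}\bigr)=\mathfrak{B}^{(r)}$ for a polynomial $P_r$, and the invertibility of $M-N$ together with the centrality of each $\mathfrak{B}^{(r)}$ (via \eqref{qLiou-gl} and Proposition \ref{centre:gl}) lets one solve recursively inside the center. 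By uniqueness $\mu_f(g(u))=f(u)g(u)$, whence $\mu_f(T'(u))=T'(u)$ and every matrix coefficient ${t'}_{ij}^{(r)}$ lies in $\mathrm{Y}(\mathfrak{sl}_{M|N})$. The identity $t_{ij}^{(r)}=\sum_{s=0}^{r}g^{(s)}{t'}_{ij}^{(r-s)}$ then exhibits each generator of $\mathrm{Y}(\mathfrak{gl}_{M|N})$ as a product of central elements and $\mathrm{Y}(\mathfrak{sl}_{M|N})$-elements, yielding surjectivity of $m$.

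For injectivity, Proposition \ref{grU} identifies $\operatorname{gr}\mathrm{Y}(\mathfrak{gl}_{M|N})$ with $\mathrm{U}(\mathfrak{gl}_{M|N}[x])$. Since $M\neq N$, the scalar matrix $\sum_{i\in I}E_{ii}$ has non-zero supertrace $M-N$ and hence is not in $\mathfrak{sl}_{M|N}$, giving the Lie-superalgebra splitting $\mathfrak{gl}_{M|N}[x]=\mathfrak{z}[x]\oplus \mathfrak{sl}_{M|N}[x]$ and consequently
$$\mathrm{U}(\mathfrak{gl}_{M|N}[x])=\mathrm{U}(\mathfrak{z}[x])\otimes \mathrm{U}(\mathfrak{sl}_{M|N}[x]).$$
Corollary \ref{Na:Z:U} supplies $\operatorname{gr}\mathcal{Z}\mathrm{Y}(\mathfrak{gl}_{M|N})=\mathrm{U}(\mathfrak{z}[x])$, while a Nazarov-degree analysis of the recursion ${t'}_{ij}^{(r)}=t_{ij}^{(r)}-\sum_{s\geqslant 1}g^{(s)}{t'}_{ij}^{(r-s)}$, combined with the surjectivity just established, forces $\operatorname{gr}\mathrm{Y}(\mathfrak{sl}_{M|N})=\mathrm{U}(\mathfrak{sl}_{M|N}[x])$. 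The graded map $\operatorname{gr} m$ then coincides with the classical tensor-product decomposition and is bijective; injectivity of $m$ follows by the standard filtered-to-graded argument.

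The main technical obstacle will be confirming that $\operatorname{gr} {t'}_{ii}^{(r)}$ actually lies in $\mathfrak{sl}_{M|N}[x]$ rather than only in $\mathfrak{gl}_{M|N}[x]$. This requires careful tracking of the Nazarov-filtration degree of each $g^{(s)}$ and checking that the leading-order supertrace of the correction from $g^{(r)}$ exactly cancels the supertrace of $\operatorname{gr} t_{ii}^{(r)}$; this cancellation is engineered by the functional equation for $g(u)$, which is equivalent to $\mathfrak{B}_{M|N}(T'(u))=1$.
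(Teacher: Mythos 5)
Your proof is correct and takes essentially the same approach as the paper's proof of the analogous twisted decomposition \eqref{tw:decomposition} (the untwisted statement here is only cited from Go07): your $g(u)$ is exactly the paper's root series $\mathring{B}(u)$ obtained from the $(M-N)$-fold shifted factorization of $\mathfrak{B}_{M|N}(u)$. The supertrace cancellation you flag as the main obstacle does work out, since $\deg g^{(s)}=s-1$ in the Nazarov filtration with $\operatorname{gr}\,g^{(s)}=-\tfrac{1}{M-N}\sum_{i}E_{ii}\,x^{s-1}$, which exactly removes the trace part of $\operatorname{gr}\,t_{ii}^{(r)}$.
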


\begin{corollary}
    For $M\neq N$, the superalgebra $\mathrm{Y}\left(\mathfrak{sl}_{M|N}\right)$ is isomorphic to the quotient of $\mathrm{Y}\left(\mathfrak{gl}_{M|N}\right)$ by the equation $\mathfrak{B}_{M|N}(u)=1$.
\end{corollary}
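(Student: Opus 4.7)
The plan is to reduce the statement to an equality of two-sided ideals in $\mathrm{Y}(\mathfrak{gl}_{M|N})$ and then exploit the quantum Liouville formula \eqref{qLiou-gl}. By the preceding proposition, $\mathrm{Y}(\mathfrak{gl}_{M|N})\cong \mathcal{Z}\mathrm{Y}(\mathfrak{gl}_{M|N})\otimes \mathrm{Y}(\mathfrak{sl}_{M|N})$ for $M\neq N$, and by Proposition \ref{centre:gl} the augmentation ideal $\mathcal{Z}_+:=(z^{(2)},z^{(3)},\ldots)$ of $\mathcal{Z}\mathrm{Y}(\mathfrak{gl}_{M|N})$ is freely generated. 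Hence the quotient $\mathrm{Y}(\mathfrak{gl}_{M|N})/J_z$, where $J_z$ is the two-sided ideal generated by all coefficients of $z(u)-1$, is canonically isomorphic to $\mathrm{Y}(\mathfrak{sl}_{M|N})$. It therefore suffices to prove that $J_z=J_{\mathfrak{B}}$, where $J_{\mathfrak{B}}$ denotes the two-sided ideal generated by the coefficients of $\mathfrak{B}_{M|N}(u)-1$.

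The inclusion $J_z\subseteq J_{\mathfrak{B}}$ is immediate from \eqref{qLiou-gl}: modulo $J_{\mathfrak{B}}$ we have $\mathfrak{B}_{M|N}(u)\equiv 1$ and $\mathfrak{B}_{M|N}(u+1)\equiv 1$, so $z(u)\equiv 1$. For the reverse inclusion $J_{\mathfrak{B}}\subseteq J_z$, rewrite \eqref{qLiou-gl} as
\begin{equation*}
\mathfrak{B}_{M|N}(u+1)-\mathfrak{B}_{M|N}(u)=\bigl(z(u)-1\bigr)\mathfrak{B}_{M|N}(u).
\end{equation*}
Writing $\mathfrak{B}_{M|N}(u)=1+\sum_{r\geqslant 1}b_r u^{-r}$ and expanding $(u+1)^{-r}$ as a power series in $u^{-1}$, the coefficient of $u^{-n}$ on the left-hand side is a $\mathbb{Z}$-linear combination of $b_1,\ldots,b_{n-1}$ whose $b_{n-1}$-coefficient equals $-(n-1)$, while the right-hand side lies in $J_z$. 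An induction on $n\geqslant 2$ then forces $b_{n-1}\in J_z$, giving $J_{\mathfrak{B}}\subseteq J_z$.

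The only technical point is the inductive extraction of the generators $b_r$ from the recursion above: the leading coefficient $-(n-1)$ must be a unit (which it is over $\mathbb{C}$, since $n\geqslant 2$), and one has to verify that the remaining terms involve only $b_1,\ldots,b_{n-2}$, which are already known to lie in $J_z$ by the inductive hypothesis. This is a routine computation with binomial coefficients $\binom{-r}{k}$, so no real obstacle arises; the subtlety is purely bookkeeping. Once the equality $J_{\mathfrak{B}}=J_z$ is established, the corollary follows by composing the natural projection $\mathrm{Y}(\mathfrak{gl}_{M|N})\twoheadrightarrow \mathrm{Y}(\mathfrak{gl}_{M|N})/J_{\mathfrak{B}}$ with the isomorphism $\mathrm{Y}(\mathfrak{gl}_{M|N})/J_z\xrightarrow{\sim}\mathrm{Y}(\mathfrak{sl}_{M|N})$ provided by the tensor decomposition.
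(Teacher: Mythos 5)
Your proof is correct, and it takes a mildly but genuinely different route from what the paper seems to have in mind. The paper states the corollary immediately after the tensor decomposition $\mathrm{Y}(\mathfrak{gl}_{M|N})=\mathcal{Z}\mathrm{Y}(\mathfrak{gl}_{M|N})\otimes\mathrm{Y}(\mathfrak{sl}_{M|N})$ and earlier records Gow's theorem that the coefficients of $\mathfrak{B}_{M|N}(u)$ generate the center; combining these two facts, the quotient by $\mathfrak{B}_{M|N}(u)=1$ kills the polynomial factor $\mathcal{Z}$ and leaves $\mathrm{Y}(\mathfrak{sl}_{M|N})$, which is presumably the intended one-line argument.

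You instead reduce to proving that the two ideals $J_z$ and $J_{\mathfrak{B}}$ coincide, identifying $\mathrm{Y}(\mathfrak{gl}_{M|N})/J_z\cong\mathrm{Y}(\mathfrak{sl}_{M|N})$ from Proposition \ref{centre:gl} (where the $z^{(r)}$ are shown to freely generate the center), and deriving the ideal equality from the quantum Liouville formula \eqref{qLiou-gl} by an explicit coefficient recursion. The verification you flag is fine: writing $\mathfrak{B}_{M|N}(u)=1+\sum_{r\geqslant 1}b_ru^{-r}$ and using $(u+1)^{-r}-u^{-r}=\sum_{k\geqslant 1}\binom{-r}{k}u^{-r-k}$, the coefficient of $u^{-n}$ on the left side of $\mathfrak{B}_{M|N}(u+1)-\mathfrak{B}_{M|N}(u)=(z(u)-1)\mathfrak{B}_{M|N}(u)$ is $\sum_{r=1}^{n-1}\binom{-r}{n-r}b_r$, whose leading term is $-(n-1)b_{n-1}$, while the right side is manifestly in $J_z$ since $z^{(1)}=0$; the base case $n=2$ gives $b_1\in J_z$ and the induction closes. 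What your route buys is that it does not require the stronger input that the $b_r$ \emph{generate} the center (Gow's theorem) — only the quantum Liouville identity and the free generation by the $z^{(r)}$ — so it is somewhat more self-contained, at the cost of the bookkeeping you mention.
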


\begin{corollary}
    For $M\neq N$,  the superalgebra $\mathrm{Y}\left(\mathfrak{sl}_{M|N}\right)$ has a Hopf superalgebra structure inherited from $\mathrm{Y}\left(\mathfrak{gl}_{M|N}\right)$.
\end{corollary}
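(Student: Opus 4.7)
The plan is to reduce the claim to showing that the two-sided super-ideal $J\subset\mathrm{Y}(\mathfrak{gl}_{M|N})$ generated by the coefficients of $\mathfrak{B}_{M|N}(u)-1$ is a Hopf ideal. The immediately preceding corollary identifies $\mathrm{Y}(\mathfrak{sl}_{M|N})$ with $\mathrm{Y}(\mathfrak{gl}_{M|N})/J$, so once $J$ is shown to be a Hopf ideal the Hopf superalgebra structure on $\mathrm{Y}(\mathfrak{gl}_{M|N})$ automatically descends to $\mathrm{Y}(\mathfrak{sl}_{M|N})$.

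Verifying that $J$ is a Hopf ideal amounts to three compatibility conditions for the quantum Berezinian. First, $\varepsilon(\mathfrak{B}_{M|N}(u))=1$, which is immediate from $\varepsilon(T(u))=1$ (hence $\varepsilon(\widetilde{T}(u))=1$) applied termwise in the definition of $\mathfrak{B}_{M|N}(u)$. Second, $S(\mathfrak{B}_{M|N}(u))=\mathfrak{B}_{M|N}(u)^{-1}$, which yields $S(\mathfrak{B}_{M|N}(u)-1)=-\mathfrak{B}_{M|N}(u)^{-1}(\mathfrak{B}_{M|N}(u)-1)\in J$. Third, the group-like property $\Delta(\mathfrak{B}_{M|N}(u))=\mathfrak{B}_{M|N}(u)\otimes\mathfrak{B}_{M|N}(u)$, from which one gets $\Delta(\mathfrak{B}_{M|N}(u)-1)=(\mathfrak{B}_{M|N}(u)-1)\otimes\mathfrak{B}_{M|N}(u)+1\otimes(\mathfrak{B}_{M|N}(u)-1)\in J\otimes\mathrm{Y}(\mathfrak{gl}_{M|N})+\mathrm{Y}(\mathfrak{gl}_{M|N})\otimes J$. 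Since the antipode of any group-like element in a Hopf superalgebra is automatically its two-sided inverse, the second condition is in fact a consequence of the third.

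The main obstacle is the group-like property. My approach would proceed via the fusion procedure: the antisymmetrizer $A_M$ acting in $\operatorname{End}V^{\otimes M}$ applied to the ordered product $T_1(u+M-N-1)T_2(u+M-N-2)\cdots T_M(u-N)$ encodes the first symmetric-group sum in the definition of $\mathfrak{B}_{M|N}(u)$, while an analogous antisymmetrizer $A_N$ applied to a corresponding product in $\widetilde{T}$ encodes the second sum. Since $\Delta(T(v))=T(v)\otimes T(v)$ in the matrix-product sense, iterating yields $\Delta(T_1(u_1)\cdots T_M(u_M))=(T_1(u_1)\cdots T_M(u_M))\otimes(T_1(u_1)\cdots T_M(u_M))$, and the same formula holds for $\widetilde{T}$ after inversion. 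Taking partial supertraces against $A_M$ and $A_N$, and invoking the centrality of $\mathfrak{B}_{M|N}(u)$ together with the mutual commutation of the two antisymmetrized blocks, then produces the required group-likeness.

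Alternatively, one could shortcut this through the quantum Liouville formula $z(u)=\mathfrak{B}_{M|N}(u+1)/\mathfrak{B}_{M|N}(u)$: applying $\Delta$ to \eqref{centre:1} and unwinding the matrix coproduct yields $\Delta(z(u))=z(u)\otimes z(u)$. Since the coefficients $z^{(r)}$ for $r\geqslant 2$ freely generate $\mathcal{Z}\mathrm{Y}(\mathfrak{gl}_{M|N})$ and $\varepsilon(\mathfrak{B}_{M|N}(u))=1$ pins down the constant-in-$u$ ambiguity in the Liouville equation, the group-likeness of $\mathfrak{B}_{M|N}(u)$ follows. With all three conditions verified, $J$ is a Hopf ideal, completing the proof.
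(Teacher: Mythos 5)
Your overall strategy---identify $\mathrm{Y}(\mathfrak{sl}_{M|N})$ with the quotient of $\mathrm{Y}(\mathfrak{gl}_{M|N})$ by the ideal $J$ generated by the coefficients of $\mathfrak{B}_{M|N}(u)-1$, and show that $J$ is a Hopf ideal by establishing that $\mathfrak{B}_{M|N}(u)$ is group-like---is a valid route, and in the end comes down to the same classical fact. But neither of your two sub-arguments for group-likeness is actually carried out, and the Liouville ``shortcut'' as stated has two soft spots. First, applying $\Delta$ to \eqref{centre:1} does not ``unwind'' into $z(u)\otimes z(u)$ in one line: the left side $\sum_k \tilde{t}_{kj}(u)\,t_{ik}(u+M-N)$ is not the $(i,j)$-entry of a matrix product in the usual order (note the $\tilde t$ sits on the left while the summed index $k$ is in the positions of a $T\widetilde T$ product), so one really does have to run the matrix-coproduct computation carefully with $\Delta(\widetilde T)=\widetilde T^{[2]}\widetilde T^{[1]}$ and the centrality of $z(u)$ in $\mathrm{Y}\otimes\mathrm{Y}$; this is a genuine step, not bookkeeping. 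Second, the assertion that ``$\varepsilon(\mathfrak{B}_{M|N}(u))=1$ pins down the constant ambiguity in the Liouville equation'' confuses two different things: the correct resolution of $g(u+1)=g(u)$ for $g(u)=\Delta(\mathfrak{B})\cdot(\mathfrak{B}\otimes\mathfrak{B})^{-1}$ is to observe that $g\in 1+u^{-1}(\cdot)[[u^{-1}]]$ and compare leading coefficients, which forces $g=1$; the counit axiom plays no role in that deduction.

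There is also a noticeably shorter argument that avoids group-likeness entirely, and is probably the one implicit in the cited reference \cite{Go07}: view $\mathrm{Y}(\mathfrak{sl}_{M|N})$ as the sub-superalgebra of $\mu_f$-invariants (its definition in the paper) and show it is closed under the Hopf structure maps. One checks directly on $T(u)$ that
$\Delta\circ\mu_f=(\mu_f\otimes\mathrm{id})\circ\Delta=(\mathrm{id}\otimes\mu_f)\circ\Delta$ and $S\circ\mu_f=\mu_{1/f}\circ S$. Hence if $\mu_f(y)=y$ for all $f$ then $\Delta(y)$ is fixed by every $\mu_f\otimes\mathrm{id}$ and every $\mathrm{id}\otimes\mu_f$; combined with the tensor-factor decomposition $\mathrm{Y}(\mathfrak{gl}_{M|N})=\mathcal{Z}\mathrm{Y}(\mathfrak{gl}_{M|N})\otimes\mathrm{Y}(\mathfrak{sl}_{M|N})$ for $M\neq N$, this shows the fixed-point set of $\mu_f\otimes\mathrm{id}$ is $\mathrm{Y}(\mathfrak{sl}_{M|N})\otimes\mathrm{Y}(\mathfrak{gl}_{M|N})$, and intersecting with the fixed-point set of $\mathrm{id}\otimes\mu_f$ gives $\Delta(y)\in\mathrm{Y}(\mathfrak{sl}_{M|N})\otimes\mathrm{Y}(\mathfrak{sl}_{M|N})$. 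Similarly $S(y)\in\mathrm{Y}(\mathfrak{sl}_{M|N})$, and $\varepsilon$ restricts trivially. This yields the Hopf sub-superalgebra structure without any computation on the Berezinian, and the isomorphism with the quotient then transports that structure over. If you prefer to keep the quotient-by-Hopf-ideal framing, you should either supply the $\Delta(z(u))=z(u)\otimes z(u)$ calculation in detail or fall back on this invariance argument.
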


\section{Twisted super Yangian}\label{se:twistedsuperyangian}
In this section, we establish an equivalence between the presentation of the twisted super Yangian $\mathrm{Y}^{tw}\left(\mathfrak{osp}_{M|2n}\right)$ obtained by Molev in \cite[Section 2.16, Example 16]{Mo07} and the presentation introduced by Briot and Ragoucy in 2003 \cite[Theorem 3.2]{BR03} subject to the restriction of the special matrix $\mathcal{G}$.
Given the parity function $|\cdot|$ on the set $I=\{1,\ldots,M+N\}$ such that
\begin{equation*}
|i|=\begin{cases}
0,  &\text{if}\ i\in I_+, \\
1,  &\text{if}\ i\in I_-,
\end{cases}
\end{equation*}
where $I_+=\{1,\ldots,M\}$ and  $I_-=\{M+1,\ldots,M+N\}$.
For convenience, from now on we always put $N=2n$.

\subsection{Twisted super Yangian $\mathrm{Y}^{tw}\left(\mathfrak{osp}_{M|N}\right)$}

Set $J=\sum_{i\in I}(-1)^{|i|}E_{ii}$.
Let
\begin{gather*}
\mathcal{G}=\left(g_{ij}\right)_{i,j\in I}=\begin{pmatrix}
\mathcal{G}_{(1)} & 0                      \\
0                   & \mathcal{G}_{(2)}
\end{pmatrix}
\end{gather*}
be a non-degenerate $\mathbb{Z}_2$-graded matrix such that $\mathcal{G}_{(1)}$ is the $(M\times M)$-symmetric matrix and $\mathcal{G}_{(2)}$ is the $(N\times N)$-antisymmetric matrix. Clearly,
$\mathcal{G}^t=J\mathcal{G}$. We call a matrix that has this form a (nondegenerate) supersymmetric block matrix.

Consider the formal power series corresponding to $\mathcal{G}$:
\begin{gather*}
s_{ij}(u)=g_{ij}+s_{ij}^{(1)}u^{-1}+s_{ij}^{(2)}u^{-2}+\cdots
\end{gather*}
and set $S(u)=\sum_{i,j\in I}E_{ij}\otimes s_{ij}(u)$.
\begin{definition}
The twisted super Yangian $\mathrm{Y}^{tw}\left(\mathfrak{osp}_{M|N}\right)$ is an associative superalgebra generated by elements $s_{ij}^{(r)}$ for $i,j\in I$ and $r\in\mathbb{N}$, with $\mathbb{Z}_2$-grading $|s_{ij}^{(r)}|=|i|+|j|$ for all $r\geqslant 1$, and subject to the following relations
\begin{align}\label{Ytw1}
R_{12}(u-v)S_1(u)R_{12}^{t_1}(-u-v)S_2(v)&=S_2(v)R_{12}^{t_2}(-u-v)S_1(u)R_{12}(u-v), \\ \label{Ytw2}
J\,S^t(-u)&=S(u)+\frac{S(u)-S(-u)}{2u}.
\end{align}
\end{definition}

\begin{remark}
  For $M=0$ or $N=0$, the twisted super Yangian $\mathrm{Y}^{tw}\left(\mathfrak{osp}_{M|N}\right)$ can specialize to the symplectic twisted Yangian $\mathrm{Y}^{tw}\left(\mathfrak{sp}_{N}\right)$ or the orthogonal twisted Yangian $\mathrm{Y}^{tw}\left(\mathfrak{o}_M\right)$ \cite[Chapter 2]{Mo07}, respectively.
\end{remark}

\begin{proposition}\label{Ytw:eqv}
    Denote the twisted Yangian associated with $\mathcal{G}$ by $\mathrm{Y}_{\mathcal{G}}^{tw}\left(\mathfrak{osp}_{M|N}\right)$. Then the superalgebras $\mathrm{Y}_{\mathcal{G}}^{tw}\left(\mathfrak{osp}_{M|N}\right)$ are pairwise isomorphic. In other words, if $\mathcal{G}'=B\mathcal{G}B^t$ for a nonsingular matrix $B$ with zero in the upper-right $(M\times N)$-block and lower-left $(N\times M)$-block, there exists an isomorphism of superalgebras $\mathrm{Y}_{\mathcal{G}'}^{tw}\left(\mathfrak{osp}_{M|N}\right)$ and $\mathrm{Y}_{\mathcal{G}}^{tw}\left(\mathfrak{osp}_{M|N}\right)$ such that
    \begin{gather}\label{Ytw:eqv:G}
        S(u)\mapsto BS(u)B^t.
    \end{gather}
\end{proposition}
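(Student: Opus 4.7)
The plan is to show that the prescription $\phi(S(u))=B\,S(u)\,B^t$ extends to a superalgebra homomorphism from $\mathrm{Y}_{\mathcal{G}}^{tw}\big(\mathfrak{osp}_{M|N}\big)$ to $\mathrm{Y}_{\mathcal{G}'}^{tw}\big(\mathfrak{osp}_{M|N}\big)$; bijectivity will follow from the analogous map built from $B^{-1}$, since $B^{-1}\mathcal{G}'(B^{-1})^{t}=\mathcal{G}$. The constant term of $B\,S(u)\,B^t$ equals $B\mathcal{G}B^t=\mathcal{G}'$, matching the initial data on the target side, so it suffices to verify that the defining relations \eqref{Ytw1} and \eqref{Ytw2} are preserved. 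For the easier relation \eqref{Ytw2}, three consequences of $B$ being block-diagonal should be recorded: $B$ is even, both $B$ and $B^t$ commute with $J$, and $B^{tt}=B$. Combining these with the identity $(B\,S(-u)\,B^t)^t=B\,S(-u)^{t}B^t$ reduces \eqref{Ytw2} for $\phi(S)$ to the same relation for $S$.

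The bulk of the argument is relation \eqref{Ytw1}. Two commutation identities in $\mathrm{End}V\otimes\mathrm{End}V$ should be established first. Identity (i): $R_{12}(u-v)$ commutes with $B_1B_2$ and with $B_1^tB_2^t$, which follows from $P(B\otimes B)=(B\otimes B)P$ for an even matrix $B$. Identity (ii), which is the heart of the argument: $(B^{-1})^{t}_1 B_2$ commutes with $R_{12}^{t_1}(u)$, and symmetrically $B_1(B^{-1})^{t}_2$ commutes with $R_{12}^{t_2}(u)$. Identity (ii) can be shown by direct computation using the explicit formula $P^{t_1}(e_i\otimes e_j)=\delta_{ij}\sum_k(-1)^{|k|}e_k\otimes e_k$: both compositions $(B^{-1})^t_1 B_2\cdot P^{t_1}$ and $P^{t_1}\cdot (B^{-1})^t_1 B_2$, when evaluated on $e_i\otimes e_j$, collapse to $\delta_{ij}\sum_k(-1)^{|k|}e_k\otimes e_k$; the collapse on each side hinges on the identity $BJB^{-1}=J$, which encodes exactly the block-diagonality of $B$.

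Granted (i) and (ii), the verification of \eqref{Ytw1} for $\phi(S)$ proceeds as follows. On the left-hand side, one uses (ii) to slide $B_1^t$ past $R_{12}^{t_1}(-u-v)$, producing an intermediate $B_2$-factor that can be moved past $S_1(u)$ (trivial, different tensor slots); likewise $B_1^t$ is moved past $S_2(v)$. Identity (i) is then used to extract $B_1B_2$ to the leftmost and $B_1^tB_2^t$ to the rightmost, yielding $B_1B_2\cdot\big(\text{LHS of original \eqref{Ytw1}}\big)\cdot B_1^tB_2^t$. Performing the analogous manipulation on the right-hand side using the $t_2$-version of (ii) gives $B_1B_2\cdot\big(\text{RHS of original \eqref{Ytw1}}\big)\cdot B_1^tB_2^t$, and the original relation \eqref{Ytw1} completes the verification. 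The principal obstacle is identity (ii), where all the technical subtlety of the $\mathbb{Z}_2$-graded transposition is concentrated; everything else is bookkeeping about which factors commute in which tensor slot.
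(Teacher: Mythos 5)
Your proposal is correct and follows essentially the same overall strategy as the paper: show that the generating-matrix substitution respects both defining relations and conclude bijectivity from the inverse substitution via $B^{-1}$. The one genuine difference lies in how the crucial intermediate identity is established. You propose to verify that $(B^{-1})^{t}_1 B_2$ commutes with $R_{12}^{t_1}$ by an explicit entry-wise computation on $V\otimes V$ using the rank-one formula for $P^{t_1}$, correctly tracing the collapse to $BJ=JB$. The paper instead applies the partial transposition $t_1$ to the elementary commutation $R_{12}(-u-v)B_1B_2 = B_2B_1R_{12}(-u-v)$ (valid for any even $B$, since $R$ is a combination of $1$ and $P$), immediately yielding the equivalent identity $B_1^t R_{12}^{t_1}(-u-v)B_2 = B_2 R_{12}^{t_1}(-u-v)B_1^t$. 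The transposition argument is shorter and sidesteps the super-sign bookkeeping your direct computation would require; otherwise the two verifications of \eqref{Ytw1} and of the symmetry relation \eqref{Ytw2} match the paper's in structure and content.
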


\begin{proof}
    Applying the partial transposition $t_1$ to both sides of
    \begin{gather*}
        R_{12}(-u-v)B_1B_2=B_2B_1R_{12}(-u-v),
    \end{gather*}
    one gets
    \begin{gather*}
        B_1^tR^{t_1}_{12}(-u-v)B_2=B_2R^{t_1}_{12}(-u-v)B_1^t.
    \end{gather*}
    It follows that
    \begin{align*}
        R_{12}(u-v)B_1S_1(u)B_1^tR_{12}^{t_1}(-u-v)B_2S_2(v)B_2^t
        &=B_2B_1R_{12}(u-v)S_1(u)R_{12}^{t_1}(-u-v)S_2(v)B_1^tB_2^t \\
        &=B_2B_1S_2(v)R_{12}^{t_2}(-u-v)S_1(u)R_{12}(u-v)B_1^tB_2^t \\
        &=B_2S_2(v)B_2^tR_{12}^{t_2}(-u-v)B_1S_1(v)B_1^tR_{12}(u-v).
    \end{align*}
    Moreover, we also have
    \begin{gather*}
        J\left(BS(u)B^t\right)^t=BJS^t(u)B^t,
    \end{gather*}
    which preserves \eqref{Ytw2}. Since $B$ is invertible, the homomorphism \eqref{Ytw:eqv:G} is bijective, completing the proof.
    
\end{proof}

\begin{proposition}\label{prop:embed}
The assignment
\begin{gather}\label{embedding}
\jmath:\ S(u)\mapsto T(u)\mathcal{G}T^t(-u)
\end{gather}
defines an embedding of superalgebras $\mathrm{Y}^{tw}\left(\mathfrak{osp}_{M|N}\right)$ and $\mathrm{Y}\left(\mathfrak{gl}_{M|N}\right)$.
\end{proposition}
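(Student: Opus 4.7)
The plan is to show $\jmath$ is both a well-defined algebra homomorphism and injective. For the former, I need to verify that $\mathsf{S}(u) := T(u)\mathcal{G}T^t(-u)$, viewed in $\operatorname{End}V \otimes \mathrm{Y}(\mathfrak{gl}_{M|N})[[u^{-1}]]$, satisfies both \eqref{Ytw1} and \eqref{Ytw2}. To verify \eqref{Ytw1}, I will first derive auxiliary ``transposed RTT'' intertwining relations from \eqref{Y1}. Applying the partial transpositions $t_a$ and reparametrising yields relations in which $R^{t_a}_{12}(u+v)$ intertwines $T^{t_a}_a(-u)$ with $T_b(v)$ for $b\ne a$; combined with the identity $R^{t_1}_{12}(u) = R^{t_2}_{12}(u)$ and the original RTT, and using that the scalar matrices $\mathcal{G}_a$ commute with every $R$-matrix on the other strand, these allow one to migrate each of $R_{12}(u-v)$, $R^{t_1}_{12}(-u-v)$, $R^{t_2}_{12}(-u-v)$ rightward past successive $T$- and $T^t$-factors in the expansion of the left-hand side of \eqref{Ytw1}. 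After a finite chain of such substitutions the left-hand side is transformed into the right-hand side. This is the standard ``reflection equation from RTT'' argument adapted to the super setting.

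To verify \eqref{Ytw2}, the key ingredients are $\mathcal{G}^t = J\mathcal{G}$, the double-graded-transpose identity $X^{tt} = JXJ$, the unitarity $R_{12}(u)R_{12}(-u) = (1-u^{-2})\mathrm{id}$, and the exchange relation of $T_1(u)$ with $T_2(-u)$ obtained from \eqref{Y1} at $v = -u$. Since the matrix entries of $T(u)$ are not super-commutative, the naive identity $(XY)^t = Y^t X^t$ fails; the discrepancy is controlled by the RTT commutator at the ``critical'' value $u - v = 2u$, and a direct computation of $J\mathsf{S}^t(-u)$ using the three ingredients above produces exactly the stated correction $\tfrac{\mathsf{S}(u) - \mathsf{S}(-u)}{2u}$.

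For injectivity, I would pass to the associated graded algebra. Under the filtration introduced before Proposition \ref{grU}, the leading symbol of $\jmath(s^{(r)}_{ij})$ in $\operatorname{gr}\mathrm{Y}(\mathfrak{gl}_{M|N}) \cong \mathrm{U}(\mathfrak{gl}_{M|N}[x])$ is a linear combination of monomials $E_{kl}\,x^{r-1}$ with coefficients governed by $\mathcal{G}$, spanning precisely the fixed subalgebra of $\mathfrak{gl}_{M|N}[x]$ under the involution $\theta$ attached to $\mathcal{G}$. These symbols are linearly independent in $\mathrm{U}(\mathfrak{gl}_{M|N}[x])$ by PBW, hence $\jmath$ is injective. (This is in the spirit of Section~\ref{se:Invo-graded}, which develops exactly this graded picture.)

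The main obstacle is extracting the precise correction $\tfrac{\mathsf{S}(u)-\mathsf{S}(-u)}{2u}$ in \eqref{Ytw2}: tracking the interaction between the parity-sensitive graded transpose and the non-super-commutativity of $\mathrm{Y}(\mathfrak{gl}_{M|N})$ at the critical specialization $v=-u$ requires delicate sign accounting, and it is here that the choice of the normalising constant in the rational $R$-matrix is essential.
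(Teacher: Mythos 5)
Your overall strategy matches the paper's: verify the quaternary relation by a chain of RTT migrations, verify the symmetry relation by a direct computation using $\mathcal{G}^t=J\mathcal{G}$ and the RTT commutator at $v=-u$, and establish injectivity by passing to a graded algebra. There are, however, two genuine problems with the execution.

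\textbf{The identity $R_{12}^{t_1}(u)=R_{12}^{t_2}(u)$ is false for the graded transposition when $M,N>0$.} Computing directly, $P^{t_1}=\sum_{i,j}(-1)^{|i||j|+|i|+|j|}E_{ji}\otimes E_{ji}$ while $P^{t_2}=\sum_{i,j}(-1)^{|i||j|}E_{ji}\otimes E_{ji}$; the summands differ by the sign $(-1)^{|i|+|j|}$, which is $-1$ whenever $|i|\neq |j|$. (Already for $M=N=1$ these operators are not equal.) The correct replacements, which the paper's proof uses, are $PP^{t_1}=J_1P^{t_1}=P^{t_2}P$ together with $P\mathcal{G}_1\mathcal{G}_2=\mathcal{G}_2\mathcal{G}_1P$; these combine to give
$R_{12}(u-v)\,\mathcal{G}_1\,R_{12}^{t_1}(-u-v)\,\mathcal{G}_2=\mathcal{G}_2\,R_{12}^{t_2}(-u-v)\,\mathcal{G}_1\,R_{12}(u-v)$.
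Your migration argument needs this $\mathcal{G}$-twisted intertwining relation in place of the naive one, because the super-$R$-matrix is not $t_1$--$t_2$ symmetric. Once that substitution is made, the rest of the quaternary-relation check goes through as you describe, and your treatment of the symmetry relation is in line with the paper's direct computation.

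\textbf{Your injectivity argument uses the wrong filtration and, as stated, is incomplete.} You invoke the filtration $\deg s_{ij}^{(r)}=r-1$ from Proposition~\ref{grU}, under which $\operatorname{gr}\mathrm{Y}(\mathfrak{gl}_{M|N})\cong \mathrm{U}(\mathfrak{gl}_{M|N}[x])$. But linear independence in $\mathrm{U}(\mathfrak{gl}_{M|N}[x])$ of the symbols of the \emph{generators} does not yield injectivity of $\jmath$ on the whole algebra; you would additionally need a PBW-type theorem saying that $\operatorname{gr}\mathrm{Y}^{tw}(\mathfrak{osp}_{M|N})$ is no larger than $\mathrm{U}(\mathfrak{gl}_{M|N}[x]^{\theta})$. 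That statement is exactly Proposition~\ref{special} of the paper, whose proof \emph{already uses} that $\jmath$ is an embedding, so appealing to that picture here would be circular. The paper instead passes to the first filtration $\deg s_{ij}^{(r)}=r$, for which $\operatorname{gr}_0\mathrm{Y}(\mathfrak{gl}_{M|N})$ is a free supercommutative polynomial superalgebra on the generators; there the injectivity of $\bar\jmath$ reduces to an algebraic-independence check carried out as in \cite[Theorem 2.4.3]{Mo07}. Switching to this filtration (or supplying an independent PBW argument for the second one) is what is missing from your sketch.
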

\begin{proof}
Since
\begin{gather*}
PP^{t_1}=J_1P^{t_1}=P^{t_2}P,\quad P\mathcal{G}_1\mathcal{G}_2=\mathcal{G}_2\mathcal{G}_1P,
\end{gather*}
we have
\begin{gather*}
    \mathcal{G}_1P^{t_1}\mathcal{G}_2=\mathcal{G}_2P^{t_2}\mathcal{G}_1,\quad P\mathcal{G}_1P^{t_1}\mathcal{G}_2=\mathcal{G}_2P^{t_2}\mathcal{G}_1P.
\end{gather*}
It implies the equation
\begin{gather*}
R_{12}(u-v)\mathcal{G}_1R_{12}^{t_1}(-u-v)\mathcal{G}_2
     =\mathcal{G}_2R_{12}^{t_2}(-u-v)\mathcal{G}_1R_{12}(u-v)
\end{gather*}
holds.
Consequently, we have
\begin{align*}
&\quad R_{12}(u-v)T_1(u)\mathcal{G}_1 T_1^t(-u)R_{12}(-u-v)^{t_1}T_2(v)\mathcal{G}_2T_2^t(-v) \\
&=R_{12}(u-v)T_1(u)\mathcal{G}_1 T_2(v) R_{12}^{t_1}(-u-v)T_1^t(-u) \mathcal{G}_2 T_2^t(-v) \\
&=T_2(v)T_1(u)R_{12}(u-v)\mathcal{G}_1 R_{12}^{t_1}(-u-v)\mathcal{G}_2T^t_1(-u)T^t_2(-v) \\
&=T_2(v)T_1(u)\mathcal{G}_2 R_{12}^{t_2}(-u-v)\mathcal{G}_1R_{12}(u-v)T_1^t(-u)T_2^t(-v) \\
&=T_2(v)\mathcal{G}_2 T_1(u) R_{12}^{t_2}(-u-v)\mathcal{G}_1T_2^t(-v)T_1^t(-u)R_{12}(u-v) \\
&=T_2(v)\mathcal{G}_2  T_2^t(-v) R_{12}^{t_2}(-u-v)T_1(u)\mathcal{G}_1T_1^t(-u)R_{12}(u-v).
\end{align*}

Furthermore, the $(i,j)$-position of matrix $X(u):=T(u)\mathcal{G}T^t(-u)$ is
\begin{gather*}
\sum\limits_{a,b\in I}(-1)^{|i||j|+|i||a|+|j|+|a|}g_{ab}t_{ia}(u)t_{jb}(-u).
\end{gather*}
Then
\begin{align*}
J X^t(-u)-X(u)
        &=\sum_{i,j,a,b}\left( (-1)^{|j||a|+|j|+|a|}g_{ab}t_{ja}(-u)t_{ib}(u)
                  -(-1)^{|i||j|+|i||a|+|j|+|a|}g_{ba}t_{ib}(u)t_{ja}(-u)\right) \\
        &=\sum_{i,j,a,b}(-1)^{|j||a|+|j|+|a|}g_{ab}\,[t_{ja}(-u),\,t_{ib}(u)]  \\
        &=\frac{1}{2u}\sum_{i,j,a,b}(-1)^{|i||j|+|i||a|+|j|+|a|}g_{ab}\left( t_{ia}(u)t_{jb}(-u)-t_{ia}(u)t_{jb}(-u)\right) \\
        &=\frac{1}{2u}\left(X(u)-X(-u)\right),
\end{align*}
owing to $g_{ab}=(-1)^{|a|}g_{ba}$ and $g_{ab}=0$ for $|a|\neq |b|$.

Next, similar to the super Yangian $\mathrm{Y}\big(\mathfrak{gl}_{M|N}\big)$, we define the first ascending $\mathbb{Z}$-filtration of the twisted super Yangian $\mathrm{Y}^{tw}\left(\mathfrak{osp}_{M|N}\right)$ such that $\deg s_{ij}^{(r)}=r$ with the graded algebra $\operatorname{gr}_0\mathrm{Y}^{tw}\left(\mathfrak{osp}_{M|N}\right)$. We will also employ the second $\mathbb{Z}$-filtration in Section \ref{se:Invo-graded}. The homomorphism \eqref{embedding} preserves the filtration, hence, it induces a superalgebraic homomorphism
$$\bar{\jmath}:\ \operatorname{gr}_0\mathrm{Y}^{tw}\left(\mathfrak{osp}_{M|N}\right)\rightarrow \operatorname{gr}_0\mathrm{Y}\left(\mathfrak{gl}_{M|N}\right).$$
By the same arguments as \cite[Theorem 2.4.3]{Mo07}, we deduce that $\bar{\jmath}$ is injective. Thus, $\jmath$ is an embedding.

\end{proof}

\begin{corollary}\label{co:coideal}
    The superalgebra $\mathrm{Y}^{tw}\left(\mathfrak{osp}_{M|N}\right)$ is a left coideal of $\mathrm{Y}\left(\mathfrak{gl}_{M|N}\right)$, that is,
    \begin{gather*}
        \Delta\left( \mathrm{Y}^{tw}\left(\mathfrak{osp}_{M|N}\right) \right)\subset \mathrm{Y}\left(\mathfrak{gl}_{M|N}\right) \otimes \mathrm{Y}^{tw}\left(\mathfrak{osp}_{M|N}\right).
    \end{gather*}
    The generators of $\mathrm{Y}^{tw}\left(\mathfrak{osp}_{M|N}\right)$ under the comultiplication have the form
    \begin{gather*}
        \Delta(s_{ij}(u))=\sum\limits_{a,b\in I}(-1)^{|i||j|+|a||j|+|i||a|+|j||b|+|j|+|a|}t_{ia}(u)t_{jb}(-u)\otimes s_{ab}(u).
    \end{gather*}
\end{corollary}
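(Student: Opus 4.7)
The plan is to derive the generator formula for $\Delta(s_{ij}(u))$ directly from the embedding $\jmath$ of Proposition~\ref{prop:embed}; the coideal inclusion will then follow automatically because $\Delta$ is a superalgebra homomorphism and the $s_{ij}^{(r)}$ generate $\mathrm{Y}^{tw}(\mathfrak{osp}_{M|N})$. The key input is the explicit componentwise form of the embedding, recorded in the proof of Proposition~\ref{prop:embed}:
\[
\jmath(s_{ij}(u))=\sum_{a',b'\in I}(-1)^{|i||j|+|i||a'|+|j|+|a'|}g_{a'b'}\,t_{ia'}(u)t_{jb'}(-u).
\]
Apply $\Delta$ to each factor using $\Delta(t_{ij}(u))=\sum_k(-1)^{(|i|+|k|)(|k|+|j|)}t_{ik}(u)\otimes t_{kj}(u)$, and collapse the product of two elementary tensors via the super tensor rule $(x_1\otimes y_1)(x_2\otimes y_2)=(-1)^{|x_2||y_1|}x_1x_2\otimes y_1y_2$.

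This produces a quadruple sum
\[
\Delta(\jmath(s_{ij}(u)))=\sum_{a,b,a',b'}(-1)^{\alpha}g_{a'b'}\,t_{ia}(u)t_{jb}(-u)\otimes t_{aa'}(u)t_{bb'}(-u),
\]
where $\alpha=|i||j|+|i||a'|+|j|+|a'|+(|i|+|a|)(|a|+|a'|)+(|j|+|b|)(|b|+|b'|)+(|j|+|b|)(|a|+|a'|)$. The second step is to regroup the inner sum over $(a',b')$ and recognize it as $\jmath(s_{ab}(u))$ sitting in the second tensor factor, thereby rewriting the right-hand side as
\[
\sum_{a,b\in I}(-1)^{|i||j|+|a||j|+|i||a|+|j||b|+|j|+|a|}\,t_{ia}(u)t_{jb}(-u)\otimes \jmath(s_{ab}(u)),
\]
which, after identifying $\mathrm{Y}^{tw}(\mathfrak{osp}_{M|N})$ with its image under $\jmath$, is precisely the claimed formula.

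The main obstacle, and the only nontrivial content of the argument, is verifying that $\alpha$ matches the sum of the outer sign $|i||j|+|a||j|+|i||a|+|j||b|+|j|+|a|$ and the inner sign $|a||b|+|a||a'|+|b|+|a'|$ coming from $\jmath(s_{ab}(u))$, modulo $2$. I expect this to be routine but tedious: the two simplifications that make it work are $|c||c|=|c|$ for parities $|c|\in\{\bar 0,\bar 1\}$, and the observation that $g_{a'b'}=0$ unless $|a'|=|b'|$, so one may substitute $|b'|=|a'|$ throughout the identity. After these substitutions several pairs of terms cancel in pairs modulo~$2$, and the residual expressions on each side coincide term-by-term. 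Once the generator formula is in hand, the left coideal property $\Delta(\mathrm{Y}^{tw}(\mathfrak{osp}_{M|N}))\subset\mathrm{Y}(\mathfrak{gl}_{M|N})\otimes \mathrm{Y}^{tw}(\mathfrak{osp}_{M|N})$ is immediate, because both sides of the formula display the second tensor factor as an element of $\jmath(\mathrm{Y}^{tw}(\mathfrak{osp}_{M|N}))$ and $\Delta$ respects the superalgebra structure generated by the $s_{ij}^{(r)}$.
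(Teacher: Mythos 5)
Your proposal is correct. The paper states Corollary~\ref{co:coideal} without a written proof, leaving it as an immediate consequence of Proposition~\ref{prop:embed}, and the argument you supply is exactly the standard derivation that the paper implicitly relies on: apply $\Delta$ to $\jmath(s_{ij}(u))=\sum_{a',b'}(-1)^{|i||j|+|i||a'|+|j|+|a'|}g_{a'b'}t_{ia'}(u)t_{jb'}(-u)$, expand using $\Delta(t_{ij}(u))=\sum_k(-1)^{(|i|+|k|)(|k|+|j|)}t_{ik}(u)\otimes t_{kj}(u)$ and the super tensor multiplication rule, then regroup the inner $(a',b')$-sum as $\jmath(s_{ab}(u))$. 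Your sign $\alpha$ does reduce correctly: after substituting $|b'|=|a'|$ (justified since $g_{a'b'}=0$ otherwise) and using $|c|^2=|c|$, the terms $|i||a'|$, $|j||a'|$, and $|b||a'|$ each appear twice and cancel modulo $2$, leaving exactly $|i||j|+|a||j|+|i||a|+|j||b|+|j|+|a|$ plus the inner sign $|a||b|+|a||a'|+|b|+|a'|$. The left-coideal inclusion then follows because $\Delta$ is a superalgebra homomorphism and the $s_{ij}^{(r)}$ generate the twisted super Yangian.
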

The Proposition \ref{prop:embed} and Corollary \ref{co:coideal} imply that the twisted super Yangian $\mathrm{Y}^{tw}\left(\mathfrak{osp}_{M|N}\right)$ is a coideal sub-superalgebra of the super Yangian $\mathrm{Y}\left(\mathfrak{gl}_{M|N}\right)$.

Set
\begin{gather*}
\widetilde{S}(u)=\sum_{i,j\in I}E_{ij}\otimes \tilde{s}_{ij}(u):=S(u)^{-1}.
\end{gather*}
Multiplying $\widetilde{S}_2(u)$ on both sides of relation \eqref{Ytw1},
one gets
\begin{gather*}
R_{12}^{t_2}(-u-v)S_1(u)R_{12}(u-v)\widetilde{S}_2(v)
=\widetilde{S}_2(v)R_{12}(u-v)S_1(u)R_{12}^{t_1}(-u-v).
\end{gather*}
More precisely,
\begin{equation*}
\begin{split}
&s_{ij}(u)\tilde{s}_{kl}(v)-\varsigma_{i,j;k,l}\,\tilde{s}_{kl}(v)s_{ij}(u) \\
&=\frac{\varsigma_{i,j;k,l}}{u-v}\left(\delta_{jk}\sum_b(-1)^{|j|}\varsigma_{i,b;b,l}s_{ib}(u)\tilde{s}_{bl}(v)
  -\delta_{il}\sum_a (-1)^{|a|}\varsigma_{i,a;k,j}\tilde{s}_{ka}(v)s_{aj}(u)\right) \\
&+\frac{\varsigma_{i,j;k,l}}{u+v}\left(\delta_{jl}\sum_a(-1)^{|j|+|j||k|+|k||a|}\tilde{s}_{ka}(v)s_{ia}(u)
-\delta_{ik}\sum_a(-1)^{|i||j|+|a||l|+|j||l|}s_{aj}(u)\tilde{s}_{al}(v)\right) \\
&+\frac{1}{u^2-v^2}\left(\delta_{ik}\sum_as_{ja}(u)\tilde{s}_{al}(v)(-1)^{|i||j|+|j|}\varsigma_{j,a;a,l}\varsigma_{i,j;k,l}
-\delta_{jl}\sum_b\tilde{s}_{kb}(v)s_{bi}(u)(-1)^{|i||j|+|i|}\varsigma_{ia;ak}\right),
\end{split}
\end{equation*}
where the notation $\varsigma_{ab;cd}$ means $(-1)^{(|a|+|b|)(|c|+|d|)}$. Consequently, the following lemma is immediately obtained.
\begin{lemma}
If $\{i,j\}\cap\{k,l\}=\varnothing$, then
\begin{gather*}
\left[s_{ij}(u),\,\tilde{s}_{kl}(v)\right]=0.
\end{gather*}
In particular, the coefficients of $s_{ij}(u)$ and $\tilde{s}_{kl}(v)$ commute for $i,j\leqslant M<k,l$.
\end{lemma}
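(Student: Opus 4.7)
The plan is to read the lemma off as an immediate corollary of the explicit commutation relation between $s_{ij}(u)$ and $\tilde{s}_{kl}(v)$ displayed just above the statement. That expansion was obtained by sandwiching \eqref{Ytw1} with $\widetilde{S}_2(v)$, and its right-hand side consists of three lines of summations, each summand prefactored by one of the Kronecker deltas $\delta_{jk}$, $\delta_{il}$, $\delta_{jl}$, or $\delta_{ik}$.

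First, I would invoke the hypothesis $\{i,j\}\cap\{k,l\}=\varnothing$ to force $i\neq k$, $i\neq l$, $j\neq k$, and $j\neq l$ simultaneously. This kills all four Kronecker deltas at once, so every summand on the right-hand side vanishes. What remains is simply
$$s_{ij}(u)\tilde{s}_{kl}(v) - \varsigma_{i,j;k,l}\,\tilde{s}_{kl}(v)s_{ij}(u) = 0,$$
which is exactly the vanishing of the graded commutator $[s_{ij}(u),\tilde{s}_{kl}(v)]$ under the convention $[X,Y]=XY-(-1)^{|X||Y|}YX$.

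For the ``in particular'' clause, I would note that $i,j\leqslant M<k,l$ places $\{i,j\}\subseteq I_+$ and $\{k,l\}\subseteq I_-$, so the disjointness hypothesis is automatic. Moreover $|i|=|j|=0$ gives $\varsigma_{i,j;k,l}=(-1)^{0\cdot(|k|+|l|)}=1$, so the graded commutator coincides with the ordinary commutator, and the coefficients commute in the usual non-graded sense.

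There is essentially no substantive obstacle, since the lemma is a direct specialization of a relation already in hand. The only care needed is in trusting the displayed explicit form, which itself comes from expanding the matrix identity $R_{12}^{t_2}(-u-v)S_1(u)R_{12}(u-v)\widetilde{S}_2(v) = \widetilde{S}_2(v)R_{12}(u-v)S_1(u)R_{12}^{t_1}(-u-v)$ component-by-component in $\operatorname{End}V^{\otimes 2}\otimes\mathrm{Y}^{tw}\left(\mathfrak{osp}_{M|N}\right)\left[\left[u^{-1},v^{-1}\right]\right]$; that expansion is a routine but sign-sensitive bookkeeping exercise whose only pitfall is tracking the parity factors $(-1)^{|\cdot|}$ correctly, and it has already been carried out.
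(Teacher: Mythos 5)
Your argument is correct and is precisely what the paper intends: the lemma is stated immediately after the explicit expansion of the commutation relation with the phrase ``Consequently, the following lemma is immediately obtained,'' so the intended proof is exactly the observation that every term on the right-hand side carries one of the factors $\delta_{jk},\delta_{il},\delta_{jl},\delta_{ik}$, all of which vanish under the disjointness hypothesis. Your handling of the graded sign $\varsigma_{i,j;k,l}=(-1)^{(|i|+|j|)(|k|+|l|)}$ and the ``in particular'' specialization to $i,j\leqslant M<k,l$ (where the sign becomes $1$) is also correct.
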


Furthermore, we can also multiply both sides of \eqref{Ytw1} by $R_{12}(u-v)^{-1}$, $\widetilde{S}_1(u)$, $\big(R_{12}^{t_1}(-u-v)\big)^{-1}$ (resp. $\big(R_{12}^{t_2}(-u-v)\big)^{-1}$), $\widetilde{S}_2(v)$ successively due to
\begin{gather*}
    R(u)^{-1}=\frac{u^2}{u^2-1}R(-u),\qquad \big(R^{t_i}(u)\big)^{-1}=R^{t_i}(-u+M-N)~~\text{ for }~~i=1,2.
\end{gather*}
That is to say,
\begin{lemma}The following equation holds in $\mathrm{End} V^{\otimes 2}\otimes\mathrm{Y}^{tw}\big(\mathfrak{osp}_{M|N}\big)\big[\big[u^{-1},v^{-1}\big]\big]$,
    \begin{equation}\label{Ytw3}
\begin{split}
&R_{12}(u-v)\widetilde{S}_1(-u-\frac{M-N}{2})R_{12}^{t_2}(-u-v)\widetilde{S}_2(-v-\frac{M-N}{2}) \\
   =&\widetilde{S}_2(-v-\frac{M-N}{2})R_{12}^{t_1}(-u-v)\widetilde{S}_1(-u-\frac{M-N}{2})R_{12}(u-v).
\end{split}
\end{equation}
\end{lemma}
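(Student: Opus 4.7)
The plan is to derive \eqref{Ytw3} from the defining relation \eqref{Ytw1} by inverting both sides and then applying the spectral-parameter shift $u\mapsto -u-\tfrac{M-N}{2}$, $v\mapsto -v-\tfrac{M-N}{2}$, following the strategy indicated by the authors immediately before the lemma.

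First, I would take the inverse of \eqref{Ytw1}. Every factor on either side is invertible in $\operatorname{End}V^{\otimes 2}\otimes \mathrm{Y}^{tw}(\mathfrak{osp}_{M|N})\bigl[\bigl[u^{-1},v^{-1}\bigr]\bigr]$: the $R$-matrices by the identities $R(u)^{-1}=\tfrac{u^2}{u^2-1}R(-u)$ and $\bigl(R^{t_i}(u)\bigr)^{-1}=R^{t_i}(-u+M-N)$ recalled just before the lemma, and $S_i$ because its leading term is the nondegenerate matrix $\mathcal{G}$. Since inversion reverses the order of factors, the equation becomes
\begin{equation*}
\widetilde{S}_2(v)\,\bigl(R_{12}^{t_1}(-u-v)\bigr)^{-1}\,\widetilde{S}_1(u)\,R_{12}(u-v)^{-1}=R_{12}(u-v)^{-1}\,\widetilde{S}_1(u)\,\bigl(R_{12}^{t_2}(-u-v)\bigr)^{-1}\,\widetilde{S}_2(v).
\end{equation*}
This is precisely what is produced by the successive multiplications by $R_{12}(u-v)^{-1}$, $\widetilde{S}_1(u)$, $\bigl(R_{12}^{t_1}(-u-v)\bigr)^{-1}$ (resp.\ $\bigl(R_{12}^{t_2}(-u-v)\bigr)^{-1}$) and $\widetilde{S}_2(v)$ hinted at in the paper.

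Second, I would substitute the two $R$-matrix identities into the above. The replacement $R_{12}(u-v)^{-1}=\tfrac{(u-v)^2}{(u-v)^2-1}R_{12}(v-u)$ produces a common scalar factor on each side that cancels, while $\bigl(R_{12}^{t_i}(-u-v)\bigr)^{-1}$ is replaced by $R_{12}^{t_i}(u+v+M-N)$. We arrive at
\begin{equation*}
\widetilde{S}_2(v)\,R_{12}^{t_1}(u+v+M-N)\,\widetilde{S}_1(u)\,R_{12}(v-u)=R_{12}(v-u)\,\widetilde{S}_1(u)\,R_{12}^{t_2}(u+v+M-N)\,\widetilde{S}_2(v).
\end{equation*}

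Third, I would apply the shift $u\mapsto -u-\tfrac{M-N}{2}$, $v\mapsto -v-\tfrac{M-N}{2}$. This choice is forced on us by the demand that $u+v+M-N\mapsto -u-v$; it simultaneously sends $v-u\mapsto u-v$ and $\widetilde{S}_i(u)$ to $\widetilde{S}_i(-u-\tfrac{M-N}{2})$. Each series $\widetilde{s}_{ij}(-u-\tfrac{M-N}{2})$ expands as a well-defined element of $\mathrm{Y}^{tw}(\mathfrak{osp}_{M|N})[[u^{-1}]]$ via the usual geometric expansion of $(-u-\tfrac{M-N}{2})^{-r}$ in powers of $u^{-1}$, so the substitution is legitimate at the level of formal power series. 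The resulting identity is \eqref{Ytw3} verbatim.

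The argument is essentially bookkeeping: no conceptual obstacle arises, and the only points deserving care are to keep track of the order of noncommuting factors under inversion and to identify the correct shift. The latter is the sole nontrivial observation, and it is pinned down by the requirement that the exchange between $u+v+M-N$ and $-u-v$ produces matching arguments on both sides of the desired equation.
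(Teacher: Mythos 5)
Your proof is correct and follows exactly the strategy the paper sketches in the sentence preceding the lemma: invert both sides of \eqref{Ytw1} (which reverses the order of factors), substitute the recalled $R$-matrix inverse identities, cancel the scalar $\tfrac{(u-v)^2}{(u-v)^2-1}$, and apply the shift $u\mapsto -u-\tfrac{M-N}{2}$, $v\mapsto -v-\tfrac{M-N}{2}$ to turn $u+v+M-N$ into $-u-v$ and $v-u$ into $u-v$. The paper's ``multiply successively'' phrasing is just the step-by-step version of the one-line inversion you performed, so there is no genuine difference.
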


\subsection{An equivalent presentation for twisted super Yangian}
This subsection is devoted to obtaining an alternative presentation for twisted super Yangian, which is inspired by Molev's book in \cite[Section 2.15]{Mo07}. For any supersymmetrix block matrix $\mathcal{G}$, define a transposition $\iota$ on a $\mathbb{Z}_2$-graded matrix $X$ associated with $\mathcal{G}$ by
\begin{gather*}
X^{\iota}=\mathcal{G}X^t\mathcal{G}^{-1}.
\end{gather*}
Since $J^2=1$, $\mathcal{G}^t$ is invertible. It follows that
\begin{gather*}
(X^{\iota})^{\iota}=\mathcal{G}\left(\mathcal{G}X^t\mathcal{G}^{-1}\right)^t\mathcal{G}^{-1}
=\mathcal{G}(\mathcal{G}^{-1})^t(X^t)^t\mathcal{G}^t\mathcal{G}^{-1}
=J(X^t)^tJ=X.
\end{gather*}

Consider another formal power series
\begin{gather*}
\mathfrak{s}_{ij}(u)=\delta_{ij}+\mathfrak{s}_{ij}^{(1)}u^{-1}+\mathfrak{s}_{ij}^{(2)}u^{-2}+\cdots ,
\end{gather*}
and $\mathcal{S}(u)=\sum\limits_{i,j\in I}E_{ij}\otimes \mathfrak{s}_{ij}(u)$.
\begin{proposition}\label{prop:alterpresentation}
The twisted super Yangian $\mathrm{Y}^{tw}\left(\mathfrak{osp}_{M|N}\right)$ is isomorphic to the superalgebra $\mathcal{Y}^{tw}\left(\mathfrak{osp}_{M|N}\right)$ generated by elements $\mathfrak{s}_{ij}^{(r)}$, with parity $|\mathfrak{s}_{ij}^{(r)}|=|i|+|j|$, for $i,j\in I$ and $r\in\mathbb{N}$ with the defining relations
\begin{align}\label{Yeqv1}
R_{12}(u-v)\mathcal{S}_1(u)R_{12}^{\iota_1}(-u-v)\mathcal{S}_2(v)
    &=\mathcal{S}_2(v)R_{12}^{\iota_1}(-u-v)\mathcal{S}_1(u)R_{12}(u-v), \\ \label{Yeqv2}
\mathcal{S}^{\iota}(-u)&=\mathcal{S}(u)+\frac{\mathcal{S}(u)-\mathcal{S}(-u)}{2u}.
\end{align}
More concretely,
the assignment
\begin{gather}\label{Yeqv3}
\phi:\ S(u)\mapsto \mathcal{S}(u)\mathcal{G}
\end{gather}
defines an isomorphism between the superalgebra $\mathrm{Y}^{tw}\left(\mathfrak{osp}_{M|2n}\right)$ and $\mathcal{Y}^{tw}\left(\mathfrak{osp}_{M|2n}\right)$.
\end{proposition}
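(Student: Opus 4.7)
The plan is to show that the assignment $\phi$ on the matrix of generating series is a bijection on generators that carries the defining relations of $\mathrm{Y}^{tw}\left(\mathfrak{osp}_{M|N}\right)$ to those of $\mathcal{Y}^{tw}\left(\mathfrak{osp}_{M|N}\right)$, and then invoke the obvious inverse assignment $\mathcal{S}(u)\mapsto S(u)\mathcal{G}^{-1}$ to conclude the same in the opposite direction. Since $\mathcal{G}$ is a non-singular scalar matrix, bijectivity on generators is immediate; the content lies in translating the two families of relations.

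The key computational identity I would isolate at the outset is
\begin{gather*}
\mathcal{G}_a\, R^{t_a}_{12}(w)=R^{\iota_a}_{12}(w)\,\mathcal{G}_a,\qquad a=1,2,
\end{gather*}
which follows directly from the definition $X^{\iota}=\mathcal{G}X^t\mathcal{G}^{-1}$ applied in the $a$-th tensor factor (here $R(w)=1-w^{-1}P$ has scalar entries, so conjugation by $\mathcal{G}_a$ passes through without sign issues). Combined with the observation that $\mathcal{G}_1$ and $\mathcal{G}_2$ act on disjoint tensor factors, so they commute with each other and (thanks to $P\mathcal{G}_1\mathcal{G}_2=\mathcal{G}_2\mathcal{G}_1 P$, recorded already in the proof of Proposition \ref{prop:embed}) with $R_{12}(u-v)$, this lets me push all $\mathcal{G}$-factors to one side of the ternary relation.

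The first step is the translation of the reflection-type relation \eqref{Ytw1}. Substituting $S_a(u)=\mathcal{S}_a(u)\mathcal{G}_a$ on both sides and moving each $\mathcal{G}_a$ to the right past $R^{t_a}_{12}(-u-v)$ via the key identity produces a common trailing factor $\mathcal{G}_1\mathcal{G}_2=\mathcal{G}_2\mathcal{G}_1$ on both sides; cancelling this factor (allowed because it is invertible and commutes with $R_{12}(u-v)$) yields \eqref{Yeqv1}. The second step is the translation of the symmetry relation \eqref{Ytw2}. Using $\mathcal{G}^t=J\mathcal{G}$ and $J^2=1$, one computes
\begin{gather*}
J\,S^t(-u)=J\,\mathcal{G}^t\mathcal{S}^t(-u)=\mathcal{G}\,\mathcal{S}^t(-u)=\mathcal{S}^{\iota}(-u)\,\mathcal{G},
\end{gather*}
while the right-hand side of \eqref{Ytw2} equals $\bigl(\mathcal{S}(u)+\frac{\mathcal{S}(u)-\mathcal{S}(-u)}{2u}\bigr)\mathcal{G}$; cancelling $\mathcal{G}$ on the right gives exactly \eqref{Yeqv2}.

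Finally, the inverse substitution $\mathcal{S}(u)=S(u)\mathcal{G}^{-1}$ performs the reverse translation by the same computations read backwards (conjugation by $\mathcal{G}^{-1}$ instead of $\mathcal{G}$), so the two presentations define isomorphic superalgebras via $\phi$. I expect the main subtlety to be purely bookkeeping: keeping track of the $\mathbb{Z}_2$-graded signs in the transpositions $t_a$ and $\iota_a$ and verifying once and for all that $\mathcal{G}_a$ commutes past those entries of $R^{t_a}_{12}$ which carry non-trivial parity, rather than any conceptual difficulty — the structural content is entirely absorbed into the single identity $\mathcal{G}_a R^{t_a}_{12}=R^{\iota_a}_{12}\mathcal{G}_a$.
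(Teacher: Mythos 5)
Your proof is correct and follows essentially the same route as the paper: substitute $S_a(u)=\mathcal{S}_a(u)\mathcal{G}_a$, push each $\mathcal{G}_a$ through $R^{t_a}_{12}$ via $\mathcal{G}_a R^{t_a}_{12}=R^{\iota_a}_{12}\mathcal{G}_a$ and through $R_{12}$ via $P\mathcal{G}_1\mathcal{G}_2=\mathcal{G}_2\mathcal{G}_1 P$, then cancel the invertible trailing factor $\mathcal{G}_1\mathcal{G}_2$, and translate the symmetry relation by $J\mathcal{G}^t=\mathcal{G}$ and $\mathcal{G}\mathcal{S}^t=\mathcal{S}^{\iota}\mathcal{G}$. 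One very minor point worth noting: tracking the $\mathcal{G}_a$-factors carefully, as you do, naturally produces $\iota_1$ on the left and $\iota_2$ on the right-hand side of the translated quaternary relation (mirroring $t_1$ and $t_2$ in \eqref{Ytw1}), which is indeed what appears in the paper's own chain of equalities, even though \eqref{Yeqv1} as printed writes $\iota_1$ on both sides — an apparent typo that your derivation implicitly corrects.
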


\begin{proof}
    Given a nondegenerate matrix $\mathcal{G}$, it follows that the mapping $\phi$ is a bijection.
    Since $P_{12}\mathcal{G}_1\mathcal{G}_2=\mathcal{G}_2\mathcal{G}_1P_{12}$ and \eqref{Yeqv1}, we have
    \begin{align*}
        R_{12}(u-v)\mathcal{S}_1(u)\mathcal{G}_1R_{12}^{t_1}(-u-v)\mathcal{S}_2(v)\mathcal{G}_2
        &=R_{12}(u-v)\mathcal{S}_1(u)R_{12}^{\iota_1}(-u-v)\mathcal{S}_2(v)\mathcal{G}_1\mathcal{G}_2 \\
        &=\mathcal{S}_2(v)R_{12}^{\iota_2}(-u-v)\mathcal{S}_1(u)R_{12}(u-v)\mathcal{G}_1\mathcal{G}_2 \\
        &=\mathcal{S}_2(v)R_{12}^{\iota_2}(-u-v)\mathcal{G}_2\mathcal{S}_1(u)\mathcal{G}_1R_{12}(u-v) \\
        &=\mathcal{S}_2(v)\mathcal{G}_2R_{12}^{t_2}(-u-v)\mathcal{S}_1(u)\mathcal{G}_1R_{12}(u-v).
    \end{align*}
    Moreover, by \eqref{Yeqv2}, we also have
    \begin{align*}
        J\left(\mathcal{S}(-u)\mathcal{G}\right)^t&=J\mathcal{G}^t\mathcal{S}^t(-u)=\mathcal{G}\mathcal{S}^t(-u)
        =\mathcal{S}^{\iota}(-u)\mathcal{G} \\
        &=\mathcal{S}(u)\mathcal{G}+\frac{\mathcal{S}(u)\mathcal{G}-\mathcal{S}(-u)\mathcal{G}}{2u}.
    \end{align*}

\end{proof}

The isomorphism $\phi$ enables us to obtain some properties related to $\mathfrak{s}_{ij}(u)$.
\begin{corollary}
The coefficients of $\mathfrak{s}_{ij}(u)$ and $\tilde{\mathfrak{s}}_{kl}(v)$ commute for $i,j\leqslant M<k,l$.
\end{corollary}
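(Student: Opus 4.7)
The natural plan is to transport the preceding lemma (the commutativity of $s_{ij}(u)$ and $\tilde{s}_{kl}(v)$ for $i,j\leqslant M<k,l$) through the isomorphism $\phi$ of Proposition~\ref{prop:alterpresentation}. Concretely, $\phi$ identifies $S(u)=\mathcal{S}(u)\mathcal{G}$, so equivalently $\mathcal{S}(u)=S(u)\mathcal{G}^{-1}$, and hence (since $\mathcal{G}$ has scalar entries, no signs are produced)
\begin{equation*}
\mathfrak{s}_{ij}(u)=\sum_{a\in I}s_{ia}(u)(\mathcal{G}^{-1})_{aj}.
\end{equation*}
Inverting the first identity in the other direction gives $\widetilde{\mathcal{S}}(u)=\mathcal{G}\,\widetilde{S}(u)$, so
\begin{equation*}
\tilde{\mathfrak{s}}_{kl}(v)=\sum_{b\in I}g_{kb}\,\tilde{s}_{bl}(v).
\end{equation*}

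The decisive point is the block-diagonal structure of $\mathcal{G}$: both $\mathcal{G}$ and $\mathcal{G}^{-1}$ vanish on the off-diagonal $(I_+\times I_-)$ and $(I_-\times I_+)$ blocks. Thus, for $i,j\leqslant M$ the sum for $\mathfrak{s}_{ij}(u)$ reduces to $a\in I_+$, and for $k,l>M$ the sum for $\tilde{\mathfrak{s}}_{kl}(v)$ reduces to $b\in I_-$. In particular, when $i,j\leqslant M<k,l$,
\begin{equation*}
[\mathfrak{s}_{ij}(u),\tilde{\mathfrak{s}}_{kl}(v)]=\sum_{a\leqslant M<b}(\mathcal{G}^{-1})_{aj}\,g_{kb}\,[s_{ia}(u),\tilde{s}_{bl}(v)],
\end{equation*}
and each pair of indices $(i,a,b,l)$ appearing here satisfies $i,a\leqslant M<b,l$, so the previous lemma forces every bracket on the right-hand side to vanish.

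There is no real obstacle beyond bookkeeping; the whole content is that the change of generators $S(u)\leftrightarrow\mathcal{S}(u)$ given by $\phi$ respects the two index blocks separately, because $\mathcal{G}$ does. The only point requiring a small check is that no extra signs appear when multiplying the matrix $\mathcal{S}(u)$ by the scalar matrix $\mathcal{G}$ inside $\operatorname{End}V\otimes\mathcal{A}$, and when inverting $S(u)\mathcal{G}^{-1}$ to obtain $\widetilde{\mathcal{S}}(u)=\mathcal{G}\widetilde{S}(u)$; both are immediate from the tensor-notation conventions fixed in the introduction. Therefore the corollary follows by a one-line application of the previous lemma after the two index-restricted expressions above are written down.
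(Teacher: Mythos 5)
Your proof is correct and follows exactly the route the paper tacitly intends: transport the preceding lemma about $s_{ij}(u)$ and $\tilde{s}_{kl}(v)$ through the isomorphism $\phi$ of Proposition~\ref{prop:alterpresentation}, noting that the block-diagonal form of $\mathcal{G}$ keeps the $I_+$ and $I_-$ indices separated. One small imprecision worth noting: the reason no sign appears when computing $\mathcal{S}(u)=S(u)\mathcal{G}^{-1}$ and $\widetilde{\mathcal{S}}(u)=\mathcal{G}\widetilde{S}(u)$ in $\operatorname{End}V\otimes\mathcal{A}$ is not merely that $\mathcal{G}$ has scalar entries, but that $\mathcal{G}$ (and $\mathcal{G}^{-1}$) is an \emph{even} element of $\operatorname{End}V$ --- the nonzero components $g_{ab}E_{ab}$ all satisfy $|a|=|b|$ --- so the sign $(-1)^{(|a|+|b|)(|i|+|j|)}$ arising from the super tensor product convention is trivial term by term; this is the same block-diagonality you later invoke, so the argument closes up correctly.
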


Denote
\begin{gather*}
    \widetilde{\mathcal{S}}(u)=\sum_{i,j\in I}E_{ij}\otimes \tilde{\mathfrak{s}}_{ij}(u):=\mathcal{S}(u)^{-1}.
\end{gather*}

\begin{corollary}\label{S-inver}
The following equation holds in $\mathrm{End}V^{\otimes 2}\otimes\mathcal{Y}^{tw}\big(\mathfrak{osp}_{M|N}\big)\big[\big[u^{-1},v^{-1}\big]\big]$,
    \begin{equation}\label{Yeqv4}
\begin{split}
&R_{12}(u-v)\widetilde{S}_1\left(-u-\frac{M-N}{2}\right)R_{12}^{\iota_1}(-u-v)\widetilde{S}_2\left(-v-\frac{M-N}{2}\right) \\
   =&\widetilde{S}_2\left(-v-\frac{M-N}{2}\right)R_{12}^{\iota_1}(-u-v)\widetilde{S}_1\left(-u-\frac{M-N}{2}\right)R_{12}(u-v).
\end{split}
\end{equation}
\end{corollary}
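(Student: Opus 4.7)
The plan is to derive the identity by transporting \eqref{Ytw3} along the superalgebra isomorphism $\phi$ of Proposition \ref{prop:alterpresentation}, in direct parallel with the passage from \eqref{Ytw1} to \eqref{Yeqv1} carried out in its proof. Since $\phi(S(u))=\mathcal{S}(u)\mathcal{G}$, one has $\phi(\widetilde{S}(u)) = \mathcal{G}^{-1}\widetilde{\mathcal{S}}(u)$, and thus $\phi(\widetilde{S}_a(u')) = \mathcal{G}_a^{-1}\widetilde{\mathcal{S}}_a(u')$ for $a=1,2$ with the shifted spectral parameter $u' = -u-\frac{M-N}{2}$ (and similarly $v'$).

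Substituting these into \eqref{Ytw3} produces an identity in which $\mathcal{G}_1^{-1}$ and $\mathcal{G}_2^{-1}$ appear sandwiched between the various $\widetilde{\mathcal{S}}_a$ and the R-matrices. To clear them I would invoke three elementary observations: (a) because the block-diagonal matrix $\mathcal{G}$ preserves the $\mathbb{Z}_2$-grading of $V$, the permutation $P$ (and hence $R_{12}(u-v)$) commutes with $\mathcal{G}_1\mathcal{G}_2$; (b) since $\mathcal{G}$ is an even matrix, $\mathcal{G}_a^{\pm 1}$ commutes with $\widetilde{\mathcal{S}}_b$ for $a\neq b$, as the super-signs collapse; (c) directly from the definition $X^{\iota}=\mathcal{G}X^{t}\mathcal{G}^{-1}$, one has $\mathcal{G}_a R_{12}^{t_a}(-u-v)\mathcal{G}_a^{-1} = R_{12}^{\iota_a}(-u-v)$. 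Multiplying both sides of the substituted equation on the left by $\mathcal{G}_1\mathcal{G}_2$ and then using (a)--(c) to migrate each $\mathcal{G}_a^{\pm 1}$ into the slot where it conjugates an $R_{12}^{t_a}$ into an $R_{12}^{\iota_a}$ yields the desired equation \eqref{Yeqv4}.

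The only bookkeeping concern is keeping proper track of tensor positions when moving $\mathcal{G}_1^{\pm 1}$ and $\mathcal{G}_2^{\pm 1}$ around; this is routine because $\mathcal{G}$ is even and because $R_{12}(u-v)$ is invariant under conjugation by $\mathcal{G}\otimes\mathcal{G}$, exactly as in the proof of Proposition \ref{prop:alterpresentation}. As an alternative, one can work entirely within $\mathcal{Y}^{tw}(\mathfrak{osp}_{M|N})$ and mimic the derivation of \eqref{Ytw3} from \eqref{Ytw1}: invert both sides of \eqref{Yeqv1}, employ the $R$-matrix inversion formulas $R(u)^{-1}=\frac{u^2}{u^2-1}R(-u)$ together with $(R^{\iota_a}(u))^{-1}=R^{\iota_a}(-u+M-N)$ (the latter obtained from the corresponding $t_a$-identity by conjugation with $\mathcal{G}_a$), and finally substitute $u\mapsto u'$, $v\mapsto v'$ to obtain the stated relation.
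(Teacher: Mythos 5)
Your proposal is correct and matches the paper's intent: Corollary~\ref{S-inver} is stated immediately after $\phi$ is established and the paper offers no separate argument, so transporting \eqref{Ytw3} along $\phi$ is exactly what is meant, and your second (``alternative'') route of inverting \eqref{Yeqv1} directly and applying the $R$-matrix inversion identities is an equally valid and arguably cleaner derivation. One nuance worth spelling out in the first route: after applying $\phi$ to \eqref{Ytw3} and multiplying through by $\mathcal{G}_1\mathcal{G}_2$, your steps (a)--(c) turn the $R^{t_2}$ on the left-hand side into $R^{\iota_2}$ and the $R^{t_1}$ on the right-hand side into $R^{\iota_1}$, whereas the target \eqref{Yeqv4} carries $R^{\iota_1}$ on both sides. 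This is reconciled by the identity $P^{\iota_1}=P^{\iota_2}$, which follows for any supersymmetric block matrix $\mathcal{G}$ from $\mathcal{G}_1P^{t_1}\mathcal{G}_2=\mathcal{G}_2P^{t_2}\mathcal{G}_1$ (already recorded in the proof of Proposition~\ref{prop:embed}); the paper invokes it only implicitly, so your proof is at the same level of detail. Your second route sidesteps this entirely, since \eqref{Yeqv1} already carries $R^{\iota_1}$ on both sides, and only needs $(R^{\iota_1}(u))^{-1}=R^{\iota_1}(-u+M-N)$, which follows from $(R^{t_1}(u))^{-1}=R^{t_1}(-u+M-N)$ by conjugating with $\mathcal{G}_1$, exactly as you say.
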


Following the embedding \eqref{embedding}, one immediately gets:
\begin{proposition}\eqref{prop:embed}
The assignment
\begin{gather*}
\mathcal{S}(u)\mapsto T(u)T^{\iota}(-u)
\end{gather*}
defines an embedding of the superalgebras $\mathcal{Y}^{tw}\left(\mathfrak{osp}_{M|N}\right)$ and $\mathrm{Y}\left(\mathfrak{gl}_{M|N}\right)$.
\end{proposition}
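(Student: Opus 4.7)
The plan is to realize the proposed map as the composition $\jmath \circ \phi^{-1}$, where $\phi$ is the isomorphism from Proposition \ref{prop:alterpresentation} and $\jmath$ is the embedding from Proposition \ref{prop:embed}, and then to deduce injectivity directly from the corresponding properties of these two maps.

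First, I would verify the identification on generators. Since $\phi$ sends $S(u)$ to $\mathcal{S}(u)\mathcal{G}$, the inverse $\phi^{-1}:\mathcal{Y}^{tw}\left(\mathfrak{osp}_{M|N}\right)\to \mathrm{Y}^{tw}\left(\mathfrak{osp}_{M|N}\right)$ sends $\mathcal{S}(u)$ to $S(u)\mathcal{G}^{-1}$. Applying $\jmath$ and using the defining relation $X^{\iota}=\mathcal{G}X^{t}\mathcal{G}^{-1}$ from the previous subsection, one obtains
$$
\jmath\bigl(S(u)\mathcal{G}^{-1}\bigr)=T(u)\mathcal{G}T^{t}(-u)\mathcal{G}^{-1}=T(u)\bigl(\mathcal{G}T^{t}(-u)\mathcal{G}^{-1}\bigr)=T(u)T^{\iota}(-u).
$$
Thus the proposed assignment $\mathcal{S}(u)\mapsto T(u)T^{\iota}(-u)$ is literally $\jmath \circ \phi^{-1}$ on generators, and no independent check of the R-matrix relations \eqref{Yeqv1}--\eqref{Yeqv2} is required: they are automatically preserved because $\phi^{-1}$ and $\jmath$ are both superalgebra homomorphisms.

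Finally, since $\phi$ is an isomorphism of superalgebras and $\jmath$ is injective, the composition $\jmath\circ\phi^{-1}$ is an injective superalgebra homomorphism, which is exactly the desired embedding. In this sense there is no genuine obstacle: all of the hard analysis (satisfaction of the quaternary and symmetry relations, together with injectivity via the associated graded argument at the end of the proof of Proposition \ref{prop:embed}) was already carried out, and the present statement is an immediate corollary obtained by bookkeeping through the change of generators $S(u)=\mathcal{S}(u)\mathcal{G}$.
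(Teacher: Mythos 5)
Your argument is correct and matches the paper's intent exactly: the paper states this proposition as an immediate consequence of the embedding $\jmath$ in \eqref{embedding}, and the composition $\jmath\circ\phi^{-1}$ (with the computation $\jmath(S(u)\mathcal{G}^{-1}) = T(u)\mathcal{G}T^{t}(-u)\mathcal{G}^{-1} = T(u)T^{\iota}(-u)$) is precisely the bookkeeping that makes this immediate. Nothing more is needed.
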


\begin{remark}\label{BRtw}
Fix $\mathcal{G}=\mathcal{G}_0=\sum_{i\in I}(-1)^{|i|}\theta_i E_{ii'}$. If we define the signs $\theta_i$ and $i'$ on the index set $I$ as follows:
\begin{equation*}
\begin{split}
    \theta_i&=\begin{cases}
   1,  &1\leqslant i\leqslant M+n, \\
   -1,  &M+n+1\leqslant i\leqslant M+N,
\end{cases} \\
   i'&=\begin{cases}
M+1-i,  &1\leqslant i\leqslant M  \\
2M+N+1-i,  &M+1\leqslant i\leqslant M+N.
\end{cases}
\end{split}
\end{equation*}
 Then, the definition of $\mathcal{Y}^{tw}\left(\mathfrak{osp}_{M|2n}\right)$ coincides with the ``+'' version of the twisted super Yangians referred to \cite{BR03}. In this case, $\iota$ acts on a matrix $X=(X_{ij})$ by
\begin{gather*}
X^{\iota}=\sum_{i,j\in I}E_{ij}\otimes X_{ij}^{\iota}\quad \text{with}\quad x_{ij}^{\iota}=(-1)^{|i||j|+|i|}\theta_i\theta_jX_{j'i'}.
\end{gather*}
\end{remark}

\section{The center of twisted super Yangian}\label{se:Invo-graded}
In this section, our purpose is to describe the center of the twisted super Yangian $\mathrm{Y}^{tw}\left(\mathfrak{osp}_{M|N}\right)$, denoted by  $\mathcal{Z}\mathrm{Y}^{tw}\left(\mathfrak{osp}_{M|N}\right)$. Before that, we need to investigate the connection between $\mathrm{Y}^{tw}\left(\mathfrak{osp}_{M|N}\right)$ and a twisted polynomial current Lie superalgebra $\mathfrak{gl}_{M|N}[x]^{\theta}$.

\subsection{Twisted polynomial current Lie superalgebra}
Let $\mathfrak{a}$ be a finite-dimensional Lie superalgebra with a unity, and let $\theta$ be an involution of $\mathfrak{a}$ such that $\theta^2=1$. Define the Lie sub-superalgebras $\mathfrak{a}_+$ and $\mathfrak{a}_-$ of $\mathfrak{a}$ by\footnote{According to the description in \cite{SW24}, the involution of a Lie superalgebra $\mathfrak{a}$ is defined as an automorphism with order 2 or 4. In our paper, we focus exclusively on the case where $\mathfrak{a}^+$ forms a fixed sub-superalgebra.}
\begin{gather*}
\mathfrak{a}_+:=\{\,a\in\mathfrak{a}\,|\,\theta(a)=a\,\}\quad \text{and}\quad \mathfrak{a}_-:=\{\,a\in\mathfrak{a}\,|\,\theta(a)=-a\,\}.
\end{gather*}
It is evident that $\mathfrak{a}=\mathfrak{a}_+\oplus \mathfrak{a}_-$.
Let $\mathfrak{a}[x]^{\theta}$ be the \textit{twisted polynomial current Lie superalgebra}:
\begin{gather*}
    \mathfrak{a}[x]^{\theta}:=\{\,a(u)\in \mathfrak{a}[x]\,|\,\theta\left(a(u)\right)=a(-u)\,\}.
\end{gather*}
Equivalently,
\begin{gather*}
\mathfrak{a}[x]^{\theta}=\mathfrak{a}_+\oplus \mathfrak{a}_-x\oplus \mathfrak{a}_+x^2\oplus \mathfrak{a}_-x^3\oplus \cdots
\end{gather*}

\begin{lemma}\label{trivialc}
Suppose that the centralizer of $\mathfrak{a}_+$ in $\mathfrak{a}$ is trivial with respect to the standard supercommutator. In that case, the center of the universal enveloping superalgebra of $\mathfrak{a}[x]^{\theta}$ is also trivial.
\end{lemma}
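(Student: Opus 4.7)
The plan is to pass to the associated graded algebra under the PBW filtration on $\mathrm{U}(\mathfrak{a}[x]^{\theta})$, and to show that every $\operatorname{ad}$-invariant (Poisson-central) element of the supersymmetric superalgebra $\mathrm{S}(\mathfrak{a}[x]^{\theta})$ is a scalar. For a central $z \in \mathcal{Z}(\mathrm{U}(\mathfrak{a}[x]^{\theta}))$ of PBW filtration degree $n$, the leading symbol $\bar z \in \mathrm{S}^{n}(\mathfrak{a}[x]^{\theta})$ is a nonzero Poisson-invariant, so once we know all Poisson-invariants lie in $\mathrm{S}^{0}=\mathbb{C}$ we must have $n=0$ and therefore $z\in\mathbb{C}$.

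The key structural feature is that $\mathfrak{a}[x]^{\theta}$ is $\mathbb{Z}_{\geq 0}$-graded by $x$-degree, with $\mathfrak{a}[x]^{\theta}_n=\mathfrak{a}_{(-1)^n} x^n$; this grading lifts to $\mathrm{S}(\mathfrak{a}[x]^{\theta})$ and the Poisson bracket is additive in it. A Poisson-invariant $\bar z$ therefore splits into $x$-homogeneous Poisson-invariant pieces, so I may assume $\bar z$ is $x$-homogeneous. Writing $e_j^{(n)}$ for the generator of $\mathrm{S}(\mathfrak{a}[x]^{\theta})$ corresponding to $e_j\cdot x^n$ (with $\{e_j\}$ a fixed basis of $\mathfrak{a}_{(-1)^n}$), let $N$ be the largest $x$-power appearing in any variable of $\bar z$. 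I then compute $\{e\cdot x^{M},\bar z\}=0$ for $e\in\mathfrak{a}_{(-1)^M}$ and $M$ so large that $M+n>N$ for every $0\leq n\leq N$. Expanding by the super-Leibniz rule and separating the result by the shifted $x$-power $M+n$ (the terms $[e,e_j]\cdot x^{M+n}$ being ``new'', i.e.\ outside the support of $\bar z$), one extracts for each $0\leq n\leq N$ the identity
\[
\sum_j \frac{\partial \bar z}{\partial e_j^{(n)}} \otimes [e,e_j] \;=\; 0 \qquad \text{in}\qquad \mathrm{S}(\mathfrak{a}[x]^{\theta})\otimes\mathfrak{a}.
\]
Letting the parity of $M$ vary, $e$ ranges over all of $\mathfrak{a}$, so $\Phi_n:=\sum_j(\partial \bar z/\partial e_j^{(n)})\otimes e_j$ is $\operatorname{ad}(\mathfrak{a})$-invariant, i.e.\ $\Phi_n \in \mathrm{S}(\mathfrak{a}[x]^{\theta})\otimes \mathcal{Z}(\mathfrak{a})$.

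The hypothesis now enters directly: since $\mathcal{Z}(\mathfrak{a})\subseteq\{a\in\mathfrak{a}\mid[a,\mathfrak{a}_+]=0\}$ and the latter is trivial by assumption, $\mathcal{Z}(\mathfrak{a})$ is itself trivial. Thus $\Phi_n=0$ for every $n$, forcing $\partial \bar z/\partial e_j^{(n)}=0$ for all $j,n$; consequently $\bar z$ is a scalar, and so is $z$. The principal technical hurdle will be the expansion step --- carefully tracking the super-signs in the Leibniz rule for the Poisson bracket and verifying that the separation by shifted $x$-powers cleanly isolates the constraints indexed by $n$, which hinges on the linear independence of monomials involving new variables of distinct degrees in the supersymmetric algebra.
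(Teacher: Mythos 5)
Your argument is correct and rests on the same engine as the paper's proof: reduce to showing that the only invariants in the supersymmetric superalgebra $\mathrm{S}(\mathfrak{a}[x]^{\theta})$ are scalars, then bracket such an invariant with an element whose adjoint action pushes variables into $x$-degrees that do not yet occur, and use linear independence of the newly produced variables to extract a linear condition that the triviality of the centralizer of $\mathfrak{a}_+$ forces to collapse. The implementations diverge. The paper, via Milnor--Moore rather than your PBW-symbol reduction, fixes the top degree $d$ among the $\mathfrak{a}_-$-variables occurring in the invariant $B$ and brackets with a fixed low-degree current from $\mathfrak{a}_+[x^2]$, isolating the coefficients of the newly produced top-degree $\mathfrak{a}_-$-variables; you instead bracket once with $e\,x^{M}$ for a single $M$ so large that the shifted degrees $M+n$, $0\leqslant n\leqslant N$, are all new and pairwise distinct, which extracts in one pass the relations $\sum_j(\partial\bar z/\partial e_j^{(n)})\otimes[e,e_j]=0$ for every $n$ and for both the $\mathfrak{a}_+$- and $\mathfrak{a}_-$-type variables, killing all partial derivatives simultaneously; this buys a uniform, iteration-free conclusion. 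Two small remarks: you need not vary the parity of $M$, since taking $M$ even already makes $e$ run over $\mathfrak{a}_+$ and the hypothesis is exactly that the centralizer of $\mathfrak{a}_+$ in $\mathfrak{a}$ vanishes, so $\Phi_n\in\mathrm{S}(\mathfrak{a}[x]^{\theta})\otimes\{0\}$ directly without the detour through $\mathcal{Z}(\mathfrak{a})$; and the sign issue you flag is indeed only a technicality, because with left super-partial derivatives the Leibniz expansion reads $\{y,\bar z\}=\sum_{n,j}\{y,e_j^{(n)}\}\cdot\partial_L\bar z/\partial e_j^{(n)}$ with no sign depending on the parity of $y$, so separating by the $x$-degree of $\{e\,x^M,e_j^{(n)}\}=[e,e_j]\,x^{M+n}$ produces exactly the stated identity.
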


\begin{proof}
Assume $\dim \mathfrak{a}_+=p$ and $\dim \mathfrak{a}_-=q$. Let $\{f_1^+,\ldots,f_p^+\}$ and $\{f_1^-,\ldots,f_q^-\}$ be the $\mathbb{Z}_2$-homogeneous bases of $\mathfrak{a}_+$ and $\mathfrak{a}_-$, respectively, such that
\begin{gather*}
[f_i^+,\,f_j^-]=\sum_{k=1}^{q}c_{ij}^kf_k^-\in\mathfrak{a}_-, \text{ with }c_{ij}^k\in\mathbb{C},\quad 1\leqslant i\leqslant p,\ 1\leqslant j\leqslant q,
\end{gather*}
By \cite[Theorem 5.15]{MM65}, it is enough to demonstrate that the supersymmetric algebra $\mathfrak{S}\left(\mathfrak{a}[x]^{\theta}\right)$ possesses no nonzero invariants under the $\mathbb{Z}_2$-graded adjoint action of $\mathfrak{a}_+\left[x^2\right]$.

Let $B\in \mathfrak{S}(\mathfrak{a}[x]^{\theta})$ be an $\mathfrak{a}_+[x^2]$-invariant element. Suppose $d$ is the maximal nonnegative integer such that
\begin{gather*}
B=\sum_{\lambda} B_{\lambda} (f_1^-x^d)^{\lambda_1}\cdots (f_q^-x^d)^{\lambda_q},
\end{gather*}
where the $q$-tuple $\lambda=(\lambda_1,\ldots,\lambda_q)\in\mathbb{Z}_+\times \cdots\times \mathbb{Z}_+$ and the coefficients $B_{\lambda}$ are the polynomials in $f_j^-x^r$ for $1\leqslant j\leqslant q$, $0\leqslant r<d$ and $f_i^+x^s$ for $1\leqslant i\leqslant p$, $s\geqslant 0$. Here each $\lambda_k$ for $1\leqslant k\leqslant q$ ranges over $\mathbb{Z}_+$ except that if some $\lambda_k>1$, with the $\mathbb{Z}_2$-grading $|f_k^-|=\bar{1}$, put $B_{\lambda}=0$.

The selection of $B$ leads to
\begin{gather*}
\left[f_i^+x,\,B\right]=0\quad \text{for}\quad 1\leqslant i\leqslant p.
\end{gather*}
For simplicity, set
\begin{gather*}
F_{\lambda}=(f_1^-x^d)^{\lambda_1}\cdots (f_q^-x^d)^{\lambda_q}.
\end{gather*}
If $|f_j^-|=\bar{1}$ for some $j$, we have
\begin{equation*}
\left[f_i^+x,\,(f_j^-x^d)^{\lambda_j}\right]=\begin{cases}
(-1)^{|f_i^+||f_j^-|}\sum_{k}f_k^-x^{d+1}, &\lambda_j=1, \\
0,  &\lambda_j=0.
\end{cases}
\end{equation*}
Then
\begin{align*}
\left[f_i^+x,\,B\right] =&\sum_{\lambda} \left[f_i^+x,\,B_{\lambda}\right]F_{\lambda}+\sum_{\lambda}(-1)^{|f_i^+||B_{\lambda}|}B_{\lambda}\left[f_i^+x,\,F_{\lambda}\right] \\
 =&\sum_{\lambda} \left[f_i^+x,\,B_{\lambda}\right]F_{\lambda}+\sum_{\lambda}(-1)^{|f_i^+||B_{\lambda}|}B_{\lambda}  \\
 &\times\sum_{j=1}^q (-1)^{|f_i^+|(|f_1^-|+\cdots+|f_j^-|)}\lambda_j (f_1^-x^d)^{\lambda_1}\cdots (f_j^-x^d)^{\lambda_q-1} \cdots (f_q^-x^d)^{\lambda_q}
   \sum_{k=1}^q c_{ij}^k f_k^-x^{d+1},
\end{align*}
which forces the coefficients of every $f_k^-x^{d+1}$ for $1\leqslant k\leqslant p$ to be zero since $\left[f_i^+x,\,B\right]=0$, that is to say,
\begin{gather*}
\sum_{\lambda}B_{\lambda}\sum_{j=1}^q (-1)^{|f_i^+|(|B_{\lambda}|+|f_1^-|+\cdots+|f_j^-|)}\lambda_jc_{ij}^k (f_1^-x^d)^{\lambda_1}\cdots (f_j^-x^d)^{\lambda_q-1} \cdots (f_q^-x^d)^{\lambda_q}=0,
\end{gather*}
for $1\leqslant i\leqslant p$, $1\leqslant k\leqslant q$. Hence, for any nonnegative integer $q$-tuple $\lambda'=(\lambda_1',\ldots,\lambda_q')$, it is established that
\begin{gather}\label{B:lamdba}
\sum_{j=1}^q (-1)^{|f_i^+|(|B_{\lambda'(j)}|+|f_1^-|+\cdots+|f_j^-|)}(\lambda_j'+1)c_{ij}^k B_{\lambda'(j)}=0,\quad 1\leqslant i\leqslant p,\ 1\leqslant k\leqslant q,
\end{gather}
where $\lambda'(j)$ denotes the $q$-tuple $(\lambda_1',\ldots,\lambda_j'+1,\ldots,\lambda_q')$.
Here we use the well-known formula:
\begin{gather*}
\left[f,\,a_1a_2\cdots a_m\right]=\sum_i (-1)^{|f|(|a_1|+\cdots+|a_{i-1}|)}a_1\cdots a_{i-1}\cdot\left[f,\,a_i\right]\cdot a_{i+1}\cdots a_m.
\end{gather*}

Fix $\lambda'$. Denote
\begin{gather*}
g_j^-=(-1)^{|f_i^+|(|B_{\lambda'(j)}|+|f_1^-|+\cdots+|f_j^-|)}(\lambda_j'+1)f_j^-,\quad 1\leqslant j\leqslant q.
\end{gather*}
The set of all elements $g_j^-$ also forms a basis of $\mathfrak{a}_-$. Pick $m_1,\ldots,m_{q}$ such that
\begin{gather}\label{B:lambda2}
\left[f_i^+,\,\sum_{j=1}^q m_j g_j^-\right]=0\quad\text{with}\quad 1\leqslant i\leqslant p.
\end{gather}
Since the centralizer of $\mathfrak{a}_+$ in $\mathfrak{a}$ is trivial, the system of linear equations \eqref{B:lambda2} involving the variables $m_1,\ldots,m_{q}$ has only zero solution. Additionally, \eqref{B:lambda2} is equivalent to the system of equations
\begin{gather*}
\sum_{j=1}^q (-1)^{|f_i^+|(|B_{\lambda'(j)}|+|f_1^-|+\cdots+|f_j^-|)}(\lambda_j'+1)c_{ij}^k b_j=0,\quad 1\leqslant i\leqslant p,\ 1\leqslant k\leqslant q,
\end{gather*}
Comparing with \eqref{B:lamdba}, we have $B_{\lambda'(j)}=0$.

Therefore, the equality $B_{\lambda}=0$ holds for all nonzero $\lambda$. Thus, the proof is complete.

\end{proof}

\vspace{1em}
Now, we consider a specific twisted polynomial current Lie superalgebra.
Let
\begin{gather*}
    \theta:\ E_{ij}\mapsto -(-1)^{|i||j|+|i|}\theta_i\theta_j E_{j'i'}
\end{gather*}
be the involution of the general linear Lie superalgebra $\mathfrak{gl}_{M|N}$ with order 2. The fixed sub-superalgebra under $\theta$ coincides with the ortho-symplectic Lie superalgebra $ \mathfrak{osp}_{M|N}$, i.e.,
\begin{gather*}
    \mathfrak{osp}_{M|N}:=\operatorname{span}\Big\{\,F_{ij}=E_{ij}-(-1)^{|i||j|+|i|}\theta_i\theta_j E_{j'i'}\,\Big|\,i,j\in I\,\Big\}.
\end{gather*}
Consider the twisted polynomial current Lie superalgebra $\mathfrak{gl}_{M|N}[x]^{\theta}$ with $\left(\mathfrak{gl}_{M|N}\right)_+=\mathfrak{osp}_{M|N}$.
Set
\begin{gather*}
    F_{ij}^{(r)}:=\left(E_{ij}+(-1)^r (-1)^{|i||j|+|i|}\theta_i\theta_j E_{j'i'}\right)x^{r-1}, \quad i,j\in I,\ r\in\mathbb{N}.
\end{gather*}
Note that the set of all elements $F_{ij}^{(r)}$ spans $\mathfrak{gl}_{M|N}[x]^{\theta}$.

Given a supersymmetric block matrix $\mathcal{G}$. The superalgebra $\mathrm{Y}^{tw}_{\mathcal{G}}\left(\mathfrak{osp}_{M|N}\right)$ is a sub-superalgebra of $\mathrm{Y}\left(\mathfrak{gl}_{M|N}\right)$. It inherits a filtration on $\mathrm{Y}^{tw}_{\mathcal{G}}\left(\mathfrak{osp}_{M|N}\right)$ such that
\begin{gather*}
    \mathcal{F}^{tw}_p=\mathcal{F}_p\cap \mathrm{Y}^{tw}_{\mathcal{G}}\left(\mathfrak{osp}_{M|N}\right),\quad \deg s_{ij}^{(r)}=r-1.
\end{gather*}
The graded algebra subject to this filtration is given by
\begin{gather*}
    \operatorname{gr}\mathrm{Y}^{tw}_{\mathcal{G}}\left(\mathfrak{osp}_{M|N}\right)
    :=\mathcal{F}^{tw}_0\oplus\left(\mathop{\bigoplus}\limits_{p=1}^{\infty}\mathcal{F}^{tw}_p/\mathcal{F}^{tw}_{p-1}\right).
\end{gather*}

\begin{proposition}\label{special}
The graded algebra $\operatorname{gr}\mathrm{Y}^{tw}_{\mathcal{G}}\left(\mathfrak{osp}_{M|N}\right)$ is isomorphic to the universal enveloping superalgebra corresponding to $\mathfrak{gl}_{M|N}[x]^{\theta}$ for any supersymmetric block matrix $\mathcal{G}$, that is,
$\operatorname{gr}\mathrm{Y}^{tw}_{\mathcal{G}}\left(\mathfrak{osp}_{M|N}\right)\cong \mathrm{U}\left(\mathfrak{gl}_{M|N}[x]^{\theta}\right)$.
\end{proposition}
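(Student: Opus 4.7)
The plan is to adapt Molev's proof of the classical analogue \cite[Theorem 2.4.3]{Mo07} to the super setting, exploiting the embedding $\jmath$ from Proposition \ref{prop:embed}. By Proposition \ref{Ytw:eqv}, it is enough to treat the distinguished matrix $\mathcal{G}=\mathcal{G}_0$ of Remark \ref{BRtw}, for which the involution $\theta$ introduced above cuts out exactly the fixed sub-superalgebra $\mathfrak{osp}_{M|N}$. Since $\jmath(s_{ij}^{(r)})$ is a sum of products of generators $t_{kl}^{(p)}$ whose total $p$-index does not exceed $r$, the map $\jmath$ respects the filtration $\deg s_{ij}^{(r)}=r-1$ and induces a graded homomorphism
\[
\bar{\jmath}:\ \operatorname{gr}\mathrm{Y}^{tw}_{\mathcal{G}_0}\left(\mathfrak{osp}_{M|N}\right) \longrightarrow \operatorname{gr}\mathrm{Y}\left(\mathfrak{gl}_{M|N}\right) \cong \mathrm{U}\left(\mathfrak{gl}_{M|N}[x]\right),
\]
where the second identification is Proposition \ref{grU}.

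Expanding $S(u)=T(u)\mathcal{G}T^t(-u)$ coefficient-wise, the coefficient of $u^{-r}$ in $s_{ij}(u)$ decomposes as a linear combination of terms $g_{aj}t_{ia}^{(r)}$ and $(-1)^{r}g_{ib}t_{jb}^{(r)}$, together with a quadratic remainder consisting of products $t_{ia}^{(p)}t_{jb}^{(q)}$ with $p+q=r$ and $p,q\geqslant 1$, whose filtration degree $(p-1)+(q-1)$ is strictly smaller than $r-1$. Applying the identification $\bar{t}_{ij}^{(r+1)}\mapsto -(-1)^{|j|}E_{ji}x^{r}$ together with the explicit form of $\mathcal{G}_0$, a direct sign computation shows that $\bar{\jmath}\left(\bar{s}_{ij}^{(r)}\right)$ is a nonzero $\mathbb{C}$-linear combination of elements of the form $F_{kl}^{(r)}$, and hence lies in the sub-superalgebra $\mathrm{U}\left(\mathfrak{gl}_{M|N}[x]^{\theta}\right)$. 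Moreover, as $(i,j,r)$ range over all admissible values, these images span $\mathfrak{gl}_{M|N}[x]^{\theta}$.

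To go in the other direction, define $\phi:\mathrm{U}\left(\mathfrak{gl}_{M|N}[x]^{\theta}\right)\rightarrow\operatorname{gr}\mathrm{Y}^{tw}_{\mathcal{G}_0}\left(\mathfrak{osp}_{M|N}\right)$ by sending each $F_{ij}^{(r)}$ to the correspondingly scaled class of $\bar{s}_{ij}^{(r)}$. Well-definedness amounts to verifying that the images satisfy the Lie super-bracket relations of $\mathfrak{gl}_{M|N}[x]^{\theta}$; these follow by extracting the top filtered component of the reflection equation \eqref{Ytw1}, in which the $R$-matrix contributions collapse into structure constants of $\mathfrak{gl}_{M|N}$, combined with the symmetry relation \eqref{Ytw2}, whose leading part encodes precisely the $\theta$-invariance condition. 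The map $\phi$ is surjective because the classes $\bar{s}_{ij}^{(r)}$ generate $\operatorname{gr}\mathrm{Y}^{tw}_{\mathcal{G}_0}(\mathfrak{osp}_{M|N})$, and the composition $\bar{\jmath}\circ\phi$ is identified with the canonical inclusion $\mathrm{U}\left(\mathfrak{gl}_{M|N}[x]^{\theta}\right)\hookrightarrow\mathrm{U}\left(\mathfrak{gl}_{M|N}[x]\right)$, which is injective by the PBW theorem for Lie superalgebras. Consequently $\phi$ is an isomorphism, and so is $\bar{\jmath}$ onto its image.

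The main obstacle is the bookkeeping of signs: one must carefully combine the $(-1)^{|i||j|+|j|}$ from the graded transpose, the $(-1)^{|i|}\theta_i$ hidden inside the entries of $\mathcal{G}_0$, and the $(-1)^{r}$ arising from $T^t(-u)$, so that the linear part of $\bar{s}_{ij}^{(r)}$ genuinely produces an element of $\mathfrak{gl}_{M|N}[x]^{\theta}$ rather than a spurious element of $\mathfrak{gl}_{M|N}[x]$, and so that the top-degree part of \eqref{Ytw2} reproduces exactly the condition that cuts out $\mathfrak{gl}_{M|N}[x]^{\theta}$. Once this sign analysis is in place, the remainder of the argument is formal and parallels the non-super case.
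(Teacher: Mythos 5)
Your proposal follows essentially the same route as the paper: reduce to $\mathcal{G}_0$ via Proposition \ref{Ytw:eqv}, pass to graded algebras through the embedding $\jmath$, and compute that the top-degree classes $\bar s_{ij}^{(r)}$ land in $\mathfrak{gl}_{M|N}[x]^{\theta}\subset\mathrm{U}\left(\mathfrak{gl}_{M|N}[x]\right)$ and span it. The one genuine difference is your construction of the inverse map $\phi$ and the verification of well-definedness from the graded top parts of \eqref{Ytw1}--\eqref{Ytw2}: this is a valid way to obtain injectivity of $\bar\jmath$, but the paper gets it for free, since the second filtration on $\mathrm{Y}^{tw}_{\mathcal{G}}$ is defined as the induced filtration $\mathcal{F}^{tw}_p=\mathcal{F}_p\cap\mathrm{Y}^{tw}_{\mathcal{G}}$, which makes $\operatorname{gr}\mathrm{Y}^{tw}_{\mathcal{G}}\hookrightarrow\operatorname{gr}\mathrm{Y}\left(\mathfrak{gl}_{M|N}\right)$ automatically injective, so the paper only needs to read off the image and check it spans. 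Note that both your argument and the paper's rely implicitly on the fact that the classes $\bar s_{ij}^{(r)}$ generate $\operatorname{gr}\mathrm{Y}^{tw}_{\mathcal{G}_0}$, which is not a tautology for an intersection-induced filtration but is part of the PBW-type statement for the twisted super Yangian. Finally, the sign computation you defer is precisely the content of the paper's short proof, where the image is worked out explicitly as $\bar\jmath\left(\bar s_{ij}^{(r)}\right)=-(-1)^{|j|}\theta_j F_{j'i}^{(r)}$; without carrying this out, one cannot confirm that the images lie in $\mathfrak{gl}_{M|N}[x]^{\theta}$ rather than merely in $\mathfrak{gl}_{M|N}[x]$, so this step should not be left as an aside.
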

\begin{proof}
By Proposition \ref{Ytw:eqv}, we only need to prove for $\mathcal{G}_0=\sum_k(-1)^{|k|}\theta_kE_{kk'}$. If there is no ambiguity, we omit the subscript $\mathcal{G}$.
From Proposition \ref{grU}, the mapping
\begin{gather*}
\overline{t}_{ij}^{(r)}\mapsto (-1)^{|i|}E_{ij}x^{r-1},\quad i,j\in I,\ r\in\mathbb{N},
\end{gather*}
forms an isomorphism between $\operatorname{gr}\mathrm{Y}\left(\mathfrak{gl}_{M|N}\right)$ and
$\mathrm{U}\left(\mathfrak{gl}_{M|N}[x]\right)$. 

Let $\bar{s}_{ij}^{(r)}$ denotes the image of $s_{ij}^{(r)}$ in the $r$-th component of $\operatorname{gr}\mathrm{Y}^{tw}\big(\mathfrak{osp}_{M|N}\big)$. The images of the elements $\overline{\mathfrak{s}}_{ij}^{(r)}$ under the following superalgebra homomorphism sequence
\begin{gather*}
\operatorname{gr}\mathrm{Y}^{tw}\left(\mathfrak{osp}_{M|N}\right)\hookrightarrow  \operatorname{gr}\mathrm{Y}\left(\mathfrak{gl}_{M|N}\right)  \rightarrow \mathrm{U}\left(\mathfrak{gl}_{M|N}[x]\right)
\end{gather*}
are equal to $-(-1)^{|j|}\theta_jF_{j'i}^{(r)}$,
which also spans $\mathfrak{gl}_{M|N}[x]^{\theta}$.

\end{proof}

\begin{corollary}
    The ortho-symplectic Lie superalgebra $\mathfrak{osp}_{M|N}$ is isomorphic to the superalgebra spanned by
    $$\sum_{k\in I}\left( E_{ik}g_{kj}+(-1)^{|i|}E_{jk}g_{ki}\right),\quad i,j\in I$$
    for any supersymmetric block matrix $\mathcal{G}=(g_{ij})$.
\end{corollary}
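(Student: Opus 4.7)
The plan is to deduce this corollary directly from Proposition~\ref{special}. That proposition asserts an isomorphism $\operatorname{gr}\mathrm{Y}^{tw}_{\mathcal{G}}\bigl(\mathfrak{osp}_{M|N}\bigr) \cong \mathrm{U}\bigl(\mathfrak{gl}_{M|N}[x]^{\theta}\bigr)$ for every supersymmetric block matrix $\mathcal{G}$. The degree-zero component of the right-hand side is precisely the fixed subsuperalgebra $\mathfrak{osp}_{M|N} = \bigl(\mathfrak{gl}_{M|N}\bigr)_{+}$ under $\theta$, while the degree-zero component of the left-hand side is the linear span of the classes $\bar{s}_{ij}^{(1)}$ of the first-order generators. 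It therefore suffices to identify these classes explicitly inside $\operatorname{gr}\mathrm{Y}\bigl(\mathfrak{gl}_{M|N}\bigr) \cong \mathrm{U}\bigl(\mathfrak{gl}_{M|N}[x]\bigr)$ and to verify that their span coincides with $\operatorname{span}\{F_{ij}^{\mathcal{G}} : i,j\in I\}$, where $F_{ij}^{\mathcal{G}}$ denotes the element displayed in the statement.

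To carry this out, I would begin from the embedding $\jmath : S(u)\mapsto T(u)\mathcal{G}T^{t}(-u)$ of Proposition~\ref{prop:embed} and expand in powers of $u^{-1}$:
\[
T(u)\mathcal{G}T^{t}(-u) \;=\; \mathcal{G} + \bigl(T^{(1)}\mathcal{G} - \mathcal{G}(T^{(1)})^{t}\bigr)\, u^{-1} + O\bigl(u^{-2}\bigr),
\]
so that the $(i,j)$-entry expresses $s_{ij}^{(1)}$ as a $\mathcal{G}$-weighted linear combination of the $t^{(1)}_{\bullet\bullet}$. Applying Proposition~\ref{grU}, namely $\bar t^{(1)}_{ab}\mapsto -(-1)^{|b|}E_{ba}$, and collapsing the resulting signs by means of the supersymmetry relation $g_{ji}=(-1)^{|i|}g_{ij}$ (which holds whenever either side is nonzero, by $\mathcal{G}^{t}=J\mathcal{G}$ and the vanishing $g_{ab}=0$ for $|a|\ne|b|$), one arrives, up to a uniform nonzero scalar, at the element $F_{ij}^{\mathcal{G}}$ of the statement.

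Once this identification is in hand, the corollary is immediate: Proposition~\ref{special} guarantees that the Lie superbracket inherited on $\operatorname{span}\{\bar s_{ij}^{(1)}\}$ from $\operatorname{gr}\mathrm{Y}^{tw}_{\mathcal{G}}\bigl(\mathfrak{osp}_{M|N}\bigr)$ coincides with the bracket on the degree-zero summand $\mathfrak{osp}_{M|N}$ of $\mathfrak{gl}_{M|N}[x]^{\theta}$. Consequently $\operatorname{span}\{F_{ij}^{\mathcal{G}}:i,j\in I\}$ is a Lie subsuperalgebra of $\mathfrak{gl}_{M|N}$ isomorphic to $\mathfrak{osp}_{M|N}$, with the abstract isomorphism realized explicitly by $\bar s_{ij}^{(1)} \leftrightarrow F_{ij}^{\mathcal{G}}$.

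The only technical obstacle is the parity bookkeeping: the graded transposition $t$ contributes the sign $(-1)^{|a||b|+|b|}$, the identification in Proposition~\ref{grU} contributes another $(-1)^{|b|}$, and the block structure of $\mathcal{G}$ forces $|a|=|b|$ on each nonzero summand. Pooling and simplifying these signs via $g_{ji}=(-1)^{|i|}g_{ij}$ is what produces the clean coefficient $(-1)^{|i|}$ on the second term of $F_{ij}^{\mathcal{G}}$; beyond this calculation nothing new is required, all remaining input being already supplied by Propositions~\ref{prop:embed}, \ref{grU}, and \ref{special}.
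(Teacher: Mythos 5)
The route you propose — invoke Proposition~\ref{special} to identify the degree-zero piece of $\operatorname{gr}\mathrm{Y}^{tw}_{\mathcal G}$ with $\mathfrak{osp}_{M|N}$, expand $T(u)\mathcal G T^t(-u)$ to order $u^{-1}$, and read off the image of $\bar s_{ij}^{(1)}$ via Proposition~\ref{grU} — is the natural one and is essentially what the paper intends. The problem lies in the step you declare to be ``only parity bookkeeping,'' which you assert but do not carry out. It does not come out as you claim.

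Carrying the expansion through: using $s_{ij}(u)=\sum_{a,b}(-1)^{|i||j|+|i||a|+|j|+|a|}g_{ab}\,t_{ia}(u)t_{jb}(-u)$ from the proof of Proposition~\ref{prop:embed}, extracting the $u^{-1}$ coefficient, applying the identification $\bar t^{(1)}_{ab}\mapsto(-1)^{|a|}E_{ab}$ used in the proof of Proposition~\ref{special}, and simplifying with $g_{ab}=0$ for $|a|\neq|b|$ and $g_{ab}=(-1)^{|a|}g_{ba}$, one obtains
\[
\bar s_{ij}^{(1)}\ \longmapsto\ (-1)^{|i|}\Bigl(\sum_{k}E_{ik}g_{kj}\; -\; (-1)^{|i||j|}\sum_{k}E_{jk}g_{ki}\Bigr).
\]
The scalar $(-1)^{|i|}$ is not uniform in $(i,j)$ (harmless for the span), but more importantly the coefficient of the second term is $-(-1)^{|i||j|}$, not $+(-1)^{|i|}$ as the displayed statement and your write-up claim. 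The two agree only when $|i|=1$, $|j|=0$.

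This is not a cosmetic discrepancy: the formula printed in the corollary cannot be correct. Take $N=0$ and $\mathcal G$ the identity; then $\sum_k(E_{ik}g_{kj}+(-1)^{|i|}E_{jk}g_{ki})=E_{ij}+E_{ji}$, and these span the symmetric $M\times M$ matrices, of dimension $\tfrac{M(M+1)}{2}$, whereas $\mathfrak o_M$ has dimension $\tfrac{M(M-1)}{2}$. (Similarly for $M=0$, the stated formula collapses to a one-dimensional span for $N=2$, not $\mathfrak{sp}_2$.) With the coefficient $-(-1)^{|i||j|}$ obtained above, the dimension count is $\tfrac{M(M-1)}{2}+\tfrac{N(N+1)}{2}+MN=\dim\mathfrak{osp}_{M|N}$, and then the argument you sketch — Proposition~\ref{special} identifying the Lie superbracket with that of the degree-zero summand of $\mathfrak{gl}_{M|N}[x]^\theta$ — does close the proof. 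So the genuine gap in your proposal is precisely the omitted sign computation: performing it both exposes the error in your final assertion and reveals that the statement itself, as printed, has the wrong sign on the second term.
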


\subsection{Comments on the center $\mathcal{Z}\mathrm{Y}^{tw}\left(\mathfrak{osp}_{M|N}\right)$}
With the framework of the embedding map \eqref{embedding}, we regard $\mathrm{Y}^{tw}\big(\mathfrak{osp}_{M|N}\big)$ as a sub-superalgebra of $\mathrm{Y}\big(\mathfrak{gl}_{M|N}\big)$.
We present the twisted analogue of the central formulas as stated in Proposition \ref{centre:gl}. The following lemma will be utilized in Section \ref{se:quantumbereianian:1}.

\begin{lemma}\label{rho-Z}
The automorphism $\rho$ in \eqref{auto:Ygl4} acts on $z(u)$ given by
\begin{gather*}
\rho\left(z(u)\right)=z(-u-M+N)^{-1}.
\end{gather*}
\end{lemma}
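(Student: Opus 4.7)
The plan is to exploit the decomposition $\rho=\vartheta\circ\tau=\tau\circ\vartheta$ together with the corollary $\tau(z(u))=z(u)$ stated earlier, which immediately reduces the desired identity to proving $\vartheta(z(u))=z(-u-M+N)^{-1}$.

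First, I would apply the anti-automorphism $\vartheta$ to the defining identity \eqref{centre:1} from Proposition \ref{centre:gl}. Using that $\vartheta(T(u))=T(-u)$ acts entrywise by $t_{ij}(u)\mapsto t_{ij}(-u)$ (with the corresponding action on $\tilde{t}_{ij}(u)$ deduced from applying $\vartheta$ to $T(u)\widetilde{T}(u)=I$), and reversing the product order with the super sign $(-1)^{(|k|+|j|)(|i|+|k|)}$ that any superalgebra anti-automorphism introduces, then specializing $i=j$ and substituting $v=-u-M+N$, I would arrive at an identity of the form
\[
\sum_{k\in I}(-1)^{|i|+|k|}\,t_{ik}(v)\,\tilde{t}_{ki}(v+M-N)\;=\;\vartheta(z(u)).
\]

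Next, I would identify the left-hand side with $z(v)^{-1}$. The natural route is to reinterpret the sum as the $(i,i)$-entry (weighted by $(-1)^{|i|}$) of the matrix $T(v)\,J\,\widetilde{T}(v+M-N)$ with $J=\sum_{k\in I}(-1)^{|k|}E_{kk}$, and then prove the matrix identity $T(v)\,J\,\widetilde{T}(v+M-N)=z(v)^{-1}\,J$ by combining the companion identity \eqref{centre:2} with the centrality of $z(v)$: multiplying \eqref{centre:2} appropriately by $\widetilde{T}$ and by $T$ on suitable sides inverts the roles of $t$ and $\tilde t$ and produces the inverse central series $z(v)^{-1}$, completing the proof.

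The main obstacle is the careful bookkeeping of super signs that appear when reversing products under the anti-automorphism $\vartheta$ and when converting between entrywise sums and tensor-product matrix identities in $\mathrm{End}V\otimes\mathrm{Y}(\mathfrak{gl}_{M|N})$. A cleaner alternative route would be to use the quantum Liouville formula $z(u)=\mathfrak{B}_{M|N}(u+1)/\mathfrak{B}_{M|N}(u)$ and compute $\rho$ directly on the explicit permutation-sum defining $\mathfrak{B}_{M|N}(u)$: because the first factor of $\mathfrak{B}_{M|N}(u)$ involves only even-indexed $t$'s and the second only odd-indexed $\tilde t$'s, the action of $\rho$ on each factor is transparent, and a reindexing $\sigma\mapsto\sigma^{-1}$ (which preserves $\operatorname{sgn}\sigma$) together with argument-shift manipulations should identify $\rho(\mathfrak{B}_{M|N}(u))$ with $\mathfrak{B}_{M|N}(-u-M+N)^{-1}$, from which the quotient formula yields the claim.
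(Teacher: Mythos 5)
Your reduction via $\rho=\vartheta\circ\tau$ and $\tau(z(u))=z(u)$ is fine, and after applying $\vartheta$ to \eqref{centre:1} you do arrive at a sum of the form $\sum_k(\pm)\,t_{ik}(v)\tilde t_{ki}(v+M-N)=\vartheta(z(u))$ with $v=-u-M+N$. The gap is in the next step: you assert that this sum can be identified with $z(v)^{-1}$ ``by multiplying \eqref{centre:2} appropriately by $\widetilde T$ and by $T$ on suitable sides,'' but no such naive matrix manipulation actually collapses — the extra super signs in products of the form $(E_{ik}\otimes t_{ik})(E_{kl}\otimes\tilde t_{kl})$ prevent the contraction you want, and the factors $t$, $\tilde t$ at different shifts do not supercommute. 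The paper closes exactly this gap by a different device: it applies the \emph{automorphism} $\psi$ of \eqref{auto:Ygl5}, which is known to send $z(u)\mapsto z(u)^{-1}$ (by combining the corollaries $\tau(z(u))=z(u)$ and $S(z(u))=z(u)^{-1}$), to the central identity \eqref{centre:2}. Since $\psi$ is an automorphism rather than an anti-automorphism, products need not be reversed, and one obtains in a single step the identity $\delta_{ij}z(-u-M+N)^{-1}=\sum_k(\pm)\,\tilde t_{jk}(-u)t_{ki}(-u-M+N)$; comparing with the result of applying the automorphism $\rho$ directly to \eqref{centre:1} then finishes the proof. So the structural idea is close, but without invoking $\psi$ the crucial identification of your sum with $z(v)^{-1}$ is left unjustified.

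Your proposed alternative via the quantum Liouville formula does not work. If $\rho\bigl(\mathfrak{B}_{M|N}(u)\bigr)=\mathfrak{B}_{M|N}(-u-M+N)^{-1}$ held, then \eqref{qLiou-gl} would give $\rho(z(u))=\mathfrak{B}_{M|N}(-u-M+N)\big/\mathfrak{B}_{M|N}(-u-M+N-1)$, whereas the lemma requires $z(-u-M+N)^{-1}=\mathfrak{B}_{M|N}(-u-M+N)\big/\mathfrak{B}_{M|N}(-u-M+N+1)$; the argument shifts differ by two, so the proposed formula for $\rho(\mathfrak{B}_{M|N}(u))$ is inconsistent with the statement you want to prove. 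Indeed, the paper explicitly introduces the separate series $\mathfrak{P}_{M|N}(u)=\rho\bigl(\mathfrak{B}_{M|N}(u)\bigr)$ and remarks (with the example $M=1$, $N=2$) that $\rho(\mathfrak{B}_{M|N}(u))$ is \emph{not} a simple shift (or inverse shift) of $\mathfrak{B}_{M|N}$ unless $M=N$ or $M=0$, precisely because the ``even'' and ``odd'' blocks transform differently under $\rho$.
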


\begin{proof}
Applying the automorphism \eqref{auto:Ygl5} to the both sides of \eqref{centre:2}, we get
\begin{gather*}
\delta_{ij}z(u)^{-1}=\sum_k (-1)^{|i||k|+|j||k|+|k|+|i|} \tilde{t}_{jk}(u+M-N)t_{ki}(u).
\end{gather*}
Replacing $u$ with $-u-M+N$, we obtain
\begin{gather}\label{central:2:1}
\delta_{ij}z(-u-M+N)^{-1}=\sum_k (-1)^{|i||k|+|j||k|+|k|+|i|} \tilde{t}_{jk}(-u)t_{ki}(-u-M+N).
\end{gather}
Then the lemma follows from the action of $\rho$ on \eqref{centre:1}.

\end{proof}


\begin{proposition}
In the twisted super Yangian $\mathrm{Y}^{tw}(\mathfrak{osp}_{M|N})$, we have
\begin{align}\label{centre:3}
&\sum_k (-1)^{|j||k|+|i||k|+|i|}\tilde{s}_{jk}(-u)s_{ki}(-u-M+N)=\delta_{ij}z(u)z(-u-M+N)^{-1}, \\ \label{centre:4}
&\sum_k (-1)^{|j||k|+|i||k|+|i|}s_{jk}(-u-M+N)\tilde{s}_{ki}(-u)=\delta_{ij}z(u)z(-u-M+N)^{-1}.
\end{align}
\end{proposition}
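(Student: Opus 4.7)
The natural strategy is to work inside $\mathrm{Y}(\mathfrak{gl}_{M|N})$ via the embedding of Proposition~\ref{prop:embed}, so that both $\tilde s_{jk}(-u)$ and $s_{ki}(-u-M+N)$ are expressible in terms of the generators $t_{pq}$, $\tilde t_{pq}$ and the fixed matrix $\mathcal{G}$. Concretely, $\widetilde S(u)=\widetilde T^t(-u)\mathcal{G}^{-1}\widetilde T(u)$, and substituting into the LHS of \eqref{centre:3} produces a multi-index sum over $\alpha,\beta,\gamma,\delta$ (constrained by $|\alpha|=|\beta|$ and $|\gamma|=|\delta|$) in which the summation variable $k$ appears only inside the inner factor $\tilde t_{\beta k}(-u) t_{k\gamma}(-u-M+N)$, carrying an additional sign $(-1)^{(|\alpha|+|\gamma|)|k|}$ after bookkeeping of the graded-transpose signs.

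The crucial preparatory step is to upgrade Proposition~\ref{centre:gl} to a family of companion central identities. Applying the anti-automorphism $S$ of \eqref{antiauto:Ygl3} to \eqref{centre:1}--\eqref{centre:2}, using that $S(z(u))=z(u)^{-1}$, and then shifting $u\mapsto -u-M+N$, yields
$$\sum_k \tilde t_{\beta k}(-u)\, t_{k\gamma}(-u-M+N)=\delta_{\beta\gamma}\, z(-u-M+N)^{-1},$$
together with its reverse-order sibling. This is precisely the identity that collapses the inner $k$-sum on the parity-diagonal block $|\alpha|=|\gamma|$. The outer $\beta,\gamma,\delta$-summation then telescopes via the matrix identity $\mathcal{G}^{-1}J\mathcal{G}=J$, which is an immediate consequence of $\mathcal{G}^t=J\mathcal{G}$ combined with the symmetric/antisymmetric block form of $\mathcal{G}$. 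What remains is a single sum in $\alpha$ that is identified with $\delta_{ij}\,z(u)$ by a final appeal to \eqref{centre:1}, producing the desired RHS $\delta_{ij}\,z(u)\,z(-u-M+N)^{-1}$.

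Identity \eqref{centre:4} is then handled by the mirror-image computation with the roles of $\widetilde T$ and $T$ exchanged, or, equivalently, by transporting \eqref{centre:3} through the antipode $S$ and relabelling indices. The main obstacle I anticipate is the control of the cross-parity terms ($|\alpha|\neq|\gamma|$): there the inner $k$-sum acquires a nontrivial $(-1)^{|k|}$ weighting and does not directly match the central formulas of Proposition~\ref{centre:gl}. Rewriting these cross-parity inner sums as entries of the matrix $\widetilde T(-u)\, J\, T(-u-M+N)$ and showing that their combined contribution recombines cleanly with the parity-diagonal contribution into $\delta_{ij}\,z(u)\,z(-u-M+N)^{-1}$ is the technical heart of the argument, and requires invoking both the block-diagonal structure of $\mathcal{G}$ and the action of $\rho$ on $z(u)$ from Lemma~\ref{rho-Z}.
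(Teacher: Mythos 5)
Your overall strategy matches the paper's: expand $\widetilde S(-u)$ and $S(-u-M+N)$ through the embedding of Proposition~\ref{prop:embed}, collapse the inner $k$-sum with a shifted central identity, cancel $\mathcal{G}^{-1}$ against $\mathcal{G}$, and finish with \eqref{centre:1}. However, there is a genuine gap in the ``crucial preparatory step.'' The intermediate identity you write down,
\[
\sum_k \tilde t_{\beta k}(-u)\, t_{k\gamma}(-u-M+N)=\delta_{\beta\gamma}\, z(-u-M+N)^{-1},
\]
is missing an essential sign factor; the correct identity (which is precisely \eqref{central:2:1} in the paper) is
\[
\sum_k (-1)^{|\beta||k|+|\gamma||k|+|k|+|\beta|}\, \tilde t_{\beta k}(-u)\, t_{k\gamma}(-u-M+N)=\delta_{\beta\gamma}\, z(-u-M+N)^{-1}.
\]
Moreover, you propose to derive it by applying the antipode $S$ of \eqref{antiauto:Ygl3} to \eqref{centre:1}--\eqref{centre:2}. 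That route is problematic: $S$ is a super-anti-automorphism whose square is not the identity, so evaluating $S(\tilde t_{kj}(u))$ requires controlling $S^{2}$ and introduces more sign bookkeeping than it removes. The paper instead applies the (non-involutive) \emph{automorphism} $\psi$ of \eqref{auto:Ygl5} to \eqref{centre:2}; since $\psi(T(u))=T^{\ast}(u)=\widetilde T^{t}(u)$ and $\psi(\widetilde T(u))=T^{t}(u)$, this yields the signed identity directly with a single shift $u\mapsto -u-M+N$. This is exactly the content of the proof of Lemma~\ref{rho-Z}, which you cite but do not actually exploit.

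Once the signed intermediate identity is in hand, your anticipated difficulty with ``cross-parity terms'' disappears. The signs coming from expanding $\widetilde S(-u)S(-u-M+N)$ through $\jmath$ and the signs in \eqref{central:2:1} combine uniformly, with no case split on $|\alpha|$ versus $|\gamma|$; after the inner sum collapses, one only needs $\sum_b \tilde g_{ab}\, g_{bd}=\delta_{ad}$ (i.e.\ $\mathcal{G}^{-1}\mathcal{G}=1$), not the identity $\mathcal{G}^{-1}J\mathcal{G}=J$, and then \eqref{centre:1} closes the argument. Your planned invocation of $\rho$ and the block structure for a separate cross-parity contribution is a red herring created by the missing sign in your preparatory step. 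With that corrected, the computation collapses in three lines exactly as in the paper, and \eqref{centre:4} follows in the same way.
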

\begin{proof}
Let $\mathcal{G}=(g_{ij})$ be fixed as in \eqref{embedding}. Set
\begin{gather*}
    \mathcal{G}^{-1}=\sum_{ij}E_{ij}\otimes \tilde{g}_{ij}
\end{gather*}
such that $\sum_k \tilde{g}_{ik}g_{kj}=\delta_{ij}$.

We compute only equation \eqref{centre:3}, while equation \eqref{centre:4} can be verified using a similar approach. Using equation \eqref{central:2:1} and embedding \eqref{embedding}, the left side of \eqref{centre:3} is equal to
\begin{align*}
    &\quad\ \sum_{a,b}\sum_{c,d}\tilde{t}_{aj}(u)\tilde{g}_{ab}\bigg(\sum_k(-1)^{|k||b|+|k||c|+|k|+|c|}\tilde{t}_{bk}(-u)t_{kc}(-u-M+N)\bigg)g_{cd}t_{id}(u+M-N) \\
    &=\sum_{a,b,d}\tilde{t}_{aj}(u)\tilde{g}_{ab}g_{bd}t_{id}(u+M-N)z(-u-M+N)^{-1} \\
    &=\sum_{a}\tilde{t}_{aj}(u)t_{ia}(u+M-N)z(-u-M+N)^{-1}=\delta_{ij}z(u)z(-u-M+N)^{-1}.
\end{align*}

\end{proof}

Put $\mathfrak{z}(u)=z(u)z(-u-M+N)^{-1}$. It can be rewritten as a formal power series as follows
\begin{gather*}
\mathfrak{z}(u)=1+\mathfrak{z}^{(1)}u^{-1}+\mathfrak{z}^{(2)}u^{-2}+\mathfrak{z}^{(3)}u^{-3}+\mathfrak{z}^{(4)}u^{-4}+\cdots \ \in \ \mathrm{Y}^{tw}\big(\mathfrak{osp}_{M|N}\big)\big[\big[u^{-1}\big]\big].
\end{gather*}
Observe that $\mathfrak{z}^{(1)}=\mathfrak{z}^{(2)}=0$.
\begin{theorem}
The coefficients $\mathfrak{z}^{(3)}$, $\mathfrak{z}^{(4)}$, $\mathfrak{z}^{(5)}$, $\ldots$ are generators of the center of the twisted super Yangian $\mathrm{Y}^{tw}(\mathfrak{osp}_{M|N})$.
\end{theorem}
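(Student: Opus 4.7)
The plan is a filtered-to-graded argument based on Proposition \ref{special}: under the second filtration with $\deg s_{ij}^{(r)} = r-1$, the graded algebra $\operatorname{gr}\mathrm{Y}^{tw}(\mathfrak{osp}_{M|N})$ is isomorphic to $\mathrm{U}(\mathfrak{gl}_{M|N}[x]^{\theta})$. I would proceed in four stages: verify each $\mathfrak{z}^{(r)}$ is central in $\mathrm{Y}^{tw}(\mathfrak{osp}_{M|N})$, compute its image $\bar{\mathfrak{z}}^{(r)}$ in the graded algebra, identify $\mathcal{Z}\mathrm{U}(\mathfrak{gl}_{M|N}[x]^{\theta})$ explicitly, and transfer the result back to $\mathrm{Y}^{tw}(\mathfrak{osp}_{M|N})$ by induction on the filtration degree.

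Centrality is immediate: equations \eqref{centre:3}--\eqref{centre:4} realize $\mathfrak{z}(u)$ as a power series in $s_{ij}(u)$ and $\tilde{s}_{ij}(u)$, so $\mathfrak{z}^{(r)} \in \mathrm{Y}^{tw}(\mathfrak{osp}_{M|N})$, while $z(u)$ is central in the ambient $\mathrm{Y}(\mathfrak{gl}_{M|N})$ by Proposition \ref{centre:gl}, hence supercommutes with every element of the subalgebra $\mathrm{Y}^{tw}(\mathfrak{osp}_{M|N})$ via the embedding $\jmath$ of Proposition \ref{prop:embed}. For the second stage, set $c = M-N$ and expand $\mathfrak{z}(u) = z(u)\,z(-u-c)^{-1}$ as a series in $u^{-1}$. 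Since $z^{(r)}$ has filtration degree $r-2$ by Corollary \ref{Na:Z:U}, a direct leading-order calculation gives, for each odd $r \geq 3$, the identity $\mathfrak{z}^{(r)} = 2\,z^{(r)} + (\text{terms of strictly lower filtration degree})$. Combined with the formula $\bar{z}^{(r)} = (r-1)\,I\,x^{r-2}$ (where $I = \sum_{i} E_{ii}$) from Corollary \ref{Na:Z:U}, this yields $\bar{\mathfrak{z}}^{(r)} = 2(r-1)\,I\,x^{r-2}$, a nonzero scalar multiple of $I\,x^{r-2}$; this element lies in $\mathfrak{gl}_{M|N}[x]^{\theta}$ precisely because $r-2$ is odd and $\theta(I) = -I$.

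The main obstacle is the third stage. Lemma \ref{trivialc} does not apply as stated because $I$ is central in $\mathfrak{gl}_{M|N}$ itself, so $\mathbb{C}\,I$ is a nontrivial subspace of the centralizer of $\mathfrak{osp}_{M|N}$ in $\mathfrak{gl}_{M|N}$. A super-Schur argument applied to the irreducible defining module $\mathbb{C}^{M|N}$ shows that this scalar line exhausts the centralizer. I then rerun the proof of Lemma \ref{trivialc} using a homogeneous basis of $\mathfrak{gl}_{M|N}^{-}$ that contains $I$: the key linear system \eqref{B:lambda2} now has a one-dimensional solution space spanned by $I$, and carefully tracking this one-parameter ambiguity through the induction produces $\mathcal{Z}\mathrm{U}(\mathfrak{gl}_{M|N}[x]^{\theta}) = \mathbb{C}[\,I\,x,\,I\,x^{3},\,I\,x^{5},\,\ldots\,]$.

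Finally, let $Z \subseteq \mathrm{Y}^{tw}(\mathfrak{osp}_{M|N})$ denote the subalgebra generated by the $\mathfrak{z}^{(r)}$ with $r \geq 3$. The first stage gives $Z \subseteq \mathcal{Z}\mathrm{Y}^{tw}(\mathfrak{osp}_{M|N})$, and the second and third together show that the images $\bar{\mathfrak{z}}^{(2k+1)}$ for $k \geq 1$ already generate $\mathcal{Z}(\operatorname{gr}\mathrm{Y}^{tw}(\mathfrak{osp}_{M|N}))$, so $\operatorname{gr}Z = \mathcal{Z}(\operatorname{gr}\mathrm{Y}^{tw}(\mathfrak{osp}_{M|N}))$. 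A standard induction on filtration degree---given a central $a$ of degree $d$, find $c \in Z$ with the same leading graded image and reduce to the central element $a - c$ of strictly smaller degree---then upgrades this to $Z = \mathcal{Z}\mathrm{Y}^{tw}(\mathfrak{osp}_{M|N})$, proving the theorem. As a sanity check, the even-index coefficients $\mathfrak{z}^{(2k)}$ are not strictly needed: the functional identity $\mathfrak{z}(u)\,\mathfrak{z}(-u-c) = 1$, which follows immediately from the definition of $\mathfrak{z}(u)$, expresses each of them in terms of the odd-index ones, so their inclusion in the generating set is harmless.
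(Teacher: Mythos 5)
Your proof follows essentially the same path as the paper's: compute the graded images $\bar{\mathfrak{z}}^{(r)}$ as scalar multiples of $E\,x^{r-2}$ in $\mathrm{U}\!\left(\mathfrak{gl}_{M|N}[x]^{\theta}\right)$ via Proposition~\ref{special} and Corollary~\ref{Na:Z:U}, reduce to the assertion that $\mathcal{Z}\mathrm{U}\!\left(\mathfrak{gl}_{M|N}[x]^{\theta}\right)=\mathbb{C}[E x, E x^{3},\ldots]$, and transfer back by induction on the filtration degree. The only divergence is how the hypothesis of Lemma~\ref{trivialc} is rescued: you rerun its proof with the one-dimensional centralizer $\mathbb{C}E$ tracked explicitly (and supply a super-Schur argument for why this line is the whole centralizer), whereas the paper instead quotients $\mathfrak{gl}_{M|N}[x]^{\theta}$ by the ideal generated by $E x, E x^{3},\ldots$ and applies Lemma~\ref{trivialc} to the quotient --- two essentially equivalent resolutions; your functional identity $\mathfrak{z}(u)\mathfrak{z}(-u-M+N)=1$ for disposing of the even-index coefficients is a clean substitute for the paper's unexplained assertion that $\mathfrak{z}^{(2m+2)}$ is generated by $\mathfrak{z}^{(2m+1)}$.
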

\begin{proof}
Combining with Proposition \ref{grU} and Corollary \ref{Na:Z:U}, the image of central elements $z^{(r)}$ for $r\geqslant 2$ in the universal enveloping superalgebra $\mathrm{U}\left(\mathfrak{gl}_{M|N}[x]\right)$ is equal to
\begin{gather*}
    -(r-1)\sum_{i\in I}E_{ii}x^{r-1}.
\end{gather*}
Then by Proposition \ref{special}, it follows that the image of the elements $\mathfrak{z}^{(2m+1)}$ for $m\geqslant 1$ in the universal enveloping superalgebra $\mathrm{U}\left(\mathfrak{gl}_{M|N}[x]^{\theta}\right)$ is given by
\begin{gather*}
y(m)\cdot x^{2m-1}\cdot E,
\end{gather*}
where $E$ denotes the identity matrix $E=\sum_{i\in I}E_{ii}$ and $y(m)$ is a finite sum related to $m$. Moreover, the image of the elements $\mathfrak{z}^{(2m+2)}$ for $m\geqslant 1$ in $\operatorname{gr}\mathrm{Y}^{tw}\left(\mathfrak{osp}_{M|N}\right)$ can be generated by $\mathfrak{z}^{(2m+1)}$. Hence, it suffices to show that the center of $\mathrm{U}\left(\mathfrak{gl}_{M|N}[x]^{\theta}\right)$ is generated by $x\cdot E$, $x^3\cdot E$, $x^5\cdot E$, $\ldots$
Let $\mathfrak{a}[x]^{\theta}$ be the quotient of $\mathfrak{gl}_{M|N}[x]^{\theta}$ by the ideal generated by $\mathbb{C}[x\cdot E,x^3\cdot E,x^5\cdot E,\ldots]$. Then the statement of this theorem follows from Lemma \ref{trivialc}.
\end{proof}

\subsection{Special twisted super Yangian}
 Regard $\mathrm{Y}^{tw}\left(\mathfrak{osp}_{M|N}\right)$ as a sub-superalgebra of $\mathrm{Y}\left(\mathfrak{gl}_{M|N}\right)$ under the embedding \eqref{embedding}. We can also define a ''special'' version of the twisted super Yangian as in $\mathrm{Y}\left(\mathfrak{gl}_{M|N}\right)$.
\begin{definition}
    The special twisted super Yangian $\mathrm{SY}^{tw}\left(\mathfrak{osp}_{M|N}\right)$ is the sub-superalgebra of $\mathrm{Y}^{tw}\left(\mathfrak{osp}_{M|N}\right)$ determined by
    \begin{gather*}
    \mathrm{SY}^{tw}\left(\mathfrak{osp}_{M|N}\right)
               =\mathrm{Y}\left(\mathfrak{sl}_{M|N}\right)\cap \mathrm{Y}^{tw}\left(\mathfrak{osp}_{M|N}\right).
    \end{gather*}
\end{definition}

Since the map
\begin{gather}\label{Ytw:auto:1}
    \mu_f^{tw}:\ S(u)\mapsto f(u)S(u)
\end{gather}
also defines an automorphism of $\mathrm{Y}^{tw}\left(\mathfrak{osp}_{M|N}\right)$ for all $f(u)\in 1+u^{-1}\mathbb{C}\left[\left[u^{-1}\right]\right]$, the special twisted super Yangian has an equivalent definition.
\begin{definition}
    The special twisted super Yangian $\mathrm{SY}^{tw}\left(\mathfrak{osp}_{M|N}\right)$ is the sub-superalgebra of $\mathrm{Y}^{tw}\left(\mathfrak{osp}_{M|N}\right)$, which consists of elements preserved by all automorphisms \eqref{Ytw:auto:1}.  That is,
    $$\mathrm{SY}^{tw}(\mathfrak{osp}_{M|N}):=\left\{y\in\mathrm{Y}^{tw}(\mathfrak{osp}_{M|N})|\ \mu_f^{tw}(y)=y \text{ for all }f\right\}.$$
\end{definition}

\begin{proposition}
    For $M\neq N$, the superalgebra $\mathrm{Y}^{tw}\left(\mathfrak{osp}_{M|N}\right)$ has the following decomposition
    \begin{gather}\label{tw:decomposition}
        \mathrm{Y}^{tw}\left(\mathfrak{osp}_{M|N}\right)
                =\mathcal{Z}\mathrm{Y}^{tw}\left(\mathfrak{osp}_{M|N}\right)\otimes \mathrm{SY}^{tw}\left(\mathfrak{osp}_{M|N}\right).
    \end{gather}
\end{proposition}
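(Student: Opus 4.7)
The plan is to show that the multiplication map
$$m:\ \mathcal{Z}\mathrm{Y}^{tw}\left(\mathfrak{osp}_{M|N}\right)\otimes \mathrm{SY}^{tw}\left(\mathfrak{osp}_{M|N}\right)\longrightarrow \mathrm{Y}^{tw}\left(\mathfrak{osp}_{M|N}\right)$$
is a superalgebra isomorphism, following the blueprint of the analogous non-twisted decomposition for $\mathrm{Y}\left(\mathfrak{gl}_{M|N}\right)$ recorded in the preceding proposition. The map is a well-defined homomorphism because $\mathcal{Z}\mathrm{Y}^{tw}$ super-commutes with $\mathrm{SY}^{tw}$. First I would attack both surjectivity and injectivity by passing to the associated graded algebra via Proposition \ref{special}, combined with the explicit action of the automorphisms $\mu_f^{tw}$ from \eqref{Ytw:auto:1} on the central series $\mathfrak{z}(u)$.

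The first key computation I would carry out is the scaling formula, valid for $c=M-N\neq 0$ and even $f\in 1+u^{-2}\mathbb{C}\left[\left[u^{-2}\right]\right]$:
$$\mu_f^{tw}\bigl(\mathfrak{z}(u)\bigr)=\frac{f(u+c)}{f(u)}\,\mathfrak{z}(u),$$
obtained by applying $\mu_f^{tw}$ to \eqref{centre:3} using $\tilde{s}_{ij}(u)\mapsto f(u)^{-1}\tilde{s}_{ij}(u)$; since $c\neq 0$, a coefficient-wise match shows every generator $\mathfrak{z}^{(r)}$ with $r\geqslant 3$ is moved non-trivially by some such $f$. The second ingredient is a direct check that $\theta(I)=-I$ for the identity $I=\sum_i E_{ii}$ and $\mathrm{Str}\circ\theta=-\mathrm{Str}$, so the splitting $\mathfrak{gl}_{M|N}=\mathfrak{sl}_{M|N}\oplus\mathbb{C}I$ (which holds precisely because $M\ne N$) is $\theta$-stable and induces
$$\mathfrak{gl}_{M|N}[x]^{\theta}=\mathfrak{sl}_{M|N}[x]^{\theta}\,\oplus\,\bigoplus_{m\geqslant 0}\mathbb{C}\cdot I x^{2m+1}.$$
By PBW, $\mathrm{U}\bigl(\mathfrak{gl}_{M|N}[x]^{\theta}\bigr)\cong\mathrm{U}\bigl(\mathfrak{sl}_{M|N}[x]^{\theta}\bigr)\otimes\mathbb{C}\left[Ix,Ix^3,Ix^5,\ldots\right]$, and the right tensor factor coincides with $\operatorname{gr}\mathcal{Z}\mathrm{Y}^{tw}$ by the preceding central theorem.

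Finally, I would establish bijectivity of $m$ by induction on filtration degree. For surjectivity, given $y\in\mathcal{F}^{tw}_p$, I decompose $\operatorname{gr}(y)$ under the PBW factorization as $\sum_i\bar z_i\,\bar y_i$, lift each $\bar z_i\in\mathbb{C}[Ix,Ix^3,\ldots]$ to $\mathcal{Z}\mathrm{Y}^{tw}$ via the preceding central theorem, and lift each $\bar y_i\in\mathrm{U}\bigl(\mathfrak{sl}_{M|N}[x]^{\theta}\bigr)$ to an element of $\mathrm{SY}^{tw}$ by an iterative normalization: at each step the deviation $\mu_g^{tw}(y)-y$ of a tentative lift drops in filtration degree and its leading symbol is of the form (central) times (lower-degree $\mathrm{SY}^{tw}$-lift, available by the inductive hypothesis), so it can be absorbed by adding a correction of the same form. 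Then $y-\sum z_iy_i\in\mathcal{F}^{tw}_{p-1}$ and the induction closes. Injectivity follows because a non-zero relation $\sum z_iy_i=0$ with $\{z_i\}$ linearly independent in $\mathcal{Z}\mathrm{Y}^{tw}$ would yield, on passage to leading filtration terms, a non-trivial dependence among the PBW basis elements of $\mathrm{U}\bigl(\mathfrak{sl}_{M|N}[x]^{\theta}\bigr)\otimes\mathbb{C}[Ix,Ix^3,\ldots]$, which is absurd. The hard part will be making the iterative normalization actually close at each filtration step — i.e., verifying that the required central correction exists and can be chosen simultaneously even in $u$, central, and of the correct degree; this is where the symmetry $\mathfrak{z}(u)\mathfrak{z}(-u-c)=1$ together with the bijectivity between even $f$ and $\mathfrak{z}$-type series satisfying this symmetry becomes essential.
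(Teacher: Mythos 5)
Your approach is genuinely different from the paper's. You pass to the associated graded $\mathrm{U}\bigl(\mathfrak{gl}_{M|N}[x]^{\theta}\bigr)$ and try to transfer the PBW factorization induced by the $\theta$-stable ideal decomposition $\mathfrak{gl}_{M|N}=\mathfrak{sl}_{M|N}\oplus\mathbb{C}I$ back to the filtered algebra. Your preparatory observations are correct: $\theta(I)=-I$, $\mathrm{Str}\circ\theta=-\mathrm{Str}$, the splitting $\mathfrak{gl}_{M|N}[x]^{\theta}=\mathfrak{sl}_{M|N}[x]^{\theta}\oplus\bigoplus_{m\geqslant 0}\mathbb{C}\,Ix^{2m+1}$, and the scaling $\mu_f^{tw}\bigl(\mathfrak{z}(u)\bigr)=\frac{f(u+M-N)}{f(u)}\mathfrak{z}(u)$ for even $f$ all check out. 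The paper, by contrast, avoids graded lifting entirely: following \cite[Prop.\ 2.15]{MNO96} it extracts a ``fractional root'' $\mathring{B}(u)\in 1+u^{-1}\mathcal{Z}\mathrm{Y}(\mathfrak{gl}_{M|N})[[u^{-1}]]$ of the quantum Berezinian and writes $s_{ij}(u)=\bigl(\mathring{B}(u)\mathring{B}(-u)\bigr)\mathring{s}_{ij}(u)$, where $\mathring{s}_{ij}(u):=\mathring{B}(u)^{-1}\mathring{B}(-u)^{-1}s_{ij}(u)$. By construction the coefficients of $\mathring{s}_{ij}(u)$ are fixed by every $\mu_g^{tw}$ and those of $\mathring{B}(u)\mathring{B}(-u)$ are central, so $\mathrm{Y}^{tw}=\mathcal{Z}\mathrm{Y}^{tw}\cdot\mathrm{SY}^{tw}$ holds generator by generator with no induction at all; the tensor-product form then follows from the analogous decomposition of $\mathrm{Y}(\mathfrak{gl}_{M|N})$ together with the inclusions $\mathcal{Z}\mathrm{Y}^{tw}\subset\mathcal{Z}\mathrm{Y}(\mathfrak{gl}_{M|N})$ and $\mathrm{SY}^{tw}\subset\mathrm{Y}(\mathfrak{sl}_{M|N})$.

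The real gap in your argument is exactly the ``iterative normalization'' step that you flag yourself. To establish surjectivity at the graded level you need $\operatorname{gr}\mathrm{SY}^{tw}\supseteq\mathrm{U}\bigl(\mathfrak{sl}_{M|N}[x]^{\theta}\bigr)$, and this is not automatic: each $\mu_f^{tw}$ acts trivially on the graded quotients (it only adds terms of strictly lower filtration degree), so computing $\operatorname{gr}\mathrm{SY}^{tw}$ genuinely requires producing actual invariants, not just graded symbols. Your normalization would need, at each filtration degree, a \emph{single} correction absorbing the deviations $\mu_g^{tw}(y)-y$ simultaneously for all even $g$; you neither construct such a correction nor argue that one exists, and you concede this is ``the hard part.'' The paper's $\mathring{B}(u)$ is precisely the device that dissolves the difficulty in one stroke, since $\mathring{s}_{ij}(u)$ is a priori invariant under every $\mu_g^{tw}$, leaving nothing to iterate. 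Without this (or an equivalent explicit splitting) the surjectivity half of your argument does not close.
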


\begin{proof}
    This proof is very similar to \cite[Theorem 2.9.2]{Mo07} (or see \cite[Proposition 2.16]{MNO96}). Since the second filtration mentioned in Section \ref{se:Superyangian:1} preserves the grading of generators, all coefficients of $z(u)$ are even elements. Therefore, the center $\mathcal{Z}\mathrm{Y}\left(\mathfrak{gl}_{M|N}\right)$ is an associative commutative algebra.

   Let $\mathcal{A}$ be an arbitrary associative commutative algebra. Due to \cite[Proposition 2.15]{MNO96},
    for any series $a(u)\in 1+u^{-1}\mathcal{A}\left[\left[u^{-1}\right]\right]$, there is a unique series $\mathring{a}(u)\in 1+u^{-1}\mathcal{A}\left[\left[u^{-1}\right]\right]$ such that
    \begin{gather*}
        a(u)=\mathring{a}(u)\mathring{a}(u-1)\cdots \mathring{a}(u-K+1)
    \end{gather*}
    for any $K\in\mathbb{N}$.

    If $M>N$, the statement above immediately implies that there is a unique series
    $$\mathring{B}(u)\in 1+u^{-1}\mathcal{Z}\mathrm{Y}\big(\mathfrak{gl}_{M|N}\big)\left[\left[u^{-1}\right]\right]$$
    such that
    \begin{gather*}
        \mathfrak{B}_{M|N}(u)=\mathring{B}(u+M-N-1)\cdots \mathring{B}(u).
    \end{gather*}
    By \eqref{qLiou-gl}, we have
    \begin{gather*}
        z(u)=\mathring{B}(u+M-N)\mathring{B}(u)^{-1}.
    \end{gather*}
    Following from \eqref{centre:1}, the automorphism $\mu_f$ in \eqref{auto:Ygl1} maps $\mathring{B}(u)$ to $f(u)\mathring{B}(u)$.
Thus, the coefficients of
\begin{gather*}
    \mathring{t}_{ij}(u)=\mathring{B}(u)^{-1}t_{ij}(u)\in \mathrm{Y}\left(\mathfrak{gl}_{M|N}\right)\big[\big[u^{-1}\big]\big].
\end{gather*}
are invariant under the automorphism $\mu_f$, which forces $\mathring{t}_{ij}(u)\in \mathrm{Y}(\mathfrak{sl}_{M|N})\big[\big[u^{-1}\big]\big]$.

 Set
\begin{gather*}
    \mathring{s}_{ij}(u)=\mathring{B}^{-1}(u)\mathring{B}^{-1}(-u)s_{ij}(u)\in \mathrm{Y}^{tw}(\mathfrak{osp}_{M|N})\big[\big[u^{-1}\big]\big].
\end{gather*}
This implies
\begin{align*}
    \mathring{s}_{ij}(u)=\sum_{k,l\in I}(-1)^{|i||j|+|i||k|+|j|+|k|}g_{kl}\mathring{t}_{ik}(u)\mathring{t}_{jl}(-u),
\end{align*}
by Proposition \ref{prop:embed}. Since
    \begin{align*}
        \mathfrak{z}(u)&=z(u)z(-u-M+N)^{-1} \\
        &=\mathring{B}^{-1}(u)\mathring{B}^{-1}(-u)\mathring{B}(u+M-N)\mathring{B}(-u-M+N),
    \end{align*}
    all coefficients of $\mathring{B}(u)\mathring{B}(-u)$ are included in the center $\mathcal{Z}\mathrm{Y}^{tw}\big(\mathfrak{osp}_{M|N}\big)$.
    The coefficients of $\mathring{s}_{ij}(u)$ are also invariant under any automorphisms $\mu_g^{tw}$ for $g(u)=f(u)f(-u)$ in \eqref{Ytw:auto:1}.
    Hence, we have the following decomposition
\begin{gather}\label{tw:decomposition2}
\mathrm{Y}^{tw}\left(\mathfrak{osp}_{M|N}\right)=\mathcal{Z}\mathrm{Y}^{tw}\left(\mathfrak{osp}_{M|N}\right)\mathrm{SY}^{tw}\left(\mathfrak{osp}_{M|N}\right).
    \end{gather}

 In the case of $N<M$, we can take
 $$\mathring{B}'(u)\in 1+u^{-1}\mathcal{Z}\mathrm{Y}\big(\mathfrak{gl}_{M|N}\big)\left[\left[u^{-1}\right]\right]$$  such that
 \begin{gather*}
     \mathfrak{B}_{M|N}(u)=\mathring{B}'(-u+1)\cdots \mathring{B}'(-u+N-M).
 \end{gather*}
 Select $\mathring{t}'_{ij}(u)=\mathring{B}'(-u)t_{ij}(u)$ and $\mathring{s}'_{ij}(u)=\mathring{B}'(u)\mathring{B}'(-u)s_{ij}(u)$ to replace $\mathring{t}_{ij}(u)$ and $\mathring{s}_{ij}(u)$, respectively. Then,
we also have \eqref{tw:decomposition2}.

Moreover, the inclusions
\begin{gather*}
    \mathcal{Z}\mathrm{Y}^{tw}\left(\mathfrak{osp}_{M|N}\right)\subset
              \mathcal{Z}\mathrm{Y}\left(\mathfrak{gl}_{M|N}\right),\quad
      \mathrm{SY}^{tw}\left(\mathfrak{osp}_{M|N}\right)\subset
             \mathrm{Y}\left(\mathfrak{sl}_{M|N}\right)
\end{gather*}
lead to \eqref{tw:decomposition}.

\end{proof}
It is clear that the center $\mathcal{Z}\mathrm{Y}^{tw}\left(\mathfrak{osp}_{M|N}\right)$ is an associative commutative algebra. Moreover, we also have the following

\begin{corollary}
    For $M\neq N$, the superalgebra $\mathrm{SY}^{tw}\left(\mathfrak{osp}_{M|N}\right)$ is isomorphic to the quotient of $\mathrm{Y}^{tw}\left(\mathfrak{osp}_{M|N}\right)$ by the ideal generated by $\mathfrak{z}^{(3)}$, $\mathfrak{z}^{(4)}$, $\mathfrak{z}^{(5)}$, $\ldots$.
\end{corollary}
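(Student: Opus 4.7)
The plan is to leverage the tensor product decomposition
$$\mathrm{Y}^{tw}(\mathfrak{osp}_{M|N}) = \mathcal{Z}\mathrm{Y}^{tw}(\mathfrak{osp}_{M|N}) \otimes \mathrm{SY}^{tw}(\mathfrak{osp}_{M|N})$$
from the preceding proposition, together with the theorem that $\mathcal{Z}\mathrm{Y}^{tw}(\mathfrak{osp}_{M|N})$ is generated as a commutative algebra by the central elements $\mathfrak{z}^{(r)}$ for $r \geq 3$.

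First, I would let $I$ denote the two-sided ideal of $\mathrm{Y}^{tw}(\mathfrak{osp}_{M|N})$ generated by $\{\mathfrak{z}^{(r)} \mid r \geq 3\}$. Since all these generators are central, $I$ coincides with the product ideal $\mathcal{Z}_+ \cdot \mathrm{Y}^{tw}(\mathfrak{osp}_{M|N})$, where $\mathcal{Z}_+ \subset \mathcal{Z}\mathrm{Y}^{tw}(\mathfrak{osp}_{M|N})$ denotes the augmentation ideal consisting of central elements with vanishing scalar term. Invoking the tensor product decomposition, $I$ further identifies with $\mathcal{Z}_+ \otimes \mathrm{SY}^{tw}(\mathfrak{osp}_{M|N})$, and therefore
$$\mathrm{Y}^{tw}(\mathfrak{osp}_{M|N})/I \;\cong\; \bigl(\mathcal{Z}\mathrm{Y}^{tw}(\mathfrak{osp}_{M|N})/\mathcal{Z}_+\bigr) \otimes \mathrm{SY}^{tw}(\mathfrak{osp}_{M|N}) \;\cong\; \mathrm{SY}^{tw}(\mathfrak{osp}_{M|N}),$$
where the last step uses $\mathcal{Z}\mathrm{Y}^{tw}(\mathfrak{osp}_{M|N})/\mathcal{Z}_+ \cong \mathbb{C}$ (which is clear because the center is generated by elements in $\mathcal{Z}_+$).

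The required isomorphism is realized concretely by composing the inclusion $\mathrm{SY}^{tw}(\mathfrak{osp}_{M|N}) \hookrightarrow \mathrm{Y}^{tw}(\mathfrak{osp}_{M|N})$ with the canonical projection $\mathrm{Y}^{tw}(\mathfrak{osp}_{M|N}) \twoheadrightarrow \mathrm{Y}^{tw}(\mathfrak{osp}_{M|N})/I$. Surjectivity follows because any central element is congruent modulo $I$ to its scalar term, so each tensor $z \otimes y$ reduces to a scalar multiple of $y$; injectivity follows from $\mathrm{SY}^{tw}(\mathfrak{osp}_{M|N}) \cap (\mathcal{Z}_+ \otimes \mathrm{SY}^{tw}(\mathfrak{osp}_{M|N})) = 0$, which is immediate from uniqueness in the tensor product decomposition. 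The proof contains no substantive obstacle, as the genuine content was already present in establishing the tensor product decomposition and in identifying the $\mathfrak{z}^{(r)}$ as generators of the center; this corollary amounts to a standard computation with quotients of tensor products over the augmentation ideal.
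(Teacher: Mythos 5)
Your proof is correct and follows exactly the route the paper intends: the corollary is stated immediately after the tensor product decomposition $\mathrm{Y}^{tw}\left(\mathfrak{osp}_{M|N}\right) = \mathcal{Z}\mathrm{Y}^{tw}\left(\mathfrak{osp}_{M|N}\right)\otimes \mathrm{SY}^{tw}\left(\mathfrak{osp}_{M|N}\right)$ and the theorem that the $\mathfrak{z}^{(r)}$ generate the center, from which the quotient identification is the standard consequence you derive. Your explicit bookkeeping with the augmentation ideal $\mathcal{Z}_+$ and the verification of both surjectivity and injectivity of the composite map is exactly what is implicitly being asserted.
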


\begin{corollary}
    For $M\neq N$, the superalgebra $\mathrm{SY}^{tw}\left(\mathfrak{osp}_{M|N}\right)$ can be regarded as a left coideal sub-superalgebra of $\mathrm{Y}\left(\mathfrak{sl}_{M|N}\right)$ under the comultiplication $\Delta$.
\end{corollary}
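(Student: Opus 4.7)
The plan is to verify the coideal property by combining the two inclusions that are available for $\Delta(y)$ whenever $y\in\mathrm{SY}^{tw}\left(\mathfrak{osp}_{M|N}\right)=\mathrm{Y}\left(\mathfrak{sl}_{M|N}\right)\cap\mathrm{Y}^{tw}\left(\mathfrak{osp}_{M|N}\right)$, and then to intersect the results. First, since $\mathrm{Y}\left(\mathfrak{sl}_{M|N}\right)$ is a Hopf sub-superalgebra of $\mathrm{Y}\left(\mathfrak{gl}_{M|N}\right)$ (by the previous Corollary in Section \ref{se:Superyangian}), we already have
\begin{equation*}
\Delta(y)\in\mathrm{Y}\left(\mathfrak{sl}_{M|N}\right)\otimes\mathrm{Y}\left(\mathfrak{sl}_{M|N}\right).
\end{equation*}
Second, since $\mathrm{Y}^{tw}\left(\mathfrak{osp}_{M|N}\right)$ is a left coideal of $\mathrm{Y}\left(\mathfrak{gl}_{M|N}\right)$ by Corollary \ref{co:coideal}, we also have
\begin{equation*}
\Delta(y)\in\mathrm{Y}\left(\mathfrak{gl}_{M|N}\right)\otimes\mathrm{Y}^{tw}\left(\mathfrak{osp}_{M|N}\right).
\end{equation*}

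The next step is to intersect these two containments. Invoking the elementary identity $(A_1\otimes B_1)\cap(A_2\otimes B_2)=(A_1\cap A_2)\otimes(B_1\cap B_2)$, valid for $\mathbb{Z}_2$-graded subspaces of a tensor product of two vector superspaces over $\mathbb{C}$, yields
\begin{equation*}
\Delta(y)\in\mathrm{Y}\left(\mathfrak{sl}_{M|N}\right)\otimes\mathrm{SY}^{tw}\left(\mathfrak{osp}_{M|N}\right),
\end{equation*}
which is precisely the asserted left-coideal property.

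The only step that requires care is the tensor-intersection identity itself; it follows from the fact that every homogeneous subspace of a vector superspace admits a homogeneous direct complement, so one may pick homogeneous bases of $\mathrm{Y}\left(\mathfrak{gl}_{M|N}\right)$ and $\mathrm{Y}^{tw}\left(\mathfrak{osp}_{M|N}\right)$ simultaneously adapted to the four subspaces in play. As an alternative that avoids this lemma, one may use the $\mu_f$-characterizations $\mathrm{Y}\left(\mathfrak{sl}_{M|N}\right)=\{x:\mu_f(x)=x\ \forall f\}$ and $\mathrm{SY}^{tw}\left(\mathfrak{osp}_{M|N}\right)=\{x:\mu_f^{tw}(x)=x\ \forall f\}$, together with the compatibilities $\Delta\circ\mu_f=(\mu_f\otimes\mathrm{id})\circ\Delta=(\mathrm{id}\otimes\mu_f)\circ\Delta$, which are immediate on $T(u)$ since $f(u)$ has scalar coefficients. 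Writing $\Delta(y)=\sum_i a_i\otimes b_i$ with the $a_i$ (resp.\ $b_i$) linearly independent, invariance of $\Delta(y)$ under both $\mu_f\otimes\mathrm{id}$ and $\mathrm{id}\otimes\mu_f$ then forces each $a_i\in\mathrm{Y}\left(\mathfrak{sl}_{M|N}\right)$ and each $b_i\in\mathrm{SY}^{tw}\left(\mathfrak{osp}_{M|N}\right)$. I do not anticipate any serious obstacle: once the Hopf sub-superalgebra structure of $\mathrm{Y}\left(\mathfrak{sl}_{M|N}\right)$ and Corollary \ref{co:coideal} are in hand, the conclusion is essentially formal.
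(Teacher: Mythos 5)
Your main argument is correct, and since the paper states this corollary without proof it is almost certainly the intended one: combine $\Delta\left(\mathrm{Y}\left(\mathfrak{sl}_{M|N}\right)\right)\subseteq\mathrm{Y}\left(\mathfrak{sl}_{M|N}\right)\otimes\mathrm{Y}\left(\mathfrak{sl}_{M|N}\right)$ (from the Hopf sub-superalgebra corollary, which uses $M\neq N$) with Corollary~\ref{co:coideal}, and intersect. The identity $(A_1\otimes B_1)\cap(A_2\otimes B_2)=(A_1\cap A_2)\otimes(B_1\cap B_2)$ is a genuine linear-algebra fact valid over any field (pick a basis of $V$ simultaneously adapted to $A_1\cap A_2$, $A_1$, $A_2$, $A_1+A_2$, likewise for $W$, and compare coordinate supports), so no superspace subtlety arises; the appeal is legitimate.

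One caution on your ``alternative'' sketch: you invoke $\mathrm{SY}^{tw}\left(\mathfrak{osp}_{M|N}\right)=\{x:\mu_f^{tw}(x)=x\ \forall f\}$, but the compatibility $\Delta\circ\mu_f=(\mathrm{id}\otimes\mu_f)\circ\Delta$ involves $\mu_f$, not $\mu_f^{tw}$. Under the embedding~\eqref{embedding}, $\mu_f$ restricted to $\mathrm{Y}^{tw}\left(\mathfrak{osp}_{M|N}\right)$ acts as $\mu_{f(u)f(-u)}^{tw}$, and $f(u)f(-u)$ ranges only over series that are even in $u^{-1}$. So the invariance of $\Delta(y)$ under $\mathrm{id}\otimes\mu_f$ yields directly only $b_i\in\mathrm{Y}\left(\mathfrak{sl}_{M|N}\right)$; to conclude $b_i\in\mathrm{SY}^{tw}\left(\mathfrak{osp}_{M|N}\right)$ you still need $b_i\in\mathrm{Y}^{tw}\left(\mathfrak{osp}_{M|N}\right)$, which must come from Corollary~\ref{co:coideal} as in your main argument. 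In other words, the alternative is not actually independent of the coideal inclusion: it is the same proof, merely phrased through fixed points rather than subspace intersections.
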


\section{Quantum Berezinian for twisted super Yangian}\label{se:quantumbereianian}
In this section, we will present the explicit form of the quantum Berezinian for the generators of $\mathrm{Y}^{tw}\left(\mathfrak{osp}_{M|N}\right)$, referring to the references \cite{Mo95,Mo07,MNO96,Na20}.

\subsection{Definition of quantum Berezinian in twisted type}\label{se:quantumbereianian:1}

Define a homomorphism $\pi_m$ from the symmetric group $\mathfrak{S}_m$ to the $m$-tensor product algebra $\left(\operatorname{End}V\right)^{\otimes m}$ such that
\begin{gather*}
\sigma_{ij}=(ij)\mapsto P_{i,j},\quad \text{for }1\leqslant i<j\leqslant m.
\end{gather*}

Denote by $G^{(m)}$ and $H^{(m)}$ the respective images of the anti-symmetrizer and symmetrizer:
\begin{gather*}
\sum_{\sigma\in\mathfrak{S}_m}\operatorname{sgn}(\sigma)\cdot\sigma\qquad \text{and}\qquad \sum_{\sigma\in\mathfrak{S}_m}\sigma
\end{gather*}
under the action of $\pi_m$. Observe that for $m\geqslant 2$,
\begin{align*}
&G^{(m)}=\left(1-P_{1,m}-\cdots-P_{1,2}\right)\cdots\left(1-P_{m-2,m}-P_{m-2,m-1}\right)
    \left(1-P_{m-1,m}\right), \\
&H^{(m)}=\left(1+P_{1,m}+\cdots+P_{1,2}\right)\cdots\left(1+P_{m-2,m}+P_{m-2,m-1}\right)
    \left(1+P_{m-1,m}\right).
\end{align*}
Set
\begin{gather*}
    \sigma_i=\sigma_{i,i+1},\quad \epsilon_i=(-1)^{|i||i+1|}.
\end{gather*}
We define the function $\epsilon$ on $\mathfrak{S}_m$ by
\begin{gather*}
    \epsilon(\sigma)=\epsilon_{i_1}\epsilon_{i_2}\cdots \epsilon_{i_k}\quad\text{for}\quad \sigma=\sigma_{i_1}\sigma_{i_2}\cdots \sigma_{i_k}\in\mathfrak{S}_m.
\end{gather*}
It is easy to see that $\epsilon(\sigma)=\operatorname{sgn}\sigma$ for each permutation $\sigma\in\mathfrak{S}$.

\begin{lemma}
In $V^{\otimes(M+N)}$, we have
\begin{align*}
&G^{(m)}(e_{i_1}\otimes \cdots \otimes e_{i_m})=\sum_{\sigma\in \mathfrak{S}_m}\epsilon(\sigma)\operatorname{sgn}(\sigma)(e_{i_{\sigma(1)}}\otimes \cdots \otimes e_{i_{\sigma(m)}}), \\
&H^{(m)}(e_{i_1}\otimes \cdots \otimes e_{i_m})=\sum_{\sigma\in \mathfrak{S}_m}\epsilon(\sigma)(e_{i_{\sigma(1)}}\otimes \cdots \otimes e_{i_{\sigma(m)}}).
\end{align*}

\end{lemma}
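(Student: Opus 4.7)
The plan is to reduce both identities to a single-permutation statement: for each $\sigma\in\mathfrak{S}_m$,
\begin{equation*}
\pi_m(\sigma)\bigl(e_{i_1}\otimes\cdots\otimes e_{i_m}\bigr) = \epsilon(\sigma)\,e_{i_{\sigma(1)}}\otimes\cdots\otimes e_{i_{\sigma(m)}}.
\end{equation*}
Summing this equality over $\sigma\in\mathfrak{S}_m$ against $\operatorname{sgn}(\sigma)$ (respectively, against $1$) then reproduces the claimed formulas for $G^{(m)}$ and $H^{(m)}$ by their definitions as $\pi_m$-images of the antisymmetrizer and symmetrizer in $\mathbb{C}[\mathfrak{S}_m]$.

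I would prove the single-permutation statement by induction on the length $k$ of a reduced expression $\sigma=\sigma_{j_1}\cdots\sigma_{j_k}$. The case $k = 1$ reduces to the definition $P(e_a\otimes e_b) = (-1)^{|a||b|}\,e_b\otimes e_a$, which exhibits the factor $\epsilon_{j_1}$ directly. For the inductive step, write $\sigma = \sigma_{j_1}\tau$ with $\tau$ of length $k-1$, apply the inductive hypothesis to compute $\pi_m(\tau)\bigl(e_{i_1}\otimes\cdots\otimes e_{i_m}\bigr)$, and then apply $\pi_m(\sigma_{j_1}) = P_{j_1,j_1+1}$ to the resulting signed pure tensor; this picks up an additional sign determined by the parities of the basis vectors presently occupying positions $j_1$ and $j_1+1$, which is the factor $\epsilon_{j_1}$ in the chosen decomposition. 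Independence of $\epsilon(\sigma)$ from the particular reduced expression is automatic because $\pi_m$ is a well-defined group homomorphism; this in turn follows from $P^2 = \operatorname{id}$ and the braid relation $P_{12}P_{23}P_{12} = P_{23}P_{12}P_{23}$, both immediate from the definition of $P$.

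The main bookkeeping hurdle is tracking the running parities: the sign produced at step $\ell$ depends on which basis vectors have been shuffled into positions $j_\ell$ and $j_\ell+1$ by the previously applied transpositions, not on the original indices $i_{j_\ell}$ and $i_{j_\ell+1}$. Once one checks that this cumulative product of super-signs matches $\epsilon_{j_1}\epsilon_{j_2}\cdots\epsilon_{j_k}$ as prescribed by the definition of $\epsilon(\sigma)$, the two identities of the lemma follow by linearity.
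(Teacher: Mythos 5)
Your route is genuinely different from the paper's, which only invokes ``induction on $m$'' (presumably by peeling off the outer factor $\bigl(1\mp P_{1,m}\mp\cdots\mp P_{1,2}\bigr)$ in the displayed multiplicative expressions for $G^{(m)}$ and $H^{(m)}$): you instead reduce to the single-permutation identity $\pi_m(\sigma)(e_{i_1}\otimes\cdots\otimes e_{i_m})=\epsilon(\sigma)\,e_{i_{\sigma(1)}}\otimes\cdots\otimes e_{i_{\sigma(m)}}$ and induct on the length of a reduced word. However, the inductive step has a genuine gap. The sign that $P_{j_1,j_1+1}$ contributes at the outer stage is $(-1)^{|a||b|}$ where $a,b$ are the basis indices \emph{currently} occupying slots $j_1$ and $j_1+1$ after $\pi_m(\tau)$ has acted; it therefore depends on $\tau$ and on $(i_1,\ldots,i_m)$. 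You call this factor ``$\epsilon_{j_1}$,'' but the paper's $\epsilon_{j_1}=(-1)^{|j_1|\,|j_1+1|}$ is a constant built from the fixed parities of the integers $j_1,\,j_1+1$. These coincide only under hypotheses on the parity pattern. Concretely, with $M=1$, $N=2$, $m=2$, $(i_1,i_2)=(2,3)$ and $\sigma=\sigma_1$, the running sign is $(-1)^{|2|\,|3|}=-1$, while $\epsilon_1=(-1)^{|1|\,|2|}=+1$; and indeed $H^{(2)}(e_2\otimes e_3)=e_2\otimes e_3-e_3\otimes e_2$, so the coefficient of $e_3\otimes e_2$ really is $-1$.

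Your closing paragraph correctly identifies the reconciliation of the cumulative running super-sign with $\epsilon_{j_1}\cdots\epsilon_{j_k}$ as the crux, but then defers it (``once one checks''); that check is where the substance lies and it does not succeed under the literal reading of $\epsilon_i$: the product $\epsilon_{j_1}\cdots\epsilon_{j_k}$ is not even independent of the reduced word once the $\epsilon_i$ are unequal (for example $\epsilon_1\epsilon_2\epsilon_1\neq\epsilon_2\epsilon_1\epsilon_2$ for the longest element of $\mathfrak{S}_3$ whenever $\epsilon_1\neq\epsilon_2$). Your well-definedness argument (from $\pi_m$ being a homomorphism) shows only that the left-hand sign is reduced-word-independent; it does not identify that sign with $\prod_\ell\epsilon_{j_\ell}$. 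To close the gap you must either restrict to the parity pattern $|i_j|=|j|$ for all $j$ (which is exactly what the two corollaries that follow actually require), or reinterpret $\epsilon_i$ as the running super-sign at step $i$ and then verify that these running signs assemble into a quantity depending only on $\sigma$ and the $i_j$'s, e.g.\ via the inversion-set formula. Either way, the key computation is currently absent.
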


\begin{proof}
    This lemma can be proved by induction on $m$.

\end{proof}

\begin{corollary}\label{GH-del}
If $\{i_1,\ldots,i_m\}\subseteq\{1,\ldots,M\}$, then
\begin{gather*}
G^{(m)}(e_{i_1}\otimes \cdots \otimes e_{i_m})=\sum_{\sigma\in \mathfrak{S}_m}\operatorname{sgn}(\sigma)\cdot(e_{i_{\sigma(1)}}\otimes \cdots \otimes e_{i_{\sigma(m)}}).
\end{gather*}
If $\{i_1,\ldots,i_m\}\subseteq\{M+1,\ldots,M+N\}$, then
\begin{gather*}
H^{(m)}(e_{i_1}\otimes \cdots \otimes e_{i_m})=\sum_{\sigma\in \mathfrak{S}_m}\operatorname{sgn}(\sigma) \cdot (e_{i_{\sigma(1)}}\otimes \cdots \otimes e_{i_{\sigma(m)}}).
\end{gather*}
\end{corollary}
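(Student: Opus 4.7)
The corollary should follow immediately from the preceding lemma by evaluating the sign function $\epsilon(\sigma)$ in the two homogeneity cases. The plan is to unwind the definition $\epsilon_i=(-1)^{|i||i+1|}$ on the specific tensor $e_{i_1}\otimes\cdots\otimes e_{i_m}$ and observe that on each homogeneous input, the super sign $\epsilon(\sigma)$ collapses to something explicit, after which the identities drop out by substitution.

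First I would fix a reduced expression $\sigma=\sigma_{j_1}\sigma_{j_2}\cdots\sigma_{j_k}$ and track, step by step, the parity of the indices appearing in the adjacent positions being swapped when $P_{j_l,j_l+1}$ acts on the running tensor. For a generic homogeneous input $e_{i_1}\otimes\cdots\otimes e_{i_m}$, each application of $P_{j_l,j_l+1}$ contributes a factor $(-1)^{|a||b|}$ where $a,b$ are the indices currently sitting in positions $j_l,j_l+1$. Under the hypothesis that all $i_r$ lie in a single parity class, this factor is independent of which positions are being swapped, so the product telescopes to a uniform value across the whole reduced word.

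In Case (i), where $\{i_1,\ldots,i_m\}\subseteq I_+$, every parity encountered is $\bar{0}$, so every elementary factor equals $1$, and consequently $\epsilon(\sigma)=1$ for all $\sigma\in\mathfrak{S}_m$. Plugging this into the lemma's formula for $G^{(m)}$ gives the stated identity. In Case (ii), where $\{i_1,\ldots,i_m\}\subseteq I_-$, every parity is $\bar{1}$, so each elementary factor equals $(-1)^{\bar{1}\cdot\bar{1}}=-1$, and hence $\epsilon(\sigma)=(-1)^{\ell(\sigma)}=\operatorname{sgn}(\sigma)$ for a length-$\ell(\sigma)$ reduced word. Plugging this into the lemma's formula for $H^{(m)}$ transforms $\sum_\sigma\epsilon(\sigma)(\,\cdots\,)$ into $\sum_\sigma\operatorname{sgn}(\sigma)(\,\cdots\,)$, as required.

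The only mild subtlety, which I would address explicitly, is that $\epsilon(\sigma)$ depends a priori on the chosen reduced expression; however, under the homogeneity hypothesis the value depends only on the number of adjacent transpositions modulo $2$, hence only on $\operatorname{sgn}(\sigma)$, so it is well-defined and the computation above is independent of the reduced word. No further ingredient is needed; the result is purely a specialization of the preceding lemma.
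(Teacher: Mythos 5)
Your proof is correct and is essentially the intended argument: the corollary is the specialization of the preceding lemma to the two homogeneous cases, where $\epsilon(\sigma)$ collapses to $1$ (all indices even) or to $\operatorname{sgn}(\sigma)$ (all indices odd). The paper gives no written proof of this corollary, and your substitution — together with the (correct) remark that under the homogeneity hypothesis $\epsilon(\sigma)$ depends only on $\operatorname{sgn}(\sigma)$ and not on the reduced word — is exactly what is needed.
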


\begin{corollary}\label{GH-del2}
Given a permutation $\sigma\in\mathfrak{S}_m$. If $\{i_1,\ldots,i_m\}\subseteq\{1,\ldots,M\}$, then
\begin{gather*}
G^{(m)}(e_{i_{\sigma(1)}}\otimes \cdots \otimes e_{i_{\sigma(m)}})=\operatorname{sgn}(\sigma) \cdot G^{(m)}(e_{i_1}\otimes \cdots \otimes e_{i_m}).
\end{gather*}
If $\{i_1,\ldots,i_m\}\subseteq\{M+1,\ldots,M+N\}$, then
\begin{gather*}
H^{(m)}(e_{i_{\sigma(1)}}\otimes \cdots \otimes e_{i_{\sigma(m)}})=\operatorname{sgn}(\sigma)\cdot H^{(m)}(e_{i_1}\otimes \cdots \otimes e_{i_m}).
\end{gather*}
\end{corollary}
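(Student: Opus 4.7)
The plan is a direct reduction to Corollary \ref{GH-del} via a re-indexing of the summation. For the first identity, I would start by applying Corollary \ref{GH-del} directly to the tensor $e_{i_{\sigma(1)}}\otimes\cdots\otimes e_{i_{\sigma(m)}}$, which is legitimate because the underlying index multiset $\{i_{\sigma(1)},\ldots,i_{\sigma(m)}\}$ coincides with $\{i_1,\ldots,i_m\}$ and therefore still lies in $\{1,\ldots,M\}$. This produces the expansion
$$G^{(m)}\bigl(e_{i_{\sigma(1)}}\otimes\cdots\otimes e_{i_{\sigma(m)}}\bigr) = \sum_{\tau\in\mathfrak{S}_m}\operatorname{sgn}(\tau)\,e_{i_{\sigma\tau(1)}}\otimes\cdots\otimes e_{i_{\sigma\tau(m)}}.$$

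Next, I would perform the change of variable $\rho=\sigma\tau$. Since left multiplication by $\sigma$ is a bijection of $\mathfrak{S}_m$ and $\operatorname{sgn}(\tau)=\operatorname{sgn}(\sigma^{-1}\rho)=\operatorname{sgn}(\sigma)\operatorname{sgn}(\rho)$, the scalar $\operatorname{sgn}(\sigma)$ factors out and the sum becomes
$$\operatorname{sgn}(\sigma)\sum_{\rho\in\mathfrak{S}_m}\operatorname{sgn}(\rho)\,e_{i_{\rho(1)}}\otimes\cdots\otimes e_{i_{\rho(m)}}.$$
A second application of Corollary \ref{GH-del} identifies this last sum with $G^{(m)}(e_{i_1}\otimes\cdots\otimes e_{i_m})$, yielding the desired equality.

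For the symmetrizer identity, I would observe that Corollary \ref{GH-del} gives exactly the same signed-sum formula for $H^{(m)}$ on tensors with indices in $\{M+1,\ldots,M+N\}$ as it does for $G^{(m)}$ in the even-parity case. Consequently, the identical $\rho=\sigma\tau$ manipulation produces the factor $\operatorname{sgn}(\sigma)$ in front of $H^{(m)}(e_{i_1}\otimes\cdots\otimes e_{i_m})$. There is no substantive obstacle here: the corollary is a purely formal consequence of Corollary \ref{GH-del}, and the only subtlety worth flagging is that permuting the indices by $\sigma$ preserves the hypothesis that all indices have the same parity, which is immediate.
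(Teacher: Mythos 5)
Your proof is correct and is the natural deduction of this corollary from Corollary~\ref{GH-del}, which is precisely why the paper states it without proof. Applying Corollary~\ref{GH-del} to the permuted tensor, reindexing the sum via $\rho=\sigma\tau$, and using the multiplicativity of $\operatorname{sgn}$ is exactly the intended argument, and your observation that permuting the indices by $\sigma$ preserves the single-parity hypothesis is the only point that needs flagging; the $H^{(m)}$ case is word-for-word the same because Corollary~\ref{GH-del} already gives the signed expansion in the purely-odd case.
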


Consider the following rational function in $\left(\operatorname{End}V\right)^{\otimes m}\left[\left[u_1^{-1},\ldots,u_m^{-1}\right]\right]$ for $m\geqslant 2$:
\begin{gather*}
R(u_1,u_2,\ldots,u_m)=R_{m-1,m}\left(R_{m-2,m}R_{m-2,m-1}\right)\cdots
     \left(R_{1,m}\cdots R_{1,2}\right),
\end{gather*}
where $R_{ij}$ for $i<j$ denotes $R_{ij}(u_i-u_j)$ if no confusion.

The next lemma follows from the well-known ``multiplicative formula'' for the anti-symmetrizer and symmetrizer in $\mathbb{C}[\mathfrak{S}_m]$:
\begin{gather*}
    \sum_{\sigma\in\mathfrak{S}_m}\operatorname{sgn}\sigma \cdot \sigma =\mathop{\overleftarrow{\prod}}\limits_{i=1}^{m-1}
        \mathop{\overleftarrow{\prod}}\limits_{j=i+1}^{m} \left(1-\frac{\sigma_{ij}}{j-i}\right),\quad
    \sum_{\sigma\in\mathfrak{S}_m}\sigma =\mathop{\overleftarrow{\prod}}\limits_{i=1}^{m-1}
        \mathop{\overleftarrow{\prod}}\limits_{j=i+1}^{m} \left(1+\frac{\sigma_{ij}}{j-i}\right).
\end{gather*}
\begin{lemma}\label{GH-ex}
(1)\ Choose $u_1,\ldots,u_m$ such that $u_i-u_{i+1}=1$ for $i\in\{1,2,\ldots,m-1\}$, then we have
\begin{gather*}
G^{(m)}=R(u_1,\ldots,u_m).
\end{gather*}

(2)\ Choose $u_{1},\ldots,u_{m}$ such that $u_i-u_{i+1}=-1$ for $i\in\{1,2,\ldots,m-1\}$, then we have
\begin{gather*}
H^{(m)}=R(u_{1},\ldots,u_{m}).
\end{gather*}
\end{lemma}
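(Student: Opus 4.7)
The plan is to lift the identity to the group algebra $\mathbb{C}[\mathfrak{S}_m]$ via the algebra homomorphism $\pi_m$ and then invoke the stated multiplicative formulas for the anti-symmetrizer and symmetrizer. Since $\pi_m$ sends $\sigma_{ij}\mapsto P_{ij}$, the elements $G^{(m)}$ and $H^{(m)}$ are by definition the images of $\sum_{\sigma}\operatorname{sgn}(\sigma)\sigma$ and $\sum_{\sigma}\sigma$ respectively, so it suffices to match the factors of the right-hand sides.

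For part (1), I would first telescope the hypothesis $u_i-u_{i+1}=1$ to obtain $u_i-u_j=j-i$ for all $i<j$. Using $R(u)=1-u^{-1}P$, this yields $R_{ij}(u_i-u_j)=1-P_{ij}/(j-i)$. Applying $\pi_m$ term by term to the multiplicative formula
\begin{gather*}
\sum_{\sigma\in\mathfrak{S}_m}\operatorname{sgn}\sigma\cdot \sigma
=\mathop{\overleftarrow{\prod}}\limits_{i=1}^{m-1}
\mathop{\overleftarrow{\prod}}\limits_{j=i+1}^{m}\Bigl(1-\frac{\sigma_{ij}}{j-i}\Bigr)
\end{gather*}
then produces the ordered product $R_{m-1,m}(R_{m-2,m}R_{m-2,m-1})\cdots(R_{1,m}\cdots R_{1,2})$, which is exactly $R(u_1,\ldots,u_m)$. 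The key verification is that the ordering conventions agree: the outer $\overleftarrow{\prod}$ runs $i=m-1,m-2,\ldots,1$ from left to right, and the inner $\overleftarrow{\prod}$ for fixed $i$ runs $j=m,m-1,\ldots,i+1$ from left to right, matching the grouping in the definition of $R(u_1,\ldots,u_m)$.

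Part (2) is entirely parallel. The hypothesis $u_i-u_{i+1}=-1$ gives $u_i-u_j=-(j-i)$ for $i<j$, hence $R_{ij}(u_i-u_j)=1+P_{ij}/(j-i)$, and the second multiplicative formula translates termwise under $\pi_m$ into the product defining $R(u_1,\ldots,u_m)$, equating it with $H^{(m)}$.

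I do not expect any genuine obstacle here; the only care needed is in checking that the ordering of factors in the $\overleftarrow{\prod}$ convention coincides with the left-to-right order in the explicit definition of $R(u_1,\ldots,u_m)$, and that the sign in $R(u)=1-u^{-1}P$ interacts correctly with the sign of $u_i-u_j$ in the two cases. Once these book-keeping points are checked, the lemma is an immediate consequence of the multiplicative formulas quoted in the excerpt.
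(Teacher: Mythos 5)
Your proposal is correct and follows essentially the same route as the paper, which proves the lemma ``directly by the action of $\pi_m$''; you simply spell out the bookkeeping (telescoping to $u_i-u_j=j-i$, matching $R_{ij}=1\mp P_{ij}/(j-i)$ with the factors $1\mp\sigma_{ij}/(j-i)$, and checking the $\overleftarrow{\prod}$ ordering against the definition of $R(u_1,\ldots,u_m)$) that the paper leaves implicit.
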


\begin{proof}
    This lemma can be proven directly by the action of $\pi_m$.

\end{proof}

\begin{lemma}\label{GHT:eqs}
In $\left(\operatorname{End}V\right)^{\otimes m}\otimes \mathrm{Y}(\mathfrak{gl}_{M|N})\left[\left[u^{-1}\right]\right]$,
\begin{align}\label{GH:eq1}
G^{(m)}T_1(u)\cdots T_{m}(u-m+1)&=T_m(u-m+1)\cdots T_1(u)G^{(m)}, \\ \label{GH:eq2}
H^{(m)}T_1^{\ast}(u)\cdots T_m^{\ast}(u+m-1)&=T_m^{\ast}(u+m-1)\cdots T_1^{\ast}(u)H^{(m)}.
\end{align}
where we denote $G^{(m)}=G^{(m)}\otimes 1$ and $H^{(m)}=H^{(m)}\otimes 1$ to simplify matters.
\end{lemma}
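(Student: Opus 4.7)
The plan is to prove both equations uniformly by reducing them to a single statement: for any matrix $X(u)$ that satisfies the RTT relation $R_{12}(u-v)X_1(u)X_2(v)=X_2(v)X_1(u)R_{12}(u-v)$ and any spectral parameters $u_1,\ldots,u_m$,
\begin{gather*}
R(u_1,\ldots,u_m)X_1(u_1)\cdots X_m(u_m)=X_m(u_m)\cdots X_1(u_1)R(u_1,\ldots,u_m).
\end{gather*}
Equation \eqref{GH:eq1} follows by taking $X=T$ with $u_i=u-i+1$ (so $u_i-u_{i+1}=1$), since Lemma \ref{GH-ex}(1) then identifies $R(u_1,\ldots,u_m)$ with $G^{(m)}$. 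Equation \eqref{GH:eq2} follows by taking $X=T^{\ast}$ with $u_i=u+i-1$ (so $u_i-u_{i+1}=-1$), where Lemma \ref{GH-ex}(2) gives $R(u_1,\ldots,u_m)=H^{(m)}$.

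A preliminary observation is that $T^{\ast}(u)$ satisfies the same RTT relation as $T(u)$. This is because $\psi$ in \eqref{auto:Ygl5} is an algebra automorphism of $\mathrm{Y}(\mathfrak{gl}_{M|N})$ that preserves the parities of the generators and sends each $t_{ij}(u)$ to $t^{\ast}_{ij}(u)$; applying $\psi$ entrywise to \eqref{Y1} yields $R_{12}(u-v)T^{\ast}_1(u)T^{\ast}_2(v)=T^{\ast}_2(v)T^{\ast}_1(u)R_{12}(u-v)$.

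The general statement itself is proved by induction on $m$. The base case $m=2$ is the RTT relation. For the inductive step, the key is the factorization, read off directly from the definition,
\begin{gather*}
R(u_1,\ldots,u_m)=R(u_2,\ldots,u_m)\cdot\bigl(R_{1,m}R_{1,m-1}\cdots R_{1,2}\bigr),
\end{gather*}
where $R(u_2,\ldots,u_m)$ is understood to act on positions $\{2,\ldots,m\}$. One then slides the block $(R_{1,m}\cdots R_{1,2})$ across $X_1(u_1)\cdots X_m(u_m)$: starting with the rightmost $R_{1,2}$, RTT gives $R_{1,2}X_1X_2=X_2X_1R_{1,2}$, and $R_{1,2}$ commutes past $X_3,\ldots,X_m$ since they act on disjoint positions; iterating with $R_{1,3},R_{1,4},\ldots,R_{1,m}$ produces
\begin{gather*}
(R_{1,m}\cdots R_{1,2})X_1X_2\cdots X_m=X_2X_3\cdots X_mX_1(R_{1,m}\cdots R_{1,2}).
\end{gather*}
Applying the inductive hypothesis to $R(u_2,\ldots,u_m)X_2\cdots X_m=X_m\cdots X_2R(u_2,\ldots,u_m)$, and using that $R(u_2,\ldots,u_m)$ commutes with $X_1$ (which acts on the disjoint position $1$), one assembles the desired equality.

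The main obstacle is the bookkeeping in the sliding step: one must verify that each $R_{1,k}$ traverses the product of $X$-factors exactly once, leaving behind no residual R-matrices and producing the correct reversal of the order of the $X_i$. Provided one follows the specific ordering built into the definition of $R(u_1,\ldots,u_m)$, this is a routine (if delicate) iterated application of RTT, and the same reasoning goes through without change for $X=T$ and $X=T^{\ast}$.
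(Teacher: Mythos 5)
Your proof is correct and follows essentially the same route as the paper: both reduce the statement to the identity $R(u_1,\ldots,u_m)X_1\cdots X_m=X_m\cdots X_1R(u_1,\ldots,u_m)$ via Lemma \ref{GH-ex}, prove it by sliding the block $R_{i,m}\cdots R_{i,i+1}$ past the $X$-factors iteratively, and handle \eqref{GH:eq2} by noting that $T^{\ast}$ satisfies the RTT relation because $\psi$ is an automorphism. The only difference is cosmetic (you phrase the iteration as induction on $m$, the paper as induction on $i$), and your write-up of the sliding step is in fact somewhat more detailed than the paper's.
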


\begin{proof}
    We check the relation \eqref{GH:eq1} first.
    By Lemma \ref{GH-ex}, it suffices to show that
    \begin{gather*}
        \left(R(u_1,\ldots,u_m)\otimes 1\right)T_1(u_1)\cdots T_{m}(u_m)
           =T_m(u_m)\cdots T_1(u_1)\left(R(u_1,\ldots,u_m)\otimes 1\right)
    \end{gather*}
    for $u_i=u-i+1$. It can be proved by induction on $i\in\{1,\ldots,m-1\}$ owing to
    \begin{gather*}
        \left(R_{i,m}\cdots R_{i,i+1}\otimes 1\right)T_i(u_i)T_{i+1}(u_{i+1})\cdots T_m(u_m)=T_{i+1}(u_{i+1})\cdots T_m(u_m)T_i(u_i)\left(R_{i,m}\cdots R_{i,i+1}\otimes 1\right).
    \end{gather*}

    Since the mapping \eqref{auto:Ygl5} is an automorphism of $\mathrm{Y}\big(\mathfrak{gl}_{M|N}\big)$, we obtain
    \begin{gather*}
        R_{12}(u-v)T_1^{\ast}(u)T_2^{\ast}(v)=T_2^{\ast}(v)T_1^{\ast}(u)R_{12}(u-v),
    \end{gather*}
    which allows us to prove \eqref{GH:eq2} as similarly as the first relation.
\end{proof}

Let $\mathcal{I}\in\operatorname{End}V$ and $\mathcal{J}\in\operatorname{End}V$ be the respective projections of the superspace $V$ onto its even subspace $\mathbb{C}^{M|0}$ and odd subspace $\mathbb{C}^{0|N}$. This means that
\begin{gather*}
    \mathcal{I}(e_i)=\delta_{0,|i|}\,e_i,\quad \mathcal{J}(e_i)=\delta_{1,|i|}\,e_i.
\end{gather*}
Denote $\Im=\mathcal{I}_1\cdots \mathcal{I}_M\mathcal{J}_{M+1}\cdots\mathcal{J}_{M+N}$. The action of $\Im$ on the superspace $$V^{\otimes (M+N)}\setminus \left(C^{M|0}\right)^{\otimes M}\otimes \left(C^{0|N}\right)^{\otimes N}$$ must be zero.
Since the identities
    \begin{gather*}
        \mathcal{I}_iP_{i,j}=P_{i,j}\mathcal{I}_j, \quad \mathcal{I}_jP_{i,j}=P_{i,j}\mathcal{I}_i,
    \end{gather*}
    we have in $u^{-1}$
    \begin{gather}\label{IRcomm}
        \mathcal{I}_i\mathcal{I}_jR_{i,j}(u)=R_{i,j}(u)\mathcal{I}_i\mathcal{I}_j.
    \end{gather}
    Similarly,
    \begin{gather}\label{JRcomm}
        \mathcal{J}_i\mathcal{J}_jR_{i,j}(u)=R_{i,j}(u)\mathcal{J}_i\mathcal{J}_j.
    \end{gather}
    These two equalities \eqref{IRcomm} and \eqref{JRcomm} lead to $\Im A^{(M|N)}=A^{(M|N)}\Im$. Combining with Lemma \ref{GHT:eqs}, we have
\begin{align*}
    &A^{(M|N)}\Im  T_1(w_1)\cdots T_{M}(w_M) T^{\ast}_{M+1}(w_{M+1})\cdots T^{\ast}_{M+N}(w_{M+N})\Im \\
    =&\Im T^{\ast}_{M+N}(w_{M+N})\cdots T^{\ast}_{M+1}(w_{M+1})T_{M}(w_M)\cdots T_1(w_1) A^{(M|N)} \Im,
\end{align*}
for $A^{(M|N)}=G^{(M)}\otimes H^{(N)}\otimes 1$, and
\begin{equation*}
    w_i=\begin{cases}
        u+M-N-i, &\text{if}~~i\in I_+, \\
        u-M-N-1+i, &\text{if}~~i\in I_-.
    \end{cases}
\end{equation*}
The operator $\Im$ and Corollary \ref{GH-del2} ensure that the image of $A^{(M|N)}\Im $ on $V^{\otimes (M+N)}$ is one-dimensional. Then there exists a formal power series
    \begin{gather*}
        \mathfrak{D}_{M|N}(u)\in 1+u^{-1}\mathrm{Y}(\mathfrak{gl}_{M|N})\left[\left[u^{-1}\right]\right]
    \end{gather*}
    such that in $\operatorname{End} V^{\otimes (M+N)}\otimes \mathrm{Y}(\mathfrak{gl}_{M|N})\left[\left[u^{-1}\right]\right]$,
    \begin{gather}\label{GH-QB:eq1}
\Im A^{(M|N)}\mathfrak{D}_{M|N}(u)\Im=\Im A^{(M|N)} T_1(w_1)\cdots T_{M}(w_M) T^{\ast}_{M+1}(w_{M+1})\cdots T^{\ast}_{M+N}(w_{M+N})\Im.
\end{gather}

\begin{theorem}\label{GH-QB}
The following equation hold in $\mathrm{Y}(\mathfrak{gl}_{M|N})\left[\left[ u^{-1}\right]\right]$,
    \begin{gather}\label{GH-QB:eqbd}
        \mathfrak{D}_{M|N}(u)=\mathfrak{B}_{M|N}(u).
    \end{gather}
    That is,
    \begin{gather}\label{GH-QB:eq2}
\Im A^{(M|N)}\mathfrak{B}_{M|N}(u)\Im=\Im A^{(M|N)} T_1(w_1)\cdots T_{M}(w_M) T^{\ast}_{M+1}(w_{M+1})\cdots T^{\ast}_{M+N}(w_{M+N})\Im.
\end{gather}
\end{theorem}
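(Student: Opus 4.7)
The plan is to apply both sides of \eqref{GH-QB:eq2} to the reference vector $e_{\circ} := e_1 \otimes e_2 \otimes \cdots \otimes e_{M+N} \in V^{\otimes(M+N)}$ (tensored with $1 \in \mathrm{Y}(\mathfrak{gl}_{M|N})[[u^{-1}]]$) and to compare the coefficients of $e_{\circ}$ in the resulting elements. Since $\mathcal{I}_a e_a = e_a$ for $a \in I_+$ and $\mathcal{J}_a e_a = e_a$ for $a \in I_-$, one has $\Im(e_{\circ}) = e_{\circ}$. On the left-hand side of \eqref{GH-QB:eq2}, $\mathfrak{D}_{M|N}(u)$ enters as a scalar in $\mathrm{Y}$, and by the identity-permutation term in Corollary \ref{GH-del} the coefficient of $e_{\circ}$ in $A^{(M|N)}(e_{\circ})$ is $1$. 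Hence the coefficient of $e_{\circ}$ on the left-hand side equals $\mathfrak{D}_{M|N}(u)$.

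For the right-hand side, I would expand the product $X(u) := T_1(w_1) \cdots T_M(w_M) T^{\ast}_{M+1}(w_{M+1}) \cdots T^{\ast}_{M+N}(w_{M+N})$ in the super-tensor algebra $\operatorname{End} V^{\otimes(M+N)} \otimes \mathrm{Y}(\mathfrak{gl}_{M|N})[[u^{-1}]]$. Because each $T_a(w_a)$ or $T^{\ast}_a(w_a)$ acts nontrivially only on the $a$-th tensor slot, applying $X(u)$ to $e_{\circ}$ forces the column indices to satisfy $j_a = a$ and yields a sum over tuples $(i_1, \ldots, i_{M+N}) \in I^{M+N}$ of basis vectors $e_{i_1} \otimes \cdots \otimes e_{i_{M+N}}$ with Yangian coefficients $t_{i_1,1}(w_1) \cdots t_{i_M,M}(w_M) t^{\ast}_{i_{M+1},M+1}(w_{M+1}) \cdots t^{\ast}_{i_{M+N},M+N}(w_{M+N})$, decorated by super-signs arising from the super-tensor multiplication rule.

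Next, applying the left factor $\Im$ restricts the sum to tuples with $i_a \in I_+$ for $a \le M$ and $i_a \in I_-$ for $a > M$. In this regime $|i_a| = |a|$, so each $|i_a| + |j_a| \equiv 0 \pmod{2}$; consequently every super-sign (whether from reordering pairs $T_a T_b$ and $T_a^{\ast} T_b^{\ast}$ or from the graded action of $E_{i_a j_a}^{(a)}$ past the preceding basis vectors) collapses to $+1$. Now applying $A^{(M|N)} = G^{(M)} \otimes H^{(N)} \otimes 1$, Corollary \ref{GH-del2} shows that only tuples with $(i_1, \ldots, i_M) = (\sigma(1), \ldots, \sigma(M))$ for some $\sigma \in \mathfrak{S}_M$ and $(i_{M+1}, \ldots, i_{M+N}) = (M+\tau(1), \ldots, M+\tau(N))$ for some $\tau \in \mathfrak{S}_N$ contribute to the coefficient of $e_{\circ}$, each weighted by $\operatorname{sgn}(\sigma)\operatorname{sgn}(\tau)$.

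Finally, since $(-1)^{|i||j| + |j|} = 1$ for $i, j \in I_-$, the identity $t^{\ast}_{ij}(u) = \tilde{t}_{ji}(u)$ holds on the odd block, so the resulting coefficient of $e_{\circ}$ matches the defining formula for $\mathfrak{B}_{M|N}(u)$ once the shifts $w_a = u + M - N - a$ (for $a \le M$) and $w_a = u - M - N - 1 + a$ (for $a > M$) are identified with those in the Berezinian expression. Comparing the two sides yields \eqref{GH-QB:eqbd}. The principal technical obstacle is the bookkeeping of super-signs in the expansion of $X(u)$; the key observation that resolves it is that the projector $\Im$ forces every relevant entry $t_{i_a,a}(w_a)$ or $t^{\ast}_{i_a,a}(w_a)$ to be of even super-degree, which makes all the $(-1)^{(|i|+|j|)(|k|+|l|)}$-type signs vanish simultaneously.
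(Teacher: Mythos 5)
Your proof is correct and follows essentially the same strategy as the paper: apply the defining equation \eqref{GH-QB:eq1} to a basis vector, note that $\Im$ forces all occurring matrix entries to have even super-degree so the tensor-product super-signs vanish, and use Corollary \ref{GH-del2} to read off the permutation expansion with the correct $\operatorname{sgn}$ weights. The paper does the same computation but applies it to the more general vector $e_{\eta(1)}\otimes\cdots\otimes e_{M+\eta'(N)}$ for arbitrary $\eta\in\mathfrak{S}_M$, $\eta'\in\mathfrak{S}_N$ before specializing; your choice $\eta=\eta'=\mathrm{id}$ already suffices. (Note a small slip in labeling: you should be applying both sides of \eqref{GH-QB:eq1}, the defining identity for $\mathfrak{D}_{M|N}(u)$, rather than \eqref{GH-QB:eq2}, which is the equality to be proved.)
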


\begin{proof}
    By Corollary \ref{GH-del}, $G^{(M)}$ acts non-trivially on the vector $e_{i_1}\otimes\ldots\otimes e_{i_M}\in V^{\otimes M}$ only if the indices $i_1,\ldots,i_M\in\{1,\ldots,M\}$ are pairwise distinct. In other words, there exists some permutation $\sigma\in\mathfrak{S}_M$ such that $i_k=\sigma(k)$ for each $k$. So does $H^{(N)}$.

    Applying the right side of \eqref{GH-QB:eq1} to both sides of the vector $v=e_{\eta(1)}\otimes\cdots \otimes e_{\eta(M)} \otimes e_{M+\eta'(1)}\otimes \cdots\otimes  e_{M+\eta'(N)}$ for $\eta\in\mathfrak{S}_M$ and $\eta'\in \mathfrak{S}_N$, we obtain
    \begin{align*}
       &A^{(M|N)}(e_1\otimes \cdots\otimes e_{M+N})\sum_{\sigma\in\mathfrak{S}_M}\operatorname{sgn}(\sigma)\cdot t_{\sigma(1),\eta(1)}(w_1)\cdots t_{\sigma(M),\eta(M)}(w_M)  \\
       &\qquad\qquad\qquad\qquad\qquad\qquad\times\sum_{\sigma\in\mathfrak{S}_N}
         \operatorname{sgn}(\sigma) \cdot t^{\ast}_{M+\sigma(1),M+\eta'(1)}(w_{\overline{1}})\cdots t^{\ast}_{M+\sigma(N),M+\eta'(N)}(w_{\overline{N}}).
    \end{align*}
    The left side of \eqref{GH-QB:eq1} acts on $v$ to yield
    \begin{gather*}
        \operatorname{sgn}(\eta)\operatorname{sgn}(\eta')A^{(M|N)}(e_1\otimes \cdots\otimes e_{M|N}),
    \end{gather*}
    which implies that
    \begin{align*}
       &\mathfrak{D}_{M|N}(u)= \sum_{\sigma\in\mathfrak{S}_M}\operatorname{sgn}(\sigma\eta)\cdot t_{\sigma(1),\eta(1)}(w_1)\cdots t_{\sigma(M),\eta(M)}(w_M)  \\
       &\qquad\qquad\qquad\times\sum_{\sigma\in\mathfrak{S}_N}
         \operatorname{sgn}(\sigma\eta')\cdot t^{\ast}_{M+\sigma(1),M+\eta'(1)}(w_{\overline{1}})\cdots t^{\ast}_{M+\sigma(N),M+\eta'(N)}(w_{\overline{N}}).
    \end{align*}
    Hence, by Corollary \ref{GH-del2} and the definition of $\mathfrak{B}_{M|N}(u)$, we get \eqref{GH-QB:eqbd}.
    
\end{proof}

There exists an alternative realization to define the quantum Berezinian in terms of generators $T(u)$:
\begin{proposition}
The following equations hold in $\mathrm{Y}(\mathfrak{gl}_{M|N})\left[\left[ u^{-1}\right]\right]$,
    \begin{align*}
        \mathfrak{B}_{M|N}(u) &=\operatorname{Str}_{1,\cdots,M+N}\,\Im A^{(M|N)}T_1(w_1)\cdots T_{M}(w_M) T^{\ast}_{M+1}(w_{M+1})\cdots T^{\ast}_{M+N}(w_{M+N}) \\
        &=\operatorname{Str}_{1,\cdots,M+N}\,\Im A^{(M|N)} T_1(w_1)\cdots T_{M}(w_M) T^{\ast}_{M+1}(w_{M+1})\cdots T^{\ast}_{M+N}(w_{M+N})\Im.
    \end{align*}
\end{proposition}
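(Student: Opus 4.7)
The plan is to deduce both equalities from Theorem~\ref{GH-QB} together with the cyclicity of the partial supertrace. The right-hand equality is actually independent of Theorem~\ref{GH-QB}: since $\Im$ is an even operator satisfying $\Im^2=\Im$, cyclicity of the supertrace (valid because $\Im$ is even) yields
\[
\operatorname{Str}_{1,\ldots,M+N}\bigl(\Im A^{(M|N)}T_1\cdots T^*_{M+N}\Im\bigr)=\operatorname{Str}_{1,\ldots,M+N}\bigl(\Im\cdot \Im A^{(M|N)}T_1\cdots T^*_{M+N}\bigr)=\operatorname{Str}_{1,\ldots,M+N}\bigl(\Im A^{(M|N)}T_1\cdots T^*_{M+N}\bigr).
\]

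For the left-hand equality I would start from equation~\eqref{GH-QB:eq2} in Theorem~\ref{GH-QB}. Since $\mathfrak{B}_{M|N}(u)$ is a Yangian-valued scalar and therefore commutes with every $\operatorname{End}V$ entry, it can be pulled out to the right, giving the operator identity
\[
\Im A^{(M|N)}T_1(w_1)\cdots T^*_{M+N}(w_{M+N})\Im = \bigl(\Im A^{(M|N)}\Im\bigr)\cdot \mathfrak{B}_{M|N}(u).
\]
Applying $\operatorname{Str}_{1,\ldots,M+N}$ and using linearity over $\mathrm{Y}(\mathfrak{gl}_{M|N})$ then yields
\[
\operatorname{Str}_{1,\ldots,M+N}\bigl(\Im A^{(M|N)}T_1\cdots T^*_{M+N}\Im\bigr)=\operatorname{Str}_{1,\ldots,M+N}\bigl(\Im A^{(M|N)}\Im\bigr)\cdot \mathfrak{B}_{M|N}(u),
\]
so everything reduces to the scalar normalization identity $\operatorname{Str}_{1,\ldots,M+N}(\Im A^{(M|N)}\Im)=1$.

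To establish the normalization identity, I would first use $\Im^2=\Im$ together with the commutation relations \eqref{IRcomm} and \eqref{JRcomm} to rewrite $\Im A^{(M|N)}\Im=\Im A^{(M|N)}=(\mathcal{I}_1\cdots\mathcal{I}_M G^{(M)})\otimes(\mathcal{J}_{M+1}\cdots\mathcal{J}_{M+N}H^{(N)})$. The full supertrace then factorizes as a product of partial supertraces on the first $M$ and last $N$ tensor factors. Each factor can be evaluated by acting on a basis of the pure even (respectively pure odd) tensor subspace and extracting the diagonal coefficient via Corollaries~\ref{GH-del} and \ref{GH-del2}, exactly as in the proof of Theorem~\ref{GH-QB}.

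The main difficulty lies in carefully tracking the signs in this last step, because the super-symmetrizer $H^{(N)}$ restricted to purely odd tensors behaves like the ordinary antisymmetrizer up to the sign $\operatorname{sgn}(\sigma)$, and this interacts with the factor $(-1)^{|\vec{i}|}=(-1)^N$ coming from the supertrace on purely odd tensors. Once the combinatorial contributions from the sums over $\mathfrak{S}_M$ and $\mathfrak{S}_N$ are matched against these signs, the required normalization emerges, and both equalities of the proposition follow at once.
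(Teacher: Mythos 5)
Your two-step strategy departs from the paper's: the paper proves the first equality by expanding $\Im A^{(M|N)}T_1\cdots T^{*}_{M+N}$ directly into a sum of elementary tensors and reading off the supertrace, then states that the second equality follows; you instead prove the second equality first (by cyclicity of the supertrace together with $\Im^2=\Im$, which is correct and cleanly stated) and then reduce the first to Theorem~\ref{GH-QB} via \eqref{GH-QB:eq2} plus a scalar normalization identity.

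The gap is in that normalization identity. You claim $\operatorname{Str}_{1,\ldots,M+N}\bigl(\Im A^{(M|N)}\Im\bigr)=1$, but with the paper's unnormalized symmetrizers this is false. Since $\Im A^{(M|N)}\Im=\Im A^{(M|N)}$ and all the operators $\mathcal{I}_a,\mathcal{J}_b,G^{(M)},H^{(N)}$ are even, the full supertrace factorizes over the two groups of tensor slots:
\[
\operatorname{Str}_{1,\ldots,M+N}\bigl(\Im A^{(M|N)}\bigr)
=\operatorname{Str}_{1,\ldots,M}\bigl(\mathcal{I}_1\cdots\mathcal{I}_M\,G^{(M)}\bigr)\cdot
 \operatorname{Str}_{M+1,\ldots,M+N}\bigl(\mathcal{J}_{M+1}\cdots\mathcal{J}_{M+N}\,H^{(N)}\bigr).
\]
By Corollary~\ref{GH-del}, on the purely even (resp. purely odd) subspace $G^{(M)}$ (resp. $H^{(N)}$) acts as the ordinary unnormalized antisymmetrizer, whose diagonal entries are $1$ on each of the $M!$ (resp. $N!$) tuples of distinct indices and $0$ otherwise; the odd block contributes an overall $(-1)^N=1$ since $N=2n$. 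Hence $\operatorname{Str}\bigl(\Im A^{(M|N)}\Im\bigr)=M!\,N!$, not $1$, so your reduction actually yields $\operatorname{Str}\bigl(\Im A^{(M|N)}T_1\cdots T^*_{M+N}\Im\bigr)=M!\,N!\cdot\mathfrak{B}_{M|N}(u)$. A direct check for $M=2,\,N=0$ confirms this: $\operatorname{Str}_{12}\bigl((1-P_{12})T_1(u+1)T_2(u)\bigr)=\bigl(t_{11}(u+1)t_{22}(u)-t_{21}(u+1)t_{12}(u)\bigr)+\bigl(t_{22}(u+1)t_{11}(u)-t_{12}(u+1)t_{21}(u)\bigr)=2\,\mathfrak{B}_{2|0}(u)$. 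The discrepancy is a genuine combinatorial multiplicity of the unnormalized symmetrizer, not a sign issue, so it cannot be absorbed by the sign bookkeeping you defer to; the normalization identity needs to be replaced by the correct value $M!\,N!$ and a compensating factor inserted (or the symmetrizers normalized by $1/(M!\,N!)$) before the argument can close.
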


\begin{proof}
We only need to establish the validity of the first equation. Subsequently, the second one can be derived straightforwardly. Due to $A^{(M|N)}\Im=\Im A^{(M|N)}$, one has
    \begin{align*}
        &\Im A^{(M|N)}T_1(w_1)\cdots T_{M}(w_M) T^{\ast}_{M+1}(w_{M+1})\cdots T^{\ast}_{M+N}(w_{M+N}) \\
        =&
           \sum_{k_1,\ldots,k_M\in I}E_{1,k_1}\otimes \cdots \otimes E_{M,k_M}\otimes 1^{\otimes N}\otimes \sum_{\sigma\in\mathfrak{S}_M}\operatorname{sgn}(\sigma)t_{\sigma(1),k_1}(w_1)\cdots t_{\sigma(M),k_M}(w_M)  \\
       &\times \sum_{h_1,\ldots,h_N\in I} 1^{\otimes M}\otimes E_{M+1,h_1}\otimes \cdots\otimes E_{M+N,h_N}\otimes \sum_{\sigma\in\mathfrak{S}_N}
         \operatorname{sgn}(\sigma) t^{\ast}_{M+\sigma(1),h_1}(w_{\overline{1}})\cdots t^{\ast}_{M+\sigma(N),h_N}(w_{\overline{N}}).
    \end{align*}
    The action of $\operatorname{Str}_{1,\cdots,M+N}$ on the right side of the equation above immediately gives the expression of $\mathfrak{B}_{M|N}(u)$.

\end{proof}

Let $\mathfrak{P}_{M|N}(u)$ be the series obtained from the quantum Berezinian $\mathfrak{B}_{M|N}(u)$ under the action of $\rho$, that is,
$$\mathfrak{P}_{M|N}(u):=\rho\left(\mathfrak{B}_{M|N}(u)\right).$$
Then, by Lemma \ref{rho-Z}, we have
\begin{gather*}
\mathfrak{z}(u)=z(u)z(-u-M+2n)^{-1}=\frac{\mathfrak{B}_{M|N}(u+1)\mathfrak{P}_{M|N}(u+1)}{\mathfrak{B}_{M|N}(u)\mathfrak{P}_{M|N}(u)}.
\end{gather*}

\begin{remark}
In $\mathrm{Y}(\mathfrak{gl}_N)$, it is known that $\rho\left(\mathrm{qdet}T(u)\right)=\mathrm{qdet}T(-u+N-1)$. However, $$\mathfrak{P}_{M|N}(u)=\rho\Big(\mathfrak{B}_{M|N}(u)\Big)\neq \mathfrak{B}_{M|N}(-u+N+1),$$
except when $M=N$ or $M=0$. For instance,
\begin{gather*}
\mathfrak{B}_{1|2}(u)=t_{11}(u-2)\big(\tilde{t}_{22}(u-2)\tilde{t}_{33}(u-1)-\tilde{t}_{23}(u-2)\tilde{t}_{32}(u-1)\big).
\end{gather*}
In this case, by the defining relation \eqref{Y1} and the automorphism \eqref{auto:Ygl5}, we know
\begin{align*}
\mathfrak{P}_{1|2}(u)&=t_{11}(-u+2)\big(\tilde{t}_{22}(-u+2)\tilde{t}_{33}(-u+1)-\tilde{t}_{32}(-u+2)\tilde{t}_{23}(-u+1)\big) \\
&=t_{11}(-u+2)\big(\tilde{t}_{22}(-u+1)\tilde{t}_{33}(-u+2)-\tilde{t}_{23}(-u+1)\tilde{t}_{32}(-u+2)\big) \\
&\neq \mathfrak{B}_{1|2}(-u+3).
\end{align*}
\end{remark}

\begin{corollary}
  Let $w_i$ be the same parameters as in Theorem \ref{GH-QB}, for $1\leqslant i \leqslant M+N$. Then the following equation holds in  $\operatorname{End}V^{\otimes (M+N)}\otimes \mathrm{Y}\left(\mathfrak{gl}_{M|N}\right)\left[\left[u^{-1}\right]\right]$,
\begin{gather}\label{GH-QB:eq3}
\Im A^{(M|N)}\mathfrak{P}_{M|N}(u)\Im
=\Im A^{(M|N)} T_1^t(-w_1)\cdots  T_{M}^t(-w_M) \widetilde{T}_{M+1}(-w_{M+1})\cdots \widetilde{T}_{M+N}(-w_{M+N})\Im.
\end{gather}
\end{corollary}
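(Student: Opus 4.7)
The plan is to apply the involutive automorphism $\rho$ defined in \eqref{auto:Ygl4} to both sides of the identity \eqref{GH-QB:eq2} of Theorem \ref{GH-QB}, extending $\rho$ to $\operatorname{End}V^{\otimes(M+N)}\otimes\mathrm{Y}(\mathfrak{gl}_{M|N})[[u^{-1}]]$ by acting trivially on the first tensor factor. Since $\rho$ is an even algebra homomorphism, it distributes through the super tensor-product multiplication of matrices, and the scalar-matrix factors $\Im$ and $A^{(M|N)}$ pass through unchanged. By the very definition $\mathfrak{P}_{M|N}(u):=\rho(\mathfrak{B}_{M|N}(u))$, the left-hand side of \eqref{GH-QB:eq2} becomes $\Im A^{(M|N)}\mathfrak{P}_{M|N}(u)\Im$, which is the left-hand side of the claimed \eqref{GH-QB:eq3}.

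For the right-hand side I would compute $\rho$ factor by factor. Immediately $\rho(T_a(w_a))=T_a^t(-w_a)$. For the starred factors, applying $\rho$ to the identity $T(u)\widetilde{T}(u)=I$ yields $\rho(\widetilde{T}(u))=(T^t(-u))^{-1}=(T(-u)^{-1})^t=T^*(-u)$, and then applying the graded transposition once more and using the convention $(X^t)_{ij}=(-1)^{|i||j|+|j|}X_{ji}$, a short entry-wise calculation gives
\[
\rho(T_a^*(w_a))=J_a\,\widetilde{T}_a(-w_a)\,J_a,\qquad J=\sum_{k\in I}(-1)^{|k|}E_{kk};
\]
the extra $J$-factors arise because the graded transposition squares to conjugation by $J$ rather than the identity.

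Substituting, the $\rho$-image of the right-hand side of \eqref{GH-QB:eq2} becomes
\[
\Im A^{(M|N)}\,T_1^t(-w_1)\cdots T_M^t(-w_M)\prod_{a=M+1}^{M+N}\bigl(J_a\widetilde{T}_a(-w_a)J_a\bigr)\Im.
\]
Since each $J_a$ acts only on the $a$-th tensor slot, it commutes with every $T_b^t(-w_b)$ and $\widetilde{T}_b(-w_b)$ for $b\neq a$ and with the other $J_b$'s; gathering them to the two ends of the product yields $K\cdot T_1^t(-w_1)\cdots T_M^t(-w_M)\widetilde{T}_{M+1}(-w_{M+1})\cdots\widetilde{T}_{M+N}(-w_{M+N})\cdot K$, where $K:=J_{M+1}\cdots J_{M+N}$. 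Now $K$ commutes with $G^{(M)}$ (disjoint slots), and with each $P_{a,b}$ entering $H^{(N)}$ because $J_aJ_bP_{a,b}=P_{a,b}J_aJ_b$, hence with all of $A^{(M|N)}$.

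The final step uses $J_a\mathcal{J}_a=-\mathcal{J}_a$, which gives $K\Im=\Im K=(-1)^N\Im$. The two factors of $(-1)^N$ cancel, and exactly the right-hand side of \eqref{GH-QB:eq3} emerges. The principal obstacle is this bookkeeping of the anomalous $J$-factors induced by $\rho$ acting on $T^*=\widetilde{T}^t$; the cancellation is delicate in that it requires the projection $\Im$ to pin the last $N$ slots to the odd subspace and depends on $K$ commuting with the full symmetrizer $H^{(N)}$, even though individual $J_a$'s fail to commute with each $P_{a,b}$.
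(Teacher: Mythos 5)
Your proof is correct, and since the paper gives no proof of this corollary (it is stated directly after $\mathfrak{P}_{M|N}(u)$ is defined as $\rho(\mathfrak{B}_{M|N}(u))$), applying the even algebra automorphism $1^{\otimes(M+N)}\otimes\rho$ to both sides of \eqref{GH-QB:eq2} is exactly the intended argument. All the key steps check out: $\rho$ is even, so $1\otimes\rho$ respects the super tensor-product multiplication and leaves $\Im$ and $A^{(M|N)}$ fixed; $\rho(T_a(w_a))=T_a^t(-w_a)$; $\rho(\widetilde T(u))=(T^t(-u))^{-1}=T^*(-u)$; and applying $t$ once more and using $(X^t)^t=JXJ$ gives $\rho(T^*_a(w_a))=J_a\widetilde T_a(-w_a)J_a$. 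The $J$-bookkeeping is handled correctly: $J_aJ_bP_{a,b}=P_{a,b}J_aJ_b$ (even though $J_a$ alone does not commute with $P_{a,b}$), so $K=J_{M+1}\cdots J_{M+N}$ commutes with $H^{(N)}$ and, trivially, with $G^{(M)}$ and the $T_a^t$ for $a\le M$; finally $K\Im=\Im K=(-1)^N\Im$, and the two sign factors cancel. The proof supplies precisely the detail the paper leaves implicit, and nothing in it differs in substance from the natural route.
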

\begin{corollary}
Let $w_i$ be the same parameters as in Theorem \ref{GH-QB}, for $1\leqslant i \leqslant M+N$. Then the following equations hold in $\operatorname{End}V^{\otimes (M+N)}\otimes \mathrm{Y}\left(\mathfrak{gl}_{M|N}\right)\left[\left[u^{-1}\right]\right]$,
\begin{align}\label{GH-QB:eq4}
\Im A^{(M|N)}\mathfrak{B}_{M|N}(u) \Im
&=\Im A^{(M|N)} T_1(w_1)\cdots T_{M}(w_M) \widetilde{T}^{\iota}_{M+1}(w_{M+1})\cdots \widetilde{T}^{\iota}_{M+N}(w_{M+N})\Im, \\ \label{GH-QB:eq5}
\Im A^{(M|N)}\mathfrak{P}_{M|N}(u) \Im
&=\Im A^{(M|N)} T_1^{\iota}(-w_1)\cdots  T_{M}^{\iota}(-w_M) \widetilde{T}_{M+1}(-w_{M+1})\cdots \widetilde{T}_{M+N}(-w_{M+N})\Im.
\end{align}
\end{corollary}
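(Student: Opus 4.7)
The plan is to reduce \eqref{GH-QB:eq4} to Theorem \ref{GH-QB} and \eqref{GH-QB:eq5} to the preceding corollary \eqref{GH-QB:eq3} by exploiting the identities $\widetilde{T}^{\iota} = \mathcal{G}\, T^{\ast}\, \mathcal{G}^{-1}$ and $T^{\iota} = \mathcal{G}\, T^{t}\, \mathcal{G}^{-1}$, which are immediate consequences of the definition $X^{\iota}=\mathcal{G}X^t\mathcal{G}^{-1}$ together with $T^{\ast}=\widetilde{T}^t$. The trade-off is that each $\iota$-conjugate carries an extra pair of $\mathcal{G}^{\pm 1}$, and the whole proof amounts to showing that these factors cancel once sandwiched between $\Im A^{(M|N)}$ and $\Im$.

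For \eqref{GH-QB:eq4}, since $\mathcal{G}_i$ commutes with any matrix supported in tensor slots other than $i$, repeated rearrangement yields
$$\widetilde{T}^{\iota}_{M+1}(w_{M+1})\cdots \widetilde{T}^{\iota}_{M+N}(w_{M+N}) = \mathcal{G}_{-}\, T^{\ast}_{M+1}(w_{M+1})\cdots T^{\ast}_{M+N}(w_{M+N})\, \mathcal{G}_{-}^{-1},$$
where $\mathcal{G}_{-} := \mathcal{G}_{M+1}\cdots \mathcal{G}_{M+N}$. One then verifies that $\mathcal{G}_{-}$ commutes with $\Im$, with $T_i$ for $i\leqslant M$, and with $A^{(M|N)}$: the last assertion follows because $G^{(M)}$ is supported in positions $\leqslant M$, while $H^{(N)}$, expressed via Lemma \ref{GH-ex} as a product of $R$-matrices, commutes with $\mathcal{G}_{-}$ on account of $R_{ij}(u)\mathcal{G}_i\mathcal{G}_j = \mathcal{G}_i\mathcal{G}_j R_{ij}(u)$.

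The essential new input is the scalar action of $\mathcal{G}_{-}$ on the one-dimensional image of $\Im A^{(M|N)}$. By Corollary \ref{GH-del}, this image is spanned by $\tilde{v}_0 := G^{(M)}(e_1\otimes\cdots\otimes e_M)\otimes H^{(N)}(e_{M+1}\otimes\cdots\otimes e_{M+N})$, whose second tensor factor lies in the top exterior power $\bigwedge^N \mathbb{C}^N$, on which $\mathcal{G}_{(2)}^{\otimes N}$ acts as multiplication by $\det \mathcal{G}_{(2)}$. Hence $\Im A^{(M|N)}\, \mathcal{G}_{-}^{\pm 1} = (\det \mathcal{G}_{(2)})^{\pm 1}\, \Im A^{(M|N)}$. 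Combining with Theorem \ref{GH-QB} and the fact that the Yangian series $\mathfrak{B}_{M|N}(u)$ commutes with every matrix operator, the two scalars cancel:
\begin{align*}
\Im A^{(M|N)}\mathcal{G}_{-} L\, \mathcal{G}_{-}^{-1}\Im
&= (\det\mathcal{G}_{(2)})\cdot \Im A^{(M|N)} L\, \Im\, \mathcal{G}_{-}^{-1} \\
&= (\det\mathcal{G}_{(2)})\cdot \mathfrak{B}_{M|N}(u)\, \Im A^{(M|N)}\, \mathcal{G}_{-}^{-1} \\
&= (\det\mathcal{G}_{(2)})(\det\mathcal{G}_{(2)})^{-1}\, \mathfrak{B}_{M|N}(u)\, \Im A^{(M|N)} = \Im A^{(M|N)}\mathfrak{B}_{M|N}(u)\Im,
\end{align*}
where $L := T_1(w_1)\cdots T_M(w_M)T^{\ast}_{M+1}(w_{M+1})\cdots T^{\ast}_{M+N}(w_{M+N})$. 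The proof of \eqref{GH-QB:eq5} is mirror-symmetric: one extracts $\mathcal{G}_{+} := \mathcal{G}_1\cdots\mathcal{G}_M$ from $T_1^{\iota}(-w_1)\cdots T_M^{\iota}(-w_M)$ and uses that $\mathcal{G}_{+}$ acts as $\det\mathcal{G}_{(1)}$ on $G^{(M)}(e_1\otimes\cdots\otimes e_M)\in\bigwedge^M \mathbb{C}^M$; the same cancellation then reduces the identity to \eqref{GH-QB:eq3}.

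The proof is largely bookkeeping. The only non-routine steps I anticipate are: verifying that $A^{(M|N)}$ commutes with $\mathcal{G}_{\pm}$, which requires the $R$-matrix presentation of the (anti)symmetrizer; and tracking the order of $\mathcal{G}_{\pm}$, $\mathcal{G}_{\pm}^{-1}$, $\Im$, and $A^{(M|N)}$ so that the two $\det\mathcal{G}_{(i)}$-factors extracted actually meet and cancel. The identification of $\mathcal{G}_{\pm}$ acting on $\tilde v_0$ via the top exterior power is where the argument pivots.
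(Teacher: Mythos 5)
Your proof is correct and follows essentially the same route as the paper: both conjugate \eqref{GH-QB:eq2} by $\mathcal{G}_- = \mathcal{G}_{M+1}\cdots\mathcal{G}_{M+N}$, using the commutation of $\mathcal{G}_-$ with $\Im$, with $T_k$ for $k\leqslant M$, and with $A^{(M|N)}$ (the paper phrases the last via $\mathcal{G}_i\mathcal{G}_jP_{ij}\mathcal{G}_j^{-1}\mathcal{G}_i^{-1}=P_{ij}$, which is equivalent to your $R$-matrix version), together with $\mathcal{G}_iT^\ast_i\mathcal{G}_i^{-1}=\widetilde{T}^\iota_i$. The detour through $\det\mathcal{G}_{(2)}$ on the top exterior power is valid but superfluous: once $\mathcal{G}_-$ is known to commute with $\Im A^{(M|N)}$, the extracted $\mathcal{G}_-$ and $\mathcal{G}_-^{-1}$ simply pass to the outside of the sandwich and cancel as operators, with no need to evaluate the scalar by which they act on the one-dimensional image.
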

\begin{proof}
The equation \eqref{GH-QB:eq4} is follows from multiplying both sides of the equations \eqref{GH-QB:eq2} by $$\mathcal{G}_{M+1}\mathcal{G}_{M+2}\cdots\mathcal{G}_{M+N}, \text{and }\mathcal{G}_{M+N}^{-1}\cdots\mathcal{G}_{M+2}^{-1}\mathcal{G}_{M+1}^{-1}$$
on the left and right, respectively. Due to
$\mathcal{G}_i\mathcal{G}_jP_{ij}\mathcal{G}_j^{-1}\mathcal{G}_i^{-1}=P_{ij}$ for $i<j$, one has
\begin{gather*}
\mathcal{G}_{M+1}\cdots\mathcal{G}_{M+N}\left(1^{\otimes M}\otimes H^{(N)}\right)\mathcal{G}_{M+N}^{-1}\cdots\mathcal{G}_{M+1}^{-1}=1^{\otimes M}\otimes H^{(N)}.
\end{gather*}
Thus, the equation \eqref{GH-QB:eq4} holds since $\mathcal{G}_i$ commutes with $\mathcal{I}_j$ (resp. $\mathcal{J}_j$, $T_k$, $\widetilde{T}_k^t$) for every $i,j\in I$ and $k\neq i$.
Equation \eqref{GH-QB:eq5} can be checked similarly by \eqref{GH-QB:eq3}.
\end{proof}

Now, to optimize processing, we will address the computation of the quantum Berezinian for $\mathcal{Y}^{tw}\left(\mathfrak{osp}_{M|N}\right)$ associated with $\mathcal{G}=(g_{ij})$. By Proposition \ref{Ytw:eqv}, we choose $\mathcal{G}=\mathcal{G}_0$ as in Remark \ref{BRtw}. In this case, 
$$P^{\iota_1}=P^{\iota_2},\quad \text{and}\quad (P^{\iota_1})^{\iota_1}=P.$$
According to the isomorphism $\phi$, we identify with
\begin{gather*}
    s_{ij}(u)=\theta_j\mathfrak{s}_{ij'}(u),\quad \tilde{s}_{ij}(u)=\theta_i\tilde{\mathfrak{s}}_{i'j}(u).
\end{gather*}
This implies that
\begin{gather*}
    \mathfrak{z}(u)=\sum_k (-1)^{|j||k|+|i||k|+|j|+|k|}\tilde{\mathfrak{s}}_{ik}(-u)\mathfrak{s}_{ki}(-u-M+N).
\end{gather*}

Set $\mathcal{S}_i=\mathcal{S}_i(u_i)$, $\widetilde{\mathcal{S}}_i=\widetilde{\mathcal{S}}_i\left(-u_i-\frac{M-N}{2}\right)$, and  $R_{ij}^{\iota}=R_{ji}^{\iota}=R_{i,j}^{\iota_i}(-u_i-u_j)$ for $i<j$. Denote
\begin{gather*}
\langle \mathcal{S}_{k_1},\ldots,\mathcal{S}_{k_m} \rangle:=\mathcal{S}_{k_1}(R_{k_1k_2}^{\iota}\cdots R_{k_1k_m}^{\iota})\mathcal{S}_{k_2}(R_{k_2k_3}^{\iota}\cdots R_{k_2k_m}^{\iota})\cdots \mathcal{S}_{k_m}.
\end{gather*}

\begin{lemma}\label{GH-RS}
    In $\operatorname{End}V^{\otimes m}\otimes \mathcal{Y}^{tw}(\mathfrak{osp}_{M|N})\left[\left[u^{-1}\right]\right]$, we have
    \begin{align*}
        (G^{(m)}\otimes 1)\langle \mathcal{S}_1,\ldots,\mathcal{S}_m \rangle&=
          \langle \mathcal{S}_m,\ldots,\mathcal{S}_1 \rangle (G^{(m)}\otimes 1),\\
          (H^{(m)}\otimes 1)\langle \widetilde{\mathcal{S}}_{1},\ldots,\widetilde{\mathcal{S}}_{m} \rangle&=
          \langle \widetilde{\mathcal{S}}_m,\ldots,\widetilde{\mathcal{S}}_1 \rangle(H^{(m)}\otimes 1).
    \end{align*}
\end{lemma}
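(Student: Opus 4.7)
The plan is to argue by induction on $m$, treating both identities in parallel, with the reflection equations \eqref{Yeqv1} (for $\mathcal{S}$) and \eqref{Yeqv4} (for $\widetilde{\mathcal{S}}$) supplying the $m=2$ ingredient. The base case $m=2$ is direct: specializing $u_1 - u_2 = 1$ in \eqref{Yeqv1} and using $R_{12}(1) = 1 - P_{12} = G^{(2)}$, together with $R^{\iota}_{12} = R^{\iota}_{21}$ under the fixed choice $\mathcal{G} = \mathcal{G}_0$ of Remark \ref{BRtw}, rewrites \eqref{Yeqv1} as
$$G^{(2)}\,\mathcal{S}_1\,R^{\iota}_{12}\,\mathcal{S}_2 \;=\; \mathcal{S}_2\,R^{\iota}_{12}\,\mathcal{S}_1\,G^{(2)},$$
which is exactly the $m=2$ instance of the first identity. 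The second identity at $m=2$ follows in the same way by setting $u_1 - u_2 = -1$ in \eqref{Yeqv4}, so that $R_{12}(-1) = 1 + P_{12} = H^{(2)}$.

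For the inductive step I would prove the stronger unspecialized statement
$$R(u_1, \ldots, u_m)\,\langle \mathcal{S}_1, \ldots, \mathcal{S}_m \rangle \;=\; \langle \mathcal{S}_m, \ldots, \mathcal{S}_1 \rangle\,R(u_1, \ldots, u_m)$$
for generic parameters, and then specialize $u_i - u_{i+1} = 1$ (respectively, $-1$) to invoke Lemma \ref{GH-ex}(1) (respectively, Lemma \ref{GH-ex}(2) in the $\widetilde{\mathcal{S}}$-version). Using the factorization
$$R(u_1, \ldots, u_m) \;=\; R(u_2, \ldots, u_m)\,(R_{1,m}\,R_{1,m-1} \cdots R_{1,2}),$$
the induction hypothesis applied to positions $2, \ldots, m$ commutes $R(u_2, \ldots, u_m)$ past $\langle \mathcal{S}_2, \ldots, \mathcal{S}_m \rangle$. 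The remaining work is to move $(R_{1,m} \cdots R_{1,2})\,\mathcal{S}_1\,(R^{\iota}_{1,2} \cdots R^{\iota}_{1,m})$ through $\langle \mathcal{S}_2, \ldots, \mathcal{S}_m \rangle$, which is carried out by $m-1$ successive applications of the unspecialized reflection equation \eqref{Yeqv1}, each implementing the local swap
$$R_{1,k}\,\mathcal{S}_1\,R^{\iota}_{1,k}\,\mathcal{S}_k \;=\; \mathcal{S}_k\,R^{\iota}_{1,k}\,\mathcal{S}_1\,R_{1,k},$$
combined with repeated use of the Yang--Baxter equation \eqref{YB-1} to reorganize the $R$-matrix factors so they reassemble into $R(u_1, \ldots, u_m)$ on the right. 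The $\widetilde{\mathcal{S}}$-identity is handled identically using \eqref{Yeqv4} in place of \eqref{Yeqv1}.

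The main obstacle is purely combinatorial bookkeeping: one must track the precise arguments and tensor positions of all $R$ and $R^{\iota}$ factors across the $m-1$ reflection-equation swaps, and verify by iterated Yang--Baxter moves that the final $R$-matrix product on the right is equal to $R(u_1, \ldots, u_m)$. This type of rearrangement argument is standard in the FRT formalism for twisted Yangians and was carried out in the non-super setting in \cite[Chapter 2]{Mo07}; in the present super setting the combinatorics is identical, with the superalgebra sign conventions absorbed into the definitions of $P$, $R(u)$ and $\iota$.
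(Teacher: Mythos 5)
Your proposal is correct and follows essentially the same route as the paper: prove the unspecialized commutation $R(u_1,\ldots,u_m)\langle\mathcal{S}_1,\ldots,\mathcal{S}_m\rangle=\langle\mathcal{S}_m,\ldots,\mathcal{S}_1\rangle R(u_1,\ldots,u_m)$ by peeling off the factor $R_{1,m}\cdots R_{1,2}$, moving $\mathcal{S}_1$ to the right via repeated use of the reflection equation, and then specializing the parameters to obtain $G^{(m)}$ or $H^{(m)}$ via Lemma~\ref{GH-ex}. One small point you gloss over: the ``repeated Yang--Baxter moves'' you invoke are not only the plain equation~\eqref{YB-1} but also its $\iota$-twisted variants obtained by partial transposition and sign changes of the parameters (the paper records these as three auxiliary identities before carrying out the rearrangement); these are what allow the $R^{\iota}$-factors produced by each reflection swap to be commuted past the remaining $R$-factors. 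Since you explicitly flag this as bookkeeping and correctly locate the precedent in \cite[Chapter~2]{Mo07}, the argument is sound.
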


\begin{proof}
    The Yang-Baxter equation \eqref{YB-1} implies for $i<j<k$,
    \begin{gather}\label{TB-2}
        R_{ij}R_{ik}R_{jk}=R_{jk}R_{ik}R_{ij}.
    \end{gather}
    Applying the partial transposition $\iota_i,\iota_j$ to both sides of the relation \eqref{TB-2} and replacing $(u_i,u_j)$ with $(-u_i,-u_j)$ to get
    \begin{gather}\label{TB-3}
        R_{ij}R_{ik}^{\iota}R_{jk}^{\iota}=R_{jk}^{\iota}R_{ik}^{\iota}R_{ij},
    \end{gather}
    since $(R_{ij}^{\iota})^{\iota}=R_{ij}$ and $R(u)R(-u)=1-u^{-2}$. Furthermore, by applying $\iota_i$ to both sides of the relation \eqref{TB-2} and replacing $u_i$ with $-u_i$, we also have
    \begin{gather}\label{TB-4}
        R_{jk}R_{ij}^{\iota_i}R_{ik}^{\iota_i}=R_{ik}^{\iota_i}R_{ij}^{\iota_i}R_{jk}.
    \end{gather}
    Since $P_{ij}R_{jk}P_{ij}=R_{ik}(u_j-u_k)$ and
          $P_{ij}R_{ik}^{\iota_i}P_{ij}=R_{jk}^{\iota_j}(-u_i-u_k)$,
          one gets from \eqref{TB-4},
    \begin{gather}\label{TB-5}
        R_{ik}R_{ij}^{\iota_j}R_{jk}^{\iota_j}=R_{jk}^{\iota_j}R_{ij}^{\iota_j}R_{ik}.
    \end{gather}
    Then by \eqref{TB-3}, \eqref{TB-5} and
    \begin{gather*}
        R_{ij}\mathcal{S}_iR_{ij}^{\iota}\mathcal{S}_j=\mathcal{S}_jR_{ij}^{\iota}\mathcal{S}_iR_{ij},
    \end{gather*}
    we obtain
    \begin{align*}
        R_{im}\cdots R_{i,i+1}\langle \mathcal{S}_{i},\ldots,\mathcal{S}_{m} \rangle 
        =\langle \mathcal{S}_{m},\ldots,\mathcal{S}_{i} \rangle R_{im}\cdots R_{i,i+1}.
    \end{align*}
    It proves the first relation of this lemma. The second one is similar.

\end{proof}

From here to the end of this article, we always set $\overline{i}=i+M$ for $i\in \{1,\ldots,N\}$. Denote $H^{(i,p)}$ as the image of the symmetrizer corresponding to the subset of indices $\left\{\overline{i},\ldots,\overline{p}\right\}$. We identify $H^{(i,p)}$ with
$$1^{\otimes (M+i-1)}\otimes H^{(i,p)}\otimes 1^{\otimes (N-p+1)}$$
as an element of $\left(\operatorname{End}V\right)^{\otimes (M+N)}\otimes \mathcal{Y}^{tw}\left(\mathfrak{osp}_{M|N}\right)$.
Observe that $H^{(1,N)}=1^{\otimes M}\otimes H^{(N)}\otimes 1$.

\begin{lemma}\label{product:H}
    It holds for $1\leqslant i<j<q<p\leqslant N$,
    \begin{gather*}
        H^{(i,p)}\Im =\frac{1}{(q-j+1)!}H^{(i,p)}H^{(j,q)}\Im .
    \end{gather*}
\end{lemma}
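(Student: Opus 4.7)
My plan is to reduce the identity to a standard group-algebra computation in which $\Im$ plays no essential role. First I will note that $\pi_m$ gives a genuine representation of $\mathfrak{S}_m$: the super-permutation operators $P_{i,i+1}$ square to $1$, commute for non-adjacent indices, and obey the braid relation, so the assignment $\sigma_{ij}\mapsto P_{i,j}$ extends to a group homomorphism $\pi\colon\mathfrak{S}_m\to(\operatorname{End}V)^{\otimes m}$ with $\pi(\tau\sigma)=\pi(\tau)\pi(\sigma)$. With this in hand I will rewrite
\[
H^{(i,p)}=\sum_{\tau\in\mathfrak{S}_A}\pi(\tau),\qquad H^{(j,q)}=\sum_{\sigma\in\mathfrak{S}_B}\pi(\sigma),
\]
where $A=\{\overline{i},\ldots,\overline{p}\}$ and $B=\{\overline{j},\ldots,\overline{q}\}$, and $\mathfrak{S}_A,\mathfrak{S}_B$ are viewed as subgroups of $\mathfrak{S}_{M+N}$ via the natural inclusion (fixing indices outside the set).

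Next, the hypothesis $i<j<q<p$ forces $B\subsetneq A$, so $\mathfrak{S}_B\leqslant\mathfrak{S}_A$. For each fixed $\sigma\in\mathfrak{S}_B\subseteq\mathfrak{S}_A$, right multiplication $\tau\mapsto\tau\sigma$ is a bijection of $\mathfrak{S}_A$, hence
\[
H^{(i,p)}\pi(\sigma)=\sum_{\tau\in\mathfrak{S}_A}\pi(\tau\sigma)=\sum_{\tau'\in\mathfrak{S}_A}\pi(\tau')=H^{(i,p)}.
\]
Summing over $\sigma\in\mathfrak{S}_B$ yields $H^{(i,p)}H^{(j,q)}=|\mathfrak{S}_B|\,H^{(i,p)}=(q-j+1)!\,H^{(i,p)}$ already in $(\operatorname{End}V)^{\otimes(M+N)}$. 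Multiplying on the right by $\Im$ and dividing by $(q-j+1)!$ then produces the asserted identity.

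I do not anticipate a genuine obstacle: the whole argument hinges on the standard group-algebra fact that the full group sum absorbs right multiplication by any subgroup element. The only point requiring care is the verification that $\pi_m$ is actually a homomorphism on all of $\mathfrak{S}_m$ (not merely on adjacent transpositions), but this is standard for the super-permutation representation. Should a reader prefer a more concrete route, one could instead act on an arbitrary basis vector in the image of $\Im$, apply Corollary \ref{GH-del} to replace each $H^{(\cdot,\cdot)}$ by an honest antisymmetrizer on pairwise-distinct odd indices, and invoke the classical nested-antisymmetrizer identity $A_A A_B=|B|!\,A_A$ when $B\subseteq A$; this alternative explains why it is natural to include $\Im$ in the statement.
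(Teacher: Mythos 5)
The paper states this lemma without proof, so there is no "paper's own proof" to compare against; your task is therefore to supply one, and your argument does this correctly. The crucial step is the observation that the super-permutation assignment $\pi_m\colon \mathfrak{S}_m\to(\operatorname{End}V)^{\otimes m}$, $\sigma\mapsto\pi_m(\sigma)$, is a genuine group homomorphism: the operators $P_{a,a+1}$ satisfy $P_{a,a+1}^2=1$, the braid relations, and distant commutativity (since $P$ is even, the Koszul sign in the tensor product of superalgebras is trivial), and one checks compatibility with the non-adjacent generators, e.g.\ $P_{13}=P_{12}P_{23}P_{12}$, directly from the Koszul-signed permutation action on $V^{\otimes m}$. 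Granting this, writing $H^{(i,p)}=\sum_{\tau\in\mathfrak{S}_A}\pi(\tau)$ with $A=\{\overline{i},\dots,\overline{p}\}$ and using the standard absorption identity
\[
H^{(i,p)}\pi(\sigma)=H^{(i,p)}\quad\text{for all }\sigma\in\mathfrak{S}_B\subseteq\mathfrak{S}_A,\qquad B=\{\overline{j},\dots,\overline{q}\},
\]
gives $H^{(i,p)}H^{(j,q)}=(q-j+1)!\,H^{(i,p)}$ already in $(\operatorname{End}V)^{\otimes(M+N)}$, which is actually slightly stronger than the stated lemma: the projector $\Im$ is not needed for this identity and can be appended at the end for free.

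Your remark about the alternative route is also sound: after applying $\Im$, all indices in positions $M+1,\dots,M+N$ are odd, so Corollary~\ref{GH-del} reduces $H^{(\cdot,\cdot)}$ to ordinary antisymmetrizers on those positions, and the nested-antisymmetrizer identity finishes the job. This second route explains why the paper chose to phrase the statement with $\Im$ (it is only ever used in the presence of $\Im$, and on the image of $\Im$ the symmetrizers become the more familiar antisymmetrizers), but the first, group-algebraic argument is cleaner and more general. No gap.
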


Lemma \ref{GH-RS} allows us to define
\begin{gather*}
\Im A^{(M|N)}\mathfrak{B}^{tw}_{M|N}(u)\Im=\Im A^{(M|N)} \langle \mathcal{S}_{1},\ldots,\mathcal{S}_{M} \rangle \cdot
         \langle \widetilde{\mathcal{S}}_{M+1},\ldots,\widetilde{\mathcal{S}}_{M+N} \rangle \Im
\end{gather*}
for
\begin{equation*}
u_i=\begin{cases}
  w_i, &\text{if}~~i\in I_+, \\
  w_i-\frac{M-N}{2},      &\text{if}~~i\in I_-.
\end{cases}
\end{equation*}

\begin{proposition}The following equality holds in $\mathcal{Y}^{tw}(\mathfrak{osp}_{M|N})\left[\left[ u^{-1}\right]\right]$,
    \begin{gather*}
         \mathfrak{B}^{tw}_{M|N}(u)=\frac{2u-M-N-1}{2u-M-1}\cdot \mathfrak{B}_{M|N}(u)\mathfrak{P}_{M|N}(u).
    \end{gather*}
\end{proposition}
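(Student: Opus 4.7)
The plan is to substitute the embedding $\mathcal{S}(u)=T(u)T^{\iota}(-u)$ (and the corresponding $\widetilde{\mathcal{S}}(u)=\widetilde{T}^{\iota}(-u)\widetilde{T}(u)$) into the defining expression
$$\Im A^{(M|N)}\mathfrak{B}^{tw}_{M|N}(u)\Im=\Im A^{(M|N)}\langle \mathcal{S}_1,\ldots,\mathcal{S}_M\rangle\,\langle \widetilde{\mathcal{S}}_{M+1},\ldots,\widetilde{\mathcal{S}}_{M+N}\rangle\Im,$$
and then to commute all the $T^{\iota}$--factors past the $T$--factors so that the product separates into a piece producing $\mathfrak{B}_{M|N}(u)$ (via formula \eqref{GH-QB:eq4}) and a piece producing $\mathfrak{P}_{M|N}(u)$ (via formula \eqref{GH-QB:eq5}), up to a scalar.

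First I would replace each $\mathcal{S}_i$ in the first bracket by $T_i\,T_i^{\iota}(-u_i)$ for $i\in I_+$, and each $\widetilde{\mathcal{S}}_j$ in the second bracket by $\widetilde{T}^{\iota}_j(w_j)\widetilde{T}_j(-w_j)$ for $j\in I_-$ (using the choices $u_j=w_j-(M-N)/2$ so that $-u_j-(M-N)/2=-w_j$). Then I would establish the ``commutation'' identity
$$T_i^{\iota}(-u_i)\,R_{ij}^{\iota}(-u_i-u_j)\,T_j(u_j)=T_j(u_j)\,R_{ij}^{\iota}(-u_i-u_j)\,T_i^{\iota}(-u_i),$$
obtained from the RTT relation \eqref{Y1} by applying the partial transposition $\iota_i$ (which amounts to conjugating the identity on the $i$--th position by $\mathcal{G}_i$), together with its analogue involving $\widetilde{T}^{\iota}_j$ derived from \eqref{Ytw3} or \eqref{Yeqv4}. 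Using these identities repeatedly (an induction on $M$ and on $N$ in the style of Lemma \ref{GH-RS}) I can push all the $T^{\iota}$--factors to the right of the first bracket and all the $\widetilde{T}^{\iota}$--factors to the left of the second bracket.

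At this stage the product is rearranged into
$$\Im A^{(M|N)}\,\bigl(T_1\cdots T_M\bigr)\cdot\mathcal{R}\cdot\bigl(T_1^{\iota}(-u_1)\cdots T_M^{\iota}(-u_M)\bigr)\bigl(\widetilde{T}^{\iota}_{M+1}(w_{M+1})\cdots\widetilde{T}^{\iota}_{M+N}(w_{M+N})\bigr)\cdot\mathcal{R}'\cdot\bigl(\widetilde{T}_{M+1}(-w_{M+1})\cdots\widetilde{T}_{M+N}(-w_{M+N})\bigr)\Im,$$
where $\mathcal{R},\mathcal{R}'$ are scalar-valued products of rational $R$--matrices. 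Next I would use the standard fusion properties of the antisymmetrizer $G^{(M)}$ and symmetrizer $H^{(N)}$ (Lemma \ref{GH-ex} and Lemma \ref{product:H}) to absorb most of the factors in $\mathcal{R},\mathcal{R}'$ into $A^{(M|N)}$. After this absorption, only one ``boundary'' $R^{\iota}$--matrix will survive, bridging the $T$'s with the $T^{\iota}$'s; its evaluation at the specific spectral parameter $u_{M+1}+u_{M+N}=2u-M-1$ coming from our parametrization yields a scalar multiple of the identity on $\Im\,A^{(M|N)}$, and this scalar is precisely $(2u-M-N-1)/(2u-M-1)$. Combining the two resulting factors via \eqref{GH-QB:eq4} and \eqref{GH-QB:eq5} gives the claim.

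The main obstacle will be step two: tracking the accumulation of scalar factors arising from the $R^{\iota}$--type unitarity relations $R^{\iota}(x)R^{\iota}(-x+M-N)=(\text{scalar})\cdot\mathrm{id}$ during the successive commutations, in particular verifying that all parameter-dependent scalars telescope except for the single boundary contribution that produces the prefactor $(2u-M-N-1)/(2u-M-1)$. The bookkeeping is delicate because the $\iota$--transposition mixes positions via the matrix $\mathcal{G}$, so each intermediate scalar must be computed after reducing to the canonical form $\mathcal{G}=\mathcal{G}_0$ of Remark \ref{BRtw}; this reduction is legitimate by Proposition \ref{Ytw:eqv}. Once this bookkeeping is done, the identification with $\mathfrak{B}_{M|N}(u)\mathfrak{P}_{M|N}(u)$ follows directly from Theorem \ref{GH-QB} and its corollary \eqref{GH-QB:eq3}.
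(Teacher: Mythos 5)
Your overall plan---substitute $\mathcal{S}_i = T_iT_i^{\iota}$ and $\widetilde{\mathcal{S}}_j=\widetilde{T}^{\iota}_j\widetilde{T}_j$, commute the $\iota$-factors out of the brackets, and match the result against formulas \eqref{GH-QB:eq4} and \eqref{GH-QB:eq5}---is the right direction, and your commutation identity (from applying $\iota_1$ and a position swap to the RTT relation) is exactly what the paper uses. But the execution plan as stated has two gaps. First, the paper does not try to rearrange the full product at once; it first factors $\Im=\Im_+\Im_-$ and correspondingly $\mathfrak{B}^{tw}_{M|N}(u)=\mathfrak{B}^{tw}_+(u)\mathfrak{B}^{tw}_-(u)$, with $\mathfrak{B}^{tw}_+$ living only on $(\mathbb{C}^{M|0})^{\otimes M}$ and $\mathfrak{B}^{tw}_-$ only on $(\mathbb{C}^{0|N})^{\otimes N}$. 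This split reveals an asymmetry your proposal misses: on the antisymmetrizer side one has $(1-P)R^{\iota}(x)=(1-P)$, so the $R^{\iota}$-factors in $\langle\mathcal{S}_1,\ldots,\mathcal{S}_M\rangle$ disappear on the image of $G^{(M)}$ and $G^{(1,M)}\langle\mathcal{S}_1,\ldots,\mathcal{S}_M\rangle=G^{(1,M)}T_1\cdots T_MT_1^{\iota}\cdots T_M^{\iota}$ contributes no scalar at all.

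Second, the scalar therefore comes entirely from the symmetrizer side, and it is not the value of a single boundary $R^{\iota}$ at $u_{M+1}+u_{M+N}=2u-M-1$. After commuting the $\widetilde{T}^{\iota}$-factors past $H^{(1,N)}$, one is left to evaluate $\Im\,H^{(1,N)}\langle E_{\overline{1}},\ldots,E_{\overline{N}}\rangle$ on the highest-weight vector; using Lemma \ref{product:H} to absorb $H^{(i+1,N)}$ into $H^{(1,N)}$ step by step and the one-dimensionality of the image, this produces a telescoping product of $n$ rational factors $\prod_{i=1}^n\frac{w_{\overline{i}}+w_{\overline{i+1}}-M+N-2}{w_{\overline{i}}+w_{\overline{i+1}}-M+N}$ with $w_{\overline{i}}+w_{\overline{i+1}}=2u-2N-1+2i$, whose first numerator and last denominator yield $\frac{2u-M-N-1}{2u-M-1}$. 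Without the even/odd split and the $(1-P)R^{\iota}=(1-P)$ collapse, your bookkeeping would have to track $R^{\iota}$-scalars from both brackets simultaneously, and the ``single boundary matrix'' picture would not correctly reproduce the prefactor.
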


\begin{proof}
Put
\begin{gather*}
    \Im_+=\mathcal{I}_1\mathcal{I}_2\cdots \mathcal{I}_M,\qquad \Im_-=\mathcal{J}_{M+1}\mathcal{J}_{M+2}\cdots \mathcal{J}_{M+N}.
\end{gather*}
Note that $\Im=\Im_+\Im_-$.
    Define the formal power series
    \begin{gather*}
        \mathfrak{B}_+^{tw}(u),\ \mathfrak{B}_-^{tw}(u)\ \in\mathcal{Y}^{tw}\left(\mathfrak{osp}_{M|N}\right)\left[\left[u^{-1}\right]\right]
    \end{gather*}
    determined by
    \begin{align*}
        \Im_+ \left(G^{(M)}\otimes 1^{\otimes (N+1)}\right)\mathfrak{B}_+^{tw}(u)\Im_+
           &=\Im_+ \left(G^{(M)}\otimes 1^{\otimes (N+1)}\right)\langle \mathcal{S}_{1},\ldots,\mathcal{S}_{M} \rangle\Im_+, \\
        \Im_- \left(1^{\otimes M}\otimes H^{(N)}\otimes 1\right)\mathfrak{B}_-^{tw}(u)\Im_-
           &=\Im_-  \left(1^{\otimes M}\otimes H^{(N)}\otimes 1\right)\langle \widetilde{\mathcal{S}}_{\overline{1}},\ldots,\widetilde{\mathcal{S}}_{\overline{N}} \rangle\Im_- .
    \end{align*}
    Since $\Im_-  \left(1^{\otimes M}\otimes H^{(N)}\otimes 1\right)$ commutes with $\langle S_{1},\ldots,S_{M} \rangle$, we have
    \begin{gather*}
        \mathfrak{B}^{tw}_{M|N}(u)=\mathfrak{B}_+^{tw}(u)\mathfrak{B}_-^{tw}(u).
    \end{gather*}

    We calculate $\mathfrak{B}_-^{tw}(u)$ first. Observe that  $\widetilde{\mathcal{S}}_i(-w_i)=\widetilde{T}_i^{\iota}(w_i)\widetilde{T}_i(-w_i)$.
    Applying $\iota_1$ and $P(\,\cdot\,)P$ to \eqref{Y1} and replacing $v$ with $-v$, we deduce
    \begin{gather*}
        T_2^{\iota}(u)R_{12}^{\iota}(u+v)T_1(-v)=T_1(-v)R_{12}^{\iota}(u+v)T_2^{\iota}(u).
    \end{gather*}
    By $R^{\iota}(u)R^{\iota}(-u+M-N)=1$, one has
    \begin{gather*}
        \widetilde{T}_1(-v)R_{12}^{\iota}(-u-v+M-N)\widetilde{T}_2^{\iota}(u)=\widetilde{T}_2^{\iota}(u) R_{12}^{\iota}(-u-v+M-N)\widetilde{T}_1(-v).
    \end{gather*}
    For the identity matrix $E$, set
    $$\langle E_{\overline{1}},\ldots,E_{\overline{N}} \rangle=(R_{\overline{1}\,\overline{2}}^{\iota}\cdots R_{\overline{1}\,\overline{N}}^{\iota})(R_{\overline{2}\,\overline{3}}^{\iota}\cdots R_{\overline{2}\,\overline{N}}^{\iota})\cdots R_{\overline{N-1}\,\overline{N}}^{\iota}.$$
    Then
    \begin{align*}
        H^{(1,N)}\langle \widetilde{\mathcal{S}}_{\overline{1}},\ldots,\widetilde{\mathcal{S}}_{\overline{N}} \rangle
           &= H^{(1,N)}\widetilde{\mathcal{S}}_{\overline{1}}(R_{\overline{1}\,\overline{2}}^{\iota}\cdots R_{\overline{1}\,\overline{N}}^{\iota})\widetilde{\mathcal{S}}_{\overline{2}}(R_{\overline{2}\,\overline{3}}^{\iota}\cdots R_{\overline{2}\,\overline{N}}^{\iota})\cdots \widetilde{\mathcal{S}}_{\overline{N}}, \\
           &=H^{(1,N)}\widetilde{T}_{\overline{1}}^{\iota}\widetilde{T}_{\overline{1}}(R_{\overline{1}\,\overline{2}}^{\iota}\cdots R_{\overline{1}\,\overline{N}}^{\iota})\widetilde{T}_{\overline{2}}^{\iota}\widetilde{T}_{\overline{2}}(R_{\overline{2}\,\overline{3}}^{\iota}\cdots R_{\overline{2}\,\overline{N}}^{\iota})\cdots \widetilde{T}_{\overline{N}}^{\iota}\widetilde{T}_{\overline{N}} \\
           &=H^{(1,N)}\widetilde{T}_{\overline{1}}^{\iota}\cdots \widetilde{T}_{\overline{N}}^{\iota}\times
               \langle E_{\overline{1}},\ldots,E_{\overline{N}} \rangle
              \times \widetilde{T}_{\overline{1}} \cdots \widetilde{T}_{\overline{N}} \\
            &=\widetilde{T}_{\overline{N}}^{\iota}\cdots \widetilde{T}_{\overline{1}}^{\iota}\times H^{(1,N)}
               \langle E_{\overline{1}},\ldots,E_{\overline{N}} \rangle
              \times \widetilde{T}_{\overline{1}} \cdots \widetilde{T}_{\overline{N}},
    \end{align*}
    for $R_{ij}^{\iota}=R_{i,j}^{\iota}(-w_i-w_j+M-N)$.

    Since
    $$H^{(1,N)}\langle E_{\overline{1}},\ldots,E_{\overline{N}} \rangle=\langle E_{\overline{N}},\ldots,E_{\overline{1}} \rangle H^{(1,N)},$$
    and its action on $\left(\mathbb{C}^{0|N}\right)^{\otimes N}$ is one-dimensional.
    Our next step is to determine the coefficient of
    $$\Im H^{(1,N)}\langle E_{\overline{1}},\ldots,E_{\overline{N}} \rangle v,$$
    for $$v=e_1\otimes\cdots\otimes e_M\otimes e_{\overline{1}}\otimes \cdots\otimes e_{\overline{N}}.$$
    Here, for brevity, we set $\overline{i}'=(\overline{i})'$.
    Denote
    \begin{gather*}
        v_i=e_1\otimes \cdots \otimes e_M \otimes e_{\overline{1}}\otimes \cdots e_{\overline{i}}\otimes e_{\overline{i}'}\otimes \cdots \otimes e_{\overline{i+1}}\otimes \cdots \otimes e_{\overline{N}},
    \end{gather*}
    which is obtained from $v$ by exchanging $e_{\overline{i+1}}$ and $e_{\overline{i}'}$.
    Observe that
    \begin{gather*}
        H^{(i+1,N)}v=-H^{(i+1,N)}v_i,
    \end{gather*}
    and
    \begin{align*}
        \Im H^{(i,N)}R_{\overline{i}\,\overline{i+1}}^{\iota}(-u)\cdots R_{\overline{i}\,\overline{N}}^{\iota}(-u)v_i
         =\Im H^{(i,N)}R_{\overline{i}\,\overline{i}'}^{\iota}(-u)v_i=-\Im \frac{u-2}{u}H^{(i,N)}v.
    \end{align*}
    Then by lemma \ref{product:H}, we have
    \begin{align*}
        &\Im H^{(1,N)}\langle E_{\overline{1}},\ldots,E_{\overline{N}} \rangle v\\
        =&\Im H^{(1,N)}(R_{\overline{1}\,\overline{2}}^{\iota}\cdots R_{\overline{1}\,\overline{N}}^{\iota})(R_{\overline{2}\,\overline{3}}^{\iota}\cdots R_{\overline{2}\,\overline{N}}^{\iota})\cdots (R_{\overline{n}\,\overline{n+1}}^{\iota}\cdots R_{\overline{n}\,\overline{N}}^{\iota}) v \\
        =&\frac{1}{(N-1)!} \cdots \frac{1}{(n+1)!} \Im H^{(1,N)}(R_{\overline{1}\,\overline{2}}^{\iota}\cdots R_{\overline{1}\,\overline{N}}^{\iota})H^{(2,N)}(R_{\overline{2}\,\overline{3}}^{\iota}\cdots R_{\overline{2}\,\overline{N}}^{\iota})\cdots H^{(n,N)}(R_{\overline{n}\,\overline{n+1}}^{\iota}\cdots R_{\overline{n}\,\overline{N}}^{\iota})v \\
    =&\frac{w_{\overline{1}}+w_{\overline{2}}-M+N-2}{w_{\overline{1}}+w_{\overline{2}}-M+N}\times \cdots \times \frac{w_{\overline{n}}+w_{\overline{n+1}}-M+N-2}{w_{\overline{n}}+w_{\overline{n+1}}-M+N} \Im H^{(1,N)}v \\
    =&\frac{2u-M-N-1}{2u-M-1} \Im H^{(1,N)}v.
    \end{align*}
    Here $w_{\overline{i}}+w_{\overline{i+1}}=2u-2N-1+2i$ for each $i\in\{1,\ldots,n\}$.

    In addition, since
    \begin{gather*}
        (1-P)R^{\iota}(u)=(1-P)(1+P^{\iota}u^{-1})=(1-P),
    \end{gather*}
    it follows that
    \begin{gather*}
        G^{(1,M)}(R_{12}^{\iota}\cdots R_{1M}^{\iota})(R_{23}^{\iota}\cdots R_{2M}^{\iota})\cdots R_{M-1,M}^{\iota}=G^{(1,M)}.
    \end{gather*}
    This implies
    \begin{gather*}
        G^{(1,M)}\langle \mathcal{S}_{1},\ldots,\mathcal{S}_{M} \rangle=G^{(1,M)}T_1\cdots T_M T_{1}^{\iota}\cdots T_{M}^{\iota},
    \end{gather*}
    where
    $$G^{(1,M)}=G^{(M)}\otimes 1^{\otimes (N+1)},\ \mathcal{S}_i=\mathcal{S}_i(w_i),\
        T_i=T_i(w_i),\ T_i^{\iota}=T_i^{\iota}(-w_i), R_{ij}^{\iota}=R_{ij}^{\iota}(-w_i-w_j).$$
        Hence, we complete the proof.

\end{proof}
We call $\mathfrak{B}_{M|N}^{tw}(u)$ the \textit{quantum Berezinian} of $\mathcal{Y}^{tw}\left(\mathfrak{osp}_{M|N}\right)$. The next proposition is the \textit{quantum Liouville formula} for $\mathcal{Y}^{tw}\left(\mathfrak{osp}_{M|N}\right)$, which is a direct conclusion by the definition of  $\mathfrak{B}_{M|N}^{tw}(u)$.

\begin{proposition}
    The following equation holds in $\mathcal{Y}^{tw}(\mathfrak{osp}_{M|N})\left[\left[ u^{-1}\right]\right]$,
    \begin{gather}\label{Q:Liou:tw}
       \mathfrak{z}(u)=\frac{(2u-M-N-1)(2u-M+1)}{(2u-M-N+1)(2u-M-1)}\cdot \frac{\mathfrak{B}^{tw}(u+1)}{\mathfrak{B}^{tw}(u)}.
    \end{gather}
\end{proposition}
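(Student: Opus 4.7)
The plan is to reduce the identity directly to the preceding proposition together with the super quantum Liouville formula \eqref{qLiou-gl} for $\mathrm{Y}(\mathfrak{gl}_{M|N})$ and its twist under $\rho$. Concretely, I will express $\mathfrak{z}(u)$ as a product of ratios of $\mathfrak{B}_{M|N}$ and $\mathfrak{P}_{M|N}$, then replace the product $\mathfrak{B}_{M|N}(u)\mathfrak{P}_{M|N}(u)$ by $\mathfrak{B}^{tw}_{M|N}(u)$ up to the rational scalar identified in the previous proposition.

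First I would record the quantum Liouville formula \eqref{qLiou-gl} together with its image under the involutive automorphism $\rho$ in \eqref{auto:Ygl4}. Applying $\rho$ to both sides of \eqref{qLiou-gl} and using Lemma \ref{rho-Z}, which gives $\rho(z(u))=z(-u-M+N)^{-1}$, one obtains the companion identity
\begin{gather*}
z(-u-M+N)^{-1}=\frac{\mathfrak{P}_{M|N}(u+1)}{\mathfrak{P}_{M|N}(u)}.
\end{gather*}
Multiplying this with \eqref{qLiou-gl} produces the factorization of $\mathfrak{z}(u)=z(u)z(-u-M+N)^{-1}$ that already appeared in the remark preceding the statement, namely
\begin{gather*}
\mathfrak{z}(u)=\frac{\mathfrak{B}_{M|N}(u+1)\mathfrak{P}_{M|N}(u+1)}{\mathfrak{B}_{M|N}(u)\mathfrak{P}_{M|N}(u)}.
\end{gather*}

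Next I would substitute the identity
\begin{gather*}
\mathfrak{B}_{M|N}(u)\mathfrak{P}_{M|N}(u)=\frac{2u-M-1}{2u-M-N-1}\,\mathfrak{B}^{tw}_{M|N}(u),
\end{gather*}
which is just a rearrangement of the proposition proved immediately before the statement. Doing this for both $u$ and $u+1$ and taking the ratio yields
\begin{gather*}
\mathfrak{z}(u)=\frac{2(u+1)-M-1}{2(u+1)-M-N-1}\cdot\frac{2u-M-N-1}{2u-M-1}\cdot\frac{\mathfrak{B}^{tw}_{M|N}(u+1)}{\mathfrak{B}^{tw}_{M|N}(u)},
\end{gather*}
and simplifying the numerators gives exactly \eqref{Q:Liou:tw}.

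There is essentially no obstacle here: all the analytic work has been done in the proof of the preceding proposition and in Lemma \ref{rho-Z}. The only thing to be careful about is the bookkeeping of the shifts $u\mapsto u+1$ and the sign conventions in $\rho$, so that the four linear factors combine in the right way. Thus the final step is genuinely a one-line algebraic substitution.
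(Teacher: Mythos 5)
Your proof is correct and follows exactly the route the paper intends: the paper dismisses this proposition as ``a direct conclusion by the definition of $\mathfrak{B}^{tw}_{M|N}(u)$,'' and the text just before it already records the factorization $\mathfrak{z}(u)=\mathfrak{B}_{M|N}(u+1)\mathfrak{P}_{M|N}(u+1)\big/\mathfrak{B}_{M|N}(u)\mathfrak{P}_{M|N}(u)$ which you re-derive from \eqref{qLiou-gl} and Lemma \ref{rho-Z}. You then substitute the scalar relation between $\mathfrak{B}^{tw}_{M|N}$ and $\mathfrak{B}_{M|N}\mathfrak{P}_{M|N}$ at $u$ and $u+1$ and simplify; the bookkeeping of the four linear factors matches \eqref{Q:Liou:tw}, so the argument is complete and agrees with the paper.
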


\begin{remark}
    When $N$ goes to 0, the scalar in \eqref{Q:Liou:tw} is equal to 1, which provides a quantum Liouville formula for the twisted Yangian $\mathrm{Y}^{tw}(\mathfrak{o}_M)$, cf. \cite[Section 2.11]{Mo07}.
\end{remark}

\begin{corollary}
    The coefficients of the series $\mathfrak{B}_{M|N}^{tw}(u)$ are central elements of $\mathcal{Y}^{tw}\left(\mathfrak{osp}_{M|N}\right)$. Moreover, they generate the center of $\mathcal{Y}^{tw}\left(\mathfrak{osp}_{M|N}\right)$.
\end{corollary}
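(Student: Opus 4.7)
The plan is to combine the identification
$$\mathfrak{B}^{tw}_{M|N}(u) \;=\; \frac{2u-M-N-1}{2u-M-1}\,\mathfrak{B}_{M|N}(u)\,\mathfrak{P}_{M|N}(u)$$
established in the preceding proposition with two pieces of structural information already in hand: the coefficients of $\mathfrak{B}_{M|N}(u)$ lie in $\mathcal{Z}\mathrm{Y}(\mathfrak{gl}_{M|N})$, and the center $\mathcal{Z}\mathrm{Y}^{tw}(\mathfrak{osp}_{M|N})$ is generated by the coefficients $\mathfrak{z}^{(r)}$, $r\geqslant 3$, of $\mathfrak{z}(u)$.

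For the centrality half, I would argue as follows. Since the involutive automorphism $\rho$ of $\mathrm{Y}(\mathfrak{gl}_{M|N})$ preserves the center, $\mathfrak{P}_{M|N}(u)=\rho(\mathfrak{B}_{M|N}(u))$ has central coefficients as well, and the rational prefactor is a scalar series. Transporting the embedding of Proposition~\ref{prop:embed} across the isomorphism $\phi$ of Proposition~\ref{prop:alterpresentation}, the image of $\mathfrak{B}^{tw}_{M|N}(u)$ in $\mathrm{Y}(\mathfrak{gl}_{M|N})$ therefore has coefficients in $\mathcal{Z}\mathrm{Y}(\mathfrak{gl}_{M|N})$. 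A central element of the ambient superalgebra that happens to lie in a sub-superalgebra is, in particular, central in that sub-superalgebra, so the coefficients of $\mathfrak{B}^{tw}_{M|N}(u)$ belong to $\mathcal{Z}\mathcal{Y}^{tw}(\mathfrak{osp}_{M|N})$.

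For the generation half, I would invert the quantum Liouville formula \eqref{Q:Liou:tw}. Because $\mathfrak{B}^{tw}_{M|N}(u) = 1 + O(u^{-1})$, it is invertible in $\mathcal{Y}^{tw}(\mathfrak{osp}_{M|N})[[u^{-1}]]$, so \eqref{Q:Liou:tw} can be rewritten as
$$\mathfrak{z}(u) \;=\; \frac{(2u-M-N-1)(2u-M+1)}{(2u-M-N+1)(2u-M-1)}\;\mathfrak{B}^{tw}_{M|N}(u+1)\,\mathfrak{B}^{tw}_{M|N}(u)^{-1}.$$
Expanding in $u^{-1}$ then realizes every $\mathfrak{z}^{(r)}$ as a polynomial (with complex coefficients) in the coefficients of $\mathfrak{B}^{tw}_{M|N}(u)$. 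Since the $\mathfrak{z}^{(r)}$, $r\geqslant 3$, generate the center (transported via $\phi$), the subalgebra generated by the coefficients of $\mathfrak{B}^{tw}_{M|N}(u)$ contains the whole center; combined with the first paragraph it must equal it.

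The proof is essentially a two-line assembly of already-proved facts, and the only delicate point is bookkeeping: centrality is cheapest to verify inside the super Yangian $\mathrm{Y}(\mathfrak{gl}_{M|N})$ through the embedding, whereas the generation statement must be read off inside $\mathcal{Y}^{tw}(\mathfrak{osp}_{M|N})$ itself. Keeping track of the two identifications (the embedding $\jmath$ and the isomorphism $\phi$) is the only obstacle, but both are already in force, so no genuine new computation is required.
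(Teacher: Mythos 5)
Your proposal is correct, and it is the natural argument one would expect the authors to have in mind (the paper itself gives no proof for this corollary). Centrality: the identity $\mathfrak{B}^{tw}_{M|N}(u)=\frac{2u-M-N-1}{2u-M-1}\,\mathfrak{B}_{M|N}(u)\,\mathfrak{P}_{M|N}(u)$, read inside $\mathrm{Y}(\mathfrak{gl}_{M|N})$ via the embedding, shows the coefficients lie in $\mathcal{Z}\mathrm{Y}(\mathfrak{gl}_{M|N})$ (since $\rho$ preserves the center and $\mathfrak{P}_{M|N}=\rho(\mathfrak{B}_{M|N})$), hence they are central in the sub-superalgebra $\mathcal{Y}^{tw}(\mathfrak{osp}_{M|N})$. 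Generation: inverting the quantum Liouville formula \eqref{Q:Liou:tw} exhibits each $\mathfrak{z}^{(r)}$ as a polynomial in the coefficients of $\mathfrak{B}^{tw}_{M|N}(u)$; since those coefficients form a subalgebra of the center which contains a generating set of the center, it is the whole center. The logic is sound and complete — the only thing I would flag as a presentational nicety is that, in the centrality step, you should note explicitly that the coefficients of $\mathfrak{B}^{tw}_{M|N}(u)$ do lie inside (the image of) $\mathcal{Y}^{tw}(\mathfrak{osp}_{M|N})$ to begin with, which is immediate from the definition of $\mathfrak{B}^{tw}_{M|N}(u)$ in terms of the matrix $\mathcal{S}(u)$; this is what licenses the ``central in the ambient algebra, hence central in the sub-superalgebra'' step.
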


\subsection{The Explicit formula for $\mathfrak{B}_{M|N}^{tw}(u)$}\label{se:quantumbereianian:2}

In this section, we present an explicit formula for the quantum Berezinian $\mathfrak{B}_{M|N}^{tw}(u)$ of the twisted super Yangian $\mathcal{Y}^{tw}\left(\mathfrak{osp}_{M|N}\right)$. Prior to this, we need to introduce a series of projections of permutation groups:
\begin{gather*}
    \Omega_p:\ \mathfrak{S}_p\rightarrow \mathfrak{S}_p\quad \sigma\mapsto \sigma',
\end{gather*}
which is defined inductively as follows.

Consider an index set $\{k_1,\ldots,k_p\}$ for $p\geqslant2$ with the lexicographical order. The action of $\Omega_p$ on an ordered pair $(k_a,k_b)$ is determined by the following rule:
\begin{equation*}
    \begin{aligned}
    (k_a,k_b)\quad&\mapsto\quad (k_b,k_a), \\
    (k_a,k_p)\quad&\mapsto\quad (k_{p-1},k_a),  \\
    (k_p,k_b)\quad&\mapsto\quad (k_b,k_{p-1}),  \\
    (k_{p-1},k_p)\quad&\mapsto\quad (k_{p-1},k_{p-2}), \\
    (k_p,k_{p-1})\quad&\mapsto\quad (k_{p-1},k_{p-2}).
    \end{aligned}\quad
    \begin{aligned}
        &\text{if }~~a,b<p, \\
        &\text{if }~~a<p-1, \\
        &\text{if }~~b<p-1, \\
        & \\
        &
    \end{aligned}
\end{equation*}
For the special case $p=2$, we define the action of $\Omega_2$ such that it maps both $(k_1,k_2)$ and $(k_2,k_1)$ onto the element $k_1$. These projections are clearly well-defined.

Let $\sigma=(k_{a_1},\ldots,k_{a_p})$ be a permutation of the indices $\{k_1,\ldots,k_p\}$. Its image
under the mapping $\Omega_p$ is the permutation $\sigma'=\{k_{b_1},\ldots,k_{b_{p-1}},k_p\}$, where
the pair $(k_{b_1},k_{b_{p-1}})$ is the image of the ordered pair $(k_{a_1},k_{a_p})$. Subsequently, the pair $(k_{b_2},k_{b_{p-2}})$ is determined as the image of $(k_{a_2},k_{a_{p-1}})$, which is defined on the set of ordered pairs of elements obtained from $\{k_1,\ldots,k_p\}$ by deleting $k_{a_1}$ and $k_{a_p}$. The procedure is completed in a similar fashion by consecutively identifying the pairs $(k_{b_i},k_{b_{p-i}})$.

Define the following formal series in $\mathcal{Y}^{tw}\left(\mathfrak{osp}_{M|N}\right)\left[\left[u^{-1}\right]\right]$:
\begin{align*}
    \mathcal{S}_{\ell_1,\ldots,\ell_p}^{k_1,\ldots,k_p}(u),\quad
           \check{\mathcal{S}}_{\ell_1,\ldots,\ell_{p-1},c}^{k_1,\ldots,k_p}(u),
\end{align*}
for $p\leqslant N$ and each $k_i,\ell_i,c\in\{\overline{1},\ldots,\overline{N}\}$ via
\begin{align*}
    &\Im H^{(1,p)}\langle \widetilde{\mathcal{S}}_{\overline{1}},\ldots,\widetilde{\mathcal{S}}_{\overline{p}} \rangle \Im
        \left(e_{\ell_1}\otimes\cdots\otimes  e_{\ell_p}\right)
       =\sum  \mathcal{S}_{\ell_1,\ldots,\ell_p}^{k_1,\ldots,k_p}(u)\left(e_{k_1}\otimes\cdots\otimes e_{k_p}\right), \\
    &\Im H^{(1,p)}\langle \widetilde{\mathcal{S}}_{\overline{1}},\ldots,\widetilde{\mathcal{S}}_{\overline{p}} \rangle \mathcal{S}_{\overline{p}}(-w_{\bar{p}})\Im
       \left( e_{\ell_{1}}\otimes\cdots\otimes  e_{\ell_{p}}\otimes e_c\right) =\sum  \check{\mathcal{S}}_{\ell_1,\ldots,\ell_{p-1},c}^{k_1,\ldots,k_p}(u) \left(e_{k_1}\otimes\cdots\otimes e_{k_p}\right).
\end{align*}
Then we immediately have
\begin{lemma}\label{S:check}
  The following equation holds in $\mathcal{Y}^{tw}(\mathfrak{osp}_{M|N})\left[\left[ u^{-1}\right]\right]$,
  \begin{gather*}
      \mathcal{S}_{\ell_1,\ldots,l_p}^{k_1,\ldots,k_p}(u)
      =\sum_{c}\check{\mathcal{S}}_{\ell_1,\ldots,l_{p-1},c}^{k_1,\ldots,k_p}(u)\tilde{\mathfrak{s}}_{c,\ell_p}(-w_{\bar{p}}).
  \end{gather*}
\end{lemma}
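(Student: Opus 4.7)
My plan is to peel off the rightmost factor $\widetilde{\mathcal{S}}_{\bar p}(-w_{\bar p})$ from the ordered product $\langle \widetilde{\mathcal{S}}_{\bar 1},\ldots,\widetilde{\mathcal{S}}_{\bar p}\rangle$ and exploit the elementary cancellation $\widetilde{\mathcal{S}}_{\bar p}(-w_{\bar p})\mathcal{S}_{\bar p}(-w_{\bar p})=1$ at position $\bar p$. Set
\[
\mathcal{P}:=\widetilde{\mathcal{S}}_{\bar 1}(R^{\iota}_{\bar 1\bar 2}\cdots R^{\iota}_{\bar 1\bar p})\,\widetilde{\mathcal{S}}_{\bar 2}(R^{\iota}_{\bar 2\bar 3}\cdots R^{\iota}_{\bar 2\bar p})\cdots \widetilde{\mathcal{S}}_{\bar{p-1}}R^{\iota}_{\bar{p-1}\bar p},
\]
so that $\langle \widetilde{\mathcal{S}}_{\bar 1},\ldots,\widetilde{\mathcal{S}}_{\bar p}\rangle=\mathcal{P}\,\widetilde{\mathcal{S}}_{\bar p}(-w_{\bar p})$ and hence
\[
\Im H^{(1,p)}\langle \widetilde{\mathcal{S}}_{\bar 1},\ldots,\widetilde{\mathcal{S}}_{\bar p}\rangle \mathcal{S}_{\bar p}(-w_{\bar p})\Im \;=\; \Im H^{(1,p)}\,\mathcal{P}\,\Im.
\]
Thus $\check{\mathcal{S}}_{\ell_1,\ldots,\ell_{p-1},c}^{k_1,\ldots,k_p}(u)$ is nothing but the matrix entry of the single operator $\Im H^{(1,p)}\,\mathcal{P}\,\Im$ from $e_{\ell_1}\otimes\cdots\otimes e_{\ell_{p-1}}\otimes e_c$ to $e_{k_1}\otimes\cdots\otimes e_{k_p}$.

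Next I apply the defining operator of $\mathcal{S}^{k_1,\ldots,k_p}_{\ell_1,\ldots,\ell_p}(u)$ to $v=e_{\ell_1}\otimes\cdots\otimes e_{\ell_p}$ with all $\ell_i\in I_-$, so that $\Im v=v$, and expand
\[
\widetilde{\mathcal{S}}_{\bar p}(-w_{\bar p})(e_{\ell_p}) \;=\; \sum_{c\in I}e_c\otimes \tilde{\mathfrak{s}}_{c,\ell_p}(-w_{\bar p})
\]
at position $\bar p$. Pulling the scalars outside produces
\[
\Im H^{(1,p)}\langle \widetilde{\mathcal{S}}_{\bar 1},\ldots,\widetilde{\mathcal{S}}_{\bar p}\rangle\,v \;=\;\sum_{c\in I}\,\Im H^{(1,p)}\,\mathcal{P}\,(e_{\ell_1}\otimes\cdots\otimes e_{\ell_{p-1}}\otimes e_c)\otimes \tilde{\mathfrak{s}}_{c,\ell_p}(-w_{\bar p}).
\]
For $c\in I_-$ the input $e_{\ell_1}\otimes\cdots\otimes e_c$ already lies in the $+1$-eigenspace of $\Im$, so reading off the coefficient of $e_{k_1}\otimes\cdots\otimes e_{k_p}$ recovers $\check{\mathcal{S}}_{\ell_1,\ldots,\ell_{p-1},c}^{k_1,\ldots,k_p}(u)$ by the previous identification.

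The main technical obstacle is to verify that the $c\in I_+$ contributions vanish on both sides. By Proposition \ref{Ytw:eqv} I may fix $\mathcal{G}=\mathcal{G}_0$ as in Remark \ref{BRtw}; a short direct computation then shows $P^{\iota}(e_a\otimes e_b)=0$ unless $b=a'$, and since $|a|=|a'|$ the two slots must share parity. Because $\mathcal{P}$ contains no $\widetilde{\mathcal{S}}_{\bar p}$, position $\bar p$ is touched only by the factors $R^{\iota}_{\bar i\bar p}=1-(-u_{\bar i}-u_{\bar p})^{-1}P^{\iota}_{\bar i\bar p}$; reading the product right-to-left, each such factor is applied \emph{before} $\widetilde{\mathcal{S}}_{\bar i}$, so position $\bar i$ still carries the odd basis vector $e_{\ell_i}$ at that moment. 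Hence $R^{\iota}_{\bar i\bar p}(e_{\ell_i}\otimes e_c)=e_{\ell_i}\otimes e_c$ for every $c\in I_+$, and the even vector $e_c$ survives unchanged at position $\bar p$ throughout the action of $\mathcal{P}$. The symmetrizer $H^{(1,p)}$ merely permutes basis vectors among the $p$ slots, so $e_c$ persists in every resulting term, and the outer $\Im=\mathcal{J}_{\bar 1}\cdots\mathcal{J}_{\bar p}$ then annihilates it. The same parity argument applied to the inner $\Im$ in the definition of $\check{\mathcal{S}}$ gives $\check{\mathcal{S}}_{\ell_1,\ldots,\ell_{p-1},c}^{k_1,\ldots,k_p}(u)=0$ for $c\in I_+$. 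Combining these observations yields the claimed identity.
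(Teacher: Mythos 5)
Your proof is correct and is essentially the same peeling-off argument the paper has in mind when it labels this lemma as ``immediate'' from the definitions: writing $\langle \widetilde{\mathcal{S}}_{\bar 1},\ldots,\widetilde{\mathcal{S}}_{\bar p}\rangle=\mathcal{P}\,\widetilde{\mathcal{S}}_{\bar p}(-w_{\bar p})$, observing that $\mathcal{P}=\langle \widetilde{\mathcal{S}}_{\bar 1},\ldots,\widetilde{\mathcal{S}}_{\bar p}\rangle\mathcal{S}_{\bar p}(-w_{\bar p})$ is the operator defining $\check{\mathcal{S}}$, and expanding the rightmost $\widetilde{\mathcal{S}}_{\bar p}$ to produce the sum over $c$. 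Where you add genuine value is in making explicit the point the paper silently assumes: the terms with $c\in I_+$ drop out. The definition of $\check{\mathcal{S}}$ carries an inner $\Im$ so $\check{\mathcal{S}}_{\ell_1,\ldots,\ell_{p-1},c}(u)=0$ for even $c$ is immediate; but showing that the operator $\Im H^{(1,p)}\mathcal{P}$ (with no inner $\Im$) annihilates $e_{\ell_1}\otimes\cdots\otimes e_{\ell_{p-1}}\otimes e_c$ for $c\in I_+$ requires your observation that $P^{\iota}$ only links positions of equal parity, so slot $\bar p$ retains the even vector $e_c$ throughout the action of $\mathcal{P}$ and is then killed by $\mathcal{J}$. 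That is precisely the step a careful reader must supply, and you supply it correctly.
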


\begin{lemma}
    For any permutation $\sigma\in\mathfrak{S}_{p}$, $\sigma'\in\mathfrak{S}_{p-1}$, the following identities hold in $\mathcal{Y}^{tw}(\mathfrak{osp}_{M|N})\left[\left[ u^{-1}\right]\right]$,
    \begin{align}\label{permu:ST1}
        \mathcal{S}_{\ell_1,\ldots,\ell_{p-1},\ell_p}^{k_{\sigma(1)},\ldots,k_{\sigma(p)}}(u)
            &=\epsilon(\sigma)\mathcal{S}_{\ell_1,\ldots,\ell_{p-1},\ell_p}^{k_1,\ldots,k_p}(u), \\ \label{permu:ST2}
        \mathcal{S}_{\ell_{\sigma(1)},\ldots,\ell_{\sigma(p)}}^{k_1,\ldots,k_p}(u)
            &=\epsilon(\sigma)\mathcal{S}_{\ell_1,\ldots,\ell_{p-1},c}^{k_1,\ldots,k_p}(u), \\ \label{permu:ST3}
        \check{\mathcal{S}}_{\ell_1,\ldots,\ell_{p-1},c}^{k_{\sigma(1)},\ldots,k_{\sigma(p)}}(u)
            &=\epsilon(\sigma)\check{\mathcal{S}}_{\ell_1,\ldots,\ell_{p-1},c}^{k_1,\ldots,k_p}(u), \\ \label{permu:ST4}
        \check{\mathcal{S}}_{\ell_{\sigma'(1)},\ldots,\ell_{\sigma'(p-1)},c}^{k_1,\ldots,k_p}(u)
            &=\epsilon(\sigma')\check{\mathcal{S}}_{\ell_1,\ldots,\ell_{p-1},c}^{k_1,\ldots,k_p}(u).
    \end{align}
\end{lemma}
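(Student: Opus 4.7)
The plan is to derive the four symmetries by exploiting the symmetrizer $H^{(1,p)}$ in two complementary ways: applying it on the left to force antisymmetry in the output indices $k_r$ (for \eqref{permu:ST1} and \eqref{permu:ST3}), and moving it to the right via Lemma~\ref{GH-RS} to transfer the antisymmetry onto the input indices $\ell_r$ (for \eqref{permu:ST2} and \eqref{permu:ST4}). Throughout, I would rely on the fact that all indices $k_r, \ell_r \in \{\bar 1,\ldots,\bar N\}$ are odd, so the Koszul sign $(-1)^{|k_i||k_{i+1}|}$ from an adjacent transposition equals $-1$, matching $\epsilon(\sigma_i) = \operatorname{sgn}(\sigma_i) = -1$.

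First, I would prove \eqref{permu:ST1} and \eqref{permu:ST3} together. The vector $v$ defined by either expansion lies in the image of $\Im H^{(1,p)}$, and the operator identity $H^{(1,p)}P_{\bar i,\overline{i+1}}=H^{(1,p)}$ (valid in $(\operatorname{End} V)^{\otimes(M+N)}$ for $1 \le i \le p-1$) yields $P_{\bar i,\overline{i+1}}v = v$. Expanding $v$ in the basis $\{e_{k_1}\otimes\cdots\otimes e_{k_p}\}$ and using
\[
P_{\bar i,\overline{i+1}}(e_{k_1}\otimes\cdots\otimes e_{k_p}) = (-1)^{|k_i||k_{i+1}|}(e_{k_1}\otimes\cdots\otimes e_{k_{i+1}}\otimes e_{k_i}\otimes\cdots\otimes e_{k_p})
\]
gives the adjacent-transposition version $\mathcal{S}^{\ldots,k_{i+1},k_i,\ldots}_{\ell_\ast} = \epsilon(\sigma_i)\mathcal{S}^{\ldots,k_i,k_{i+1},\ldots}_{\ell_\ast}$, and the same argument works verbatim for $\check{\mathcal{S}}$. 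Iterating along a reduced decomposition of an arbitrary $\sigma \in \mathfrak{S}_p$ then delivers both identities.

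Next, for \eqref{permu:ST2}, I would invoke Lemma~\ref{GH-RS} together with $[H^{(1,p)},\Im]=0$ to rewrite
\[
\Im H^{(1,p)}\langle \widetilde{\mathcal{S}}_{\bar 1},\ldots,\widetilde{\mathcal{S}}_{\bar p}\rangle\Im
 \;=\; \Im\langle \widetilde{\mathcal{S}}_{\bar p},\ldots,\widetilde{\mathcal{S}}_{\bar 1}\rangle H^{(1,p)}\Im,
\]
so that $H^{(1,p)}$ now acts on the input first. Corollary~\ref{GH-del2} applied to the odd-indexed input $(e_{\ell_{\sigma(1)}}\otimes\cdots\otimes e_{\ell_{\sigma(p)}})$ produces the factor $\operatorname{sgn}(\sigma)=\epsilon(\sigma)$, which transfers unchanged to the coefficients of the output basis, giving \eqref{permu:ST2}.

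The hard part will be \eqref{permu:ST4}, because $\sigma'\in\mathfrak{S}_{p-1}$ acts only on the first $p-1$ input slots while the extra factor $\mathcal{S}_{\bar p}(-w_{\bar p})$ keeps the last slot fixed at $e_c$. My plan is to mimic the argument for \eqref{permu:ST2} using the partial symmetrizer $H^{(1,p-1)}$ on positions $\bar 1,\ldots,\overline{p-1}$, which commutes with $\mathcal{S}_{\bar p}(-w_{\bar p})$ since they act on disjoint positions. The technical core is a variant of Lemma~\ref{GH-RS} transferring $H^{(1,p-1)}$ through $\langle \widetilde{\mathcal{S}}_{\bar 1},\ldots,\widetilde{\mathcal{S}}_{\bar p}\rangle$, which amounts to commuting $H^{(1,p-1)}$ past the residual $R$-matrices $R^\iota_{\bar i,\bar p}$ coupling the first $p-1$ positions to position $\bar p$; this will follow from the braid identities \eqref{TB-3} and \eqref{TB-5} established in the proof of Lemma~\ref{GH-RS}. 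Once $H^{(1,p-1)}$ has been transferred to act on the input, Corollary~\ref{GH-del2} applied to the $p-1$ odd indices $\ell_1,\ldots,\ell_{p-1}$ yields the sign $\epsilon(\sigma')$ and completes the proof.
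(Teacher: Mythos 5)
Your treatment of \eqref{permu:ST1}--\eqref{permu:ST3} is sound and matches the paper's (which dismisses these three as immediate from the definitions): \eqref{permu:ST1} and \eqref{permu:ST3} follow because the defining vector lies in the image of $H^{(1,p)}$, and \eqref{permu:ST2} follows by shuttling $H^{(1,p)}$ to the right via Lemma~\ref{GH-RS} and reading off Corollary~\ref{GH-del2} on the input.

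However, your route to \eqref{permu:ST4} has a real gap. You propose a ``variant of Lemma~\ref{GH-RS} transferring $H^{(1,p-1)}$ through $\langle\widetilde{\mathcal{S}}_{\bar 1},\ldots,\widetilde{\mathcal{S}}_{\bar p}\rangle$'', to be established by ``commuting $H^{(1,p-1)}$ past the residual $R$-matrices $R^\iota_{\bar i,\bar p}$''. That commutation does not hold. After factoring $\langle\widetilde{\mathcal{S}}_{\bar 1},\ldots,\widetilde{\mathcal{S}}_{\bar p}\rangle=\langle\widetilde{\mathcal{S}}_{\bar 1},\ldots,\widetilde{\mathcal{S}}_{\overline{p-1}}\rangle\,R^\iota_{\bar 1\bar p}\cdots R^\iota_{\overline{p-1}\,\bar p}\,\widetilde{\mathcal{S}}_{\bar p}$, what remains is to move $H^{(1,p-1)}$ past $R^\iota_{\bar 1\bar p}\cdots R^\iota_{\overline{p-1}\,\bar p}$. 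Already for $p=3$: conjugating $R^\iota_{\bar 1\bar 3}(a)R^\iota_{\bar 2\bar 3}(b)$ by $P_{\bar 1\bar 2}$ yields $R^\iota_{\bar 2\bar 3}(a)R^\iota_{\bar 1\bar 3}(b)$, i.e.\ the spectral parameters $a=-u_{\bar 1}-u_{\bar 3}$ and $b=-u_{\bar 2}-u_{\bar 3}$ swap, and since $a\ne b$ here, $H^{(1,2)}=1+P_{\bar 1\bar 2}$ does not intertwine this product with anything of the same form. The braid identities \eqref{TB-3} and \eqref{TB-5} mix $R$ with $R^\iota$; they do not give the parameter-preserving interchange you would need. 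In addition, you never say how $H^{(1,p-1)}$ is to be introduced into the defining expression, which only contains $H^{(1,p)}$.

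The paper avoids both problems. Its key input is Lemma~\ref{product:H}, which factors $H^{(1,p)}\Im=\frac{1}{(p-1)!}H^{(1,p)}H^{(1,p-1)}\Im$. One first moves the \emph{full} $H^{(1,p)}$ to the right through $\langle\widetilde{\mathcal{S}}_{\bar 1},\ldots,\widetilde{\mathcal{S}}_{\bar p}\rangle$ using Lemma~\ref{GH-RS} (where it does pass through cleanly, as it carries all $p$ positions), then inserts $\frac{1}{(p-1)!}H^{(1,p-1)}$ via the factorization, then commutes $H^{(1,p-1)}$ past $\mathcal{S}_{\bar p}(-w_{\bar p})$ --- which is genuinely trivial since they act on disjoint tensor factors --- and finally moves $H^{(1,p)}$ back to the left. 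The net effect is to land $H^{(1,p-1)}$ directly in front of the input $\Im$, which gives \eqref{permu:ST4}. To repair your argument, you should replace the proposed ``variant of GH-RS'' by this factorization of $H^{(1,p)}$ and shuttle only $H^{(1,p)}$ through the $\widetilde{\mathcal{S}}$-product, never the partial symmetrizer.
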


\begin{proof}
We only need to verify the last equation \eqref{permu:ST4}, as the others \eqref{permu:ST1}, \eqref{permu:ST2}, and \eqref{permu:ST3} are obvious from their definitions. By Lemma \ref{product:H}, we have
\begin{align*}
    \Im H^{(1,p)}\langle \widetilde{\mathcal{S}}_{\overline{1}},\ldots,\widetilde{\mathcal{S}}_{\overline{p}} \rangle  \mathcal{S}_{\overline{p}}(-w_{\bar{p}})\Im
    &=\Im \langle \widetilde{\mathcal{S}}_{\overline{p}},\ldots,\widetilde{\mathcal{S}}_{\overline{1}} \rangle\cdot \frac{1}{(p-1)!} H^{(1,p)}H^{(1,p-1)}\cdot \mathcal{S}_{\overline{p}}(-w_{\bar{p}})\Im \\
    &=\Im \frac{1}{(p-1)!} H^{(1,p)}\langle \widetilde{\mathcal{S}}_{\overline{1}},\ldots,\widetilde{\mathcal{S}}_{\overline{p}} \rangle\cdot \mathcal{S}_{\overline{p}}(-w_{\bar{p}}) H^{(1,p-1)}\Im ,
\end{align*}
which proves \eqref{permu:ST4}.

\end{proof}

\begin{corollary}\label{Formula:S}
    The following identities hold for $a\in\{1,\ldots,p\}$ and $b\in\{1,\ldots,p-1\}$ in $\mathcal{Y}^{tw}(\mathfrak{osp}_{M|N})\left[\left[ u^{-1}\right]\right]$,
    \begin{align*}
        \mathcal{S}_{\ell_1,\ldots,\ell_p}^{k_1,\ldots,k_p}(u)
          &=(-1)^{p-a}\,\mathcal{S}_{\ell_1,\ldots,\hat{\ell}_a,\ldots,\ell_p,\ell_a}^{k_1,\ldots,k_p}(u), \\
        \check{\mathcal{S}}_{\ell_1,\ldots,\ell_{p-1},c}^{k_1,\ldots,k_p}(u)
          &=(-1)^{p-a}\,\check{\mathcal{S}}_{\ell_1,\ldots,\ell_{p-1},c}^{k_1,\ldots,\hat{k}_a,\ldots,k_p,k_a}(u), \\
        \check{\mathcal{S}}_{\ell_1,\ldots,\ell_{p-1},c}^{k_1,\ldots,k_p}(u)
          &=(-1)^{b-1}\,\check{\mathcal{S}}_{\ell_b,\ell_1,\ldots,\hat{\ell}_b,\ldots,\ell_{p-1},c}^{k_1,\ldots,k_p}(u).
    \end{align*}
\end{corollary}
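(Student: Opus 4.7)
The plan is to derive all three identities as immediate specializations of the four permutation identities \eqref{permu:ST1}--\eqref{permu:ST4} established in the preceding lemma. Since $\epsilon(\sigma)=\operatorname{sgn}(\sigma)$ for any $\sigma$ in the relevant symmetric groups, it suffices to exhibit, in each case, a cyclic permutation whose sign gives the claimed sign factor $(-1)^{p-a}$ or $(-1)^{b-1}$.

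For the first identity, I would apply \eqref{permu:ST2} with $\sigma\in\mathfrak{S}_p$ chosen to be the cyclic permutation sending
$$(\ell_1,\ldots,\ell_{a-1},\ell_a,\ell_{a+1},\ldots,\ell_p)\;\longmapsto\;(\ell_1,\ldots,\ell_{a-1},\ell_{a+1},\ldots,\ell_p,\ell_a),$$
which is a cycle of length $p-a+1$ on the positions $\{a,a+1,\ldots,p\}$. Its signature is $(-1)^{p-a}$, and this exactly produces the claimed identity (taking the intended reading of \eqref{permu:ST2}, i.e.\ with $\ell_p$ on the right rather than $c$). The second identity is obtained by the same argument applied to the upper indices via \eqref{permu:ST1}: the cycle $(a,a+1,\ldots,p)$ acting on $\{k_a,\ldots,k_p\}$ still has sign $(-1)^{p-a}$.

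For the third identity, I would appeal to \eqref{permu:ST4}, which involves a permutation $\sigma'\in\mathfrak{S}_{p-1}$ acting on the first $p-1$ lower indices (the index $c$ is frozen in the last slot, which is essential because the relation comes from the absorption of an extra symmetrizer $H^{(1,p-1)}$ that does not touch position $p$). Here the permutation
$$(\ell_1,\ldots,\ell_{b-1},\ell_b,\ell_{b+1},\ldots,\ell_{p-1})\;\longmapsto\;(\ell_b,\ell_1,\ldots,\ell_{b-1},\ell_{b+1},\ldots,\ell_{p-1})$$
is a cycle of length $b$ on the positions $\{1,2,\ldots,b\}$, with sign $(-1)^{b-1}$.

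There is essentially no obstacle: once the correct cyclic permutation is identified in each case, the statements follow mechanically from \eqref{permu:ST1}, \eqref{permu:ST2}, and \eqref{permu:ST4} respectively. The only point that requires a little care is the asymmetry in the third identity, where the position $c$ plays a distinguished role and cannot be cycled across; this is why the relevant lemma reference is \eqref{permu:ST4} (which restricts the permutation to $\mathfrak{S}_{p-1}$) rather than an analogue on all $p$ lower indices.
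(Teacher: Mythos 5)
Your approach is correct and is exactly what the paper intends: the corollary is stated without proof, so it is meant to follow immediately from the permutation identities \eqref{permu:ST1}--\eqref{permu:ST4}, and the cyclic permutations you identify carry the right signatures. You are also right to flag the typo in \eqref{permu:ST2}, whose right-hand side should read $\mathcal{S}_{\ell_1,\ldots,\ell_{p-1},\ell_p}^{k_1,\ldots,k_p}(u)$ rather than having $c$ in the last slot.

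One small slip: for the second identity you cite \eqref{permu:ST1}, but that relation concerns $\mathcal{S}$, whereas the second identity in the corollary is about $\check{\mathcal{S}}$. The correct reference is \eqref{permu:ST3}, which is the $\check{\mathcal{S}}$-analogue governing permutations of the upper indices. The argument is otherwise unchanged: the cycle on positions $\{a,\ldots,p\}$ of the $k$-indices has signature $(-1)^{p-a}$, and since all indices lie in $\{\overline{1},\ldots,\overline{N}\}$ (odd parity), $\epsilon(\sigma)=\operatorname{sgn}\sigma$ applies. Your discussion of why the third identity must go through \eqref{permu:ST4} rather than a full $\mathfrak{S}_p$-action (the frozen index $c$ in position $p$, coming from the absorption of $H^{(1,p-1)}$) is accurate and matches the proof of \eqref{permu:ST4} given via Lemma \ref{product:H}.
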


\begin{lemma}
    The following equation holds in $(\operatorname{End}V)^{\otimes (M+N)}$,
    \begin{gather*}
        H^{(2,N)}R_{M+1,M+2}^{\iota}\cdots R_{M+1,M+N}^{\iota}=\left(1+\mathfrak{c}_u(Q_{12}+\cdots +Q_{1N})\right)H^{(2,N)},
    \end{gather*}
    where $\mathfrak{c}_u=\frac{1}{2u-M-N+1}$, $R_{M+1,M+i}^{\iota}=R_{M+1,M+i}^{\iota}(-2u+M+N+1-i)$ and for $i<j$,
    \begin{gather*}
        Q_{ij}=P_{M+i,M+j}^{\iota}.
    \end{gather*}
\end{lemma}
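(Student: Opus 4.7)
My plan is to prove the identity by induction on $N$, combined with systematic use of the graded Yang-Baxter relations derived earlier in the proof of Lemma~\ref{GH-RS}.

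\textbf{Base case $N=2$:} Here $H^{(2,2)} = 1$ (empty symmetrizer on a single position), and the product consists of a single factor. Direct computation gives
\begin{equation*}
R^\iota_{M+1,M+2}(-2u+M+1) = 1 - \tfrac{1}{-2u+M+1}\,P^{\iota_1}_{M+1,M+2} = 1 + \tfrac{1}{2u-M-1}\,Q_{12} = 1 + \mathfrak{c}_u Q_{12},
\end{equation*}
which coincides with the right-hand side since $\mathfrak{c}_u = 1/(2u-M-N+1) = 1/(2u-M-1)$ when $N=2$.

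\textbf{Inductive step:} Assume the identity for $N-1$. A crucial observation is that, since the chosen spectral parameters satisfy $z_k - z_{k+1} = -1$, Lemma~\ref{GH-ex}(2) provides the fusion representation of the symmetrizer,
\begin{equation*}
H^{(2,N)} = R(z_2, z_3, \ldots, z_N)\big|_{\text{positions }M+2,\ldots,M+N},
\end{equation*}
expressing $H^{(2,N)}$ as an ordered product of ordinary $R$-matrices $R_{M+i,M+j}(z_i - z_j)$. This places both sides in a common $R$-matrix framework. Using the relations \eqref{TB-3}, \eqref{TB-4}, \eqref{TB-5}, I would peel off the factor $R^\iota_{M+1,M+N}(z_N)$ by commuting it successively past each $R_{M+i,M+N}(z_i-z_N)$ appearing in the fusion formula of $H^{(2,N)}$. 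Equivalently, one can use the decomposition $H^{(2,N)} = (1 + \sum_{k=2}^{N-1}P_{M+k,M+N})\,H^{(2,N-1)}$ and the conjugation identity $P_{M+k,M+N}\,Q_{1k}\,P_{M+k,M+N} = Q_{1N}$ to rearrange the product. Invoking the inductive hypothesis on the $H^{(2,N-1)}$-piece produces $(1 + \mathfrak{c}'_u\sum_{k=2}^{N-1}Q_{1k})$ with $\mathfrak{c}'_u = 1/(2u-M-N+2)$; the telescoping of $\mathfrak{c}'_u$ with the remaining spectral shift from $R^\iota_{M+1,M+N}(z_N)$ then collapses the coefficients to the uniform value $\mathfrak{c}_u = 1/(2u-M-N+1)$, and adds the missing $Q_{1N}$ summand.

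\textbf{Main obstacle:} The delicate point is handling the cross-terms $Q_{1i}Q_{1j}$ that arise when the shifted coefficients $\mathfrak{c}_u - \mathfrak{c}'_u$ fail to cancel naively. Their elimination requires a fusion-type identity of the shape
\begin{equation*}
H^{(2,N)}\,Q_{1i}Q_{1j} = H^{(2,N)}\,Q_{1j}, \qquad i\neq j \in \{2,\ldots,N\},
\end{equation*}
which in turn rests on the fact that, for $\mathcal{G} = \mathcal{G}_0$, the operator $P^{\iota_1}$ is (up to scalar) the rank-one invariant tensor for the ortho-symplectic Lie superalgebra $\mathfrak{osp}_{M|N}$, so that $(P^{\iota_1})^2$ reduces on the relevant subspace. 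Careful sign bookkeeping through the graded transposition $\iota$ will be needed when establishing this rank-one property; this is the step where the particular structure of the block-diagonal supersymmetric matrix $\mathcal{G}_0$ enters essentially.
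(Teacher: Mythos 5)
Your inductive strategy is a plausible alternative to the paper's argument, which is non-inductive: the paper expands the product $R^{\iota}_{\bar 1 \bar 2}\cdots R^{\iota}_{\bar 1 \bar N}$ directly, proves the collapsing identity
$H^{(2,N)}Q_{1i_m}\cdots Q_{1i_1}=H^{(2,N)}Q_{1i_1}$
by rewriting $P_{\bar 1 \bar i_1}\cdots P_{\bar 1 \bar i_m}$ in the permutation group $\mathfrak{S}_N$ as a product of transpositions in $\{\bar 2,\ldots,\bar N\}$ times $P_{\bar 1 \bar i_1}$ (these extra transpositions are absorbed by the symmetrizer) and then applies $\iota_{\bar 1}$, and finally telescopes the scalar coefficients using the elementary identity $\frac{1}{u'+r}\prod_{p=1}^{r-1}\bigl(1+\frac{1}{u'+p}\bigr)=\frac{1}{u'+1}$. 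Your route is genuinely different, but as written it has two real problems.

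First, your coefficient arithmetic is off. If you want to invoke the $(N-1)$-version of the lemma on the sub-product $R^{\iota}_{\bar 1 \bar 2}\cdots R^{\iota}_{\bar 1 \bar{N-1}}$, you must match spectral parameters: the lemma's $R^{\iota}_{\bar 1 \bar i}$ carries $-2u+M+N+1-i$, while the $(N-1)$-version at some parameter $\tilde u$ carries $-2\tilde u+M+N-i$, forcing $\tilde u=u-\tfrac12$. But then the constant in the inductive hypothesis is $\frac{1}{2\tilde u-M-(N-1)+1}=\frac{1}{2u-M-N+1}=\mathfrak{c}_u$ exactly, not your $\mathfrak{c}'_u=\frac{1}{2u-M-N+2}$. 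So there is no telescoping of coefficients to perform; the alleged "collapse from $\mathfrak{c}'_u$ to $\mathfrak{c}_u$" does not occur, and the step as you describe it would give the wrong answer. You would instead need to show directly that $\bigl(1+\sum_{k=2}^{N-1}P_{\bar k\bar N}\bigr)R^{\iota}_{\bar 1\bar N}\bigl(1+\mathfrak{c}_u\sum_{j=2}^{N-1}Q_{1j}\bigr)H^{(2,N-1)}=\bigl(1+\mathfrak{c}_u\sum_{j=2}^{N}Q_{1j}\bigr)H^{(2,N)}$, and that bookkeeping (including the $P_{\bar k\bar N}Q_{1j}$ and $P_{\bar k\bar N}Q_{1N}$ cross-terms and the quadratic $Q_{1N}Q_{1j}$ terms) is precisely the nontrivial content of the lemma; your sketch does not supply it.

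Second, your key absorption identity $H^{(2,N)}Q_{1i}Q_{1j}=H^{(2,N)}Q_{1j}$ is in fact correct, but the justification you offer (that $P^{\iota_1}$ is a rank-one $\mathfrak{osp}_{M|N}$-invariant, hence "$(P^{\iota_1})^2$ reduces") is not the right mechanism: $Q_{1i}$ and $Q_{1j}$ act on the three distinct tensor slots $\bar 1,\bar i,\bar j$, so the product is not a power of a single rank-one operator, and the reduction is not an automatic consequence of rank-one-ness. The correct mechanism, used by the paper, is purely combinatorial: $P_{\bar 1\bar i}P_{\bar 1\bar j}=P_{\bar i\bar j}P_{\bar 1\bar i}$ in $\pi_N(\mathfrak{S}_N)$ and the symmetrizer absorbs $P_{\bar i\bar j}$; applying $\iota_{\bar 1}$ (which reverses the order of the factors that act nontrivially on slot $\bar 1$) then yields the $Q$-identity. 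You should replace the rank-one heuristic with this argument. Overall, your plan could likely be made to work, but it would be longer and less transparent than the paper's direct expansion, and the version you wrote contains a concrete arithmetic error and an incorrect justification of the crucial absorption identity.
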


\begin{proof}
    By the action of $\pi_N$, we have for $i_1>i_2>\ldots>i_m>1$ and $\{i_1,\ldots,i_m\}\subset\{2,\ldots,N\}$,
\begin{gather*}
P_{\overline{1}\,\overline{i_1}}P_{\overline{1}\,\overline{i_2}}\cdots P_{\overline{1}\,\overline{i_m}}=\pi_{N}\left((\,\overline{1} \,\overline{i_m}\,\cdots \,\overline{i_1}\,)\right)=P_{\overline{i_2}\,\overline{i_1}}\cdots P_{\overline{i_m}\,\overline{i_{m-1}}}P_{\overline{1}\,\overline{i_1}}.
\end{gather*}
Then
\begin{gather}\label{Q:eq1}
H^{(2,N)}P_{\overline{1}\,\overline{i_1}}P_{\overline{1}\,\overline{i_2}}\cdots P_{\overline{1}\,\overline{i_m}}=H^{(2,N)}P_{\overline{1}\,\overline{i_1}},
\end{gather}
since
$$P_{i,j}\left(1^{\otimes (i-1)}\otimes e_{a_i}\otimes 1^{\otimes (j-i-1)}\otimes e_{a_j}\otimes 1^{\otimes (M+N-j)}\right)=-1^{\otimes (i-1)}\otimes e_{a_j}\otimes 1^{\otimes (j-i-1)}\otimes e_{a_i}\otimes 1^{\otimes (M+N-j)}$$
for $|i|=|j|=1$. Applying the partial transposition $\iota_{\overline{1}}$ to relation \eqref{Q:eq1}, we obtain
\begin{gather*}
H^{(2,N)}Q_{1i_m}\cdots Q_{1i_2}Q_{1i_1}=H^{(2,N)}Q_{1i_1}.
\end{gather*}
It follows that
\begin{gather*}
H^{(2,N)}R_{\overline{1}\,\overline{2}}^{\iota}\cdots R_{\overline{1}\,\overline{N}}^{\iota}=H^{(2,N)}\left(1+\mathfrak{c}_u(Q_{12}+\cdots +Q_{1N})\right),
\end{gather*}
by the identity
$$\frac{1}{u'+r}\prod_{p=1}^{r-1}\left(1+\frac{1}{u'+p}\right)=\frac{1}{u'+1},$$
for $u'=2u-M-N$ and $r\in\{1,\ldots,N-1\}$.

Moreover, applying the partial transposition $\iota_{\bar{1}}$ to
\begin{gather*}
    H^{(2,N)}\left(P_{\overline{1}\,\overline{2}}+\cdots+P_{\overline{1}\,\overline{N}}\right)
      =\left(P_{\overline{1}\,\overline{2}}+\cdots+P_{\overline{1}\,\overline{N}}\right)H^{(2,N)},
\end{gather*}
then we get
\begin{gather*}
    H^{(2,N)}\left(Q_{12}+\cdots+Q_{1N}\right)
      =\left(Q_{12}+\cdots+Q_{1N}\right)H^{(2,N)},
\end{gather*}
which proves the lemma.

\end{proof}

For $i,j\in\left\{\overline{1},\ldots,\overline{N}\right\}$, we define the series in $\mathcal{Y}^{tw}\left(\mathfrak{osp}_{M|N}\right)\left[\left[ u^{-1}\right]\right]$:
\begin{gather*}
    \mathfrak{s}_{ij}^{\natural}(-u)=(1+\mathfrak{c}_{u+N})\tilde{\mathfrak{s}}_{ij}(u)+\mathfrak{c}_{u+N}\theta_i\theta_j\tilde{\mathfrak{s}}_{j'i'}(u).
\end{gather*}
In particular, $\mathfrak{s}_{ii'}^{\natural}(-u)=\tilde{\mathfrak{s}}_{ii'}(u)$.
\begin{proposition}
    Suppose that $k_1<\cdots<k_{p}$, $\ell_2<\cdots<\ell_{p-1}$, $\ell_1\in\{k_1,\ldots,k_m\}$, $c\notin\{\ell_2,\ldots,\ell_{p-1}\}$ with parities $|k_i|=|\ell_i|=|c|=1$. Then
    \begin{align*}
        &\check{\mathcal{S}}_{\ell_1,\ldots,l_{p-1},c}^{k_1,\ldots,k_p}(u)=0,\quad \text{if }c\notin\{k_1,\ldots,k_p\}, \\
        &\check{\mathcal{S}}_{\ell_1,\ldots,l_{p-1},c}^{k_1,\ldots,k_p}(u)=
           \sum_{r=1}^{p-1} (-1)^{r-1}\mathfrak{s}_{k_r\ell_1}^{\natural}(u-N) \mathcal{S}_{\ell_1,\ldots,l_p}^{k_1,\ldots,\hat{k}_r,\ldots,k_p}(u+1),\quad \text{if}~~c=k_p,
    \end{align*}
    where $\hat{k}$ means the corresponding index $k$ omitted.
\end{proposition}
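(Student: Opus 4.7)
The plan is to compute the matrix coefficient $\check{\mathcal{S}}^{k_1,\ldots,k_p}_{\ell_1,\ldots,\ell_{p-1},c}(u)$ by a cofactor-style expansion: I would isolate the contribution of the first factor $\widetilde{\mathcal{S}}_{\bar 1}$, combine the remaining $R^{\iota}$-matrices in its column into a scalar operator that picks out the $\mathfrak{s}^{\natural}$-combination, and recognise what is left as a $(p-1)$-dimensional $\mathcal{S}$-type coefficient at the shifted parameter $u+1$.

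First, I exploit the cancellation $\widetilde{\mathcal{S}}_{\bar p}(-w_{\bar p})\,\mathcal{S}_{\bar p}(-w_{\bar p})=1$ at position $\bar p$ to rewrite
\[
\langle\widetilde{\mathcal{S}}_{\bar 1},\ldots,\widetilde{\mathcal{S}}_{\bar p}\rangle\,\mathcal{S}_{\bar p}(-w_{\bar p})
=\widetilde{\mathcal{S}}_{\bar 1}\bigl(R^{\iota}_{\bar 1\bar 2}\cdots R^{\iota}_{\bar 1\bar p}\bigr)\,\widetilde{\mathcal{S}}_{\bar 2}\cdots\widetilde{\mathcal{S}}_{\overline{p-1}}\,R^{\iota}_{\overline{p-1}\,\bar p}.
\]
After this, position $\bar p$ is touched only by $R^{\iota}_{\bar j\,\bar p}$ for $j<p$, each of which transports the input index $e_c$ into another position either as itself or as its prime $c'$. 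Combined with the signed symmetrizer $H^{(1,p)}$ on odd indices and the parities of the $k_i$, this forces $c\in\{k_1,\ldots,k_p\}$ for a nonvanishing coefficient, which establishes the vanishing claim.

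For the nontrivial case $c=k_p$, I separate the first column of $R^{\iota}$-matrices. Using $H^{(1,p)}H^{(2,p)}=(p-1)!\,H^{(1,p)}$, I insert an auxiliary $H^{(2,p)}$ and commute it past $\widetilde{\mathcal{S}}_{\bar 1}$ so that it sits next to the $R^{\iota}$-column. Running the telescoping argument of the previous lemma on the sub-index set $\{\bar 1,\ldots,\bar p\}$ yields
\[
H^{(2,p)}R^{\iota}_{\bar 1\bar 2}\cdots R^{\iota}_{\bar 1\bar p}=\bigl(1+\mathfrak{c}\,(Q_{12}+\cdots+Q_{1p})\bigr)\,H^{(2,p)}
\]
for a scalar $\mathfrak{c}$ extracted from the same identity $\frac{1}{u'+r}\prod_{q=1}^{r-1}(1+(u'+q)^{-1})=\frac{1}{u'+1}$. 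After absorbing the $(p-1)!^{-1}$, the whole expression factorises as an outer piece $\widetilde{\mathcal{S}}_{\bar 1}\bigl(1+\mathfrak{c}\sum_{i}Q_{1i}\bigr)$ on position $\bar 1$ and an inner piece $H^{(2,p)}\widetilde{\mathcal{S}}_{\bar 2}(R^{\iota}_{\bar 2\bar 3}\cdots R^{\iota}_{\bar 2\bar p})\cdots\widetilde{\mathcal{S}}_{\overline{p-1}}R^{\iota}_{\overline{p-1}\,\bar p}$ on positions $\bar 2,\ldots,\bar p$; the inner piece, after the relabelling $\bar i\mapsto\overline{i-1}$ and parameter shift $u\mapsto u+1$ (which indeed sends $w_{\bar i}\mapsto w_{\overline{i-1}}$), is precisely the $(p-1)$-dimensional version of the coefficient that appears on the right-hand side, contributing the factor $\mathcal{S}^{k_1,\ldots,\hat k_r,\ldots,k_p}(u+1)$ once the output index at position $\bar 1$ is fixed to be $k_r$.

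Meanwhile, since $Q_{1i}=P^{\iota}_{\bar 1\bar i}$ enacts a graded transposition combined with priming, the outer operator $\widetilde{\mathcal{S}}_{\bar 1}\bigl(1+\mathfrak{c}\sum_{i}Q_{1i}\bigr)$ acting on a state symmetric in positions $\bar 2,\ldots,\bar p$ transforms $\widetilde{\mathfrak{s}}_{k_r\ell_1}$ into exactly $(1+\mathfrak{c}')\widetilde{\mathfrak{s}}_{k_r\ell_1}+\mathfrak{c}'\,\theta_{k_r}\theta_{\ell_1}\widetilde{\mathfrak{s}}_{\ell_1'k_r'}$, matching the definition of $\mathfrak{s}^{\natural}_{k_r\ell_1}(u-N)$; the sign $(-1)^{r-1}$ is then produced by Corollary \ref{Formula:S} when the index $k_r$ is shuffled to the first slot inside $H^{(1,p)}$. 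The main obstacle I anticipate is the last bookkeeping step: reconciling the scalar $\mathfrak{c}=\mathfrak{c}_u=(2u-M-N+1)^{-1}$ coming from the $R^{\iota}$-telescoping with the scalar $\mathfrak{c}'=\mathfrak{c}_{u+N}$ that enters the definition of $\mathfrak{s}^{\natural}(u-N)$, which requires careful tracking of the shifts $w_{\bar 1}=u-N$ and $-(M-N)/2$ inside the argument of $\widetilde{\mathcal{S}}_{\bar 1}$, the effect of $\iota$ on signs, and the reparametrisation $u\mapsto u+1$ used in the $(p-1)$-dimensional reduction.
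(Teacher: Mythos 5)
Your proposal follows the paper's proof step by step: cancel $\widetilde{\mathcal{S}}_{\bar p}\mathcal{S}_{\bar p}(-w_{\bar p})$, insert $H^{(2,p)}$ via the factorisation $H^{(1,p)}H^{(2,p)}=(p-1)!\,H^{(1,p)}$, apply the preceding telescoping lemma to convert the $R^{\iota}$-column into $1+\mathfrak{c}_u\sum_i Q_{1i}$, and split the result into the outer $\widetilde{\mathcal{S}}_{\bar 1}(1+\mathfrak{c}_u\sum Q_{1i})$ factor and the inner $(p-1)$-dimensional $\mathcal{S}$-coefficient at $u+1$. This is exactly the paper's argument, including the case split on whether $c$ lies in $\{k_1,\ldots,k_p\}$ (with the $Q_{1p}$-annihilation for $c\neq\ell_1'$) and the deduction that conservation of $e_c$ at position $\bar p$ forces $c\in\{k_1,\ldots,k_p\}$.

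On the obstacle you flag: the telescoping scalar is not actually sensitive to the truncation length $p$, since the identity $\frac{1}{u'+r}\prod_{q=1}^{r-1}\bigl(1+\frac{1}{u'+q}\bigr)=\frac{1}{u'+1}$ collapses to $\mathfrak{c}_u=\frac{1}{2u-M-N+1}$ for every $r$, so the same $\mathfrak{c}_u$ appears for all $p$. The remaining discrepancy you notice — $\mathfrak{c}_u$ from the lemma versus $\mathfrak{c}_{u+N}$ in the displayed definition of $\mathfrak{s}^{\natural}_{ij}(-u)$ — is real: with the definition as printed one would get $\mathfrak{c}_{-u+2N}$, not $\mathfrak{c}_u$, when evaluating $\mathfrak{s}^{\natural}_{k_r\ell_1}(u-N)$. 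The paper's computation only closes if the definition is read with argument $+u$ on the left, $\mathfrak{s}^{\natural}_{ij}(u)=(1+\mathfrak{c}_{u+N})\tilde{\mathfrak{s}}_{ij}(-u)+\mathfrak{c}_{u+N}\theta_i\theta_j\tilde{\mathfrak{s}}_{j'i'}(-u)$, which is consistent with $\mathfrak{s}^{\natural}_{ii'}(u)=\tilde{\mathfrak{s}}_{ii'}(-u)$ and appears to be the intended convention. Once this sign convention is fixed, the reconciliation you were worried about is immediate, and your proposal contains no genuine gap.
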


\begin{proof}
 By definition,  we get the following relation summed over $\{k_1,\ldots,k_p\}\subset\{\bar{1},\ldots,\bar{N}\}$,
 \begin{align*}
     \Im H^{(1,p)}\widetilde{\mathcal{S}}_{\overline{1}}(R_{\overline{1}\,\overline{2}}^{\iota}\cdots R_{\overline{1}\,\overline{p}}^{\iota})\widetilde{\mathcal{S}}_{\overline{2}}(R_{\overline{2}\,\overline{3}}^{\iota}\cdots R_{\overline{2}\,\overline{p}}^{\iota})\cdots \widetilde{\mathcal{S}}_{\overline{p-1}}\cdots R_{\overline{p-1}\,\overline{p}}^{\iota}\left(e_{\ell_{1}}\otimes \cdots\otimes e_{\ell_{p-1}}\otimes e_c\right).
 \end{align*}
 Since $H^{(2,p)}$ commutes with $\widetilde{\mathcal{S}}_{\overline{1}}$ and $Q_{12}+\cdots+Q_{1p}$, it is equal to
 \begin{equation}\label{Q:eq2}
 \begin{split}
     &\Im e_+ +\frac{1}{(p-2)!}\Im H^{(1,p)}\widetilde{\mathcal{S}}_{\overline{1}}\left(1+\mathfrak{c}_u(Q_{12}+\cdots+Q_{1p})\right) \\
     &\qquad\qquad\qquad\qquad\qquad \times \sum_{h_2,\ldots,h_{p-1}}\mathcal{S}_{\ell_2,\ldots,\ell_{p-1}}^{h_2,\ldots,h_{p-1}}(u+1) \left(e_{\ell_1}\otimes e_{h_2}\otimes \cdots\otimes e_{h_{p-1}}\otimes e_{c}\right),
 \end{split}
 \end{equation}
 for $e_+\in V^{\otimes p}\setminus (\mathbb{C}^{0|N})^{\otimes p}$. Note that $\Im e_+=0$. We are primarily interested in the coefficient of $e_{k_1}\otimes \cdots\otimes e_{k_{p-1}}\otimes e_{k_p}$ in the following expression
 \begin{gather}\label{Q:eq3}
     \Im H^{(1,p)}\widetilde{\mathcal{S}}_{\bar{1}}\left(1+\mathfrak{c}_u(Q_{1,2}+\cdots+Q_{1,p})\right)
        \sum_{h_2,\ldots,h_{p-1}}\left(e_c\otimes e_{h_{p-1}}\otimes \cdots\otimes e_{h_2}\otimes e_{\ell_1}\right).
 \end{gather}

 If $c\notin\{k_1,\ldots,k_p\}$, then we also have $c\neq \ell_1'$, which forces
 \begin{gather*}
   Q_{1p}\left(e_{\ell_1}\otimes e_{h_2}\otimes \cdots\otimes e_{h_{p-1}}\otimes e_c\right)=0.
 \end{gather*}
 Observe that the segment following $\Im H^{(1,p)}$ in \eqref{Q:eq2} must converse $e_c$, which implies that the coefficient associated to $\{k_1,\ldots,k_p\}$ is zero.

 Next, suppose $c=k_p=\ell_1'$. The summation can only be taken over $h_j\neq \ell_1'$ for $j=2,\ldots,p-1$, otherwise $H^{(2,p)}$ annihilates the vector $e_{h_2}\otimes\cdots\otimes e_{h_{p-1}}\otimes e_c$. So
 \begin{gather*}
    Q_{1,j}\left(e_{\ell_1}\otimes e_{h_2}\otimes\cdots\otimes e_{h_{p-1}}\otimes e_c \right)=0,\quad \text{for }j=2,\ldots,p-1.
 \end{gather*}
 Hence \eqref{Q:eq3} is equal to
 \begin{gather*}
     \Im H^{(1,p)}\widetilde{\mathcal{S}}_{\bar{1}}\left(1+\mathfrak{c}_uQ_{1p}\right)
        \sum_{h_2,\ldots,h_{p-1}}\left(e_{\ell_1}\otimes e_{h_2}\otimes\cdots\otimes e_{h_{p-1}}\otimes e_c \right).
 \end{gather*}
 Expand this equation, we get the form
 \begin{equation}\label{Q:eq4}
 \begin{split}
     &\Im H^{(1,p)}\bigg(\sum_{h_1,\ldots,h_{p-1}}\tilde{\mathfrak{s}}_{h_1k_p'}(-u+N)\left(e_{h_1}\otimes \cdots\otimes e_{h_{p-1}}\otimes e_{k_p}\right) \\
     &\qquad\qquad\qquad +\mathfrak{c}_u\cdot \sum_{h_1,\ldots,h_{p}}\theta_{h_p}\theta_{k_p}\tilde{\mathfrak{s}}_{h_1h_p'}(-u+N)\left(e_{h_1}\otimes \cdots\otimes e_{h_{p-1}}\otimes e_{h_p}\right) \bigg).
 \end{split}
 \end{equation}
 Then the coefficient of $e_{k_1}\otimes\cdots\otimes  e_{k_{p-1}}\otimes e_{k_p}$ in \eqref{Q:eq3} has the form
 \begin{gather*}
 (p-2)!\sum_{r=1}^{p-1}\left((1+\mathfrak{c}_u)\tilde{\mathfrak{s}}_{k_r\ell_1}(-u+N)+\mathfrak{c}_u\theta_{k_r}\theta_{\ell_1}\tilde{\mathfrak{s}}_{\ell_1'k_r'}(-u+N)\right)=(p-2)!\sum_{r=1}^{p-1}\mathfrak{s}_{k_r\ell_1}^{\natural}(u-N).
 \end{gather*}

 Now let $c=k_p$ but $c\neq \ell_1'$. Relation \eqref{Q:eq3} is equal to
 \begin{gather*}
     \Im H^{(1,p)}\widetilde{\mathcal{S}}_{\bar{1}}\left(1+\mathfrak{c}_u(Q_{12}+\cdots+Q_{1p})\right)
        \sum_{h_2,\ldots,h_{p-1}}\left(e_{\ell_1}\otimes e_{h_2}\otimes\cdots\otimes e_{h_{p-1}}\otimes e_c \right).
 \end{gather*}
 Then the coefficient of $e_{k_1}\otimes\cdots\otimes  e_{k_{p-1}}\otimes e_{k_p}$ in \eqref{Q:eq2} has the form
 \begin{align*}
     &\frac{1}{(p-2)!}\sum_{\sigma\in\mathfrak{S}_{p-1}}\epsilon(\sigma)\tilde{\mathfrak{s}}_{k_{\sigma(1)},\ell_1}(-u+N) \mathcal{S}_{\ell_2,\ldots,\ell_{p-1}}^{k_{\sigma(2)},\ldots,k_{\sigma(p-1)}}(u+1) \\
        &-\frac{1}{(p-2)!}\mathfrak{c}_u\sum_{r=2}^{p-1}\sum_{\sigma\in\mathfrak{S}_{m-1}}\epsilon(\sigma)\theta_{\ell_1}\theta_{k_{\sigma(r)}}\tilde{\mathfrak{s}}_{k_{\sigma(1)},k_{\sigma(r)}'}(-u+N)\mathcal{S}_{\ell_2,\ldots,\ell_{p-1}}^{k_{\sigma(2)},\ldots,\hat{k}_{\sigma(r)},\ldots,k_{\sigma(p)}}(u+1) \\
    =&\sum_{r=1}^{p-1}(-1)^{r-1}\tilde{\mathfrak{s}}_{k_{r},\ell_1}(-u+N) \mathcal{S}_{\ell_2,\ldots,\ell_{p-1}}^{k_1,\ldots,\hat{k}_r,\ldots,k_{p-1}}(u+1)-\mathfrak{c}_u\sum_{r\in\{1,\ldots,p-1\},k_{r}\neq \ell_1'}(-1)^r \\
        &\times\left(\theta_{\ell_1}\theta_{\ell_1'}\tilde{\mathfrak{s}}_{k_r,\ell_1}(-u+N)-\theta_{\ell_1}\theta_{k_r}\tilde{\mathfrak{s}}_{\ell_1',k_r'}(-u+N)\right) \mathcal{S}_{\ell_2,\ldots,\ell_{p-1}}^{k_1,\ldots,\hat{k}_{r},\ldots,k_{p-1}}(u+1) \\
    =&(-1)^{r-1}\delta_{k_r\ell_1'}\tilde{\mathfrak{s}}_{\ell_1',\ell_1}(-u+N) \mathcal{S}_{\ell_2,\ldots,\ell_{p-1}}^{k_2,\ldots,\ldots,k_{p-1}}(u+1)+\sum_{r\in\{1,\ldots,p-1\},k_r\neq \ell_1'}(-1)^{r-1} \\
        &\times\left((1+\mathfrak{c}_u)\tilde{\mathfrak{s}}_{k_r,\ell_1}(-u+N) +\mathfrak{c}_u\theta_{\ell_1}\theta_{k_r}\tilde{\mathfrak{s}}_{\ell_1',k_r'}(-u+N)\right)  \mathcal{S}_{\ell_2,\ldots,\ell_{p-1}}^{k_1,\ldots,\hat{k}_{r},\ldots,k_{p-1}}(u+1) \\
    =&\sum_{r=1}^{p-1}\tilde{\mathfrak{s}}_{k_r,\ell_1}^{\natural}(u-N)\mathcal{S}_{\ell_2,\ldots,\ell_{p-1}}^{k_1,\ldots,\hat{k}_{r},\ldots,k_{p-1}}(u+1).
 \end{align*}
\end{proof}

\begin{theorem}\label{main}
     We have the explicit form of $\mathfrak{B}_{M|N}^{tw}(u)$\,{\rm :}
\begin{align*}
\mathfrak{B}_{M|N}^{tw}(u)=
       &\sum_{\sigma\in\mathfrak{S}_M}\operatorname{sgn}(\sigma\sigma')\Big\{ \mathfrak{s}_{\sigma(1),\sigma'(1)}^{\iota}(-u-M+N+1)\cdots \mathfrak{s}_{\sigma(m),\sigma'(m)}^{\iota}(-u-M+m+N) \\
            &\qquad\cdot  \mathfrak{s}_{\sigma(m+1),\sigma'(m+1)}(u+M-m-N-1)\cdots \mathfrak{s}_{\sigma(M),\sigma'(M)}(u-N)\Big\} \\
       &\times\sum_{\sigma\in\mathfrak{S}_N}\operatorname{sgn}(\sigma\sigma') \Big\{\mathfrak{s}_{M+\sigma(1),M+\sigma'(1)}^{\natural}(u-N)\cdots \mathfrak{s}_{M+\sigma(n),M+\sigma'(n)}^{\natural}(u-n-1) \\
       &\qquad \cdot \tilde{\mathfrak{s}}_{M+\sigma(n+1),M+\sigma'(n+1)}(-u+n)\cdots \tilde{\mathfrak{s}}_{M+\sigma(N),M+\sigma'(N)}(-u+1)\Big\},
\end{align*}
for $M=2m$ or $M=2m+1$. Here $\sigma'$ is obtained from $\sigma$ under the map $\Omega_M$ $(\text{resp. }\Omega_N)$ for $\sigma\in \mathfrak{S}_M$ $(\text{resp. }\sigma\in \mathfrak{S}_N)$.
\end{theorem}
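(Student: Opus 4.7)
The plan is to treat the two factors in the factorization $\mathfrak{B}^{tw}_{M|N}(u)=\mathfrak{B}^{tw}_+(u)\,\mathfrak{B}^{tw}_-(u)$ from the previous subsection separately: since $\mathfrak{B}^{tw}_+(u)$ involves only $\mathcal{S}_1,\dots,\mathcal{S}_M$ and $\mathfrak{B}^{tw}_-(u)$ involves only $\widetilde{\mathcal{S}}_{\overline{1}},\dots,\widetilde{\mathcal{S}}_{\overline{N}}$, the two computations are independent and produce the two $\mathfrak{S}$-sums in the statement. In each case the projection $\Omega_p$ will emerge from a recursive peeling: one pulls an outermost operator off the nested $\langle\,\cdot\,\rangle$-product, the (anti-)symmetrizer together with $\Im$ kills most contributions, and the surviving contractions reproduce exactly the outer-to-inner pairing $(k_{a_1},k_{a_p})\mapsto(k_{b_1},k_{b_{p-1}})$ built into $\Omega_p$.

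For $\mathfrak{B}^{tw}_+(u)$ I would start from the identity $G^{(1,M)}\langle\mathcal{S}_1,\dots,\mathcal{S}_M\rangle=G^{(1,M)}T_1\cdots T_M\,T_1^{\iota}\cdots T_M^{\iota}$ already established in the proof of the quantum Liouville formula. Applying $\Im_+$ and evaluating on $e_1\otimes\cdots\otimes e_M$ yields a sum over $\mathfrak{S}_M$ of products of the form $t_{\sigma(1),a_1}(w_1)\cdots t_{\sigma(M),a_M}(w_M)\,t^{\iota}_{a_1,1}(-w_1)\cdots t^{\iota}_{a_M,M}(-w_M)$; by inductively moving each $t^{\iota}$-factor next to its partner $t$-factor (using the quaternary relation \eqref{Ytw1} and the antisymmetry provided by $G^{(M)}$) and then recognizing $\jmath(\mathfrak{s}_{ij}(u))$ and $\jmath(\mathfrak{s}^{\iota}_{ij}(u))$ inside the contracted blocks, the leading $\lfloor M/2\rfloor$ pairs should collapse into $\mathfrak{s}^{\iota}$-factors with spectral parameters $-u-M+N+i$ and the remaining $\lceil M/2\rceil$ pairs into $\mathfrak{s}$-factors with parameters $u+M-j-N-1$. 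The pairing dictated by $\Omega_M$ records exactly which positions end up on the $\mathfrak{s}^{\iota}$-side and which on the $\mathfrak{s}$-side.

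For $\mathfrak{B}^{tw}_-(u)$ I would proceed by induction on $p$ using the $\check{\mathcal{S}}$–$\mathcal{S}$ recursion. Lemma \ref{S:check} expresses $\mathcal{S}^{k_1,\dots,k_p}_{\ell_1,\dots,\ell_p}(u)=\sum_c\check{\mathcal{S}}^{k_1,\dots,k_p}_{\ell_1,\dots,\ell_{p-1},c}(u)\,\tilde{\mathfrak s}_{c,\ell_p}(-w_{\bar p})$, and the displayed formula for $\check{\mathcal{S}}$ at $c=k_p$ pulls out a leading $\mathfrak{s}^{\natural}_{k_r,\ell_1}(u-N)$ while deleting $k_r$ from the upper list and shifting $u\mapsto u+1$ in the remaining block. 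Iterating this peeling $n$ times produces the $\mathfrak{s}^{\natural}$-block with spectral parameters $u-N,u-N+1,\dots,u-n-1$, and a further $n$ applications of Lemma \ref{S:check} alone produce the trailing $\tilde{\mathfrak s}$-block with parameters $-u+n,\dots,-u+1$. Corollary \ref{Formula:S} reorders the index lists into canonical form at each stage and converts the nested recursion into a single sum over $\mathfrak{S}_N$ with pairing $\sigma\mapsto\sigma'=\Omega_N(\sigma)$.

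The main obstacle I anticipate is the bookkeeping of signs and spectral parameters through the recursion. Verifying that the signs $(-1)^{r-1}$ from the $\check{\mathcal{S}}$-recursion combine with the $\operatorname{sgn}(\sigma)$ coming from $G^{(M)}$ and $H^{(N)}$ and with the permutation-twists of Corollary \ref{Formula:S} to yield exactly $\operatorname{sgn}(\sigma\sigma')$ is the most delicate point, and I expect it to hinge on matching the inductive structure of $\Omega_p$—in particular its asymmetric treatment of the terminal pair $k_{p-1},k_p$ and the base case on $\mathfrak{S}_2$—to the peeling pattern of $\mathcal{S}$ and $\check{\mathcal{S}}$. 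Once this combinatorial correspondence is pinned down, the spectral shifts fall out automatically from the shift $u\mapsto u+1$ at each recursion step together with the shift built into $\jmath$.
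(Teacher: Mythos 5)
Your factorization $\mathfrak{B}^{tw}_{M|N}(u)=\mathfrak{B}^{tw}_+(u)\,\mathfrak{B}^{tw}_-(u)$ and the treatment of $\mathfrak{B}^{tw}_-$ via the $\check{\mathcal{S}}$--$\mathcal{S}$ recursion (Lemma \ref{S:check}, the $c=k_p$ case of the $\check{\mathcal{S}}$-proposition, and the reorderings of Corollary \ref{Formula:S}) match the paper's argument. One structural caveat: a single combined step of Lemma \ref{S:check} plus the $\check{\mathcal{S}}$-formula strips a trailing $\tilde{\mathfrak{s}}(-w_{\bar p})$ \emph{and} a leading $\mathfrak{s}^{\natural}(u-N)$ simultaneously, reducing $p$ by two and shifting $u\mapsto u+1$ in the middle; after $n$ such iterations the whole $N=2n$ block is resolved. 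Your description of $n$ peelings followed by ``a further $n$ applications of Lemma \ref{S:check} alone'' mischaracterizes this interleaving, though the final word list is the same.

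The genuine gap is your plan for $\mathfrak{B}^{tw}_+(u)$. Starting from $G^{(1,M)}\langle\mathcal{S}_1,\ldots,\mathcal{S}_M\rangle=G^{(1,M)}T_1\cdots T_M\,T_1^{\iota}\cdots T_M^{\iota}$ and then re-pairing the $t$- and $t^{\iota}$-strings is circular: that identity was obtained precisely by absorbing all the $R^{\iota}$-factors into $G^{(M)}$ (because $(1-P)P^{\iota}=0$), so re-contracting the right-hand side back into $\mathfrak{s}$-entries simply re-derives $\langle\mathcal{S}_1,\ldots,\mathcal{S}_M\rangle$ and leaves you where you started---it does not by itself produce the $\mathfrak{s}^{\iota}/\mathfrak{s}$-split with the required spectral shifts. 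Moreover, relation \eqref{Ytw1} governs $S$-generators, not $T$- and $T^{\iota}$-generators; moving $T_j^{\iota}$ past $T_i$ and $T_k^{\iota}$ uses \eqref{Y1} and its $\iota$-twist and generates $R^{\iota}$-correction terms that your sketch does not control. The paper instead stays entirely at the $\mathcal{S}$-level for both halves: for $\mathfrak{B}^{tw}_+$ it runs the $G^{(M)}$-block analogue of the same $\check{\mathcal{S}}$--$\mathcal{S}$ recursion and then applies the $I_+$-part of the symmetry relation, $(1+\tfrac{1}{2u-1})\mathfrak{s}_{ij}(u)-\tfrac{1}{2u-1}\theta_i\theta_j\mathfrak{s}_{j'i'}(u)=\mathfrak{s}_{ij}^{\iota}(-u)$ for $i,j\in I_+$, to convert the leading factors into $\mathfrak{s}^{\iota}$'s; that is the route you should take. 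Finally, you rightly flag the $\operatorname{sgn}(\sigma\sigma')$ bookkeeping as the delicate point, but leaving it ``to be pinned down'' means the central combinatorial content of the theorem---that the outer-to-inner pairing produced by the recursion is exactly the one encoded by $\Omega_p$---remains unverified in your proposal.
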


\begin{proof}
  We first calculate the explicit form of $\mathcal{S}_{k_1,\ldots,k_{p-1},c}^{k_{1},\ldots,k_{p}}(u)$ for $k_i,c\in\{\bar{1},\ldots,\bar{N}\}$ and $k_1<k_2<\cdots<k_{p-1}<k_p$. Indeed, by Lemma \ref{S:check} and Corollary \ref{Formula:S}, we have
  \begin{equation*}
      \begin{split}
          &\mathcal{S}_{k_1,\ldots,k_{p-1},c}^{k_{1},\ldots,k_{p}}(u)=\sum_{a=1}^p \check{\mathcal{S}}_{k_1,\ldots,k_{p-1},k_a}^{k_{1},\ldots,k_{p}}(u)\mathfrak{s}_{k_a c}(-w_{\bar{p}}) \\
             =&(-1)^{p-2}\check{\mathcal{S}}_{k_{p-1},k_1,\ldots,k_{p-2},k_p}^{k_{1},\ldots,k_{p-1},k_{p}}(u) \mathfrak{s}_{k_p c}(-w_{\bar{p}})+\sum_{a=1}^{p-1}(-1)^{p-1} \check{\mathcal{S}}_{k_a,k_1,\ldots,\hat{k}_a,\ldots,k_{p-1},k_a}^{k_1,\ldots,\hat{k}_a,\ldots,k_p,k_a}(u) \mathfrak{s}_{k_a c}(-w_{\bar{p}}) \\
             =&\mathfrak{s}_{k_{p-1},k_{p-1}}^{\natural}(w_{\bar{1}})\mathcal{S}_{k_1,\ldots,k_{p-2}}^{k_{1},\ldots,k_{p-2}}(u+1) \mathfrak{s}_{k_p c}(-w_{\bar{p}})-\sum_{b=1}^{p-2} \mathfrak{s}_{k_b,k_{p-1}}^{\natural}(w_{\bar{1}}) \mathcal{S}_{k_1,\ldots,\hat{k}_b,\ldots,k_{p-2},k_b}^{k_{1},\ldots,\hat{k}_b,\ldots,k_{p-1}}(u+1) \mathfrak{s}_{k_p c}(-w_{\bar{p}}) \\
             &-\sum_{a=1}^{p-1}\bigg( \mathfrak{s}_{k_pk_a}^{\natural}(w_{\bar{1}}) \mathcal{S}_{k_1,\ldots,\hat{k}_a,\ldots,k_{p-1}}^{k_1,\ldots,\hat{k}_a,\ldots,k_{p-1}}(u+1) \mathfrak{s}_{k_a c}(w_{\bar{p}})\\
             &\qquad\qquad-\sum_{b=1}^{a-1} s_{k_bk_a}^{\natural}(w_{\bar{1}}) \mathcal{S}_{k_1,\ldots,\hat{k}_b,\ldots,\hat{k}_a,\ldots,k_{p-1},k_b}^{k_1,\ldots,\hat{k}_b\ldots,\hat{k}_a,\ldots,k_p}(u+1) \mathfrak{s}_{k_a c}(-w_{\bar{p}}) \\
             &\qquad\qquad\qquad-\sum_{b=a+1}^{p-1} \mathfrak{s}_{k_bk_a}^{\natural}(w_{\bar{1}}) \mathcal{S}_{k_1,\ldots,\hat{k}_a,\ldots,\hat{k}_b,\ldots,k_{p-1},k_b}^{k_1,\ldots,\hat{k}_a\ldots,\hat{k}_b,\ldots,k_p}(u+1) \mathfrak{s}_{k_a c}(-w_{\bar{p}})\bigg).
      \end{split}
  \end{equation*}
  Observe that the number of the terms in the right side of this equation is $p!$. As an example,
  \begin{align*}
      \mathcal{S}_{k_1c}^{k_1k_2}(u)=&\mathfrak{s}_{k_1k_1}^{\natural}(w_{\bar{1}})\mathfrak{s}_{k_2c}(-w_{\bar{2}})-\mathfrak{s}_{k_2k_1}^{\natural}(w_{\bar{1}})\mathfrak{s}_{k_1c}(-w_{\bar{2}}) \\
      =&\mathfrak{s}_{\sigma_1(k_1)k_1}^{\natural}(w_{\bar{1}})\mathfrak{s}_{\sigma_1(k_2)c}(-w_{\bar{2}})-\mathfrak{s}_{\sigma_2(k_1)k_1}^{\natural}(w_{\bar{1}})\mathfrak{s}_{\sigma_2(k_2)c}(-w_{\bar{2}}),
  \end{align*}
  for $\sigma_1=(1)$ and $\sigma_2=(k_1k_2)$.
  Since $$\mathfrak{B}_-^{tw}(u)=\mathcal{S}_{\overline{1},\ldots,\overline{N}}^{\overline{1},\ldots,\overline{N}}(u)=\mathcal{S}_{\overline{2},\ldots,\overline{N-1},\overline{1},\overline{N}}^{\overline{2},\ldots,\overline{N-1},\overline{1},\overline{N}}(u),$$
which is equal to
  \begin{gather*}
      \sum_{\sigma\in \mathfrak{S}_N} \epsilon(\sigma\sigma')\mathfrak{s}_{\overline{\sigma(1)},\overline{\sigma'(1)}}^{\natural}(w_{\overline{1}})\cdots \mathfrak{s}_{\overline{\sigma(n)},\overline{\sigma'(n)}}^{\natural}(w_{\overline{n}})\tilde{\mathfrak{s}}_{\overline{\sigma(n+1)},\overline{\sigma'(n+1)}}(-w_{\overline{n+1}})\cdots \tilde{\mathfrak{s}}_{\overline{\sigma(N)},\overline{\sigma'(N)}}(-w_{\overline{N}})
  \end{gather*}
  by using the recurrence relation above. Here, $\sigma'$ is obtained from $\sigma$ under the map $\Omega_N$.
  The form of $\mathfrak{B}_+^{tw}(u)$ in terms of generators $\mathfrak{s}_{ij}$ can be checked similarly owing to
   $$(1+\frac{1}{2u-1})\mathfrak{s}_{ij}(u)-\frac{1}{2u-1}\theta_i\theta_j\mathfrak{s}_{j'i'}(u)=\theta_i\theta_j\mathfrak{s}_{j'i'}(-u)=\mathfrak{s}_{ij}^{\iota}(-u),\quad i,j\in I_+.$$
\end{proof}

\begin{remark}
\begin{itemize}
    \item[{\rm (1)}] The formula of $\mathfrak{B}^{tw}_{M|N}(u)$ presented in Theorem \ref{main} is a super-version of the Sklyanin determinant described in \cite[Theorem 4.5]{Mo95}. When $N$ is equal to 0, it specializes to the Sklyanin determinant of the twisted Yangian $\mathrm{Y}^{tw}\left(\mathfrak{o}_M\right)$. However, it does not seem to be directly applicable to the case $M=0$, due to the fact that the mapping
    \begin{gather*}
        \mathcal{S}(u)\mapsto \widetilde{\mathcal{S}}(-u-\frac{N}{2})
    \end{gather*}
    does not satisfy the symmetry relation \eqref{Yeqv2}. Thus it is not an automorphism of the twisted Yangian $\mathrm{Y}^{tw}\left(\mathfrak{sp}_N\right)$.
    \item[{\rm (2)}] Briot-Ragoucy in their paper \cite{BR03} defined a ``$-$" version of twisted super Yangian $\mathcal{Y}^{tw}\left(\mathfrak{spo}_{N|M}\right)$ as a coideal of super Yangian $\mathrm{Y}\left(\mathfrak{gl}_{N|M}\right)$. We can also introduce the quantum Berezinian for the superalgebra $\mathcal{Y}^{tw}\left(\mathfrak{spo}_{N|M}\right)$ and obtain its explicit formula using the same method as in this section. When $M$ is equal to 0, the quantum Berezinian for $\mathcal{Y}^{tw}\left(\mathfrak{spo}_{N|M}\right)$ coincides with the Sklyanin determinant of the twisted Yangian $\mathrm{Y}^{tw}\left(\mathfrak{sp}_{N}\right)$ investigated in \cite[Theorem 4.5]{Mo07}.
    \item[{\rm (3)}] Let $\mathcal{G}$ be a supersymmetric block matrix such that $\mathcal{G}_0=B\mathcal{G}B^t$ for an invertible matrix $B$ as mentioned in Proposition \ref{Ytw:eqv}. The explicit form of the quantum Berezinian for twisted super Yangian $\mathrm{Y}^{tw}\big(\mathfrak{osp}_{M|N}\big)$ associated to $\mathcal{G}$ is obtained from $\mathfrak{B}^{tw}_{M|N}(u)$ by replacing $\mathcal{S}(u)$ with $BS(u)B^t\mathcal{G}_0^{-1}$. Here $S(u)$ is the generator matrix of $\mathrm{Y}^{tw}\big(\mathfrak{osp}_{M|N}\big)$. 
\end{itemize}
\end{remark}

\section{Quantum Sylvester theorem}\label{se:quantumsylvester}
In this section, we introduce the extended twisted super Yangian $\mathcal{X}^{tw}(\mathfrak{osp}_{M|N})$ as stated in \cite[Section 2.13]{Mo07}. Moreover, we derive a Sylvester theorem for the quantum Berezinian on $\mathcal{X}^{tw}(\mathfrak{osp}_{M|N})$. Throughout this section, $\mathcal{Y}^{tw}\big(\mathfrak{osp}_{M|N}\big)$ is consistently referred to the twisted super Yangian. Fix the corresponding supersymmetric block matrix $\mathcal{G}=\mathcal{G}_0$.

\subsection{Extended twisted super Yangian}\label{se:quantumsylvester:1}
\begin{definition}
    The \textit{extended twisted super Yangian} $\mathcal{X}^{tw}\left(\mathfrak{osp}_{M|N}\right)$ is an associative superalgebra generated by coefficients of the formal power series
    \begin{gather*}
        \mathfrak{s}_{ij}(u)=\delta_{ij}+\sum_{r>0}\mathfrak{s}_{ij}^{(r)}u^{-r},\qquad i,j\in I,
    \end{gather*}
     satisfying the only quaternary relation \eqref{Yeqv1}, where $\mathcal{S}(u)=\big(\mathfrak{s}_{ij}(u)\big)_{i,j\in I}$.
\end{definition}

\begin{lemma}
    There exists a formal power series
    \begin{gather*}
        \mathcal{E}(u)\in 1+u^{-1}\mathcal{X}^{tw}(\mathfrak{osp}_{M|N})\left[\left[u^{-1}\right]\right]
    \end{gather*}
    with coefficients in the center of $\mathcal{X}^{tw}\left(\mathfrak{osp}_{M|N}\right)$ such that
    \begin{gather}\label{RQeq}
        \mathcal{E}(u)P^{\iota}:=\widetilde{\mathcal{S}}_2(-u)R(2u)\mathcal{S}_1(u)P^{\iota}=P^{\iota}\mathcal{S}_1(u)R(2u)\widetilde{\mathcal{S}}_2(-u),
    \end{gather}
    where $\widetilde{\mathcal{S}}(u)=\mathcal{S}(u)^{-1}$.
\end{lemma}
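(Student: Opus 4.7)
The plan is to specialise the defining relation \eqref{Yeqv1} at $v=-u$, exploit the rank-one character of $P^{\iota}$ to extract a scalar series $\mathcal{E}(u)$, and then promote $\mathcal{E}(u)$ to a central element by an auxiliary-space argument.

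First I would multiply \eqref{Yeqv1} by $(u+v)$ to clear the apparent pole in $R^{\iota_1}(-u-v)=1+(u+v)^{-1}P^{\iota_1}$, and let $v\to -u$. Since for $\mathcal{G}=\mathcal{G}_0$ one has $P^{\iota_1}=P^{\iota_2}=:P^{\iota}$, the limit yields
$$R_{12}(2u)\,\mathcal{S}_1(u)\,P^{\iota}\,\mathcal{S}_2(-u) = \mathcal{S}_2(-u)\,P^{\iota}\,\mathcal{S}_1(u)\,R_{12}(2u).$$
Multiplying on the left and on the right by $\widetilde{\mathcal{S}}_2(-u)$ and using $\mathcal{S}_2(-u)\widetilde{\mathcal{S}}_2(-u)=1$ immediately produces the second equality in \eqref{RQeq}.

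Next I would verify that the common expression is a scalar series times $P^{\iota}$. A direct calculation, carefully respecting the super sign rule $(X\otimes Y)(v\otimes w)=(-1)^{|Y||v|}Xv\otimes Yw$, shows that $P^{\iota}$ is rank one on $V^{\otimes 2}$: explicitly, $P^{\iota}(e_a\otimes e_b)=\delta_{a',b}\,\theta_a\,\mathbf{v}$ with $\mathbf{v}=\sum_{j\in I}\theta_j\,e_{j'}\otimes e_j$. Applying the first-step equality to a vector $e_a\otimes e_{a'}$ with nonzero $P^{\iota}$-image shows that $\widetilde{\mathcal{S}}_2(-u)R(2u)\mathcal{S}_1(u)\mathbf{v}$ lies in $\mathcal{X}^{tw}(\mathfrak{osp}_{M|N})[[u^{-1}]]\cdot\mathbf{v}$, forcing the existence of a unique series $\mathcal{E}(u)$ with $\widetilde{\mathcal{S}}_2(-u)R(2u)\mathcal{S}_1(u)\,\mathbf{v}=\mathcal{E}(u)\,\mathbf{v}$. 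Inspecting the $u^{0}$-coefficient (where every factor reduces to the identity) gives $\mathcal{E}(u)\in 1+u^{-1}\mathcal{X}^{tw}(\mathfrak{osp}_{M|N})[[u^{-1}]]$.

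Finally, to establish centrality of $\mathcal{E}(u)$ I would introduce a third auxiliary tensor factor carrying $\mathcal{S}_3(w)$ and commute it through $\widetilde{\mathcal{S}}_2(-u)R_{12}(2u)\mathcal{S}_1(u)P^{\iota}_{12}$ by applying \eqref{Yeqv1} with labels $(1,3)$ and $(2,3)$, combined with the Yang--Baxter equation \eqref{YB-1}. The auxiliary $R$-factors produced when moving $\mathcal{S}_3(w)$ past $\mathcal{S}_1(u)$, $R_{12}(2u)$ and $\widetilde{\mathcal{S}}_2(-u)$ in succession should cancel in pairs once projected onto the one-dimensional image of $P^{\iota}_{12}$, leaving the desired commutation $\mathcal{S}_3(w)\mathcal{E}(u)=\mathcal{E}(u)\mathcal{S}_3(w)$. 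This last step is where I expect the main obstacle: one must track carefully the spectral-parameter shifts in the various $R$-matrices and the graded signs introduced by $\iota$, so that the auxiliary factors telescope exactly. The overall strategy mirrors Molev's treatment of the extended twisted Yangian in the classical setting, but the super-graded transposition $\iota$ demands new sign bookkeeping at every step.
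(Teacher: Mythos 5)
Your proposal follows essentially the same approach as the paper's proof: clear the pole in $R^{\iota_1}(-u-v)$ by multiplying by $(u+v)$, specialize $v=-u$, conjugate by $\widetilde{\mathcal{S}}_2(-u)$ to obtain the second equality of \eqref{RQeq}, invoke the rank-one character of $P^{\iota}$ to extract the scalar series $\mathcal{E}(u)$, and then appeal to Molev's three-auxiliary-space argument (\cite[Theorem 2.13.3]{Mo07}) for centrality. The only cosmetic difference is the order of operations (the paper conjugates by $\mathcal{S}_2(v)^{-1}$ before multiplying by $u+v$ and before specializing $v=-u$, while you multiply and specialize first), and the paper simply cites Molev for the centrality step rather than sketching the commutation, as you do.
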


\begin{proof}
    Multiply the quaternary relation \eqref{Yeqv1} by $\mathcal{S}_2(v)^{-1}$ on both sides and by $u+v$ on the left to obtain
    \begin{gather*}
        \mathcal{S}_2(v)^{-1}R(u-v)\mathcal{S}_1(u)\left(u+v+P^{\iota}\right)=\left(u+v+P^{\iota}\right)\mathcal{S}_1(u)R(u-v)\mathcal{S}_2(v)^{-1}.
    \end{gather*}
    Taking $v=-u$ in the equation above, we get the second relation of \eqref{RQeq}. Since the action of $P^{\iota}$ on $V\otimes V$ is one-dimensional, there exists an element, denoted by $\mathcal{E}(u)$, that satisfies the equation \eqref{RQeq}.

    By utilizing the argument stated in \cite[Theorem 2.13.3]{Mo07}, one can demonstrate that all the coefficients of $\mathcal{E}(u)$ belong in the center of $\mathcal{X}^{tw}\left(\mathfrak{osp}_{M|N}\right)$.
    
\end{proof}

\begin{theorem}
    The twisted super Yangian $\mathcal{Y}^{tw}\left(\mathfrak{osp}_{M|N}\right)$ is isomorphic to the quotient of $\mathcal{X}^{tw}\left(\mathfrak{osp}_{M|N}\right)$ by the ideal generated by the coefficients of $\mathcal{E}(u)-1+\frac{1}{2u}$.
\end{theorem}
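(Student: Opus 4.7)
The plan is to exhibit mutually inverse superalgebra homomorphisms between $\mathcal{Y}^{tw}(\mathfrak{osp}_{M|N})$ and the quotient $\mathcal{X}^{tw}(\mathfrak{osp}_{M|N})/\mathcal{I}$, where $\mathcal{I}$ denotes the two-sided ideal of $\mathcal{X}^{tw}(\mathfrak{osp}_{M|N})$ generated by the coefficients of $\mathcal{E}(u)-1+\tfrac{1}{2u}$. Since $\mathcal{X}^{tw}(\mathfrak{osp}_{M|N})$ is defined only by the quaternary relation \eqref{Yeqv1}, while $\mathcal{Y}^{tw}(\mathfrak{osp}_{M|N})$ satisfies both \eqref{Yeqv1} and the symmetry relation \eqref{Yeqv2}, the assignment $\mathfrak{s}_{ij}^{(r)}\mapsto \mathfrak{s}_{ij}^{(r)}$ yields a surjective homomorphism $\Phi\colon \mathcal{X}^{tw}(\mathfrak{osp}_{M|N})\twoheadrightarrow \mathcal{Y}^{tw}(\mathfrak{osp}_{M|N})$, which is the starting point.

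The first main step is to prove that in $\mathcal{Y}^{tw}(\mathfrak{osp}_{M|N})$ the identity $\mathcal{E}(u)=1-\tfrac{1}{2u}$ holds, so that $\Phi$ descends to the quotient. Starting from the defining relation $\mathcal{E}(u)P^{\iota}=\widetilde{\mathcal{S}}_2(-u)R(2u)\mathcal{S}_1(u)P^{\iota}$ in \eqref{RQeq}, one expands $R(2u)=1-P/(2u)$, uses $P\mathcal{S}_1(u)=\mathcal{S}_2(u)P$, and then applies the identity $P^{\iota_1}X_1=X_2^{\iota}P^{\iota_1}$ to shift transpositions across tensor factors; substituting the symmetry relation \eqref{Yeqv2} to replace $\mathcal{S}^{\iota}(-u)$ by a combination of $\mathcal{S}(u)$ and $\mathcal{S}(-u)$, the right-hand side collapses after cancellation with $\widetilde{\mathcal{S}}_2(-u)$ to $(1-\tfrac{1}{2u})P^{\iota}$. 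Since $P^{\iota}$ acts with rank one on $V\otimes V$, this determines $\mathcal{E}(u)$ uniquely.

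Next, I would construct the inverse map $\Psi\colon \mathcal{Y}^{tw}(\mathfrak{osp}_{M|N})\to \mathcal{X}^{tw}(\mathfrak{osp}_{M|N})/\mathcal{I}$. For this it suffices to verify that the symmetry relation \eqref{Yeqv2} is a consequence of \eqref{Yeqv1} together with the imposed identity $\mathcal{E}(u)=1-\tfrac{1}{2u}$ in $\mathcal{X}^{tw}(\mathfrak{osp}_{M|N})/\mathcal{I}$. The key move is to multiply \eqref{Yeqv1} by $u+v$ and specialize $v=-u$; using
\[
(u+v)R_{12}^{\iota_1}(-u-v)\Big|_{v=-u}=P^{\iota_1},
\]
this produces the reduced relation $R(2u)\mathcal{S}_1(u)P^{\iota_1}\mathcal{S}_2(-u)=\mathcal{S}_2(-u)P^{\iota_1}\mathcal{S}_1(u)R(2u)$. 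Multiplying by $\widetilde{\mathcal{S}}_2(-u)$ on the appropriate side and using the hypothesis $\mathcal{E}(u)=1-\tfrac{1}{2u}$ in conjunction with \eqref{RQeq} yields a matrix identity between $\mathcal{S}_1(u)$, $\mathcal{S}_2(-u)$, and $P^{\iota_1}$; applying the partial transposition $\iota_2$ (or equivalently a contraction exploiting the rank-one image of $P^{\iota_1}$) then translates this matrix equation entry-wise into the symmetry relation \eqref{Yeqv2}. Once $\Psi$ is established, the equalities $\Phi\circ\Psi=\mathrm{id}$ and $\Psi\circ\Phi=\mathrm{id}$ are immediate on generators.

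The main obstacle, I expect, lies in the second construction: tracking all super-signs through the $\iota$-transpositions and carrying out the rank-one reduction cleanly, so that no relation stronger than \eqref{Yeqv2} is imposed. This is the super analogue of \cite[Theorem 2.13.3]{Mo07}; while the overall strategy is parallel, the $\mathbb{Z}_2$-graded signs make each partial transposition and contraction delicate, and one must exploit the fact that $P^{\iota_1}=P^{\iota_2}$ in the present setting (by the choice $\mathcal{G}=\mathcal{G}_0$ in Remark \ref{BRtw}), which both simplifies the algebra and constrains the available manipulations.
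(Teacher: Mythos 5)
Your overall strategy is sound and is essentially the one the paper uses, but the paper packages it more compactly. Rather than running two separate verifications (that $\Phi$ descends, and that $\Psi$ is well-defined), the paper computes once and for all, by applying both sides of $\mathcal{E}(u)P^{\iota}=P^{\iota}\mathcal{S}_1(u)R(2u)\widetilde{\mathcal{S}}_2(-u)$ to the basis vectors $e_k\otimes e_j$ and reading off the coefficient of the rank-one image $w=\sum_r\theta_r e_{r'}\otimes e_r$, the closed formula
\begin{gather*}
\mathcal{E}(u)=\Bigl(\mathcal{S}^{\iota}(u)-\tfrac{1}{2u}\mathcal{S}(u)\Bigr)\mathcal{S}(-u)^{-1}
\end{gather*}
valid already in $\mathcal{X}^{tw}(\mathfrak{osp}_{M|N})$. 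From this single identity, $\mathcal{E}(u)=1-\tfrac{1}{2u}$ and the symmetry relation \eqref{Yeqv2} are literally the same relation (one obtains the other by multiplying on the right by the invertible matrix $\mathcal{S}(-u)$ and substituting $u\mapsto -u$), so the ideal generated by the coefficients of $\mathcal{E}(u)-1+\tfrac{1}{2u}$ is exactly the ideal generated by the coefficients of the symmetry relation, and the isomorphism is immediate. Your plan of expanding $R(2u)$, shuffling $P$ and $P^{\iota}$ across tensor factors, and substituting \eqref{Yeqv2} into the right-hand side would reach the same destination, but it is more delicate with super-signs; the entry-wise computation on basis vectors that the paper performs is the cleaner implementation of the rank-one reduction you gesture at, and it avoids having to separately carry out the ``reverse'' derivation of \eqref{Yeqv2} in your construction of $\Psi$, since equivalence is established in one pass. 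One small citation point: \cite[Theorem 2.13.3]{Mo07} is the analogue of the centrality statement for $\mathcal{E}(u)$ (proved in the preceding lemma here), not of the present quotient theorem; the correct non-super analogue of this statement is the subsequent theorem in that reference.
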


\begin{proof}
    Let $\tilde{\mathfrak{s}}_{ij}(u)$ be the $(i,j)$-th entry of the inverse matrix $\widetilde{\mathcal{S}}(u)=\mathcal{S}(u)^{-1}$.
    It suffices to show that the equation
    \begin{gather*}
        \mathcal{E}(u)=1-\frac{1}{2u}
    \end{gather*}
    is equivalent to the supersymmetric relation \eqref{Yeqv2}. Indeed, apply the operators $P^{\iota}\mathcal{S}_1(u)R(2u)\widetilde{\mathcal{S}}_2(-u)$ and $\mathcal{E}(u)P^{\iota}$ to the basis vector $e_k\otimes e_j$ for $k,j\in I$, we have the equation
    \begin{gather*}
        \delta_{k'j}\theta_{k'}\mathcal{E}(u)w=\sum_{i\in I}(-1)^{|i||j|+|i|+|j|}\left((-1)^{|k|}\theta_i\mathfrak{s}_{i'k}(u)\tilde{\mathfrak{s}}_{ij}(-u)-\frac{1}{2u}(-1)^{|i||k|}\theta_{k'}\mathfrak{s}_{k'i}(u)\tilde{\mathfrak{s}}_{ij}(-u)\right)w,
    \end{gather*}
    for $w=\sum_{r\in I}\theta_r e_{r'}\otimes e_r$. Multiply the equation above by $(-1)^{|k||j|+|j|}\theta_{k'}$ and replace $k$ with $k'$ to get
    \begin{gather*}
        \delta_{kj}\mathcal{E}(u)w=\sum_{i\in I}(-1)^{|i||j|+|k||j|+|i|}\left((-1)^{|k|}\theta_i\theta_{k}\mathfrak{s}_{i'k'}(u)\tilde{\mathfrak{s}}_{ij}(-u)-\frac{1}{2u}(-1)^{|i||k|}\mathfrak{s}_{ki}(u)\tilde{\mathfrak{s}}_{ij}(-u)\right)w.
    \end{gather*}
    This implies that
    \begin{gather*}
        \mathcal{E}(u)=\left(\mathcal{S}^{\iota}(u)-\frac{1}{2u}\mathcal{S}(u)\right)\mathcal{S}(-u)^{-1}.
    \end{gather*}
    Obvious, $ \mathcal{E}(u)=1-\frac{1}{2u}$ is equivalent to \eqref{Yeqv2}.

\end{proof}

\begin{remark}
    We can also define an extended twisted super Yangian $\mathrm{X}^{tw}_{\mathcal{G}}\left(\mathfrak{osp}_{M|N}\right)$ such that the twisted super Yangian $\mathrm{Y}^{tw}_{\mathcal{G}}\left(\mathfrak{osp}_{M|N}\right)$ can be regarded as a quotient of $\mathrm{X}^{tw}_{\mathcal{G}}\left(\mathfrak{osp}_{M|N}\right)$ by the ideal generated by a series of central elements. This statement holds for any supersymmetric block matrix $\mathcal{G}$.
\end{remark}

According to Corollary \ref{S-inver}, we have the following
\begin{lemma}\label{varpi:map}
    There exists an automorphism of superalgebras $\varpi_{M|N}$ over the extended twisted super Yangian $\mathcal{X}^{tw}\left(\mathfrak{osp}_{M|N}\right)$ such that
    \begin{gather*}
        \mathcal{S}(u)\mapsto \widetilde{\mathcal{S}}\left(-u-\frac{M-N}{2}\right).
    \end{gather*}
    In particular, $\varpi_{M|N}^2=1$.
\end{lemma}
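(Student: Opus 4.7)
The plan is to exhibit $\varpi_{M|N}$ as the superalgebra endomorphism of $\mathcal{X}^{tw}(\mathfrak{osp}_{M|N})$ induced by $\mathcal{S}(u)\mapsto \widetilde{\mathcal{S}}(-u-(M-N)/2)$, and then to verify that it squares to the identity. Invertibility, and thus the automorphism property, will follow automatically.

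For well-definedness, I would show that the matrix $\widetilde{\mathcal{S}}(-u-(M-N)/2)$ satisfies the sole defining relation \eqref{Yeqv1}. This is exactly the content of Corollary \ref{S-inver}: starting from \eqref{Yeqv1}, multiplying by $R_{12}(u-v)^{-1}$ and by the inverses of the two $\mathcal{S}$-factors, and using the rational identities $R(u)^{-1}=\tfrac{u^2}{u^2-1}R(-u)$ and $(R^{\iota_i}(u))^{-1}=R^{\iota_i}(-u+M-N)$, one rearranges \eqref{Yeqv1} into an identity of exactly the same shape as \eqref{Yeqv1} but with $\mathcal{S}$'s replaced by $\widetilde{\mathcal{S}}$'s and the spectral parameters shifted by $-(M-N)/2$. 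Since this manipulation never invokes the symmetry relation \eqref{Yeqv2}, it is valid in $\mathcal{X}^{tw}(\mathfrak{osp}_{M|N})$, and by the universal property of the extended algebra the assignment extends uniquely to a superalgebra homomorphism $\varpi_{M|N}$.

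To see $\varpi_{M|N}^2 = \mathrm{id}$, note that, being a ring map, $\varpi_{M|N}$ sends $\widetilde{\mathcal{S}}(v) = \mathcal{S}(v)^{-1}$ to $\widetilde{\mathcal{S}}(-v-(M-N)/2)^{-1} = \mathcal{S}(-v-(M-N)/2)$. Applying $\varpi_{M|N}$ twice to $\mathcal{S}(u)$ and then setting $v=-u-(M-N)/2$ gives
$$\varpi_{M|N}^2(\mathcal{S}(u)) \;=\; \varpi_{M|N}\!\left(\widetilde{\mathcal{S}}\!\left(-u-\tfrac{M-N}{2}\right)\right) \;=\; \mathcal{S}\!\left(-\!\left(-u-\tfrac{M-N}{2}\right)-\tfrac{M-N}{2}\right) \;=\; \mathcal{S}(u),$$
so $\varpi_{M|N}^2$ is the identity on generators and hence on the whole of $\mathcal{X}^{tw}(\mathfrak{osp}_{M|N})$.

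The only genuinely delicate point is the first step: one must confirm that the derivation of Corollary \ref{S-inver} stays within $\mathcal{X}^{tw}(\mathfrak{osp}_{M|N})$ and does not tacitly use \eqref{Yeqv2}. A direct inspection of the argument giving \eqref{Ytw3} and its transfer through the isomorphism $\phi$ of Proposition \ref{prop:alterpresentation}, which is purely formal on relations, shows that only \eqref{Yeqv1} and the rational $R$-matrix identities are used; the argument therefore transfers verbatim to the extended twisted super Yangian.
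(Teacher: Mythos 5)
Your proposal is correct and follows exactly the paper's (very terse) argument: the paper simply cites Corollary~\ref{S-inver} as the justification, which is precisely the well-definedness step you spell out, and the involution property is the routine computation you give. You add a useful explicit check that the derivation of \eqref{Yeqv4} never invokes the symmetry relation \eqref{Yeqv2} — a point the paper leaves implicit but which is indeed needed for the assignment to define a map on the extended algebra.
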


Since the definition of quantum Berezinian does not depend on the supersymmetric relation \eqref{Yeqv2}, we can define the quantum Berezinian $\mathfrak{B}^{tw}_{M|N}(u)$ in $u^{-1}$ for the extended twisted super Yangian $\mathcal{X}^{tw}\big(\mathfrak{osp}_{M|N}\big)$ in the same manner as outlined in Section \ref{se:quantumbereianian:1}.

For $k_1,\ldots,k_p,\ell_1,\ldots,\ell_p\in I_+$ and $k_{p+1},\ldots,k_{p+q},\ell_{p+1},\ldots,\ell_{p+q}\in I_-$, introduce the quantum minors $\mathcal{S}_{\ell_1,\ldots,\ell_{p+q}}^{k_1,\ldots,k_{p+q}}(u)$ of matrix $\mathcal{S}(u)$ for $\mathcal{X}^{tw}\left(\mathfrak{osp}_{M|N}\right)$:\footnote{This definition for $p=0$ coincides with the same notation introduced in Section 5.2.}
\begin{gather*}
    \Im A^{(p|q)}\langle \mathcal{S}_{1},\ldots,\mathcal{S}_{p} \rangle\cdot\langle \widetilde{\mathcal{S}}_{p+1},\ldots,\widetilde{\mathcal{S}}_{p+q} \rangle \Im
        \left(e_{\ell_1}\otimes\cdots\otimes  e_{\ell_{p+q}}\right)
       =\sum  \mathcal{S}_{\ell_1,\ldots,\ell_{p+q}}^{k_1,\ldots,k_{p+q}}(u)\left(e_{k_1}\otimes\cdots\otimes e_{k_{p+q}}\right),
\end{gather*}
where $A^{(p|q)}=G^{(p)}\otimes H^{(q)}\otimes 1$. It is easy to see that $\mathfrak{B}^{tw}_{M|N}(u)=\mathcal{S}_{1,\ldots,M+N}^{1,\ldots,M+N}(u)$.

\begin{lemma}\label{KMeqs}
   The following equations hold in $\mathcal{X}^{tw}\left(\mathfrak{osp}_{M|N}\right)\left[\left[u^{-1}\right]\right]$,
    \begin{gather}\label{KleqM}
        \mathfrak{B}^{tw}_{M|N}(u) \varpi_{M|N}\left(\mathcal{S}_{K+1,\ldots,M+N}^{K+1,\ldots,M+N}(-u-\frac{3M-5N}{2}+1)\right)=\mathcal{S}_{1,\ldots,K}^{1,\ldots,K}(u),
    \end{gather}
    for $0\leqslant K\leqslant M$, and
    \begin{gather}\label{KgeqM}
        \mathfrak{B}^{tw}_{M|N}(u) \varpi_{M|N}\left(\mathcal{S}_{K+1,\ldots,M+N}^{K+1,\ldots,M+N}(-u+\frac{M+N}{2}-1)\right)=\mathcal{S}_{1,\ldots,K}^{1,\ldots,K}(u),
    \end{gather}
    for $K>M$.
\end{lemma}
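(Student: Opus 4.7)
The plan is to prove both identities by splitting the defining expression
$\mathfrak{B}^{tw}_{M|N}(u) = \mathcal{S}_{1,\ldots,M+N}^{1,\ldots,M+N}(u)$
at position $K$ and showing that the ``head'' becomes the desired minor $\mathcal{S}_{1,\ldots,K}^{1,\ldots,K}(u)$, while the complementary ``tail'' is, after application of the automorphism $\varpi_{M|N}$ from Lemma \ref{varpi:map}, the minor $\mathcal{S}_{K+1,\ldots,M+N}^{K+1,\ldots,M+N}$ with an appropriately shifted argument. The key algebraic tools are the intertwining identity of Lemma \ref{GH-RS} (which lets us pull anti-/symmetrizers past the $\mathcal{S}_i$ and $\widetilde{\mathcal{S}}_i$ blocks), the factorization of symmetrizers from Lemma \ref{product:H}, and the alternative RTT-type relation \eqref{Yeqv4} satisfied by $\widetilde{\mathcal{S}}$.

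For case (i), $K\leqslant M$, the head $\mathcal{S}_{1,\ldots,K}^{1,\ldots,K}(u)$ is built purely from $G^{(K)}$ and $\langle\mathcal{S}_1,\ldots,\mathcal{S}_K\rangle$. I would apply both sides of the defining equality for $\mathfrak{B}^{tw}_{M|N}(u)$ to a suitable basis vector $e_{\ell_1}\otimes\cdots\otimes e_{\ell_{M+N}}$. Using Lemma \ref{GH-RS} to move the R-matrices in the full expression past the $\mathcal{S}_i$'s for $i>K$, and then invoking the identity $\mathcal{S}_i(u_i) = \widetilde{\mathcal{S}}_i(-u_i-(M-N)/2)^{-1}$ combined with relation \eqref{Yeqv4}, the tail block converts into a product of operators of the form $\widetilde{\mathcal{S}}_i(\cdot)$ for $i\in\{K+1,\ldots,M+N\}$ carrying the structure of $\mathcal{S}_{K+1,\ldots,M+N}^{K+1,\ldots,M+N}$. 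Applying $\varpi_{M|N}$ finally identifies this tail with the quantum minor on the right, and the composition of all shifts produces $-u-\tfrac{3M-5N}{2}+1$.

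For case (ii), $K>M$, the head already mixes even and odd indices: it contains the full $G^{(M)}\otimes H^{(1,K-M)}$ together with both $\langle\mathcal{S}_1,\ldots,\mathcal{S}_M\rangle$ and $\langle\widetilde{\mathcal{S}}_{M+1},\ldots,\widetilde{\mathcal{S}}_K\rangle$. Here I would decompose $H^{(N)}$ via Lemma \ref{product:H} into $H^{(1,K-M)}\cdot H^{(K-M+1,N)}$ (up to a combinatorial factor), separating the first $K-M$ odd indices from the last $M+N-K$. The residual tail consists only of $\widetilde{\mathcal{S}}_i$ for $i>K$ together with $H^{(K-M+1,N)}$, and again applying $\varpi_{M|N}$ identifies it with a minor on indices $\{K+1,\ldots,M+N\}$, now with shift $-u+\tfrac{M+N}{2}-1$ since no sign-flip between $\mathcal{S}$- and $\widetilde{\mathcal{S}}$-blocks is needed in this case.

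The main obstacle is bookkeeping the precise argument shifts after applying $\varpi_{M|N}$. Recall that in the Berezinian the parameters differ between the two blocks: $u_i=w_i=u+M-N-i$ for $i\in I_+$, while $u_i=w_i-(M-N)/2$ for $i\in I_-$, and $\varpi_{M|N}$ itself introduces an extra shift of $-(M-N)/2$. The asymmetry between the two cases (whence the two different shifts $-\tfrac{3M-5N}{2}+1$ versus $-u+\tfrac{M+N}{2}-1$) stems from whether the ``cut'' at $K$ falls inside the anti-symmetrizer block or the symmetrizer block. To guard against sign and offset errors, I would first verify the formulas at the extremal cases $K=0$ and $K=M+N$: at $K=M+N$ in (ii) the identity is tautological, while at $K=0$ in (i) it reduces to
$\mathfrak{B}^{tw}_{M|N}(u)\,\varpi_{M|N}\!\bigl(\mathfrak{B}^{tw}_{M|N}(-u-\tfrac{3M-5N}{2}+1)\bigr)=1$,
which can be checked independently from the quantum Liouville formula \eqref{Q:Liou:tw} and the central character of $\mathfrak{B}^{tw}_{M|N}$. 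Once the endpoints are anchored, the general $K$ follows by induction on the splitting position, each step of which reduces to a single application of Lemma \ref{GH-RS}.
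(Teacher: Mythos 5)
Your overall plan — split $\mathfrak{B}^{tw}_{M|N}(u)$ at position $K$, match the head to $\mathcal{S}_{1,\ldots,K}^{1,\ldots,K}(u)$, and identify the complementary factor with the $\varpi_{M|N}$-image of the opposite corner minor — is the right idea, and the case distinction you draw (cut inside the $G$-block versus inside the $H$-block) is exactly what drives the two different shifts in \eqref{KleqM} and \eqref{KgeqM}. But there are two concrete problems.

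First, the identity you invoke, ``$\mathcal{S}_i(u_i) = \widetilde{\mathcal{S}}_i(-u_i-(M-N)/2)^{-1}$,'' is false as stated: since $\widetilde{\mathcal{S}} = \mathcal{S}^{-1}$, it would assert $\mathcal{S}(u) = \mathcal{S}(-u-(M-N)/2)$, which is not an identity in $\mathcal{X}^{tw}(\mathfrak{osp}_{M|N})$. What you actually need is the statement of Lemma \ref{varpi:map}, namely that $\varpi_{M|N}$ sends $\mathcal{S}(u)$ to $\widetilde{\mathcal{S}}(-u-\tfrac{M-N}{2})$; it is an equality of \emph{automorphism images}, not a pointwise identity of the matrices. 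More importantly, the mechanism by which the tail ``converts'' into a $\varpi$-image is not pulling R-matrices through via Lemma \ref{GH-RS} — that lemma only intertwines the (anti)symmetrizer, it does not produce a complementary minor. The actual mechanism is to take $\Im A^{(M|N)}$ applied to the expression for $\mathcal{S}_{1,\ldots,K}^{1,\ldots,K}(u)$ and multiply it \emph{on the right} by the remaining factors of the Berezinian, so that the full $\mathfrak{B}^{tw}_{M|N}(u)$ appears, followed by a residual product of $\mathcal{S}_i$'s interleaved with \emph{inverse} $R^\iota$-factors. Then the single identity $\bigl(R^\iota(u)\bigr)^{-1} = R^\iota(-u+M-N)$ is used to rewrite those inverses as ordinary $R^\iota$'s with shifted argument, at which point the residual product is recognized as $\varpi_{M|N}$ applied to a bracket $\langle\cdots\rangle$ over indices $K+1,\ldots,M+N$. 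Without this right-multiplication-and-R-inversion step, your ``conversion'' does not go through and the argument shifts $-u-\tfrac{3M-5N}{2}+1$ and $-u+\tfrac{M+N}{2}-1$ cannot be extracted.

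Second, your proposed sanity check at $K=0$ via the quantum Liouville formula \eqref{Q:Liou:tw} is not available here: the lemma is stated in the \emph{extended} superalgebra $\mathcal{X}^{tw}(\mathfrak{osp}_{M|N})$, where the symmetry relation \eqref{Yeqv2} does not hold, so $\mathfrak{z}(u)$ and the Liouville formula are not defined. (The closing ``induction on the splitting position'' is also unnecessary — once the right-multiplication mechanism is in place, the argument for general $K$ is direct.)
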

\begin{proof}
    Regard as an operator on $\left(\mathbb{C}^{M|0}\right)^{\otimes M}\otimes \left(\mathbb{C}^{0|N}\right)^{\otimes N}$,
    \begin{gather*}
\Im A^{(M|N)}\mathfrak{B}^{tw}_{M|N}(u)=\Im A^{(M|N)} \langle \mathcal{S}_{1},\ldots,\mathcal{S}_{M} \rangle \cdot\langle \widetilde{\mathcal{S}}_{M+1},\ldots,\widetilde{\mathcal{S}}_{M+N} \rangle.
\end{gather*}

If $K>M$, we have the expression
\begin{equation}\label{AB:eq1}
\begin{split}
    &\Im A^{(M|N)} \langle \mathcal{S}_{1},\ldots,\mathcal{S}_{M} \rangle \cdot\langle \widetilde{\mathcal{S}}_{M+1},\ldots,\widetilde{\mathcal{S}}_K \rangle \cdot \sum_{i=M+1}^{K}(R_{i,K+1}^{\iota}\cdots R_{i,M+N}^{\iota})\\
    =&\Im A^{(M|N)}\mathfrak{B}^{tw}_{M|N}(u)
    \mathcal{S}_{M+N}\left(R_{M+N-1,M+N}^{\iota}\right)^{-1}\mathcal{S}_{M+N-1}\cdots\\
    &\qquad~~\times\mathcal{S}_{K+2}\left(R_{K+1,M+N}^{\iota}\right)^{-1}\cdots \left(R_{K+1,K+2}^{\iota}\right)^{-1}\mathcal{S}_{K+1}
    \end{split}
\end{equation}
Since $\left(R^{\iota}(u)\right)^{-1}=R^{\iota}(-u+M-N)$, the right hand side can be rewritten as
\begin{align*}
    &\Im A^{(M|N)}\mathfrak{B}^{tw}_{M|N}(u) \mathcal{S}_{M+N}^{\bullet}R_{M+N-1,M+N}^{\bullet}\mathcal{S}_{M+N-1}^{\bullet}\cdots \mathcal{S}_{K+2}^{\bullet}R_{K+1,M+N}^{\bullet}\cdots R_{K+1,K+2}^{\bullet}\mathcal{S}_{K+1}^{\bullet} \\
    =&\Im A^{(M|N)}\mathfrak{B}^{tw}_{M|N}(u)\langle \mathcal{S}_{M+N}^{\bullet},\ldots,\mathcal{S}_{K+1}^{\bullet}\rangle,
\end{align*}
where $$\mathcal{S}_i^{\bullet}=\varpi_{M|N}\left(\widetilde{\mathcal{S}}(u_i)\right),\qquad R_{ij}^{\bullet}=R_{ij}^{\iota}(u_i+u_j+M-N), $$
with $u_i$ given as in Section \ref{se:quantumbereianian:1}. Apply both sides of \eqref{AB:eq1} to the vector $v=e_1\otimes\cdots \otimes  e_{M+N}$ and compare the coefficients of $\Im A^{(M|N)}v$, and then we obtain \eqref{KgeqM}.

If $0\leqslant K\leqslant M$, we also have
\begin{equation}\label{AB:eq2}
    \begin{split}
        &\Im A^{(M|N)} \langle \mathcal{S}_{1},\ldots,\mathcal{S}_{K} \rangle  \cdot \sum_{i=1}^{K}(R_{i,K+1}^{\iota}\cdots R_{i,M}^{\iota})\\
    =&\Im A^{(M|N)}\mathfrak{B}^{tw}_{M|N}(u)
    \mathcal{S}_{M}^{\circ}R_{M-1,M}^{\circ}\mathcal{S}_{M-1}^{\circ}\cdots \mathcal{S}_{K+2}^{\circ}R_{K+1,M}^{\circ}\cdots R_{K+1,K+2}^{\circ}\mathcal{S}_{K+1}^{\circ} \\
    &\qquad~~\times \langle \widetilde{\mathcal{S}}_{M+1}^{\bullet},\ldots,\widetilde{\mathcal{S}}_K^{\bullet} \rangle\\
    =&\Im A^{(M|N)}\mathfrak{B}^{tw}_{M|N}(u) \langle \mathcal{S}_{M}^{\circ},\ldots,\mathcal{S}_{K+1}^{\circ} \rangle  \langle\widetilde{\mathcal{S}}_{M+1}^{\bullet},\ldots,\widetilde{\mathcal{S}}_{M+N}^{\bullet} \rangle,
    \end{split}
\end{equation}
where
$$\mathcal{S}_i^{\circ}=\varpi_{M|N}\left(\mathcal{S}(u_i^{\circ })\right),\qquad R_{ij}^{\circ}=R_{ij}^{\iota}(-u_i^{\circ }-u_j^{\circ }),\qquad u_i^{\circ }=-u_i-\frac{M-N}{2}. $$
The equality \eqref{KleqM} is obtained from the action of both sides of \eqref{AB:eq2} to the vector $v$.

\end{proof}

\subsection{Quasi-determinant and quantum Sylvester theorem}
Recall the quasi-determinant of a nonsingular square matrix from \cite{GGRW05}, which allows us to introduce a family of superalgebraic homomorphisms over the extended twisted super Yangians. For a square matrix $X$, denote $X^{ij}$ as the minor submatrix obtained from $X$ by deleting the $i$-th row and the $j$-th column.
\begin{definition}\label{quasi-det}
    Let $X=(x_{ij})$ be a nonsingular ($m\times m$)-matrix over a ring with unity such that its $(i,j)$-th minor submatrix is invertible. Then the $(i,j)$-th \textit{quasi-determinant} of $X$ is defined by the following formula
    \begin{equation*}
        |X|^{ij}=\left|
  \begin{array}{ccccc}
    x_{11} & \cdots & x_{1j} & \cdots & x_{1m} \\
    \vdots &  & \vdots & & \vdots \\
    x_{i1} & \cdots & \boxed{x_{ij}} & \cdots & x_{im} \\
    \vdots &  & \vdots & & \vdots \\
    x_{m1} & \cdots & x_{mj} & \cdots & x_{mm}  \\
  \end{array}
\right|:=x_{ij}-r_i^j(X^{ij})^{-1}c_j^i,
    \end{equation*}
    where $r_i^j$ is the row matrix obtained from the $i$-th row of $X$ by deleting the element $x_{ij}$ and $c_j^i$ is the column matrix obtained from the $j$-th column of $X$ by deleting the element $x_{ij}$.
\end{definition}

By the defining relation \eqref{Yeqv1}, for any $K\in\mathbb{N}$ there is a natural superalgebra homomorphism $\nu_K$ such that
\begin{gather*}
    \nu_K:\ \mathcal{X}^{tw}(\mathfrak{osp}_{M|N})\rightarrow \mathcal{X}^{tw}(\mathfrak{osp}_{K+M|N}),\quad \mathfrak{s}_{ij}(u)\mapsto \mathfrak{s}_{K+i,K+j}(u),\quad 1\leqslant i,j\leqslant M+N.
\end{gather*}
Introduce the composite
\begin{gather*}
    \psi_K=\varpi_{K+M|N}\,\circ \,\nu_K\,\circ\,\varpi_{M|N},
\end{gather*}
where $\varpi_{M|N}$ is defined in Lemma \ref{varpi:map}. The following proposition gives the image of the generating series for $\mathcal{X}^{tw}\big(\mathfrak{osp}_{M|N}\big)$ in terms of quasi-determinants. Since the constant term of each series $\mathfrak{s}_{ij}(u)$ is $\delta_{ij}$, these quasi-determinants are well-defined.

\begin{proposition}
For any $1\leqslant i,j\leqslant M+N$ and $K\geqslant 0$, we have
    \begin{equation}\label{psi:K}
\psi_K:\ \mathfrak{s}_{ij}(u+\frac{K}{2})\mapsto
\left|
  \begin{array}{cccc}
    \mathfrak{s}_{11}(u) & \ldots & \mathfrak{s}_{1K}(u) & \mathfrak{s}_{1,K+j}(u) \\
    \ldots & \ldots & \ldots & \ldots \\
    \mathfrak{s}_{K1}(u) & \ldots & \mathfrak{s}_{KK}(u) & \mathfrak{s}_{K,K+j}(u) \\
    \mathfrak{s}_{K+i,1}(u) & \ldots & \mathfrak{s}_{K+i,K}(u) & \boxed{\mathfrak{s}_{K+i,K+j}(u)}\\
  \end{array}
\right|.
\end{equation}
\end{proposition}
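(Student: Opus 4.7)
The plan is to unfold $\psi_K = \varpi_{K+M|N} \circ \nu_K \circ \varpi_{M|N}$ step by step, track the spectral shifts carefully, and identify the result as a Schur complement, which is by definition a quasi-determinant.

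First, Lemma \ref{varpi:map} gives
\begin{gather*}
\varpi_{M|N}\bigl(\mathfrak{s}_{ij}(u+K/2)\bigr) = \tilde{\mathfrak{s}}_{ij}(v_0), \qquad v_0 := -u - \tfrac{K+M-N}{2}.
\end{gather*}
Since $\nu_K$ is a superalgebra homomorphism, it commutes with matrix inversion, so $\nu_K(\tilde{\mathfrak{s}}_{ij}(v_0))$ equals the $(i,j)$-entry of $Y(v_0)^{-1}$, where $Y(v)$ denotes the lower-right $(M+N)\times(M+N)$ submatrix of the generator matrix $\mathcal{S}(v)$ of $\mathcal{X}^{tw}(\mathfrak{osp}_{K+M|N})$ on rows and columns $\{K+1,\dots,K+M+N\}$. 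Applying $\varpi_{K+M|N}$ next and using Lemma \ref{varpi:map} again, each entry $\mathfrak{s}_{K+a,K+b}(v_0)$ is sent to $\tilde{\mathfrak{s}}_{K+a,K+b}(w_0)$ where $w_0 = -v_0 - (K+M-N)/2 = u$, the two shifts cancelling exactly. Since $\varpi_{K+M|N}$ again commutes with inversion, we obtain
\begin{gather*}
\psi_K\bigl(\mathfrak{s}_{ij}(u+K/2)\bigr) = \bigl(\widetilde{Y}(u)^{-1}\bigr)_{ij},
\end{gather*}
where $\widetilde{Y}(u)$ denotes the lower-right $(M+N)\times(M+N)$ submatrix of $\widetilde{\mathcal{S}}(u)$.

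To finish, write $\mathcal{S}(u) = \begin{pmatrix} A(u) & B(u) \\ C(u) & D(u) \end{pmatrix}$ with $A(u)$ the top-left $K\times K$ block. The block-triangular factorization
\begin{gather*}
\mathcal{S}(u) = \begin{pmatrix} I & 0 \\ CA^{-1} & I \end{pmatrix} \begin{pmatrix} A & 0 \\ 0 & D - CA^{-1}B \end{pmatrix} \begin{pmatrix} I & A^{-1}B \\ 0 & I \end{pmatrix}
\end{gather*}
holds formally in $\operatorname{End}V\otimes\mathcal{X}^{tw}(\mathfrak{osp}_{K+M|N})$; inverting it yields the Schur complement identity $\widetilde{Y}(u) = (D(u) - C(u)A(u)^{-1}B(u))^{-1}$, so $\widetilde{Y}(u)^{-1} = D(u) - C(u)A(u)^{-1}B(u)$. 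The $(i,j)$-entry of the latter is exactly $\mathfrak{s}_{K+i,K+j}(u)$ minus the bilinear correction $\sum_{a,b=1}^{K} \mathfrak{s}_{K+i,a}(u)\bigl(A(u)^{-1}\bigr)_{ab}\mathfrak{s}_{b,K+j}(u)$, which is the quasi-determinant on the right-hand side of \eqref{psi:K} according to Definition \ref{quasi-det}.

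The main obstacle will be justifying the block inversion formula in the present $\mathbb{Z}_2$-graded noncommutative setting; this reduces to verifying that the factorization above is valid when all matrix products are interpreted in $\operatorname{End}V\otimes\mathcal{X}^{tw}$ with the super-tensor product conventions, which is a direct check once the standard identification of super-matrix multiplication is adopted. After this bookkeeping the identification with the displayed quasi-determinant is immediate.
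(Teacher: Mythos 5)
Your proposal is correct and follows essentially the same route as the paper: unfold $\psi_K=\varpi_{K+M|N}\circ\nu_K\circ\varpi_{M|N}$, track the spectral-parameter shifts (which cancel exactly as you compute), and apply the block-inversion / Schur-complement identity from \cite[Lemma 1.11.1]{Mo07} to identify the result with the quasi-determinant. The "main obstacle" you flag is not really one: the block-inversion formula is a purely ring-theoretic identity valid over any ring with unity (as the paper explicitly notes), so the super-tensor conventions introduce no additional difficulty beyond keeping the non-commutative order of factors.
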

\begin{proof}
    According to the statements in \cite[Lemma 1.11.1]{Mo07}, for any nonsingular block matrix
    \begin{equation*}
        \begin{pmatrix}
            A &B \\
            C &D
        \end{pmatrix}
    \end{equation*}
    over a ring with unity, such that the matrices
    $A$ and $D$ are invertible, the matrices
    \begin{gather*}
        A-BD^{-1}C,\qquad D-CA^{-1}B
    \end{gather*}
    are also invertible. Moreover,
    \begin{equation*}
        \begin{pmatrix}
            A &B \\
            C &D
        \end{pmatrix}^{-1}=\begin{pmatrix}
            (A-BD^{-1}C)^{-1} & -A^{-1}B(D-CA^{-1}B)^{-1} \\
            -D^{-1}C(A-BD^{-1}C)^{-1}  & (D-CA^{-1}B)^{-1}
        \end{pmatrix}.
    \end{equation*}

   Let
    \begin{equation*}
        \mathcal{S}(u)=\begin{pmatrix}
            A(u)  &  B(u) \\
            C(u)  &  D(u)
        \end{pmatrix}\in \mathrm{End}(\mathbb{C}^{K+M|N})\otimes\mathcal{X}^{tw}(\mathfrak{osp}_{K+M|N}),
    \end{equation*}
    where
    \begin{equation*}
        \begin{aligned}
            &A(u)=\left(\mathfrak{s}_{ij}(u)\right)_{1\leqslant i,j\leqslant K}, \\
            &C(u)=\left(\mathfrak{s}_{ij}(u)\right)_{K+1\leqslant i\leqslant K+M+N,\ 1\leqslant j\leqslant K},
        \end{aligned}
        \qquad
        \begin{aligned}
            &B(u)=\left(\mathfrak{s}_{ij}(u)\right)_{1\leqslant i\leqslant K,\ K+1\leqslant j\leqslant K+M+N}, \\
            &D(u)=\left(\mathfrak{s}_{ij}(u)\right)_{K+1\leqslant i,j\leqslant K+M+N}.
        \end{aligned}
    \end{equation*}
    By its definition, the homomorphism $\psi_K$ maps $\varpi_{M|N}(\mathcal{S}(u))=\widetilde{\mathcal{S}}\left(u'+\frac{K}{2}\right)$
    to
    $$\left(D(u')-C(u')A^{-1}(u')B(u')\right)^{-1}\quad\text{with}\quad u'= -u-\frac{M+K-N}{2}.$$
    Then we have
    \begin{gather*}
        \psi_K:\ \mathcal{S}\left(u+\frac{K}{2}\right)\mapsto D(u)-C(u)A^{-1}(u)B(u).
    \end{gather*}
    Taking the $(i,j)$-th entry and using Definition \ref{quasi-det}, we immediately get \eqref{psi:K}.

\end{proof}

For any even integers $L\geqslant 0$, there is a natural homomorphism $\eta_{L}$ from $\mathcal{X}^{tw}\left(\mathfrak{osp}_{M|N}\right)$ to $\mathcal{X}^{tw}\left(\mathfrak{osp}_{M|L+N}\right)$ defined by
$$\mathfrak{s}_{ij}(u)\mapsto \mathfrak{s}_{ij}(u),\qquad 1\leqslant i,j\leqslant M+N.$$
Note that the homomorphism $\psi_K$ maps $\tilde{\mathfrak{s}}_{ij}(u+\frac{K}{2})$ to $\tilde{\mathfrak{s}}_{K+i,K+j}(u)$, hence, the image of $\psi_K$ in $\mathcal{X}^{tw}\left(\mathfrak{osp}_{K+M|L+N}\right)$ commutes with the sub-superalgebra generated by all $\eta_L\big(\mathfrak{s}_{ab}(v)\big)$ for each $L$ where $\mathfrak{s}_{ab}(v)\in \mathcal{X}^{tw}\left(\mathfrak{osp}_{K+M|N}\right)$ and $1\leqslant a,b\leqslant K$. Specially, the image
$\psi_K\left(\mathcal{X}^{tw}\left(\mathfrak{osp}_{M|N}\right)\right)$
commutes with the sub-superalgebra of $\mathcal{X}^{tw}\left(\mathfrak{osp}_{K+M|N}\right)$ generated by the set $\big\{\,\mathfrak{s}_{ab}^{(r)}\,\big|\,1\leqslant a,b\leqslant K,\ r>0\,\big\}$.

\begin{corollary}
   For any integers $K_1,K_2\geqslant 0$, we have
    \begin{gather*}
        \psi_{K_1}\circ \psi_{K_2}=\psi_{K_1+K_2}=\psi_{K_2}\circ \psi_{K_1}.
    \end{gather*}
\end{corollary}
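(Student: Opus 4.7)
The plan is to reduce the identity to two elementary facts: the involutivity $\varpi_{L|N}^{2}=\mathrm{id}$ given by Lemma \ref{varpi:map}, together with the semigroup property $\nu_{K_1}\circ\nu_{K_2}=\nu_{K_1+K_2}$ for the index-shift homomorphisms. With these in hand, the result is a six-fold telescoping.

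First I would unfold the definition $\psi_K=\varpi_{K+M|N}\circ\nu_K\circ\varpi_{M|N}$ inside the composition $\psi_{K_1}\circ\psi_{K_2}$, noting that the outer $\psi_{K_1}$ must act on the intermediate algebra $\mathcal{X}^{tw}(\mathfrak{osp}_{K_2+M|N})$, so the relevant $\nu_{K_1}$ and the surrounding $\varpi$'s are the versions attached to that algebra. This produces the string
$$\psi_{K_1}\circ\psi_{K_2}=\varpi_{K_1+K_2+M|N}\circ\nu_{K_1}\circ\varpi_{K_2+M|N}\circ\varpi_{K_2+M|N}\circ\nu_{K_2}\circ\varpi_{M|N},$$
in which the two adjacent copies of $\varpi_{K_2+M|N}$ cancel by Lemma \ref{varpi:map}.

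The semigroup property for the $\nu$'s is verified directly on a generator by a single index chase: for $1\leqslant i,j\leqslant M+N$,
$$(\nu_{K_1}\circ\nu_{K_2})(\mathfrak{s}_{ij}(u))=\nu_{K_1}(\mathfrak{s}_{K_2+i,\,K_2+j}(u))=\mathfrak{s}_{K_1+K_2+i,\,K_1+K_2+j}(u)=\nu_{K_1+K_2}(\mathfrak{s}_{ij}(u)).$$
Inserting this into the previous display yields
$$\psi_{K_1}\circ\psi_{K_2}=\varpi_{K_1+K_2+M|N}\circ\nu_{K_1+K_2}\circ\varpi_{M|N}=\psi_{K_1+K_2}.$$
The equality $\psi_{K_2}\circ\psi_{K_1}=\psi_{K_1+K_2}$ then follows by exactly the same argument with the roles of $K_1$ and $K_2$ exchanged, since the index chase is manifestly symmetric in $K_1,K_2$.

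No substantive obstacle is expected; the only point requiring attention is bookkeeping of subscripts so that the middle cancellation $\varpi_{K_2+M|N}\circ\varpi_{K_2+M|N}=\mathrm{id}$ takes place on the correct intermediate extended twisted super Yangian $\mathcal{X}^{tw}(\mathfrak{osp}_{K_2+M|N})$, which is automatic from the construction of $\psi_{K_1}$ as a map with source that algebra.
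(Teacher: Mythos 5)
Your argument is correct, and since the paper states this corollary without proof, your telescoping $\psi_{K_1}\circ\psi_{K_2}=\varpi_{K_1+K_2+M|N}\circ\nu_{K_1}\circ\varpi_{K_2+M|N}\circ\varpi_{K_2+M|N}\circ\nu_{K_2}\circ\varpi_{M|N}$ together with the involutivity $\varpi_{K_2+M|N}^2=\mathrm{id}$ and the index-shift identity $\nu_{K_1}\circ\nu_{K_2}=\nu_{K_1+K_2}$ is exactly the right direct argument. The one thing worth making explicit in a written-up version is the notational overloading: in the composition, the outer $\psi_{K_1}$ is the morphism with source $\mathcal{X}^{tw}(\mathfrak{osp}_{K_2+M|N})$ (not $\mathcal{X}^{tw}(\mathfrak{osp}_{M|N})$), so its inner and outer $\varpi$'s are $\varpi_{K_2+M|N}$ and $\varpi_{K_1+K_2+M|N}$, and the inner $\nu_{K_1}$ is the shift-by-$K_1$ map on that larger algebra; you flag this correctly. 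It is worth remarking that although the statement is placed as a corollary to the quasi-determinant formula \eqref{psi:K}, your proof does not invoke that formula at all: it runs purely off the definition $\psi_K=\varpi_{K+M|N}\circ\nu_K\circ\varpi_{M|N}$ and Lemma \ref{varpi:map}, which is cleaner than trying to iterate the quasi-determinant expression directly. Commutativity $\psi_{K_1}\circ\psi_{K_2}=\psi_{K_2}\circ\psi_{K_1}$ then indeed follows since both sides equal $\psi_{K_1+K_2}$ by the symmetric computation.
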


Now we have the following Sylvester theorem for quantum Berezinian over the extended twisted super Yangians.
\begin{theorem}
   It holds for $K\geqslant 0$,
    \begin{gather*}
        \psi_K\left(\mathfrak{B}^{tw}_{M|N}(u)\right)=\mathfrak{B}^{tw}_{K+M|N}\left(u+\frac{3K}{2}\right)\cdot \left(\mathcal{S}_{1\cdots K}^{1\cdots K}\left(u+\frac{3K}{2}\right)\right)^{-1}.
    \end{gather*}
\end{theorem}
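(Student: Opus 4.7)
The strategy is to exploit the compositional definition $\psi_K = \varpi_{K+M|N} \circ \nu_K \circ \varpi_{M|N}$ together with two applications of Lemma \ref{KMeqs}: first in $\mathcal{X}^{tw}(\mathfrak{osp}_{M|N})$, then in the larger algebra $\mathcal{X}^{tw}(\mathfrak{osp}_{K+M|N})$. Specializing equation \eqref{KleqM} at $K=0$, where the right-hand side degenerates to the empty minor $1$, yields the transformation rule
\[
\varpi_{M|N}\!\bigl(\mathfrak{B}^{tw}_{M|N}(u)\bigr) \;=\; \mathfrak{B}^{tw}_{M|N}\!\bigl(-u - \tfrac{3M-5N}{2} + 1\bigr)^{-1}.
\]
Applying $\nu_K$ and identifying $\nu_K\bigl(\mathfrak{B}^{tw}_{M|N}(v)\bigr)$ with the quantum minor $\mathcal{S}^{K+1,\ldots,K+M+N}_{K+1,\ldots,K+M+N}(v)$ in the larger extended superalgebra, one obtains
\[
\psi_K\!\bigl(\mathfrak{B}^{tw}_{M|N}(u)\bigr) \;=\; \varpi_{K+M|N}\!\left(\mathcal{S}^{K+1,\ldots,K+M+N}_{K+1,\ldots,K+M+N}\!\bigl(-u - \tfrac{3M-5N}{2} + 1\bigr)\right)^{-1}.
\]

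Next, I would invoke Lemma \ref{KMeqs} once more, this time in $\mathcal{X}^{tw}(\mathfrak{osp}_{K+M|N})$ (so $M$ is replaced by $K+M$), with the free parameter $w$ chosen so that the argument of its $\varpi_{K+M|N}$-image matches the displayed formula above. Solving the resulting linear condition gives a shift of $u$ by a multiple of $3K/2$ and produces an identity of the form
\[
\psi_K\!\bigl(\mathfrak{B}^{tw}_{M|N}(u)\bigr) \;=\; \bigl(\mathcal{S}^{1,\ldots,K}_{1,\ldots,K}(w)\bigr)^{-1} \cdot \mathfrak{B}^{tw}_{K+M|N}(w).
\]
Finally, the commutativity observation recorded just after the quasi-determinant formula \eqref{psi:K}---namely, that the image of $\psi_K$ commutes with the sub-superalgebra generated by $\{\mathfrak{s}_{ab}^{(r)} : 1\leqslant a,b \leqslant K,\ r>0\}$, and in particular with $\mathcal{S}^{1,\ldots,K}_{1,\ldots,K}(w)$---allows the inverse factor to be transposed past $\mathfrak{B}^{tw}_{K+M|N}(w)$, yielding the stated product form.

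The main obstacle is the intermediate identification $\nu_K(\mathfrak{B}^{tw}_{M|N}(v)) = \mathcal{S}^{K+1,\ldots,K+M+N}_{K+1,\ldots,K+M+N}(v)$. Since the Berezinian involves the inverse matrix $\widetilde{\mathcal{S}}(v)$, and the inverse of a submatrix does not in general coincide with the restriction of the full inverse, this identity must be verified at the level of the tensor-product definition, using that the $R$-matrices and (anti-)symmetrizers defining the Berezinian act only on the positions $\{K+1,\ldots,K+M+N\}$ and are therefore compatible with the index-shift $i \mapsto K+i$. A secondary difficulty is the careful bookkeeping of the shifts $-(M-N)/2$ and $-(K+M-N)/2$ introduced by $\varpi_{M|N}$ and $\varpi_{K+M|N}$, which together with the shifts in Lemma \ref{KMeqs} produce the displacement $3K/2$ in the final formula.
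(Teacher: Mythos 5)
Your overall plan — write $\psi_K=\varpi_{K+M|N}\circ\nu_K\circ\varpi_{M|N}$, evaluate the outer $\varpi$'s via Lemma~\ref{KMeqs} at $K=0$, then invoke the lemma once more in $\mathcal{X}^{tw}(\mathfrak{osp}_{K+M|N})$ and clean up with the commutativity observation — is a reasonable attempt at unwinding the paper's one-line proof, and your final commutation argument is sound. But two things do not hold up.

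First, the pivotal identification
\[
\nu_K\bigl(\mathfrak{B}^{tw}_{M|N}(v)\bigr)\;\stackrel{?}{=}\;\mathcal{S}^{K+1,\ldots,K+M+N}_{K+1,\ldots,K+M+N}(v)
\]
is asserted but not proven, and you yourself call it "the main obstacle.'' This is not a routine verification to be deferred: the Berezinian involves the inverse matrix $\widetilde{\mathcal{S}}(u)=\mathcal{S}(u)^{-1}$, and $\nu_K\bigl(\widetilde{\mathcal{S}}(u)\bigr)$ is the inverse of the sub-block $D(u)=(\mathfrak{s}_{K+i,K+j}(u))_{1\le i,j\le M+N}$, whereas the entries $\tilde{\mathfrak{s}}_{K+i,K+j}(u)$ appearing in the large-algebra minor are the restriction of the $(K+M+N)$-dimensional inverse, namely $(D-CA^{-1}B)^{-1}$. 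These do not coincide. Moreover your supporting remark — that the $R$-matrices and (anti-)symmetrizers "act only on positions $\{K+1,\ldots,K+M+N\}$'' — is not accurate: they act on tensor slots $1,\ldots,M+N$, and even after the $\Im$-projection the intermediate summation indices in the large-algebra minor run over all of $\{1,\ldots,K+M+N\}$ through the entries of $\mathcal{S}$ and $\widetilde{\mathcal{S}}$, not just the $D$-block. In addition, the $R^{\iota}$-matrices in the two algebras are built from different $\mathcal{G}_0$'s (the anti-diagonal reflection $i\mapsto i'$ does not commute with the index shift $i\mapsto K+i$), so this compatibility requires a genuine argument, not an observation.

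Second, the shift arithmetic comes out with the wrong sign. Taking your step-2 output literally and matching its argument with the one in Lemma~\eqref{KleqM} (with $M$ replaced by $K+M$) forces
\[
-u-\tfrac{3M-5N}{2}+1=-w-\tfrac{3(K+M)-5N}{2}+1\;\Longrightarrow\;w=u-\tfrac{3K}{2},
\]
so your chain would yield $\psi_K(\mathfrak{B}^{tw}_{M|N}(u))=\mathfrak{B}^{tw}_{K+M|N}(u-\tfrac{3K}{2})\cdot\bigl(\mathcal{S}^{1\cdots K}_{1\cdots K}(u-\tfrac{3K}{2})\bigr)^{-1}$, not the stated $u+\tfrac{3K}{2}$. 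This is a second independent signal that the unshifted identification in step 2 cannot be correct as written: whatever relation (if any) connects $\nu_K(\mathfrak{B}^{tw}_{M|N})$ to a large-algebra minor must carry a spectral shift of order $3K$, and establishing it precisely is exactly the content that has been left out.
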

\begin{proof}
    This is an immediately conclusion of Lemma \ref{KMeqs} and the definition of $\psi_K$.
\end{proof}

\vspace{2em}
The authors have no conflicts of interest to declare that are relevant to this article.

\end{document}